\theoremstyle{plain}
\newtheorem{theorem}{Theorem}[section]
\newtheorem{corollary}[theorem]{Corollary}
\newtheorem{proposition}[theorem]{Proposition}
\newtheorem{lemma}[theorem]{Lemma}
\newtheorem{definition}[theorem]{Definition}
\newtheorem{assum}{Assumption}
\theoremstyle{remark}
\newtheorem{remark}[theorem]{Remark}
\newtheorem{example}[theorem]{Example}
\newlist{propenum}{enumerate}{1} % also creates a counter called 'propenumi'
\setlist[propenum]{label=(\roman*)}
\DeclarePairedDelimiter\abs{\lvert}{\rvert}   % |absolute value|
\DeclarePairedDelimiter\norm{\lVert}{\rVert}  % ||norm||
\title{An Infinite-Dimensional SIS Model}
\date{\today}
\author{Jean-François Delmas}
\address{Jean-François Delmas,
 CERMICS, École des Ponts, France}
\email{jean-francois.delmas@cermics.enpc.fr}
\author{Dylan Dronnier}
\address{Dylan Dronnier,
 CERMICS, École des Ponts, France}
\email{dylan.dronnier@enpc.fr}
\author{Pierre-André Zitt}
\address{Pierre-André Zitt, LAMA, Université Gustave Eiffel, France}
\email{pierre-andre.zitt@u-pem.fr}
\subjclass[2010]{92D30, 34D20, 37N25, 47B34, 47B65}
\keywords{SIS Model, Non-linear ODE, Infinite-dimensional ODE, Graph, Graphon, Banach Lattice,
  Positive Operator, Vaccination, Lockdown}
\thanks{The authors are very grateful to Jean-Stéphane Dhersin for the discussion on the general SIS
  model and in particular on models for the vaccination and lockdown policies.}
\begin{document}

%%%%%%%%%%% ABSTRACT %%%%%%%%%%

\begin{abstract}
  In this article, we introduce an infinite-dimensional deterministic SIS model which takes into
  account the heterogeneity of the infections and the social network among a large population.  We
  study the long-time behavior of the dynamic.  We identify the basic reproduction number $R_0$
  which determines whether there exists a stable endemic steady state (super-critical case: $R_0>1$)
  or if the only equilibrium is disease-free (critical and sub-critical case: $R_0\leq1$).  As an
  application of this general study, we prove that the so-called ``leaky'' and ``all-or-nothing''
  vaccination mechanism have the same effect on $R_0$.  This framework is also very natural and
  intuitive to model lockdown policies and study their impact.
\end{abstract}

%%%%%%%%%%% TITLE %%%%%%%%%%%%%

\maketitle

%%%%%%%%% SECTION 1 %%%%%%%%%%%

\section{Introduction}
\subsection{Motivation}
\subsubsection{The SIS model} Some infections do not confer any long-lasting immunity. With such
infections, individuals become susceptible again once they have recovered from the disease. The
simplest deterministic way to model this kind of epidemics in a constant size population is the
following system of ordinary differential equations, introduced by Kermack and McKendrick in
\cite{kermack_contribution_1927} and known as the SIS (susceptible/infected/susceptible) model:
\[
  \left\{
    \begin{array}{l}
      \dot{S} = - \frac{K}{N} I S + \gamma I, \\
      \\
      \dot{I} = \frac{K}{N} I S - \gamma I,
    \end{array} 
  \right.
\]
where $S=S(t)$ and $I=I(t)$ are the number of susceptible and infected individuals, the total size
$N=S(t)+I(t)$ of the population is constant in time, and $K$ and $\gamma$ are two positive numbers
which represent the infectiousness and the recovery rate of the disease. The proportion
$U(t) = I(t)/N$ of infected individuals in the population evolves autonomously, according to:
\begin{equation}\label{eq:one-group}
  \dot{U} = (1 - U) K U - \gamma U.
\end{equation}
Looking at a time change of $U$ given by $V(t)=U(t/\gamma)$ and setting $R_0=K/\gamma$, one gets
that $ \dot{V} = (1 - V) R_0 V - V$. The parameter $R_0$ can be interpreted as the number of
infected individuals one infected individual generates on average over the course of its infectious
period, in an otherwise uninfected population.  This basic reproduction number was first introduced
by Macdonald \cite{macdonald_analysis_1952}, and appears in a large class of models in epidemiology,
see the monograph \cite{brauer-castillo} from Brauer and Castillo-Chavez .  The ordinary
differential equation in $V$ is well-posed and admits an explicit solution.  If $V(0) = 0$, then
$V(t)=0$ for all $t$: as $V$ represents the proportion of infected individuals, this constant
solution is called the \emph{disease-free equilibrium}.  Now assume $V(0)=V_0\in (0, 1]$. If
$R_0\neq 1$, the proportion of infected individuals in the population for $t\geq 0$ is given by:
\[
  U(\gamma t) =V(t) = \frac{R_0 -1}{R_0 + ((1-R_0)/V_0 -
    R_0)\mathrm{e}^{(1- R_0)t}} \cdot
\]
If $R_0=1$, then the proportion of infected individuals in the population is given by:
\[
  U(\gamma t) = V(t)=\frac{1}{(1/V_0) + t}\cdot
\]
Hence, one can identify three possible longtime behaviors for the dynamical system:
\begin{description}
\item[Sub-critical regime] If $R_0<1$, $U(t)$ converges exponentially fast to $0$, and the only
  equilibrium is the disease-free solution $U(t)=0$.
\item[Critical regime] If $R_0=1$, $U(t)$ still converges to $0$ but not exponentially. The
  disease-free equilibrium is still the only one.
\item[Super-critical regime] If $R_0>1$, the constant solution $0$ becomes unstable and another
  equilibrium appears, $G^* = 1-R_0^{-1}$.  This equilibrium is called \emph{endemic}, and is
  globally stable in the sense that $U(t)$ converges towards $G^*$ for all initial positive
  conditions.
\end{description}

\subsubsection{The multidimensional Lajmanovich Yorke extension}
In a pioneering paper \cite{lajmanovich1976deterministic}, Lajmanovich and Yorke introduced an
extension of the SIS model for the propagation of gonorrhea, which takes into account the fact that
the propagation of the virus is highly non homogeneous among the population --- we refer to the
survey \cite[Section V.A.2]{pastor-satorras_epidemic_2015} from Pastor-Satorras, Castellano, Mieghem
and Vespignani (and more precisely Section 2 therein) for broader context and more details.

In this model the population is divided into $n$ groups and the transmission rates of the disease
between these groups are not equal, leading to a system of coupled ODEs:
\begin{equation}
  \label{finiteSIS}
  \dot{U}_i = (1  - U_i) \sum\limits_{j = 1}^n K_{i,j} \, U_j - \gamma_i U_i,
  \qquad \forall i \in \set{1,2,\ldots,n}
\end{equation}
where ${U}_i$ is the proportion of infected individuals in group $i$ with $U_i(0) \in [0,1]$ for all
$1 \leq i \leq n$, $K = (K_{i,j})_{1 \leq i,j \leq n}$ is a non-negative matrix that represents the
transmission rates of the infection between the different groups, and the non-negative number
$\gamma_i > 0$ is the recovery rate of group $i$. Since the matrix
$K/\gamma = (K_{i,j}/\gamma_j)_{1 \leq i,j \leq n}$ has non-negative entries, we recall it has a
Perron eigenvalue, that is, an eigenvalue $R_0 \in \mathbb{R}_+$ such that all other complex
eigenvalues $\lambda$ of $K/\gamma$ satisfy $|\lambda| \leq R_0$.  The following result is given in
\cite{lajmanovich1976deterministic}.
\begin{enumerate}[1.]
\item There exists a unique solution
  $(U_i(t)\,\colon\, t\geq 0)_{1 \leq i \leq n}$ of Equation
  \eqref{finiteSIS} and $U_i(t) \in [0,1]$ for all
  $t\in \mathbb{R}_+$.
\item If $R_0\leq 1$, $U_i(t)$ converges to $0$ for all
  $1 \leq i \leq n$, so that the disease-free equilibrium
  $(0,0, \ldots, 0)$ is globally stable.
\item If $K$ is irreducible and $R_0>1$, then there exists an endemic
  equilibrium $G^*=(G_i^*)_{1 \leq i \leq n}$ such that for
  $i=1\ldots n$:
  \[
    \lim\limits_{t \to \infty} U_i(t) = G^*_i\in (0, 1),
  \]
  provided that $U(0) \neq (0,0,\ldots,0)$.
\end{enumerate}
  
\medskip
  
Thus, under the assumption that people are connected enough, the epidemic has two possible behaviors
exactly like in the one-dimensional model: \medskip

\begin{quote}{Biotheorem 1, \cite{lajmanovich1976deterministic}}
  Either the epidemic will die out naturally for every possible initial stage of the epidemic, or
  when it is not true and the initial number of infectives of at least one group is nonzero, the
  disease will remain endemic for all the future time. Moreover, the number of infectives and
  susceptibles of each group will approach nonzero constant levels which are independent of the
  initial levels.
\end{quote}

\subsubsection{Towards a generalization}
The epidemiologic models discussed so far assume a large population, possibly made of a few groups
with different behaviours, so that the epidemics is deterministic. At the opposite side of the
modelling spectrum, some probabilistic models of interacting particles may be seen as modelling
epidemics.

In 1974, Harris \cite{harris_contact_1974} introduced the so-called contact process on
$\mathbb{Z}^d$.  The contact process is a continuous-time Markov process often used as a model for
the spread of an infection.  Nodes of the graph represent the individuals of a population. They can
either be infected or healthy. Infected individuals become healthy after an exponential time,
independently of the configuration. Healthy individuals become infected at a rate which is
proportional to the number of infected neighbors. The contact process share a numerous properties
with the multigroup SIS equations: the existence of an upper invariant measure, a disease-free
invariant measure and a monotone coupling \cite{liggett_interacting_1985,liggett_stochastic_1999}.
This proximity is not surprising since Equation~\eqref{finiteSIS} can be obtained from a mean-field
approximation of the contact process \cite[Section~V.A]{pastor-satorras_epidemic_2015}.  Notice that
Equation~\eqref{finiteSIS} can also be obtained as a limit of individual based models, see
\cite{benaim-hirsch-1999}.  \medskip

We refer to \cite{pastor-satorras_epidemic_2015}, and the numerous references therein, for a survey
on epidemic processes in complex networks.  Since social networks are a very large graphs, it is
natural to consider epidemic process on limit of large graphs using the theory developed during the
last two decades, on (i) graphings (which is used to deal with very sparse graphs, namely those with
bounded degree, see \cite{aldous_processes_2007,elek_note_2007,lovasz_large_2012}), or, at the other
extreme, on (ii) graphons (which are comprehensive and flexible objects that define a limit for
dense graphs where the mean degree is of the same order as the number of vertices, see, for example,
\cite{lovasz_large_2012,lovasz_limits_2006}). See
\cite{borgs__2019,borgs_lp_2018,kunszenti-kovacs_measures_2019} for several attempts approach to
limit theory for all kind of graphs.

\medskip

The SIS equation that we propose in the present paper has to be thought as the limit of the
mean-field approximations of the contact processes defined on a convergent sequence of large graphs.
Thus, the solutions take values in an abstract space $\Omega$ (the set of vertices), which can be
interpreted as the set of features of the individuals, the transmission of the disease is given by a
kernel $\kappa$ and the recovery rate by a function $\gamma$ (see Examples \ref{ex:graphonform} and
\ref{ex:graphingform}), see the infinite-dimensional evolution Equation \eqref{eq:SIS} below.

The two main goals of this article are the following:
\begin{itemize}
\item introduce an infinite-dimensional SIS model, generalizing the model developed by Lajmanovitch
  and Yorke (see Equation \eqref{eq:SIS} below), and prove a result similar to
  \cite[Biotheorem~1]{lajmanovich1976deterministic} in that general setting;
\item argue that this general setting is flexible enough to take into account not only the topology
  of the social network, or the disparities between different subgroups of the population, but also
  the effect of vaccination policies (see Section \ref{sec:vacc}), or the effect of lockdown (see
  Section \ref{sec:quarantine}), in the spirit of the policies used to slow down the propagation of
  Covid-19 in 2020.
\end{itemize}

\subsection{The model}
\label{subsec:model}
It is natural to extend the Lajmanovich and Yorke model \eqref{finiteSIS} to a population with an
infinite number of groups. We choose to present this extension in an abstract setting, which will
allows us to include general vaccination and lockdown policies.  We denote by $\Omega$ the set of
the features of the individuals in a given population.  Since $\Omega$ might not be countable, we
shall consider a $\sigma$-field $\mathscr{F}$ on $\Omega$ so that $(\Omega, \mathscr{F})$ is a
measurable space.  We represent the transmission rate from an infinitesimal part of the population
$\mathrm{d}y$ to $x$ by a non-negative kernel $\kappa(x, \mathrm{d}y)$: $\kappa$ is a function from
$\Omega \times \mathscr{F}$ to $\mathbb{R}_+$ such that, for all $A \in \mathscr{F}$, the mapping
$x \mapsto \kappa(x, A)$ is measurable and, for all $x \in \Omega$, the mapping
$A \mapsto \kappa(x, A)$ is a non-negative measure defined on $(\Omega, \mathscr{F})$.  We model the
recovery rate of individuals with feature $x$ by $\gamma(x)$, where $\gamma$ is a non-negative
measurable function defined on $(\Omega, \mathscr{F})$.  The number $1/\gamma(x)$ can be thought as
the typical time of recovery for individuals with feature $x$.  For $x\in \Omega$ and $t\geq 0$, we
denote by $u(t,x)$ the probability for an individual (or the proportion of individuals) with feature
$x$ to be infected at time $t$.  So the intensity of infection attempts on $x$ coming from infected
individuals in $\mathrm{d}y$ is given by $u(t,y)\kappa(x,\mathrm{d}y)$.  Recall that in a SIS model,
the probability for an infection attempt to succeed is proportional to the number of susceptibles
individuals, \textit{i.e.}, those who are not already infected; this explains the term $(1-u(t,x))$
in front of the integral in the next equation.  The evolution equation of the function $u$ for the
SIS model of the probability for being infected is given by the following differential equation (in
infinite dimension):
\begin{equation}
  \label{eq:SIS}
  \left\{
    \begin{aligned}
      \partial_t u(t,x) &= (1-u(t,x)) \int_\Omega u(t,y) \, \kappa(x,\mathrm{d}y) -\gamma(x) u(t,x),
      \quad x \in \Omega, t \in [0, \tau),  \\
      u(0,x) &= u_0(x), \quad x \in \Omega,
    \end{aligned}
  \right.   
\end{equation}
where the measurable function $u_0 \, \colon \, \Omega \to [0,1]$ is the so-called initial condition
and the solution $u$ is defined up to time $\tau \in (0, \infty]$. We shall prove that Equation
\eqref{eq:SIS} is well defined up to $\tau=+\infty $, and we will mainly focus our study on the
long-time behavior of the solutions to this equation and on the study of existence of equilibria.
We refer to Section \ref{sec:discussion} for a discussion on related work, and in particular the
work by Thieme \cite{thieme_global_2011} on spatial SIR model and by Busenberg, Iannelli and Thieme
\cite{busenberg_global_1991} on long-time behavior of an age-structured SIS infection.  \medskip

We check in the following example that the Lajmanovich and Yorke model \eqref{finiteSIS} is a
particular example of \eqref{eq:SIS}.

\begin{example}[Lajmanovich and Yorke model]
  \label{ex:infinite_to_finite}
  Consider a finite set of features, $\Omega = \set{1,2, \ldots, n}$ (with the $\sigma$-field
  $\mathscr{F} = \mathscr{P}(\Omega)$ of all sub-sets of $\Omega$), a finite kernel $\kappa$ and a
  positive recovery rate $\gamma$.  We set for all $i, j \in \Omega$ and $t\geq 0$:
  \[
    K_{i, j} = \kappa(i, \set{j}), \quad \gamma_i = \gamma(i)
    \quad\text{and}\quad U_i(t)=u(t,i),
  \]
  where $u$ is the solution to Equation \eqref{eq:SIS}. Then clearly, the functions $U_i$, for
  $1\leq i\leq n$ solves the finite-dimensional model \eqref{finiteSIS}.
\end{example}

There are two natural extensions of Example \ref{ex:infinite_to_finite} to large bounded degree
graphs and large dense graphs, which is a first approach to model large complex social networks.

\begin{example}[Graph model]\label{ex:graphingform}
  Consider a representation of the social interaction of a population by a simple graph $G$, with
  set of vertices $V(G)=\Omega$ which is at most countable, and set of edges
  $E(G) \subset\Omega \times \Omega$.  For $x\in \Omega$, let
  $\mathscr{N}(x)=\set{y\in G\, \colon \, (x,y) \in E(G)}$ stands for the neighborhood of $x$ in $G$
  and $\mathrm{deg}_G(x)=\mathrm{Card\,}(\mathscr{N}(x))$ for its degree.  If the degree of the
  vertices of $G$ are finite, we may consider a kernel with the following form:
  \begin{equation}\label{eq:graphingform}
    \kappa(x,\mathrm{d}y) = \beta(x) \sum\limits_{z \in \mathscr{N}(x)}
    \theta(y)\delta_{z}(\mathrm{d}y), 
  \end{equation}
  where $\beta$ and $\theta$ are non-negative functions, which represent the susceptibility and the
  infectiousness of the individuals respectively, and $\delta_z$ is the Dirac mass at $z$.  Then
  Equation \eqref{eq:SIS} represents the evolution equation for a SIS model on a graph. The strength
  of the formalism of \eqref{eq:SIS}
  is that one can consider limit of large bounded degree undirected graphs called graphings, see
  Section 18 in \cite{lovasz_large_2012} for the definition of a graphings.
\end{example}

\begin{example}[Graphon form]\label{ex:graphonform}
  One of the initial motivation of this work, was to consider a SIS model on \emph{graphons}, which
  are limit of large dense graphs, see the monograph~\cite{lovasz_large_2012} from Lov\`asz. Recall
  the set of features of the individuals in the population is given by a set $\Omega$.  In this
  approach, the typical form of the transmission kernel $\kappa$ we may consider is:
  \begin{equation}
    \label{eq:graphonform}
    \kappa(x,\mathrm{d}y) = \beta(x) W(x,y) \theta(y) \, \mu(\mathrm{d}y),
  \end{equation}
  where $\beta$ represents the susceptibility and $\theta$ the infectiousness of the individuals;
  $W$ models the graph of the contacts within the population and the quantity $W(x, y)\in[0, 1]$ is
  interpreted as the probability that $x$ and $y$ are connected, or as the density of contacts
  between individuals with features $x$ and $y$; $\mu$ is a probability measure on
  $(\Omega , \mathscr{F})$ and $\mu(\mathrm{d}y)$ represents the infinitesimal proportion of the
  population with feature $y$.  Formally, $\beta$ and $\theta$ are non-negative measurable
  functions, and the function $W \, \colon \Omega \times \Omega \to [0,1]$ is symmetric
  measurable. The quadruple $(\Omega, \mathscr{F}, \mu, W)$ is called a graphon. The degree
  $\mathrm{deg}_W(x)$ of $x\in \Omega$ (\textit{i.e.}  the average number of his contacts) and the
  mean degree $\mathrm{d}_W$ for a graphon $W$ are defined by:
  \begin{equation}
    \label{eq:deg}
    \mathrm{deg}_W(x) = \int_\Omega \!\! W(x,y) \, \mu(\mathrm{d}y)
    \text{ and }
    \mathrm{d}_W = \int_\Omega \!\! \mathrm{deg}_W(x) \, \mu(\mathrm{d}x)
    =  \int_{\Omega^2} \!\! W(x,y) \, \mu(\mathrm{d}y)\, \mu(\mathrm{d}x).
  \end{equation}

  \begin{propenum}
  \item\label{ex:graphonform-c} \textbf{(Constant graphon.)}  One elementary example, is the
    constant graphon, $W=p\in [0, 1]$.  In this case, the degree function is constant, equal to the
    mean degree and thus equal to the parameter $p$.  We recall this constant graphon appears as the
    limit, as $n$ goes to infinity, of Erd\"{o}s-R\'{e}nyi random graphs with $n$ vertices and
    parameter $p$ (that is: independently, for each pair of vertices, there is an edge between those
    two vertices with probability $p$).  If furthermore the functions $\beta$, $\theta$ and $\gamma$
    from~\eqref{eq:SIS} are constant on $\Omega$, then we recover the SIS model \eqref{eq:one-group}
    with $K=p\beta\theta$ and $U(t)=\int_\Omega u(t, x) \mu(\mathrm{d}x)$.

  \item\label{ex:graphonform-sbm} \textbf{(Stochastic block model.)}  The stochastic block models of
    communities corresponds to the case where $W$ is constant by block, \textit{i.e.}  there exists
    a finite partition $(\Omega_i\, \colon \, 1\leq i\leq n)$ of $\Omega$ such that $W$ is constant
    on the blocks $\Omega_i\times \Omega_j$ for all $i,j$, and equal say to $W_{i,j}$.  If
    furthermore, the functions $\beta$, $\theta$ and $\gamma$ from~\eqref{eq:SIS} are also constant
    on the partition, then we recover the Lajmanovich and Yorke model, see \eqref{finiteSIS}, with:
    $K_{i,j}=\beta_i\, W_{i, j} \, \theta_j \, \mu(\Omega_j)$; $\beta_i$, $\theta_i$ and $\gamma_i$
    are the constant values of $\beta$, $\theta$ and $\gamma$ on $\Omega_i$; and
    $U_i(t)=\int_{\Omega_i} u(t, x) \mu(\mathrm{d}x)/ \mu(\Omega_i)$.

  \item\label{ex:graphonform-geom} \textbf{(Geometric graphon.)}  We also mention the geometric
    graphon, where the probability of contact between $x$ and $y$ depends on their relative
    distance. For example, consider the population uniformly spread on the unit circle:
    $\Omega=[0, 2\pi]$ and $\mu(\mathrm{d}x)=\mathrm{d}x/2\pi$. Let $f$ be a measurable non-negative
    function defined on $\mathbb{R}$ which is bounded by 1 and $2\pi$-periodic.  Define the
    corresponding geometric graphon $W_f$ by $W_f(x,y)=f(x-y)$ for $x, y\in \Omega$. In this case,
    the degree of $x\in [0, 1]$ is constant with:
    \[
      \mathrm{deg}_{W_f}(x) = \mathrm{d}_{W_f}=\frac{1}{2\pi} \, \int_{[0, 2\pi]} f(y)\, \mathrm{d}y.
    \]
  \end{propenum}
\end{example}

\subsection{Main assumptions and definition of the reproduction rate}
In order for Equation \eqref{eq:SIS} to make sense, we will need the following assumption. It will
always be in force throughout this paper without supplementary specification.
\begin{assum}\label{Assum0}
  The function $\gamma$ is positive, bounded and the non-negative kernel $\kappa$ is uniformly
  bounded:
  \begin{equation}
    \label{eq:degreebounded}
    \sup\limits_{x \in \Omega} \kappa(x, \Omega) < \infty. 
  \end{equation}
\end{assum}

Assuming the recovery rate $\gamma$ to be bounded is equivalent to require the time of recovery
$1/\gamma$ to be bounded from below by a positive constant. The function $1/\gamma$ is also finite
for all individuals because $\gamma$ is supposed to be positive. It is possible with Assumption
\ref{Assum0} to have individuals with arbitrary large time of recovery, though. Finally, Equation
\eqref{eq:degreebounded} limits the maximal force of infection that can be put upon a susceptible
individual.  \medskip

In Examples \ref{ex:infinite_to_finite}, \ref{ex:graphingform} and \ref{ex:graphonform}, we observe
that the kernel has a density with respect to a reference measure (the counting measure in the first
two examples and the probability measure $\mu$ in the third one).  From an epidemiological point of
view, the reference measure~$\mu$ can be seen as a way to quantify the size of the population and
its sub-groups (defined by a given feature such as sex, spatial coordinates, social condition,
health background, ...). If the measure $\mu$ is finite, then for every measurable set $A$, the
number $\mu(A) / \mu(\Omega)$ is the proportion of individuals in the population whose features
belong to $A$.  We shall consider the case where the density $k$ of $\kappa$ with respect to the
reference measure $\mu $ satisfies some mild integrability condition. We stress that we do not
assume any smoothness condition on the density $k$. By a slight abuse of language, we will also call
the density $k$ a kernel.
\begin{assum}\label{Assum1}
  There exists a finite positive measure $\mu$ on $(\Omega, \mathscr{F})$, a non-negative measurable
  function $k \, \colon \, \Omega \times \Omega \to \mathbb{R}_+$ such that for all $x \in \Omega$,
  $\kappa(x , \mathrm{d}y) = k(x,y) \mu(\mathrm{d}y)$. Besides, there exists $q>1$ such that:
  \begin{equation}\label{eq:strongintegrability}
    \sup\limits_{x \in \Omega} \int_{\Omega} \frac{k(x,y)^q}{\gamma(y)^q} \, \mu(\mathrm{dy}) < \infty.
  \end{equation}
\end{assum}

Notice that since we assume that $\gamma$ is bounded, then Equation \eqref{eq:strongintegrability}
implies the following integrability condition for the kernel $k$:
\begin{equation}\label{eq:weakintegrability}
  \sup\limits_{x \in \Omega} \int_{\Omega} k(x,y)^{q} \,
  \mu(\mathrm{dy})    < \infty. 
\end{equation} 
We shall study in Section~\ref{sec:exple-N} an example which does not satisfy the integrability
condition~\eqref{eq:strongintegrability} nor~\eqref{eq:weakintegrability}.

In addition to Assumption \ref{Assum1}, we will sometimes need the following assumption on the
connectivity of the kernel $k$.
\begin{assum}[Connectivity]\label{Assum_connectivity}
  The kernel $k$ is connected, that is, for all measurable set $A$ such that $\mu(A) > 0$ and
  $\mu(A^c) > 0$, we have that:
  \begin{equation}
    \int_{A \times A^c} k(x,y) \, \mu(\mathrm{d}x) \mu(\mathrm{d}y) > 0.
  \end{equation}
\end{assum}

The sociological interpretation of the connectivity assumption is that we cannot separate the
population into two groups of individuals with no interaction.  Each time Assumptions \ref{Assum1}
or \ref{Assum_connectivity} are used, it will be specified.

\begin{remark}[The finite dimensional case]
  Assumption \ref{Assum1} is automatically satisfied in the finite-dimensional model of Example
  \ref{ex:infinite_to_finite}, where we supposed Assumption \ref{Assum0}. We can indeed take $\mu$
  to be the counting measure and Equation \eqref{eq:strongintegrability} is true because $k$ is
  bounded from above and $\gamma$ is bounded from below by a positive constant as it is positive.
  Notice Assumption \ref{Assum_connectivity} is equivalent to the matrix of transmission rates
  $K = (K_{i,j})_{1 \leq i,j \leq n}$ being irreducible.
\end{remark}

The basic reproduction number of an infection, denoted by $R_0$, has originally been defined as the
number of cases one typical individual generates on average over the course of its infectious
period, in an otherwise uninfected population. This number plays a fundamental role in epidemiology
as it provides a scale to measure how difficult to control an infectious disease is. More
importantly, $R_0$ is often used as a threshold which determines whether the disease will die out
(if~$R_0 <1$) or whether it can invade the population (if~$R_0 > 1$).

In mathematical epidemiology, Diekmann, Heesterbeek and Metz \cite{Diekmann1990} define rigorously
the basic reproduction number for a class of models with heterogeneity in the population. They
propose to consider the next-generation operator which gives the distribution of secondary cases
arising from an infected individual picked randomly according to a certain distribution -- the
population being assumed uninfected otherwise. In our model, under Assumption \ref{Assum1},
following \cite[Equation (4.2)]{Diekmann1990}, we define the next generation operator, denoted by
$T_{k/\gamma}$, as the integral operator:
\begin{equation}
  \label{eq:def-tk/g}
  T_{k/\gamma}(g)(x) =  \int_\Omega \frac{k(x,y)}{\gamma(y)} g(y) \, \mu (\mathrm{d}y)
  \quad\text{for all $x\in \Omega$,}
\end{equation}
which is, thanks to \eqref{eq:strongintegrability}, a bounded positive operator on the space
$\mathscr{L}^\infty(\Omega)$ of bounded real-valued measurable functions defined on $\Omega$.  And
the basic reproduction number is defined by, see \cite[Definition of $R_0$ in Section
2]{Diekmann1990}:
\begin{equation}\label{eq:R0}
  R_0 = r(T_{k / \gamma }),
\end{equation}
where $r$ is the spectral radius, whose exact definition in our general setting will be recalled
below (Equation \eqref{eq:spectral_radius}).  These definitions of the next-generation operator and
the basic reproduction number are consistent with the finite dimensional SIS model given in
\cite{VanDenDriessche}.

\subsection{Long time behavior of solutions to the evolution equation \eqref{eq:SIS}}

We now state our main result concerning solutions of the evolution equation \eqref{eq:SIS}.  Recall
the initial condition of \eqref{eq:SIS}, $u_0$, takes values in $[0, 1]$.
\begin{theorem}\label{th:main_theorem}
  We have the following properties.
  \begin{propenum}
  \item\label{th:main_theoremI} %\label[theorem]{th:main_theoremI}
    \textbf{(Equation \eqref{eq:SIS} is well defined and $\tau=+\infty$.)}  Under
    Assumption \ref{Assum0}, there exists a unique solution $u$ to
    Equation \eqref{eq:SIS}. This solution is such that, for all
    $(x,t) \in \Omega \times \mathbb{R}_+$, $u(t,x) \in [0,1]$.
  \item\label{th:main_theoremII}%\label[theorem]{th:main_theoremII}
    \textbf{(Disease free equilibrium in the critical and sub-critical case.)}  Assume that
    Assumptions \ref{Assum0} and \ref{Assum1} are in force. Let $R_0$ be defined by
    \eqref{eq:R0}. If $R_0 \leq 1$, then the disease dies out: for all $x \in \Omega$,
    \[
      \lim\limits_{t \to \infty} u(t,x) = 0.
    \]
  \item\label{th:main_theoremIII} %[theorem]{th:main_theoremIII}
    \textbf{(Stable endemic equilibrium in the super-critical case.)}  Assume that Assumptions
    \ref{Assum0}, \ref{Assum1} and \ref{Assum_connectivity} are in force. If $R_0>1$, then there
    exists a unique equilibrium $g^* \, \colon \, \Omega\to [0,1]$ with nonzero integral. For all
    initial condition $u_0$ such that its integral is positive:
    \[
      \int_\Omega u_0(x) \, \mu(\mathrm{d}x) >0,
    \]
    the solution $u$ to \eqref{eq:SIS} converges pointwise to $g^*$, \textit{i.e.}, for all
    $x \in \Omega$:
    \[
      \lim\limits_{t \to \infty} u(t,x) = g^*(x).
    \]
    If $u_0=0$ $\mu$-a.e. then the solution $u$ to \eqref{eq:SIS} 
    converges pointwise to~$0$.
  \end{propenum}
\end{theorem}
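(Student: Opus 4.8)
The plan is to read \eqref{eq:SIS} as an ordinary differential equation $\dot u = F(u)$ in the Banach lattice $\mathscr{L}^\infty(\Omega)$, with $F(u)(x) = (1-u(x))\int_\Omega u(y)\,\kappa(x,\mathrm dy) - \gamma(x)u(x)$, and to combine three ingredients: local well-posedness by Cauchy-Lipschitz, a comparison (monotonicity) principle for the resulting semiflow, and the spectral theory of the positive operator $T_{k/\gamma}$ from \eqref{eq:def-tk/g}.

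For part \ref{th:main_theoremI}, under Assumption \ref{Assum0} the map $F$ is locally Lipschitz on $\mathscr{L}^\infty(\Omega)$, being quadratic with coefficients controlled by $\sup_x\kappa(x,\Omega)$ and $\|\gamma\|_\infty$, so Cauchy-Lipschitz yields a unique maximal solution on $[0,\tau)$. To get invariance of $[0,1]$, I would rewrite the equation in mild form $u(t,x) = u_0(x)\,\mathrm e^{-\int_0^t c(s,x)\,\mathrm ds} + \int_0^t \mathrm e^{-\int_s^t c(r,x)\,\mathrm dr}\,(Ku)(s,x)\,\mathrm ds$, where $(Ku)(s,x) = \int u(s,y)\,\kappa(x,\mathrm dy)$ and $c(s,x) = (Ku)(s,x)+\gamma(x)\ge 0$; the right-hand side is order-preserving and sends non-negative functions to non-negative functions, so a Picard iteration started from $0$ forces $u\ge 0$, and applying the same to $v=1-u$, which satisfies an equation of the same type, forces $u\le 1$. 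Once $0\le u(t,\cdot)\le 1$ on $[0,\tau)$, the solution is bounded in $\mathscr{L}^\infty(\Omega)$ and so cannot blow up in finite time, whence $\tau=+\infty$. Along the way I would record the comparison principle, $u_0\le\tilde u_0 \Rightarrow u(t)\le\tilde u(t)$, which follows from the quasi-monotonicity of $F$ (the coupling term $Ku$ is order-preserving).

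A function $g\colon\Omega\to[0,1]$ is stationary iff $g=\Phi(g)$ with $\Phi(g)=Kg/(Kg+\gamma)$ (here $K$ has density $k$), a concave, order-preserving map. Iterating $\Phi$ from $\mathbf 1$ gives a non-increasing sequence converging to the maximal equilibrium $g^*_{\max}$, and the comparison principle gives $\limsup_t u(t,x)\le g^*_{\max}(x)$ since $u_{\mathbf 1}(t)\downarrow g^*_{\max}$ and $u_0\le\mathbf 1$. A short computation shows that a non-zero equilibrium $g$ satisfies $g(x)<1$ for all $x$, that $Kg$ vanishes on $\{g=0\}$, and that $T_{k/\gamma}(\gamma g)=(1-g)^{-1}\,\gamma g\ge\gamma g$; since $\gamma g$ is non-negative and non-zero, this forces $R_0=r(T_{k/\gamma})\ge 1$. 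Hence if $R_0<1$ there is no non-zero equilibrium, so $g^*_{\max}=0$ and part \ref{th:main_theoremII} follows in the sub-critical case. The critical case $R_0=1$ is the main obstacle: one must still rule out non-trivial equilibria, i.e. upgrade $T_{k/\gamma}(\gamma g)\ge\gamma g$ (which only gives $R_0\ge 1$) to a contradiction when $R_0=1$, and the pointwise multiplier $(1-g)^{-1}$, although strictly larger than $1$ on the positivity set of $g$, need not be bounded away from $1$ there (especially when $\gamma$ is not bounded below, so that $g$ need not be bounded away from $1$). Here Assumption \ref{Assum1} with $q>1$ should be used to get enough compactness of $T_{k/\gamma}$ that $R_0$ is an eigenvalue with a non-negative eigenfunction (Krein-Rutman), after which the identity $T_{k/\gamma}(\gamma g)=(1-g)^{-1}\gamma g$ can be tested against the left Perron eigenfunctional to force $\gamma g=0$; alternatively one may try a monotone continuation in a parameter, replacing $k$ by $sk$ with $s<1$ (sub-critical, hence only the zero equilibrium) and letting $s\uparrow 1$, the difficulty then being the left-continuity of $s\mapsto g^*_{\max}$ at $s=1$. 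I expect this spectral bookkeeping to be the technical heart of the whole theorem.

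For part \ref{th:main_theoremIII}, Assumption \ref{Assum_connectivity} makes $T_{k/\gamma}$ irreducible, so $R_0>1$ is an eigenvalue with a strictly positive eigenfunction $h$; then $\varepsilon h/\gamma$, truncated to lie in $[0,1]$ and with $\varepsilon$ small, is a non-zero subsolution of $\Phi$, and the solution started from it is non-decreasing and rises to the minimal non-zero equilibrium $g^*_{\min}$, which is positive $\mu$-a.e. A Krasnoselskii-type argument using the strict concavity of $t\mapsto t/(t+\gamma)$ together with irreducibility then shows $g^*_{\min}=g^*_{\max}=:g^*$, giving uniqueness of the non-zero equilibrium. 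Finally, for an initial condition with $\int_\Omega u_0\,\mathrm d\mu>0$, connectivity propagates positivity, so after some time $t_0$ one has $u(t_0,\cdot)\ge\varepsilon' h/\gamma$ for a non-zero subsolution; the comparison principle then squeezes $u(t_0+s,\cdot)$ between the increasing flow issued from that subsolution (which rises to $g^*_{\min}=g^*$) and $u_{\mathbf 1}(t_0+s)$ (which decreases to $g^*_{\max}=g^*$), yielding pointwise convergence to $g^*$; the case $u_0=0$ a.e. gives $u\equiv 0$. The secondary obstacle here is this spreading-of-positivity step, where one must quantify how irreducibility forces a solution that is initially positive only on a set of positive measure to become bounded below on all of $\Omega$ after a positive time.
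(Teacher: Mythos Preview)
Your overall architecture---monotone semiflow on $\mathscr{L}^\infty$, maximal equilibrium $g^*_{\max}$ as the decreasing limit of the flow from $\mathbf 1$, and spectral analysis of $T_{k/\gamma}$---is exactly the paper's, and your treatment of part~\ref{th:main_theoremI} via the mild formulation is a legitimate alternative to the paper's forward-invariance argument based on cooperativity. Two concrete gaps remain, both tied to the fact that Assumption~\ref{Assum0} allows $\inf\gamma=0$.

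First, in the critical case your plan is to test $T_{k/\gamma}(\gamma g)=(1-g)^{-1}\gamma g$ against a left Perron eigenvector $v$ of $T_{k/\gamma}$. This yields $\langle v,\gamma g^2/(1-g)\rangle=0$, which only forces $v=0$ on $\{g>0\}$, not $g=0$; without Assumption~\ref{Assum_connectivity} (which is \emph{not} assumed in~\ref{th:main_theoremII}) there is no reason for $v$ to be positive on $\operatorname{supp}g$. The paper's fix is to restrict to $A=\{g^*>0\}$ and work with the kernel $k'=\mathds 1_A\,k\,\mathds 1_A$: one still has $T_{k'/\gamma}(\gamma g^*)\ge\gamma g^*$, hence $r(T_{k'/\gamma})\ge 1$, and its left Perron eigenvector $h$ is supported in $A$ with $\int_A h\,\mathrm d\mu>0$, so both brackets in $(r(T_{k'/\gamma})-1)\langle h,\gamma g^*\rangle=\langle h,\gamma (g^*)^2/(1-g^*)\rangle$ are strictly positive, forcing $r(T_{k'/\gamma})>1$ and hence $R_0>1$.

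Second, in the supercritical case your subsolution $\varepsilon h/\gamma$ with $T_{k/\gamma}h=R_0h$ need not lie in $\mathscr{L}^\infty$ when $\inf\gamma=0$. The paper circumvents this by first showing (via continuity of $a\mapsto r(T_{k/(\gamma+a)})$) that there exists $\lambda>0$ with $r(T_{k/(\gamma+\lambda)})=1$, taking $v$ with $T_{k/(\gamma+\lambda)}v=v$, and setting $w=v/(\gamma+\lambda)\in\mathscr{L}^\infty_+$; then $T_k w=(\gamma+\lambda)w$ and $F(\varepsilon w)\ge 0$ for small $\varepsilon$. The same device resolves your spreading difficulty: rather than trying to bound $u(t_0,\cdot)$ below by a function positive everywhere, the paper uses continuity of $g\mapsto r(gT_{k/\gamma})$ in $L^1(\mu)$ to find $n$ with $r\bigl((1-\tfrac1n)\mathds 1_{\Omega_n}T_{k/\gamma}\bigr)>1$ where $\Omega_n=\{g\ge 1/n\}$, and builds a subsolution $w_n$ supported in $\Omega_n$ with $\|w_n\|\le 1/n$, so that $w_n\le g$ directly. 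Your Krasnoselskii concavity argument for uniqueness is a valid alternative to the paper's left-eigenvector computation.
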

For property \ref{th:main_theoremI}, see Proposition \ref{prop:deltainv}; property
\ref{th:main_theoremII} is a consequence of Theorems \ref{th:sub} and \ref{th:critical}; and
property \ref{th:main_theoremIII} follows from Corollary \ref{cor:existenceendemic} and Theorem
\ref{th:super}.

\begin{remark}[Uniform convergence]
  The convergence of $u(t, \cdot)$ towards~0 in~(ii) or towards~$g^*$ in~(iii) in Theorem
  \ref{th:main_theorem} is uniform on any measurable subset $A\subset \Omega$ such that
  $\inf_A \gamma>0$, see Theorem~\ref{th:strongconv}. In particular these convergences hold in
  uniform norm if the recovery rate~$\gamma$ is bounded from below.
\end{remark}

\subsection{Modelling vaccination policies, vaccination mechanisms and lockdown}
\subsubsection{Vaccination}
In Section \ref{sec:vacc}, we propose extensions of Equation \eqref{eq:SIS} which take into account
the effect of a vaccination policy. Vaccination confers a direct protection on the targeted
individuals but also acts indirectly on the rest of the population through herd immunity.  However,
all vaccinated individuals will not be totally immune to the disease.
In~\cite{smith_assessment_1984}, Smith, Rodrigues and Fine propose two possible models to explain
vaccine efficacy. In the first model, the vaccine offers complete protection to a portion of the
vaccinated individuals but does not take in the remainder of vaccinated individuals. The second
model supposes that the vaccination confers a partial protection to every vaccinated
individual. In~\cite{shim_distinguishing_2012}, Halloran, Lugini and Struchiner called the former
mechanism the \emph{all-or-nothing} vaccination and the latter one the \emph{leaky} vaccination. We
define below one infinite-dimensional SIS model for each of these two mechanisms.

In order to write the vaccination model, we adapt the one-group SIR models proposed by Shim and
Galvani in \cite{shim_distinguishing_2012} to the one-group SIS model.  Let us denote by
$\eta_\mathtt{v}$ the proportion of vaccinated individuals in the total population, and let
$\eta_\mathtt{u} = 1- \eta_{\mathtt{v}}$.  Let $U_\mathtt{v}$ and $U_\mathtt{u}$ be the proportion
of infected individuals in the vaccinated and unvaccinated population respectively, so that
$\eta_\mathtt{v}U_\mathtt{v} + \eta_\mathtt{u} U_\mathtt{u}$ is the proportion of infected
individuals in the total population.  For both models, we assume that vaccinated individuals who are
nevertheless infected by the disease become less contagious (see
\cite{preziosi_effects_2003,seward_contagiousness_2004} for instance).  We will denote the vaccine
efficacy for infectiousness, that is, the relative reduction of infectiousness for vaccinated
individuals by a parameter $\delta \in [0,1]$.  In what follows, $K$ and $\gamma$ represent the
transmission rate and the recovery rate of the disease as in the model \eqref{eq:one-group} or
\eqref{finiteSIS} and are assumed to be the same for the vaccinated and unvaccinated population.  We
now introduce two models for the so-called vaccine efficacy $e$, see
\cite{haber_measures_1991,halloran_design_1999,shim_distinguishing_2012,smith_assessment_1984} for
discussion on this parameter.  \medskip

In the leaky vaccination, we denote the relative reduction of susceptibility for vaccinated
individual by a parameter $e \in [0,1]$.  Following
\cite[Equations~(1)-(8)]{shim_distinguishing_2012}, with the parameters $\delta$ and $e$
corresponding to $\sigma$ and $\alpha$ in \cite{shim_distinguishing_2012}, the evolution equations
for the leaky vaccination are given by:
\begin{equation}\label{eq:1D_leaky}
  \left\{
    \begin{array}{l}
      \dot{U_\mathtt{v}} = (1 - U_\mathtt{v})(1 - e)
      K ((1-\delta)\eta_\mathtt{v}  U_\mathtt{v} +\eta_\mathtt{u} U_\mathtt{u})
      - \gamma U_\mathtt{v}, \\
      \\
      \dot{U_\mathtt{u}} = (1 - U_\mathtt{u})
      K ( (1-\delta) \eta_\mathtt{v} U_\mathtt{v} + \eta_\mathtt{u} U_\mathtt{u})
      - \gamma U_\mathtt{u}.
    \end{array} 
  \right.
\end{equation}
\medskip

In the all-or-nothing vaccination, we denote the proportion of vaccinated individuals immunized to
the disease (people who can neither contract not transmit the disease) by the parameter
$1-e\in [0, 1]$.  Following \cite[Equations~(13)-(20)]{shim_distinguishing_2012}, the evolution
equations for the all-or-nothing vaccination in the SIS setting are given by:
\begin{equation}\label{eq:1D_all_or_nothing}
  \left\{
    \begin{array}{l}
      \dot{U_\mathtt{v}} = (1 - e - U_\mathtt{v})
      K ((1 - \delta)\eta_\mathtt{v} U_\mathtt{v} +\eta_\mathtt{u}  U_\mathtt{u})
      - \gamma U_\mathtt{v}, \\
      \\
      \dot{U_\mathtt{u}} = (1 - U_\mathtt{u}) K
      ((1 - \delta)\eta_\mathtt{v} U_\mathtt{v} + \eta_{\mathtt{u}}U_\mathtt{u})
      - \gamma U_\mathtt{u}.
    \end{array} 
  \right.
\end{equation}
Since vaccinated individuals that are immunized cannot get the disease, we have
$U_\mathtt{v}(t) \leq 1 - e$ for all $t\in \mathbb{R}_+$.

\begin{remark}
  Notice that, in both models, the unvaccinated population can be viewed as a population inoculated
  with a vaccine of efficacy equal to $0$.
\end{remark}

In Section \ref{sec:vacc}, we derive in Equations \eqref{eq:leaky_multi} and \eqref{eq:aon_multi2}
the analogue of \eqref{eq:1D_leaky} and \eqref{eq:1D_all_or_nothing} in the infinite-dimensional
setting. Those two equations can be seen as a particular case of Equation~\eqref{eq:SIS}.  We also
prove that, as far as the basic reproduction number is concerned the two different vaccination
mechanisms, the all-or-nothing and leaky mechanisms, have the same effect in the infinite
dimensional model, see Proposition \ref{prop:aon=leaky}. This result was already observed in a
one-group model by Shim and Galvani \cite{shim_distinguishing_2012}.  In the case of a perfect
vaccine, where vaccinated people cannot be infected nor infect others, the evolution equation of the
proportion of infected among the non vaccinated population is also given by Equation~\eqref{eq:SIS}
with the kernel $\kappa(x,\mathrm{d}y)$ replaced by $\eta^0(y) \kappa(x,\mathrm{d}y)$ where
$\eta^0(y)$ is the proportion of individuals with feature $x \in \Omega$ which are not vaccinated,
see Equation~\eqref{eq:SIS-vaccinated}. We shall study in a future work the optimal vaccination in
this setting with the basic reproduction number as a criterium to minimize.

\subsubsection{Effect of lockdown policies}
Eventually, we model the effect of lockdown (see Section~\ref{sec:quarantine}) for graphon models
presented in Example \ref{ex:graphonform}, in the spirit of the policies used to slow down the
propagation of Covid-19 in 2020, for example the study in \^Ile de France~\cite{lockdown-IdF}.  In
particular, we prove that a lockdown which bounds the number of contacts of the individuals (this
roughly corresponds to reduce significantly the number of contacts for highly connected groups) is
enough to reduce the basic reproduction number, see Proposition~\ref{cor:lockdown}.  Recall the
definition of the degree $\mathrm{deg}_W(x)$ of $x$ and the mean degree $\mathrm{d}_W$ for a graphon
$W$ defined in~\eqref{eq:deg}. Following Remark~\ref{rem:W=p}, we get that the heterogeneity in the
degree for the graphon model implies larger value of the basic reproduction number. In this
direction, see also~\cite[Section 1.1]{lmdw-2011} on the SIS model from Pastor-Satorras and
Vespignani, where the basic reproduction number increases with the variance of the degrees of the
nodes in a finite graph.

\begin{corollary}
  Consider the SIS model \eqref{eq:SIS} with transmission kernel given in a graphon form
  \eqref{eq:graphonform} (so that $k(x,y)=\beta(x) W(x,y) \theta (y)$). Assume that the
  susceptibility $\beta$, the infectiousness $\theta$ and the recovery rate $\gamma$ are constant
  and positive. The weakest value of the basic reproduction number $R_0$ defined by \eqref{eq:R0}
  among all graphons $W$ with mean degree $\mathrm{d}_W \geq p$ for some threshold $p\in [0, 1]$ is
  obtained for graphons with constant degree equal to $p$ (\textit{i.e.}  graphon $W$ such that
  $\mathrm{deg}_W(x)=p$ for all $x\in \Omega$).
\end{corollary}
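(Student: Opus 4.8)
Since $\beta,\theta,\gamma$ are constant and positive, set $c=\beta\theta/\gamma>0$; then the next‑generation operator factorizes as $T_{k/\gamma}=c\,T_W$, where $T_W$ is the integral operator with kernel $W$ with respect to $\mu$, namely $T_W(g)(x)=\int_\Omega W(x,y)\,g(y)\,\mu(\mathrm{d}y)$. As $W$ is bounded and symmetric and $\mu$ is a probability measure, $T_W$ is Hilbert–Schmidt and self‑adjoint on $L^2(\mu)$. The plan is to reduce the computation of $R_0=r(T_{k/\gamma})$ to the $L^2(\mu)$‑spectral radius of $T_W$, namely $R_0=c\cdot r_{L^2(\mu)}(T_W)$, and then to use that, by self‑adjointness, $r_{L^2(\mu)}(T_W)=\norm{T_W}=\sup\{\abs{\langle T_W g,g\rangle}\colon \norm{g}_{L^2(\mu)}\leq 1\}$, where $\langle T_W g,g\rangle=\int_{\Omega^2}W(x,y)\,g(x)\,g(y)\,\mu(\mathrm{d}x)\,\mu(\mathrm{d}y)$. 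With this Rayleigh‑quotient description, the statement reduces to two elementary estimates.

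\textbf{Lower bound.} First I would test the quadratic form on the constant function $\mathbf 1$, which satisfies $\norm{\mathbf 1}_{L^2(\mu)}^2=\mu(\Omega)=1$. By the definition~\eqref{eq:deg}, $\langle T_W\mathbf 1,\mathbf 1\rangle=\int_{\Omega^2}W(x,y)\,\mu(\mathrm{d}x)\,\mu(\mathrm{d}y)=\mathrm{d}_W$. Hence $r_{L^2(\mu)}(T_W)\geq \mathrm{d}_W$, and therefore any graphon $W$ with $\mathrm{d}_W\geq p$ satisfies $R_0=c\cdot r_{L^2(\mu)}(T_W)\geq c\,p=\beta\theta p/\gamma$.

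\textbf{Upper bound for constant‑degree graphons, and conclusion.} Next, assume $\mathrm{deg}_W(x)=p$ for all $x\in\Omega$. For any $g\in L^2(\mu)$, using $\abs{W(x,y)g(x)g(y)}\leq \tfrac12 W(x,y)\bigl(g(x)^2+g(y)^2\bigr)$ together with the symmetry of $W$, one gets
\[
  \abs{\langle T_W g,g\rangle}\leq \int_{\Omega^2}W(x,y)\,g(x)^2\,\mu(\mathrm{d}x)\,\mu(\mathrm{d}y)=\int_\Omega g(x)^2\,\mathrm{deg}_W(x)\,\mu(\mathrm{d}x)=p\,\norm{g}_{L^2(\mu)}^2 .
\]
Thus $r_{L^2(\mu)}(T_W)=\norm{T_W}\leq p$; since a constant degree equal to $p$ forces $\mathrm{d}_W=p$, the lower bound gives the matching inequality, so $r_{L^2(\mu)}(T_W)=p$ and $R_0=\beta\theta p/\gamma$. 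Comparing with the lower bound of the previous paragraph shows that $\beta\theta p/\gamma$ is precisely the weakest value of $R_0$ over all graphons with $\mathrm{d}_W\geq p$, and that it is attained exactly for graphons of constant degree $p$ (in particular for the constant graphon $W\equiv p$ of~\ref{ex:graphonform-c}), which is the assertion.

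\textbf{Main obstacle.} The symmetrization estimate and the test‑function bound are routine; the only delicate point is the first reduction step, i.e. identifying $R_0=r(T_{k/\gamma})$ — a spectral radius of a positive operator on $\mathscr{L}^\infty(\Omega)$ — with $c$ times the spectral radius (equivalently, operator norm) of the symmetric operator $T_W$ acting on the Hilbert space $L^2(\mu)$. This relies on the fact that the spectral radius of a positive kernel operator with bounded kernel does not depend on the ambient $L^p(\mu)$ space; once $T_W$ is placed on $L^2(\mu)$ and seen to be self‑adjoint, everything else is a one‑line Rayleigh‑quotient argument.
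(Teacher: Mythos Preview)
Your argument is correct and essentially matches the paper's route via Lemma~\ref{lem:bounded-R0} and Remark~\ref{rem:W=p}: both identify $R_0(W)$ with $c\cdot r(\hat T_W)$ on $L^2(\mu)$ through Lemma~\ref{lem:schae}~\ref{lem:schaeIII-r}, and both obtain the lower bound $r(\hat T_W)\geq \mathrm{d}_W$ by testing the Rayleigh quotient at $g=\mathbf 1$. The only cosmetic difference is the upper bound for constant-degree graphons: the paper bounds the $\mathscr{L}^\infty$ operator norm of $T_W$ directly by $\sup_x \mathrm{deg}_W(x)$, whereas you bound the $L^2$ quadratic form via the symmetrization $|g(x)g(y)|\leq\tfrac12(g(x)^2+g(y)^2)$; both yield $r(\hat T_W)\leq \sup_x \mathrm{deg}_W(x)$, and your version has the mild advantage of staying entirely within the $L^2$ framework once the transfer has been made.
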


We recall from Example \ref{ex:graphonform}~\ref{ex:graphonform-c} and \ref{ex:graphonform-geom},
that the constant graphon and the geometric graphons have constant degree.  Considering a geometric
graphon with (mean) degree $p$, we get that $R_0=\gamma^{-1}\, \beta\theta p$, and for $R_0>1$, we
deduce (directly or from Proposition \ref{prop:g*=cst}), that the equilibrium $g^*$ is constant
equal to $1- R_0^{-1} $ (compare with model \eqref{eq:one-group} with $K=\beta\theta p$).
Furthermore, the example of the geometric graphon with a given mean degree, indicates that, if the
parameters $\beta$, $\theta$ and $\gamma$ are constant, then the contamination distance (or support
of the function $f$, see end of Remark \ref{rem:W=p}) from an infected individual is not relevant
for the value of the basic reproduction number nor for the equilibria.

\subsection{Discussion and related results}
\label{sec:discussion}
The binary dynamic described in \cite[Biotheorem~1]{lajmanovich1976deterministic} has been
established for many other compartmental models and possibly their multigroup version by using
Lyapunov function techniques (see for instance \cite{beretta_global_1986,lin_global_1993}). For a
survey, we refer to Fall, Iggidr, Sallet and Tewa \cite{fall_epidemiological_2007}. In
\cite[Section~6]{hirsch_dynamical_1984} and \cite{smith_cooperative_1986}, Hirsch and Smith proved
the long-time behavior of Equation \eqref{finiteSIS} thanks to their theory of order preserving
systems, thereby giving a completely new perspective to the study of mathematical epidemic models.
Their work greatly inspired Li and Muldowney \cite{li_global_1995} in their important proof of the
global stability of the endemic equilibrium of the SEIR model
(susceptible-exposed-infected-recovered) which was a long-standing conjecture at that time.

In the 70s, models involving transmission rates that depend on the localization of the individuals
\cite{atkinson_deterministic_1976,mollison_possible_1972} or their age \cite{hoppensteadt_age_1974}
were introduced.  Models using localization can be thought of as multigroup models with a continuous
set of groups and therefore lead to differential equations in infinite-dimensional space. In this
setting, results about global stability of the endemic or the disease-free equilibrium have also
been obtained.

In \cite{busenberg_global_1991}, Busenberg, Iannelli and Thieme established the long-time behavior
of an age-structured SIS infection. They proved, thanks to semi-group theory and positive operators
methods, that the system converges to a unique endemic equilibrium if it exists. Otherwise, it
converges to the disease-free equilibrium. In this work the transmission kernel is assumed to be
bounded from above and below by product kernels (see Equation (2.9) therein). This represents a
restriction (see the discussion at the end of \cite{busenberg_global_1991}) as it is not possible to
forbid contacts between some but not all groups. By contrast, in the setting of Example
\ref{ex:graphonform}, it is easy and natural to model the absence of contact between individuals
with feature $x$ and $y$ by imposing that $W(x,y)=0$, without imposing conditions on the probability
of contact between $x$ and other features than $y$.

In \cite{feng_global_2005} Feng, Huang and Castillo-Chavez considered a similar dynamic for a
multigroup age-structured SIS model, but where the endemic equilibrium exists but is not globally
stable.  They assume that the system has a quasi-irreducibility property (see Definition~3.1
therein) which is a weaker assumption than Assumption~\ref{Assum_connectivity}, but impose bounds on
the transmission kernel.

In \cite{thieme_spectral_2009}, Thieme also used an operator approach to study a SIR model with
variable susceptibility (see Section 4 therein).  In particular, he studied the close relation
between the spectral bound of the operator $T_k -\gamma$ and of the basic reproduction number $R_0$
which is the spectral radius of the operator $T_{k/\gamma}$.  In \cite{thieme_global_2011}, Thieme
analyzed a space-structured SIR model with birth.  In this model, the incidence term, \textit{i.e.}
the equivalent of $(1-u(t,x)) u(t,y) \kappa(x, \mathrm{d}y)$ in Equation~\eqref{eq:SIS}, is replaced
by a non bilinear term $f(x,y,1-u(t,x), u(t,y)) \, \mathrm{d}y$, where the function $f$ is
continuous, locally Lipschitz continuous and increasing in its third and fourth argument. Imposing
also that the recovery rate~$\gamma$ is bounded away from 0, he proved an analogue of
\cite[Biotheorem~1]{lajmanovich1976deterministic} (see Theorems 7.1, 8.2, 9.1 and 12.1 therein)
using Lyapunov functions. Part of those results would not hold in general if $\inf \gamma=0$. In
contrast to those works, we consider very few regularity assumptions on the parameters, and in
particular allow that $\inf \gamma=0$.

\medskip

The principal tools we use to prove Theorem~\ref{th:main_theorem}, see also the key
Lemma~\ref{lem:schae} and Proposition~\ref{prop:F}, can be summarized as follows.
\begin{description}
\item[Cooperative systems] The function $g\mapsto F(g)=(1-g) T_\kappa(g) -\gamma g$ is
  \emph{cooperative} (see Definition \ref{def:QuasiMonotony} and Remarks \ref{rem:DiffCoop1} and
  \ref{rem:DiffCoop2}), which implies that the solution of \eqref{eq:SIS} are well defined and the
  corresponding dynamical system is order preserving.  For an approach based on cooperation (or
  quasi-monotonicity) and monotone dynamical system, see \cite{hirsch_dynamical_1984,
    smith_cooperative_1986, smith-thieme-1991, smith_1995, benaim-hirsch-1999,hirsch-smith2005}.

\item[Positive operators] Under Assumption \ref{Assum1} and Equation \eqref{eq:strongintegrability},
  the integral operator $T_{k/\gamma}$ can be seen as an Hille-Tamarkin operator on $L^p(\mu)$ with
  the corresponding compactness property see \cite[Theorem 41.6]{zaanen}. Then the positivity of the
  operator $T_{k/\gamma}$ allows to use Krein-Rutman theorem to get that its spectral radius is an
  eigenvalue with a non-negative eigen-function. This argument has been widely used, see for example
  \cite{busenberg_global_1991} (where the operator is of rank one, and thus is compact) and also
  \cite{thieme_spectral_2009, thieme_global_2011}.

\item[Connectivity] Under Assumption \ref{Assum_connectivity} on the connectivity of kernel $k$
  (which in finite dimension corresponds to the irreducibility of non-negative matrices and is
  related to the Perron-Frobenius theorem), we can consider the unique corresponding eigenvector,
  thanks to the Perron-Jentzsch theorem (see \cite[Theorem V.6.6]{schaefer_banach_1974} or
  \cite[Theorem 5.2]{grobler-1995}). This eigenvector is an essential tool to study the long-time
  behavior of the solution to Equation \eqref{eq:SIS} in the super-critical regime.  In finite
  dimension, see \cite{lajmanovich1976deterministic}, where the matrix $K$ from \eqref{finiteSIS} is
  assumed to be irreducible, or \cite{benaim-hirsch-1999} for a more general finite-dimensional
  model. In infinite dimension, see \cite{feng_global_2005} for a weaker quasi-irreducibility
  condition.
\end{description}
Finally let us remark that we do not use the standard tool of Lyapunov functions, in contrast with
many previous works, see for example~\cite{beretta_global_1986,lin_global_1993,thieme_global_2011}.

\subsection{Structure of the paper}
In Section~\ref{sec:Fproperty} we construct the semi-flow associated to the infinite dimensional SIS
model \eqref{eq:SIS}, and prove its main regularity and monotonicity properties. We introduce in
Section~\ref{sec:dense} some important tools of spectral analysis in Banach lattices. This allows
us to define in Section~\ref{subsec:R0} the basic reproduction number $R_0$. The convergence of the
system towards an equilibrium is established in Section~\ref{sec:equilibrium}. In Section
\ref{sec:vacc}, we take into account the effect of a vaccination policies on the propagation of the
disease. Eventually, in Section~\ref{sec:quarantine}, we model the impact of lockdown policies on
the propagation of the disease when $\kappa$ takes the graphon form of Example \ref{ex:graphonform}.

%%%%%%%%%%% SECTION 2 %%%%%%%%%%%%%%%%%%%

\section{Model analysis}\label{sec:Fproperty}
\subsection{Preamble}\label{subsec:preamble}
In this paragraph, we recall some definitions of functional analysis. Most of them can be found in
\cite{deimling2010nonlinear}. Let $(X,\norm{\cdot})$ be a Banach space. The topological dual
$X^\star$ of $X$ is the space of all bounded linear forms and we use the notation
$\braket{x^\star, x}$ for the value of an element $x^\star \in X^\star$ at $x \in X$. We consider
$K$ a proper cone on $X$, \textit{i.e.} a closed convex subset of $X$ such that
$\lambda K \subset K$ for all $\lambda \geq 0$ and $K \cap (-K) = \set{0}$. The proper cone $K$
defines a partial ordering $\leq$ on $X$: $x\leq y$ if $y - x \in K$. It is said to be reproducing
if $K - K = X$ (any element $x \in X$ can be expressed as a difference of elements of $K$). The dual
cone of $K$ is the set $K^\star \subset X^\star$ consisting of all $x^\star$ such that
$\braket{x^\star,x} \geq 0$ for all $x \in K$. If the proper cone $K$ is reproducing then the set
$K^\star$ is a proper cone (see beginning of \cite[Section 19.2]{deimling2010nonlinear}).

We denote by $\mathcal{L}(X)$ the space of bounded linear operators from $X$ to $X$.  The operator
norm of a bounded operator $A \in \mathcal{L}(X)$ is given by:
\[
  \norm{A} = \sup \set{ \norm{Ax} \, \colon \, x \in X, \, \norm{x} \leq 1}.
\]
The topology associated to $\norm{\cdot}$ in $\mathcal{L}(X)$ is called the uniform operator
topology. A linear bounded operator $A \in \mathcal{L}(X)$ is said to be positive (with respect to
the proper cone $K$) if $A K \subset K$.

Let $F$ be a function defined on an open domain $D \subset X$ and taking values in $X$. The function
$F$ is said to be Fr\'echet differentiable at $x \in D$, if there exists a bounded linear operator
$\mathcal{D}F[x]$ such that:
\[ 
  \lim\limits_{y \to 0} \norm{F(x + y) - F(x) - \mathcal{D}F[x](y)}/\norm{y} = 0.
\]
The operator $\mathcal{D}F[x]$ is called the Fr\'echet derivative of $F$ at point $x$.

We define the cooperativeness property which is related to the definition of quasimonotony firstly
introduced by Volkmann \cite{volkmann} for abstract operators.
\begin{definition}[Cooperative function]
  \label{def:QuasiMonotony}
  Let $D_1, D_2 \subset X$. A function $F \, \colon \, X \to X$ is said to be cooperative on $D_1 \times D_2$ (with respect to $K$)
  if, for all $(x,y) \in D_1 \times D_2$
  such that $x\leq y$ and for all $z^\star \in K^\star$, we have the following property: 
  \begin{equation}
    \label{eq:Quasimonotone}
    \braket{z^\star,x - y} = 0 \implies \braket{z^\star,F(x) - F(y)} \leq 0.
  \end{equation}
\end{definition}
We shall mainly consider the cases $D_1 = X$ or $D_2 = X$.

\begin{remark}\label{rem:DiffCoop1}
  For a better understanding of the cooperativeness property, let us examine the finite dimensional
  case. Let $d \geq 2$, $ X = \mathbb{R}^d$ and $K=\mathbb{R}_+^d$. Then, for a smooth function
  $F=(F_1, F_2, \ldots, F_d)$, it is easy to see that $F$ is cooperative on $X \times X$ with
  respect to $K$ if and only if:
\begin{equation}
  \label{eq:cooperative}
  \frac{\partial F_j}{\partial x_i}(x) \geq 0 \quad\text{for all $x \in
    \mathbb{R}^d$ and all $i \neq j$}.
% \forall x \in \mathbb{R}^d, \quad \forall i \neq j.
\end{equation}
We recover the definition of cooperativeness introduced by Hirsch
\cite{hirsch_dynamical_1984}. Suppose the vector $x$ represent the utilities of a group of agents
$\set{1,2, \ldots d}$ and $F$ is the dynamic of the system, \textit{i.e.}, $\dot{x} = F(x)$. Then,
the higher the utilities of agents $j \neq i$ are, the more the situation is beneficial for the
agent $i$ as it increases the value of the time derivative of $x_i$. For this reason, the function
$F$ satisfying \eqref{eq:cooperative} is called cooperative.
\end{remark}

We extend the differential version of cooperativeness of Remark \ref{rem:DiffCoop1} to infinite
dimension in the next remark.

\begin{remark}\label{rem:DiffCoop2}
  Let $X$ be a Banach space, $D$ an open domain and $F \, \colon \, X \to X$ be a Fr\'echet
  differentiable function. Assume that $F$ is cooperative on $D \times X$. Let
  $(x,z) \in D \times K$ and let $z^\star \in K^\star$ such that $\braket{z^\star, z} = 0$. Since
  $F$ is cooperative on $D \times X$, we have:
  \[
    \braket{z^\star, (F(x + \lambda z) - F(x))/\lambda} \geq 0,
  \]
  for all $\lambda>0$. Letting $\lambda$ go to $0$, we obtain the following inequality:
\begin{equation}\label{eq:diff_cooperativeness}
\braket{z^\star, \mathcal{D}F[x](z)} \geq 0.
\end{equation}

Using path integrals in Banach space, we can prove the reverse implication in the case
$D=X$. Indeed, for all $x,y \in X$ and all $z^\star \in X^\star$, we have:
\begin{equation}\label{eq:path_integral}
  \braket{z^\star, F(x) - F(y)} =-  \int_0^1
  \braket{z^\star, \mathcal{D}F[(1 - \lambda) x + \lambda y](y - x)} \, \mathrm{d} \lambda.
\end{equation}
Assume \eqref{eq:diff_cooperativeness} holds for $z^\star \in K^\star$ and $z\in K$.  Then, if
$x \leq y$, $z^\star \in K^\star$ and $\braket{z^\star, y-x} = 0$, we get that
$\braket{z^\star, F(x) - F(y)}$ is non-positive thanks to Equation
\eqref{eq:diff_cooperativeness}. Thus the function $F$ is cooperative.
\end{remark}

Ordinary differential equations (ODEs) driven by cooperative vector fields enjoy a number of nice
properties that we now review.  Let us first recall a few definitions and classical properties of
ODEs. Let $a>0$. We consider a function $G \, \colon \, [0,a) \times X \to X$. We suppose that $G$
is locally Lipschitz in the second variable, that is: for all $(t,x) \in [0,a) \times X$, there
exist $\eta = \eta(t,x)>0$, $L = L(t,x) > 0$ and a neighborhood $U_x$ of $x$ such that
$\norm{G(s, y) - G(s, z)} \leq L \norm{y - z}$ for all $s \in [0,a) \cap [t, t+\eta]$ and
$y, z \in U_x$. With this assumption over $G$, the Picard- Lindel\"{o}f theorem ensures the
existence of $0<b \leq a$ and a continuously differentiable function $y$ from $J=[0,b)$ to $X$ which
is the unique solution of the Cauchy problem:
\begin{equation}
  \label{eq:Cauchy}
  \left\{
    \begin{array}{l}
      y'(t) = G(t, y(t)) \quad t \in J, \\
      \\
      y(0) = y_0,
    \end{array} 
  \right. 
\end{equation}
where $y_0 \in X$ is the so-called initial condition (see \cite[Section~1.1]{deimlingordinary}). A
solution $y$ defined on an interval $[0,b)$ is said to be maximal if there is no solution of
Equation \eqref{eq:Cauchy} defined on $[0,c)$ with $c>b$. A solution is said to be global if it is
defined on $[0,a)$.

Global existence, existence and theorems on differential inequalities are intimately connected with
the flow invariance of certain subsets in the domain of $G$, \textit{i.e.}, the question whether
every solution starting in $D$ remains in $D$ as long as it exists. We recall the definition of flow
invariance given in \cite[Section~5]{deimlingordinary}.
\begin{definition}[Forward invariance]\label{def:forwardinvariant}
  A set $D \subset X$ is said to be forward invariant with respect to $G$ if the maximal solution
  $(y, J)$ of the Cauchy problem \eqref{eq:Cauchy} takes values in $D$ for $t\in J$ provided that
  $y_0 \in D$.
\end{definition}

In most applications, the set $D$ owns a structure which make the forward invariance easier to
show. For instance, when $D$ is the translation of a cone, the forward invariance is implied by the
following condition, which is proved in Section \ref{sec:proofs}.
\begin{theorem}\label{th:Inv}
  Let $G\, \colon \, [0,a ) \times X \to X$ be locally Lipschitz in the second variable. Let $K$ be
  a proper cone of $X$ with non-empty interior and $y \in X$. If for all
  $(x,t) \in \partial K \times [0,a)$ and for all $x^\star \in K^\star$ such that
  $\braket{x^\star,x} = 0$, we have: $\braket{x^\star,G(t,y+x)} \geq 0$, then $y+K$ is forward
  invariant with respect to $G$.
\end{theorem}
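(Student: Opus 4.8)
The plan is to prove forward invariance by the classical perturbation method: push the vector field slightly \emph{into} the cone, show that the perturbed trajectories never reach the boundary, and then pass to the limit. First I would reduce to the case $y=0$. Replacing $G(t,\cdot)$ by $\tilde G(t,\cdot):=G(t,y+\cdot)$ — which is again locally Lipschitz in the second variable — turns the hypothesis into ``$\braket{x^\star,\tilde G(t,x)}\ge 0$ whenever $x\in\partial K$, $x^\star\in K^\star$ and $\braket{x^\star,x}=0$'' and turns the conclusion into ``$K$ is forward invariant with respect to $\tilde G$''. So from now on assume $y=0$ and write $G$ for $\tilde G$. Since $K$ has non-empty interior, fix $e\in\mathrm{int}(K)$ and $r>0$ with the open ball $B(e,r)\subset K$; testing a unit functional $x^\star\in K^\star$ against $e+rw$ with $\norm{w}\le 1$ gives $\braket{x^\star,e}\ge r$ for every $x^\star\in K^\star$ with $\norm{x^\star}=1$. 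Let $u$ be the maximal solution with $u(0)=u_0\in K$, defined on $J=[0,b)$, and fix an arbitrary $T\in(0,b)$; it suffices to prove $u(t)\in K$ for all $t\in[0,T]$. For $n\ge 1$ let $u_n$ solve the perturbed Cauchy problem $u_n'=G(t,u_n)+n^{-1}e$, $u_n(0)=u_0+n^{-1}e$. Here $u_0+n^{-1}e\in\mathrm{int}(K)$ since $n^{-1}e\in\mathrm{int}(K)$ and $K+\mathrm{int}(K)\subset\mathrm{int}(K)$; and since the perturbations (of the right-hand side and of the initial datum) are $O(1/n)$, continuous dependence of solutions on the data (see \cite{deimlingordinary}) provides $N$ such that for $n\ge N$ the solution $u_n$ is defined on all of $[0,T]$ and $\sup_{[0,T]}\norm{u_n-u}\to 0$ as $n\to\infty$.

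The heart of the argument is the claim that for $n\ge N$ one has $u_n(t)\in\mathrm{int}(K)$ for every $t\in[0,T]$. I would prove this by a first-exit-time argument. Set $t_1=\inf\set{t\in[0,T]\,\colon\,u_n(t)\notin\mathrm{int}(K)}$ and suppose this set $S$ is non-empty. Then $S$ is closed in the compact interval $[0,T]$ (preimage of the closed set $X\setminus\mathrm{int}(K)$ under the continuous map $u_n$), so $t_1\in S$; since $u_n(0)\in\mathrm{int}(K)$ which is open, $t_1>0$; and since $u_n(t)\in\mathrm{int}(K)\subset K$ for $t<t_1$ and $K$ is closed, $x_1:=u_n(t_1)\in K\setminus\mathrm{int}(K)=\partial K$. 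By the Hahn--Banach supporting hyperplane theorem applied to the convex set $K$ at the boundary point $x_1$ (see \cite{deimling2010nonlinear}), there is a nonzero $x^\star\in X^\star$, which we normalise to $\norm{x^\star}=1$, with $\braket{x^\star,z}\ge\braket{x^\star,x_1}$ for all $z\in K$; taking $z=0$ and $z=2x_1$, both in the cone $K$, forces $\braket{x^\star,x_1}=0$ and $x^\star\in K^\star$. Now the $C^1$ function $\varphi(t):=\braket{x^\star,u_n(t)}$ is $\ge 0$ on $[0,t_1)$ and vanishes at $t_1$, hence $\varphi'(t_1)\le 0$; on the other hand
\[
  \varphi'(t_1)=\braket{x^\star,G(t_1,x_1)}+n^{-1}\braket{x^\star,e}\ge 0+n^{-1}r>0,
\]
using the hypothesis of the theorem (applicable since $x_1\in\partial K$, $x^\star\in K^\star$ and $\braket{x^\star,x_1}=0$) together with $\braket{x^\star,e}\ge r$. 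This contradiction shows $S=\emptyset$, proving the claim.

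It then only remains to pass to the limit: for each fixed $t\in[0,T]$, $u(t)=\lim_{n\to\infty}u_n(t)$ is a limit of points of $\mathrm{int}(K)\subset K$, and $K$ is closed, so $u(t)\in K$. As $T\in(0,b)$ was arbitrary, $u(t)\in K$ for all $t\in J$, i.e.\ (undoing the translation) $u(t)\in y+K$ for all $t\in J$, which is the assertion. The only step requiring genuine care is the appeal to continuous dependence guaranteeing that the perturbed solutions $u_n$ are defined on the \emph{whole} of $[0,T]$ and converge uniformly there; this is standard — the graph $\set{(t,u(t))\,\colon\,t\in[0,T]}$ is compact, so $G$ admits a uniform Lipschitz constant on a tube around it, and the usual Gronwall estimate both propagates the solution over $[0,T]$ and controls $\norm{u_n-u}$ — but it is the place where one must be slightly attentive. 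Everything else, namely the supporting-functional computation and the sign of $\varphi'(t_1)$, is elementary.
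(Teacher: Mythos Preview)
Your proof is correct, and it takes a genuinely different route from the paper's own argument. The paper proves Theorem~\ref{th:Inv} by invoking two black-box results from \cite{deimlingordinary}: a Nagumo-type criterion (the tangent condition $\lim_{\lambda\to 0^+}\lambda^{-1}\rho(x+\lambda G(t,x),D)=0$ on $\partial D$ implies forward invariance of a closed convex $D$ with non-empty interior), together with a dual characterisation saying that for a cone $K$ this tangent condition at $x\in\partial K$ is equivalent to $\braket{x^\star,z}\ge 0$ for every $x^\star\in K^\star$ annihilating $x$. The proof then reduces to translating by $y$ and combining the two lemmas.

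You instead give a direct, self-contained argument: reduce to $y=0$, push the field inward by an interior point $e$, run a first-exit-time argument using a supporting functional to force a sign contradiction on the derivative, and pass to the limit via continuous dependence. This is essentially the classical technique that \emph{underlies} the Deimling lemmas the paper cites, so your proof is more elementary and transparent (no external references needed beyond standard continuous-dependence and Hahn--Banach), at the cost of being longer. The paper's approach is more modular and economical if one is happy to quote \cite{deimlingordinary}. Your identification of continuous dependence on $[0,T]$ as the only delicate step is accurate, and your sketch of the compactness-plus-Gronwall justification is the right one.
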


We will mainly use this result through the following corollary which roughly asserts that
cooperative flow preserve the order.

\begin{corollary}[Comparison Theorem]\label{cor:Comparison}
  Let $K$ be a proper cone of $X$ with non-empty interior. Denote by $\leq$ the corresponding
  partial order. Let $F \, \colon \, X \to X$ be locally Lipschitz, $D_1, D_2 \subset X$, $a > 0$,
  and let $u \, \colon \, [0,a) \to D_1$ and $v \, \colon \, [0,a) \to D_2$ be $\mathcal{C}^1$ paths
  such that $u(0) \leq v(0)$. We suppose that $F$ is cooperative on $D_1 \times X$ or on
  $X \times D_2$, and that:
  \begin{equation}\label{eq:Comparison}
    u'(t) - F(u(t)) \leq v'(t) - F(v(t)) \quad \forall t \in [0,a).
  \end{equation}
  Then, we have that: $u(t) \leq v(t)$ for all $t \in [0,a)$.
\end{corollary}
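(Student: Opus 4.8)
The plan is to deduce the Comparison Theorem from Theorem~\ref{th:Inv} by reformulating the ordering statement as a forward-invariance statement for a well-chosen product system. Set $X^2 = X \times X$, equipped with the proper cone $\widetilde K = K \times K$ (which again has non-empty interior, the product of the interiors), and consider the vector field $\widetilde G \colon [0,a) \times X^2 \to X^2$ defined by $\widetilde G(t,(x,w)) = (F(w) - F(x) + h(t), \, 0)$, where $h \colon [0,a) \to X$ is an auxiliary continuous path that encodes the gap in \eqref{eq:Comparison}. More precisely, first reduce to the case where the inequality \eqref{eq:Comparison} is an equality by introducing the error path; but it is cleaner to work directly with the difference $d(t) = v(t) - u(t)$. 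We have $d(0) \in K$ and, writing $r(t) = \bigl(v'(t) - F(v(t))\bigr) - \bigl(u'(t) - F(u(t))\bigr) \in K$ by hypothesis, we get $d'(t) = F(v(t)) - F(u(t)) + r(t)$. The goal is to show $d(t) \in K$ for all $t \in [0,a)$.

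First I would verify the technical hypotheses needed to apply Theorem~\ref{th:Inv} to the scalar equation $d'(t) = H(t, d(t))$ where, for $x \in X$, $H(t,x) = F(u(t) + x) - F(u(t)) + r(t)$ (using the cooperativity on $X \times D_2$ case; the $D_1 \times X$ case is symmetric, replacing $u$ by $v$ and reversing signs appropriately). Since $F$ is locally Lipschitz and $u$ is continuous, $H$ is locally Lipschitz in its second variable and continuous in $t$, so by Picard--Lindelöf the solution of $d' = H(t,d)$, $d(0) = v(0)-u(0)$, is unique on a maximal interval; but $d(t) = v(t) - u(t)$ is such a solution on all of $[0,a)$, so it is the maximal one. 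Thus it suffices to show that $K$ is forward invariant with respect to $H$.

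The heart of the argument is checking the boundary condition of Theorem~\ref{th:Inv} with $y = 0$: for $x \in \partial K$ and $x^\star \in K^\star$ with $\braket{x^\star, x} = 0$, we must show $\braket{x^\star, H(t,x)} \geq 0$. Now $\braket{x^\star, H(t,x)} = \braket{x^\star, F(u(t)+x) - F(u(t))} + \braket{x^\star, r(t)}$. The second term is $\geq 0$ since $r(t) \in K$ and $x^\star \in K^\star$. For the first term, note $u(t) \leq u(t) + x$ (as $x \in K$), both points are of the form needed for the $X \times D_2$ case provided $u(t)+x \in D_2$ — this is where the hypothesis on the \emph{domain} of cooperativity must be used carefully, and I expect this to be the main subtlety: if $u(t)+x$ need not lie in $D_2$, one instead uses the $D_1 \times X$ variant built around $v$, for which the analogous point $v(t) - x$ with $x \in K$ may fail to lie in $D_1$. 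The clean resolution is that \emph{one} of the two cooperativity hypotheses always applies: in the $X \times D_2$ case we should instead run the comparison around $v$, setting $\widetilde d(t) = v(t) - u(t)$ but writing $H(t,x) = F(v(t)) - F(v(t) - x) + r(t)$ so that the relevant pair is $(v(t)-x, v(t)) \in X \times D_2$; then $\braket{x^\star, x} = 0$ forces $\braket{x^\star, F(v(t)-x) - F(v(t))} \leq 0$ by Definition~\ref{def:QuasiMonotony}, i.e. $\braket{x^\star, F(v(t)) - F(v(t)-x)} \geq 0$, which is exactly what is needed. Symmetrically, the $D_1 \times X$ hypothesis is handled by running the comparison around $u$ as first described. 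In either case the boundary inequality holds, Theorem~\ref{th:Inv} gives forward invariance of $K$, and hence $v(t) - u(t) \in K$, i.e. $u(t) \leq v(t)$, for all $t \in [0,a)$. $\qed$
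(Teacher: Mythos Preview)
Your approach is essentially identical to the paper's: set $w=v-u$, write $w'=G(t,w)$ with $G(t,x)=F(u(t)+x)-F(u(t))+r(t)$ (or the version centered at $v$), check the boundary condition of Theorem~\ref{th:Inv} using cooperativity, and conclude forward invariance of $K$. Two small remarks: the product-space $X^2$ setup you introduce is never used and can be deleted; and your initial parenthetical assigns the formulation around $u$ to the $X\times D_2$ hypothesis, which is backwards---the pair $(u(t),u(t)+x)$ lies in $D_1\times X$, so this formulation handles the $D_1\times X$ case (as you in fact conclude correctly by the end of your argument).
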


Corollary \ref{cor:Comparison} is in the spirit of \cite[Theorem~5.2]{deimlingordinary}. For the
sake of completeness, a proof is given in Section \ref{sec:proofs}.

\subsection{Notations}
\label{subsec:Not}
In this section, we will work in the Banach space $\mathscr{L}^\infty(\Omega)$ of measurable bounded
real-valued functions defined on $\Omega$ equipped with the supremum norm $\norm{\cdot}$. We shall
write $\mathscr{L}^\infty$ when there is no ambiguity on the underlying space. The set:
\begin{equation}
  \label{eq:coneK}
  \mathscr{L}^\infty_+ = \set{ f \in \mathscr{L}^\infty \, \colon \, f(x) \geq 0 \quad \forall x \in \Omega},
\end{equation}
is a proper cone in $\mathscr{L}^\infty$ with non-empty interior. The order defined by this proper
cone is the usual order: $g \leq h$ if $g(x) \leq h(x)$ for all $x \in \Omega$.

We denote by $\mathscr{L}^{\infty, \star}$, the topological dual of $\mathscr{L}^\infty$. It can be
identified as the space of bounded and finitely additive signed measures on $\Omega$ equipped with
the total variation norm (see \cite[Section~2]{yosida_finitely_1952}). Since $\mathscr{L}^\infty_+$
is reproducing, the dual cone $\mathscr{L}^{\infty, \star}_+$ is a proper cone. It consists of the
continuous linear positive forms on $\mathscr{L}^\infty$.

\medskip

Let $\kappa$ be a non-negative kernel on $\mathscr{L}^\infty$ (endowed with its Borel
$\sigma$-field) satisfying Assumption \ref{Assum0}.  We denote by $T_\kappa$ the operator:
\begin{align}
\label{eq:def-T}
  T_\kappa \quad \colon \quad \mathscr{L}^\infty &\to \mathscr{L}^\infty \\
  \nonumber
  g &\mapsto \left( x \mapsto \int_\Omega g(y) \, \kappa(x, \mathrm{d}y) \right).
\end{align}
According to Assumption \ref{Assum0}, the operator $T_\kappa$ is a bounded linear operator with:
\begin{equation}
  \norm{T_\kappa} = \sup\limits_{x \in \Omega} \kappa(x, \Omega) < \infty.
\end{equation}
Since, for all $x \in \Omega$, $\kappa(x, \mathrm{d}y)$ is a positive measure, the operator
$T_\kappa$ is moreover positive. We also define the function $F$ from $\mathscr{L}^\infty$ to
$\mathscr{L}^\infty$ by:
\begin{equation}
  \label{eq:DefF}
  F(g) = (1 - g) T_\kappa(g) - \gamma g.
\end{equation}
Then, Equation \eqref{eq:SIS} may be rewritten as an ODE in the Banach space
$(\mathscr{L}^\infty, \norm{\cdot})$:
\begin{equation}
\label{eq:SISODE}
  \left\{
    \begin{array}{l}
      \partial_t u = F(u), \quad t \in [0, \tau) \\
      \\
      u(0, \cdot) = u_0,
    \end{array} 
  \right. 
\end{equation}
where $u_0 \in \mathscr{L}^\infty$ and $\tau \in (0, \infty]$. Let $\Delta$ be the set of
non-negative functions bounded by~1:
\begin{equation}
  \label{eq:Delta}
  \Delta = \set{ f \in \mathscr{L}^\infty \, \colon \, 0 \leq f \leq 1 }.
\end{equation}
Since the solution $u(t,x)$ of Equation \eqref{eq:SIS} defines the proportion of $x$-type
individuals being infected at time $t$, it should remain below $1$ and above $0$. Hence, for
\eqref{eq:SIS} to make a biological sense, the initial condition should belong to $\Delta$ and the
solution (if it exists) should remain in $\Delta$ (that is $\Delta$ is forward invariant with
respect to $F$). This will be checked in Proposition \ref{prop:deltainv}.

\subsection{Properties of the vector field}
\label{subsec:Fproposition}
Recall that Assumption \ref{Assum0} is in force. The main results of this section are gathered in
the following proposition.
\begin{proposition}[Properties of $F$]
  \label{prop:F}
  The function $F$ defined in \eqref{eq:DefF} has the following properties.
  \begin{propenum}
  \item \label{prop:Fsmooth} $F$ is of class $\mathcal{C}^\infty$ on $\mathscr{L}^\infty$.
  \item \label{prop:FLipschitz} $F$ and its repeated derivatives are bounded on bounded sets.
  \item \label{prop:Fproducttopo} $F$ is continuous on $\Delta$ with respect to the topology of pointwise convergence.
  \item \label{prop:Fcoop} $F$ is cooperative on $(1-\mathscr{L}^\infty_+) \times \mathscr{L}^\infty$ where:
    \begin{equation}\label{eq:cone1-K}
      1-\mathscr{L}^\infty_+ = \set{ g \in \mathscr{L}^\infty \, \colon \, g \leq 1 }.% \supset \Delta.
    \end{equation}
  \end{propenum}
\end{proposition}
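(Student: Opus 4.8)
The plan is to establish the four properties essentially in the order stated, since each is fairly self-contained once one unwinds the definition $F(g) = (1-g)T_\kappa(g) - \gamma g$. For part \ref{prop:Fsmooth}, I would observe that $F$ is built from three operations on $\mathscr{L}^\infty$: the bounded linear operator $T_\kappa$, the bounded linear (multiplication) operator $g \mapsto \gamma g$, and the pointwise product $(f,g) \mapsto fg$. Bounded linear maps are $\mathcal{C}^\infty$ with derivative equal to themselves, and the pointwise product is a bounded bilinear map on $\mathscr{L}^\infty \times \mathscr{L}^\infty$ (since $\norm{fg} \le \norm{f}\,\norm{g}$ in the supremum norm, because $\mathscr{L}^\infty$ is a Banach algebra), hence $\mathcal{C}^\infty$ with vanishing derivatives of order $\ge 3$. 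Writing $F(g) = T_\kappa(g) - M(g, T_\kappa g) - \gamma g$ where $M(f,h) = fh$, composition and the chain rule give that $F \in \mathcal{C}^\infty$. In fact one gets the explicit formula $\mathcal{D}F[g](h) = (1-g)\,T_\kappa(h) - h\,T_\kappa(g) - \gamma h$ and $\mathcal{D}^2F[g](h_1,h_2) = -h_1 T_\kappa(h_2) - h_2 T_\kappa(h_1)$, with all higher derivatives zero; I would record these since they are needed for the later analysis (e.g.\ Remark~\ref{rem:DiffCoop2} and the cooperativeness check).

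For part \ref{prop:FLipschitz}, the explicit derivative formulas make this immediate: on a ball of radius $r$, $\norm{F(g)} \le (1+r)\norm{T_\kappa}\,r + \norm{\gamma}\,r$, and similarly $\norm{\mathcal{D}F[g]} \le (1+r)\norm{T_\kappa} + r\norm{T_\kappa} + \norm{\gamma}$, while $\mathcal{D}^2F$ is bounded uniformly by $2\norm{T_\kappa}$ and all further derivatives vanish. So $F$ and its derivatives are bounded on bounded sets (and in particular $F$ is locally Lipschitz, which is what feeds into the Picard--Lindelöf and comparison machinery). For part \ref{prop:Fproducttopo}, fix $x \in \Omega$ and a net (or sequence) $g_n \to g$ pointwise in $\Delta$, so $0 \le g_n \le 1$. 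Then $T_\kappa(g_n)(x) = \int_\Omega g_n(y)\,\kappa(x,\mathrm{d}y) \to \int_\Omega g(y)\,\kappa(x,\mathrm{d}y) = T_\kappa(g)(x)$ by dominated convergence, the dominating function being the constant $1$ which is $\kappa(x,\cdot)$-integrable thanks to Assumption~\ref{Assum0} ($\kappa(x,\Omega) < \infty$). Hence $F(g_n)(x) = (1-g_n(x))T_\kappa(g_n)(x) - \gamma(x)g_n(x) \to F(g)(x)$, which is pointwise continuity on $\Delta$.

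For part \ref{prop:Fcoop}, I would use the differential criterion from Remark~\ref{rem:DiffCoop2}: since $F$ is Fréchet differentiable (part \ref{prop:Fsmooth}) and the relevant domain is all of $\mathscr{L}^\infty$ in the second slot, it suffices to check that for every $g \le 1$, every $z \in \mathscr{L}^\infty_+$, and every $z^\star \in \mathscr{L}^{\infty,\star}_+$ with $\braket{z^\star, z} = 0$, we have $\braket{z^\star, \mathcal{D}F[g](z)} \ge 0$. Using $\mathcal{D}F[g](z) = (1-g)T_\kappa(z) - z\,T_\kappa(g) - \gamma z$, the term $(1-g)T_\kappa(z)$ is a nonnegative function (here is where $g \le 1$ and positivity of $T_\kappa$ enter, since $z \ge 0$ forces $T_\kappa(z) \ge 0$), so its pairing with $z^\star \in \mathscr{L}^{\infty,\star}_+$ is $\ge 0$; and the remaining term $-(\,z\,T_\kappa(g) + \gamma z\,) = -(T_\kappa(g) + \gamma)\,z$ is a multiple of $z$ by the bounded function $T_\kappa(g) + \gamma$, which pairs to $0$ against $z^\star$ because $\braket{z^\star, z} = 0$ implies $\braket{z^\star, \varphi z} = 0$ for any $\varphi \in \mathscr{L}^\infty$ — this last fact follows since $z^\star$, being a positive linear form, satisfies $0 \le z^\star(\varphi z) \le \norm{\varphi}\, z^\star(z) = 0$ when $\varphi \ge 0$, and one extends to general $\varphi$ by linearity after splitting $\varphi = \varphi_+ - \varphi_-$. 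The main subtlety — the step I expect to require the most care — is precisely this pairing manipulation $\braket{z^\star, \varphi z} = 0$: $z^\star$ is only a finitely additive measure, so one cannot reason with supports in the naive way, and the clean argument is the positivity/domination estimate just sketched. Everything else reduces to the Banach-algebra structure of $\mathscr{L}^\infty$ and dominated convergence under Assumption~\ref{Assum0}.
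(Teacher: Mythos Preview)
Your treatment of \ref{prop:Fsmooth}--\ref{prop:Fproducttopo} is correct and essentially identical to the paper's: the paper also decomposes $F$ into bounded linear and bilinear pieces for \ref{prop:Fsmooth}--\ref{prop:FLipschitz}, and uses dominated convergence against the bound $\kappa(x,\Omega)<\infty$ for \ref{prop:Fproducttopo}. Your proof of the pairing fact $\braket{z^\star,\varphi z}=0$ is exactly the content of the paper's Lemma~\ref{lem:Stability}.

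For \ref{prop:Fcoop} there is a small gap in your justification. You invoke the reverse implication of Remark~\ref{rem:DiffCoop2}, but that implication is only established there for $D=X$; the fact that the \emph{second} slot is all of $\mathscr{L}^\infty$ does not by itself allow you to restrict the differential check to points $g\le 1$. Concretely, the path integral \eqref{eq:path_integral} requires evaluating $\mathcal{D}F$ at the intermediate points $g_\lambda=x+\lambda(y-x)$, and when $y\not\le 1$ these leave $1-\mathscr{L}^\infty_+$. The repair is immediate with your own tools: writing $1-g_\lambda=(1-x)-\lambda z$ with $z=y-x\ge 0$, the extra piece $-\lambda z\,T_\kappa(z)$ is again a bounded multiple of $z$ and hence also pairs to $0$ against $z^\star$, so only the nonnegative term $(1-x)T_\kappa(z)$ survives and the integrand is $\ge 0$ for every $\lambda$. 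The paper sidesteps this detour entirely by a direct algebraic identity: it writes $F(g)-F(h)=(1-g)T_\kappa(g-h)+(h-g)(T_\kappa(h)+\gamma)$, kills the second summand via Lemma~\ref{lem:Stability}, and observes the first is $\le 0$ since $g\le 1$ and $g\le h$. Both routes ultimately rest on the same lemma; the direct computation is slightly cleaner here because it only ever evaluates at the endpoint $g\in 1-\mathscr{L}^\infty_+$.
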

\begin{proof}
  The bilinear map $(g,h) \mapsto gh$ and the linear maps $g \mapsto \gamma g$ and
  $g \mapsto T_\kappa(g)$ are bounded on $\mathscr{L}^\infty$ (hence, smooth as they are
  linear). Since the function $F$ is a sum of compositions of the previous maps, properties
  \ref{prop:Fsmooth} and \ref{prop:FLipschitz} are proved.

  \medskip

  Now, we prove property \ref{prop:Fproducttopo}.  Let $(g_n, \; n \in \mathbb{N})$ be a sequence of
  functions in $\Delta$ converging pointwise to $g \in \Delta$. Let $x \in \Omega$. The functions
  $g_n$ are dominated by the function equal to $1$ everywhere. The latter is integrable with respect
  to the measure $\kappa(x, \mathrm{d}y)$ since $\kappa(x, \Omega) < \infty$ according to
  \eqref{eq:degreebounded}. Therefore, we can apply the dominated convergence theorem and obtain:
  \[
    \lim\limits_{n \to \infty} \int_\Omega g_n(y) \kappa(x, \mathrm{d}y) = \int_\Omega g(y)
    \kappa(x, \mathrm{d}y).
  \]
  Thus, the operator $T_\kappa$ is continuous on $\Delta$ with respect to the pointwise convergence
  topology. The maps $(h_1,h_2) \mapsto h_1 h_2$ and $(h_1,h_2) \mapsto h_1 + h_2$ are also
  continuous with respect to the pointwise convergence topology. Hence, property
  \ref{prop:Fproducttopo} is proved since $F$ is a composition of these functions.

  \medskip

  Finally, let us prove property \ref{prop:Fcoop}.  Let $g, h \in \mathscr{L}^\infty$ such that
  $g \leq 1$ and $g \leq h$, and let $\nu \in \mathscr{L}^{\infty, \star}_+$ such that
  $\braket{\nu,g - h} = 0$. We have:
  \begin{align*}
    \braket{\nu, F(g)  - F(h)} &= \braket{\nu,(1-g) T_\kappa(g - h) + (h - g) (T_\kappa(h) + \gamma )} \\
                               &= \braket{\nu,(1-g) T_\kappa(g - h)},
  \end{align*}
  where we used Lemma \ref{lem:Stability} below (with $g$ replaced by $h-g$ and $h$ by
  $T_\kappa(h) + \gamma$) in order to get that $\braket{\nu,(h - g) (T_\kappa(h) + \gamma )}$ is
  equal to $0$. Since $T_\kappa$ is a positive operator and $g \leq 1$, the function
  $(1-g) T_\kappa(g - h)$ is non-positive. The number $\braket{\nu,(1-g) T_\kappa(g - h)}$ is also
  non-positive because $\nu \in \mathscr{L}^{\infty, \star}_+$. Hence, we get that
  $\braket{\nu,F(g) - F(h)} \leq 0$. This ends the proof thanks to Definition
  \ref{def:QuasiMonotony}.
\end{proof}

Property \ref{prop:Fcoop} of Proposition \ref{prop:F} was proved using the following lemma.
\begin{lemma}
  \label{lem:Stability}
  Let $g \in \mathscr{L}^\infty_+$ and $\nu \in \mathscr{L}^{\infty, \star}_+$ such that
  $\braket{\nu,g} =0$. Then, for all $h \in \mathscr{L}^\infty$, we have $\braket{\nu,h g} =0$.
\end{lemma}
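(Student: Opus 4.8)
The plan is to reduce the statement to the case of a non-negative $h$ by splitting $h = h^+ - h^-$ into its positive and negative parts, both of which lie in $\mathscr{L}^\infty_+$. Since $\nu$ is linear, it suffices to show $\braket{\nu, h g} = 0$ whenever $h \in \mathscr{L}^\infty_+$; the general case then follows by subtraction. So from now on assume $0 \le h$.

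First I would exploit boundedness of $h$: there is a constant $M = \norm{h}$ with $0 \le h \le M$ on $\Omega$, hence pointwise $0 \le h g \le M g$ (here using $g \ge 0$). Because $\nu \in \mathscr{L}^{\infty,\star}_+$ is a positive linear form, it is monotone: $0 \le f_1 \le f_2$ implies $0 \le \braket{\nu, f_1} \le \braket{\nu, f_2}$. Applying this to $0 \le h g \le M g$ gives
\[
  0 \le \braket{\nu, h g} \le M \braket{\nu, g} = 0,
\]
where the last equality is the hypothesis $\braket{\nu, g} = 0$. Therefore $\braket{\nu, h g} = 0$ for every $h \in \mathscr{L}^\infty_+$.

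For general $h \in \mathscr{L}^\infty$, write $h = h^+ - h^-$ with $h^\pm \in \mathscr{L}^\infty_+$ (both are measurable and bounded since $h$ is). Then $h g = h^+ g - h^- g$, and by linearity of $\nu$ together with the case just proved,
\[
  \braket{\nu, h g} = \braket{\nu, h^+ g} - \braket{\nu, h^- g} = 0 - 0 = 0.
\]
This completes the proof. There is essentially no obstacle here: the only point to be careful about is that $\nu$, being merely a bounded finitely additive measure rather than a countably additive one, still enjoys the monotonicity property $f_1 \le f_2 \Rightarrow \braket{\nu, f_1} \le \braket{\nu, f_2}$ — but this is immediate from $\nu \in \mathscr{L}^{\infty,\star}_+$ applied to $f_2 - f_1 \in \mathscr{L}^\infty_+$, so no countable additivity or dominated convergence is needed.
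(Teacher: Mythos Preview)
Your proof is correct and follows essentially the same idea as the paper's: sandwich $hg$ between scalar multiples of $g$ using $g\ge 0$ and $\norm{h}<\infty$, then apply positivity of $\nu$ together with $\braket{\nu,g}=0$. The only difference is cosmetic: the paper handles general $h$ in one shot via the two-sided bound $-\norm{h}\,g \le hg \le \norm{h}\,g$, whereas you first split $h=h^+-h^-$ and treat each part separately; both lead to the same conclusion with the same key ingredients.
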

\begin{proof}
  Let $g \in \mathscr{L}^\infty_+$ and $\nu \in \mathscr{L}^{\infty, \star}_+$ such that
  $\braket{\nu, g} = 0$. Since $g$ is everywhere non-negative, we have:
  \[
    -\norm{h} \, g  \leq h g \leq \norm{h} \, g.
  \]
  Since $\nu \in \mathscr{L}^{\infty, \star}_+$, the previous inequalities give:
  \[ 
    -\norm{h} \, \braket{\nu,g}  \leq \braket{\nu,h g} \leq \norm{h} \, \braket{\nu,g}.
  \]
  By assumption, $\braket{\nu,g}$ is equal to $0$. Hence, the lemma is proved.
\end{proof}

\subsection{Properties of the ODE semi-flow}
The aim of this subsection is to define a semi-flow associated to Equation \eqref{eq:SIS} and to
study its main properties. Proposition \ref{prop:F}~\ref{prop:FLipschitz} enables to apply the
Picard-Lindel\"{o}f theorem and show the existence of local solutions of in $\mathscr{L}^\infty$ of
Equation \eqref{eq:SIS}. We can actually prove a stronger result. Recall that
$\Delta = \set{ f \in \mathscr{L}^\infty \, \colon \, 0 \leq f \leq 1 }$.
\begin{proposition}
\label{prop:deltainv} 
Let $F$ defined by \eqref{eq:DefF}.
\begin{propenum}
  \item \label{prop:deltainvI} The domain $\Delta$ is forward invariant with respect to $F$.
  \item \label{prop:deltainvII} Maximal solutions of Equation \eqref{eq:SISODE} such that
    $u_0 \in \Delta$ are global, \textit{i.e.}, they are defined on $\mathbb{R}_+$.
\end{propenum}
\end{proposition}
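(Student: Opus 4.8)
The plan is to prove forward invariance of $\Delta$ first, and then derive global existence as an immediate consequence. For property \ref{prop:deltainvI}, I would obtain $\Delta$ as the intersection of two translated cones: $\Delta = \mathscr{L}^\infty_+ \cap (1 - \mathscr{L}^\infty_+)$. Since intersections of forward-invariant sets are forward invariant, it suffices to check each half separately using Theorem \ref{th:Inv} with the cone $K = \mathscr{L}^\infty_+$, which has non-empty interior by the discussion in Section \ref{subsec:Not}. For the lower bound $y + K = \mathscr{L}^\infty_+$ (take $y = 0$): given $g \in \partial \mathscr{L}^\infty_+$ and $\nu \in \mathscr{L}^{\infty,\star}_+$ with $\braket{\nu, g} = 0$, I must show $\braket{\nu, F(g)} \geq 0$. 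Write $F(g) = (1-g)T_\kappa(g) - \gamma g = T_\kappa(g) - g\, T_\kappa(g) - \gamma g$. The term $\braket{\nu, g\,T_\kappa(g)}$ vanishes by Lemma \ref{lem:Stability} (applied with $h = T_\kappa(g)$), and likewise $\braket{\nu, \gamma g} = 0$ (Lemma \ref{lem:Stability} with $h = \gamma$, using $\gamma \in \mathscr{L}^\infty$). What remains is $\braket{\nu, F(g)} = \braket{\nu, T_\kappa(g)}$, which is $\geq 0$ because $T_\kappa$ is a positive operator, $g \geq 0$, and $\nu$ is a positive form. For the upper bound, apply Theorem \ref{th:Inv} with $y = 1$ and the cone $-\mathscr{L}^\infty_+$ (equivalently, reflect: $\Delta$'s upper face corresponds to $1 - \mathscr{L}^\infty_+$). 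Concretely, set $h = 1 - g \in \partial \mathscr{L}^\infty_+$ with $\braket{\nu, h} = 0$, so $g \leq 1$ with equality $\nu$-a.e.; then I need $\braket{\nu, -F(g)} \geq 0$, i.e. $\braket{\nu, F(g)} \leq 0$. Here $F(g) = (1-g)T_\kappa(g) - \gamma g$; the first term $\braket{\nu, (1-g)T_\kappa(g)}$ vanishes since $1 - g = h \geq 0$, $\braket{\nu,h}=0$, and Lemma \ref{lem:Stability} applies with $h$ in the role of the non-negative function and $T_\kappa(g)$ in the role of the bounded function; so $\braket{\nu, F(g)} = -\braket{\nu, \gamma g} = -\braket{\nu, \gamma(1-h)} = -\braket{\nu, \gamma} + \braket{\nu, \gamma h} = -\braket{\nu,\gamma}$, again using Lemma \ref{lem:Stability}. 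Since $\gamma \geq 0$ and $\nu$ is positive, $\braket{\nu, \gamma} \geq 0$, hence $\braket{\nu, F(g)} \leq 0$, as required. (One must be a little careful formulating the upper bound in the exact language of Theorem \ref{th:Inv}, which is stated for a single cone $K$ with $y + K$; applying it to the reflected field $\tilde F(g) = -F(1-g)$ on the cone $\mathscr{L}^\infty_+$, or reusing the comparison framework, makes this rigorous, but the computation above is the substance.)

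For property \ref{prop:deltainvII}, suppose $u_0 \in \Delta$ and let $(u, [0,b))$ be the maximal solution given by Picard--Lindelöf (applicable by Proposition \ref{prop:F}\ref{prop:FLipschitz}, which gives local Lipschitzness). By part \ref{prop:deltainvI}, $u(t) \in \Delta$ for all $t \in [0,b)$, so $\norm{u(t)} \leq 1$ uniformly. If $b < \infty$ were finite, then since $F$ is bounded on the bounded set $\Delta$ (Proposition \ref{prop:F}\ref{prop:FLipschitz}), say $\norm{F(g)} \leq M$ for $g \in \Delta$, the solution would be Lipschitz: $\norm{u(t) - u(s)} \leq M|t-s|$, hence $u$ extends continuously to $t = b$ with $u(b) \in \Delta$ (as $\Delta$ is closed), and restarting the ODE at $(b, u(b))$ produces a strict extension, contradicting maximality. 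Therefore $b = \infty$ and the solution is global; in particular $\tau = +\infty$ in \eqref{eq:SIS}.

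The only genuinely delicate point is the bookkeeping for the upper face of $\Delta$: Theorem \ref{th:Inv} is phrased for one cone $K$ and a translate $y + K$, so invariance of $\{g : g \leq 1\} = 1 - \mathscr{L}^\infty_+$ is not literally an instance of it without either (a) reflecting the vector field and reapplying the theorem, or (b) observing that $F$ is cooperative on $(1 - \mathscr{L}^\infty_+) \times \mathscr{L}^\infty$ by Proposition \ref{prop:F}\ref{prop:Fcoop} and invoking the Comparison Theorem (Corollary \ref{cor:Comparison}) against the constant supersolution $v \equiv 1$, which satisfies $v' - F(v) = -F(1) = \gamma \geq 0 = u' - F(u)$ whenever $u$ solves \eqref{eq:SISODE} — wait, this needs $0 \le \gamma$, which holds, giving $u(t) \le 1$. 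Combined with the constant subsolution $v \equiv 0$ (where $F(0) = 0$) for the lower bound, this comparison route sidesteps the cone-translation issue entirely and is probably the cleanest write-up. Everything else is routine given Proposition \ref{prop:F} and Lemma \ref{lem:Stability}.
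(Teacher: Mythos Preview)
Your main argument is correct and matches the paper's proof closely. The paper also writes $\Delta = \mathscr{L}^\infty_+ \cap (1-\mathscr{L}^\infty_+)$ and applies Theorem~\ref{th:Inv} to each piece, with exactly the choices $K=\mathscr{L}^\infty_+$, $y=0$ for the lower face and $K=-\mathscr{L}^\infty_+$, $y=1$, $x^\star=-\nu$ for the upper face. The only cosmetic difference is that the paper obtains the two sign conditions $\braket{\nu,F(g)}\geq 0$ and $\braket{\nu,F(1-g)}\leq 0$ by quoting cooperativeness (Proposition~\ref{prop:F}\ref{prop:Fcoop}) and the values $F(0)=0$, $F(1)=-\gamma$, whereas you expand $F$ and kill terms directly with Lemma~\ref{lem:Stability}. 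Since the proof of Proposition~\ref{prop:F}\ref{prop:Fcoop} is itself an application of Lemma~\ref{lem:Stability}, you have simply inlined that step. Your argument for \ref{prop:deltainvII} is identical to the paper's.

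One caution about your alternative route~(b) at the end. Invoking Corollary~\ref{cor:Comparison} with the supersolution $v\equiv 1$ requires, in the hypothesis ``$F$ cooperative on $D_1\times X$'', that the \emph{lower} path $u$ take values in $D_1=1-\mathscr{L}^\infty_+$, i.e.\ that the solution already satisfy $u(t)\leq 1$. That is precisely what you are trying to prove, so this branch is circular as stated; the paper does not offer cooperativeness on any set of the form $X\times D_2$ that would let you use the other hypothesis of the corollary. The comparison with the subsolution $v\equiv 0$ is fine (since $0\in 1-\mathscr{L}^\infty_+$), but for the upper bound you should stay with your primary Theorem~\ref{th:Inv} argument, which is exactly what the paper does.
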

\begin{proof}
  We first prove property \ref{prop:deltainvI}. The domain $\Delta$ is the intersection of the cone
  $\mathscr{L}^\infty_+$ defined by \eqref{eq:coneK} and the set $1-\mathscr{L}^\infty_+$ defined by
  \eqref{eq:cone1-K}. So, it is sufficient to prove that each of them is forward invariant with
  respect to $F$. Let $g \in \partial \mathscr{L}^\infty_+$ and let
  $\nu \in \mathscr{L}^{\infty, \star}_+$ such that $\braket{\nu,g} = 0$.
    
  The function $F$ being cooperative on $(1-\mathscr{L}^\infty_+) \times \mathscr{L}^\infty$
  according to Proposition \ref{prop:F}~\ref{prop:Fcoop}, the following inequality holds:
  \[
    \braket{\nu, F(0) - F(g)} \leq 0.
  \]
  We deduce that $\braket{\nu, F(g)} \geq 0$ because $F(0)=0$. Since $\mathscr{L}^\infty_+$ is a
  proper cone with non-empty interior, we can apply Theorem \ref{th:Inv} with
  $G(t, \cdot) = F(\cdot)$, $K=\mathscr{L}^\infty_+$, $x=g$, $x^\star = \nu$ and $y=0$ in order to
  get that $\mathscr{L}^\infty_+$ is forward invariant with respect to $F$.
    
  The function $F$ being cooperative on $(1-\mathscr{L}^\infty_+) \times \mathscr{L}^\infty$
  according to Proposition \ref{prop:F}~\ref{prop:Fcoop}, the following inequality holds:
  \[
    \braket{\nu,F(1 - g)- F(1)} \leq 0.
  \]
  Since $F(1) = - \gamma \leq 0$, we get that $\braket{\nu,F(1 - g)} \leq 0$. By using Theorem
  \ref{th:Inv} with $G(t, \cdot) = F(\cdot)$, $K=-\mathscr{L}^\infty_+$, $x =-g$, $y=1$ and
  $x^\star = - \nu$, we obtain that $1-\mathscr{L}^\infty_+$ is forward invariant with respect to
  $F$. This ends the proof of property \ref{prop:deltainvI}.

  \medskip

  Now we prove property \ref{prop:deltainvII}.  Let $(y , [0, \tau))$ be a solution of Equation
  \eqref{eq:SIS} with $y(0) \in \Delta$.  Assume that $\tau$ is a positive finite number. Property
  \ref{prop:deltainvI} asserts that $y(t) \in \Delta$, for all $0\leq t< \tau$. Since $F$ is bounded
  on $\Delta$ (see Proposition \ref{prop:F}~\ref{prop:FLipschitz}), $s \mapsto F(y(s))$ is
  integrable and:
  \[
    \lim\limits_{t \to \tau^-} y(t) = y(0) + \lim\limits_{t \to \tau^-} \int_0^t F(y(s)) \,
    \mathrm{d}s = y(0) + \int_0^\tau F(y(s)) \, \mathrm{d}s.
  \] 
  The solution $y$ can be extended up to its right boundary, \textit{i.e.}, on $[0, \tau]$. This
  shows that $y$ is not maximal. We deduce that the maximal solution is defined on $\mathbb{R}_+$.
\end{proof}

Thanks to Proposition \ref{prop:deltainv}, it is possible to define the semi-flow associated to the
autonomous differential equation \eqref{eq:SIS} on $\Delta$, \textit{i.e.}, the unique function
$\phi \, \colon \, \mathbb{R}_+ \times \Delta \to \Delta$ solution of:
\begin{equation}
\label{eq:semiflow}
  \left\{
    \begin{array}{l}
      \partial_t \phi(t, g) = F(\phi(t,g)), \\
      \\
      \phi(0,g) = g.
    \end{array} 
  \right. 
\end{equation}
It satisfies the semi-group property, that is, for all $g \in \Delta$ and for all
$t,s \in \mathbb{R}_+$, we have:
  \[
    \phi(t+s, g) = \phi(t, \phi(s,g)).
  \]

  The result below is a fundamental property about the semi-flow of the SIS model. It express the
  intuitive idea that if an epidemics is worse everywhere compared to a reference state, it will
  remain worse compared to the evolution of this reference state in the future.

\begin{proposition}[Order-preserving flow]
  \label{prop:Ordpres}
  If $0 \leq g \leq h \leq 1$, then we have $\phi(t, g) \leq \phi(t, h)$ for all
  $t\in \mathbb{R}_+$.
\end{proposition}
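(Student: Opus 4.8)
The plan is to read this off from the Comparison Theorem (Corollary~\ref{cor:Comparison}) applied to the two $\mathcal{C}^1$ paths $u(t) := \phi(t,g)$ and $v(t) := \phi(t,h)$. First I would note that, since $0 \leq g \leq h \leq 1$, both $g$ and $h$ lie in $\Delta$, so by Proposition~\ref{prop:deltainv} the semi-flows $u$ and $v$ are globally defined on $\mathbb{R}_+$ and take values in $\Delta$. Because $\Delta \subset 1 - \mathscr{L}^\infty_+$, I would invoke Corollary~\ref{cor:Comparison} with $D_1 = 1-\mathscr{L}^\infty_+$ and $D_2 = \mathscr{L}^\infty$: the function $F$ is locally Lipschitz (it is $\mathcal{C}^\infty$ by Proposition~\ref{prop:F}~\ref{prop:Fsmooth}--\ref{prop:FLipschitz}), the cone $\mathscr{L}^\infty_+$ has non-empty interior, and $F$ is cooperative on $D_1 \times D_2 = (1-\mathscr{L}^\infty_+)\times\mathscr{L}^\infty$ by Proposition~\ref{prop:F}~\ref{prop:Fcoop}.

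It then remains to check the two hypotheses of the corollary. One has $u(0) = g \leq h = v(0)$ by assumption; and since $u$ and $v$ both solve $\partial_t w = F(w)$, the ``defect'' terms vanish identically, so that $u'(t) - F(u(t)) = 0 = v'(t) - F(v(t))$ and the comparison inequality~\eqref{eq:Comparison} holds (with equality) on $[0,a)$ for every $a>0$. Corollary~\ref{cor:Comparison} then yields $u(t) \leq v(t)$ for all $t \in [0,a)$, and letting $a \to \infty$ gives $\phi(t,g) \leq \phi(t,h)$ for all $t \in \mathbb{R}_+$.

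I do not anticipate any real obstacle: the statement is essentially the semi-flow translation of the cooperativeness of $F$, and all the substance is already contained in Theorem~\ref{th:Inv}, Corollary~\ref{cor:Comparison} and property~\ref{prop:Fcoop}. The only point deserving a little attention is that $F$ is only known to be cooperative on the half-space $(1-\mathscr{L}^\infty_+)\times\mathscr{L}^\infty$ rather than on all of $\mathscr{L}^\infty \times \mathscr{L}^\infty$; this is why one must put the \emph{lower} path $u$ (whose values lie in $\Delta \subset 1-\mathscr{L}^\infty_+$) into the first slot $D_1$ and leave $D_2 = \mathscr{L}^\infty$.
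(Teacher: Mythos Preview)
Your proposal is correct and matches the paper's own proof essentially line for line: the paper also sets $u(t)=\phi(t,g)$, $v(t)=\phi(t,h)$, observes that both defect terms vanish so \eqref{eq:Comparison} holds with equality, and invokes Corollary~\ref{cor:Comparison} using the cooperativeness of $F$ on $(1-\mathscr{L}^\infty_+)\times\mathscr{L}^\infty$ (restricted to $\Delta\times\mathscr{L}^\infty$). Your remark that the lower path must be placed in the $D_1$ slot is exactly the point, and your extra care in passing from $[0,a)$ to $\mathbb{R}_+$ is harmless.
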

\begin{proof}
  Since $\partial_t \phi(t, g) - F(\phi(t, g)) =0$ and $\partial_t \phi(t, h) - F(\phi(t, h)) =0$,
  the inequality \eqref{eq:Comparison} is satisfied on $\mathbb{R}_+$ for the paths
  $u \, \colon \, t \mapsto \phi(t,g)$ and $v \, \colon \, t \mapsto \phi(t,h)$. By assumption, we
  have also that $g = \phi(0,g) \leq \phi(0,h) = h$.  Furthermore $F$ is locally Lipschitz (see
  Proposition \ref{prop:F}~\ref{prop:FLipschitz}) and cooperative on
  $(1-\mathscr{L}^\infty_+) \times \mathscr{L}^\infty$ (see Proposition
  \ref{prop:F}~\ref{prop:Fcoop}) and thus on $\Delta \times \mathscr{L}^\infty$.  Hence, we can
  apply Corollary \ref{cor:Comparison} with $u(t) = \phi(t,g)$ and $v(t) = \phi(t,h)$ to obtain that
  $\phi(t, g) \leq \phi(t, h)$ for all $t\in \mathbb{R}_+$.
\end{proof}
As a consequence of the previous proposition, we have the following result.
\begin{corollary}[Local Monotony implies Global Monotony]
  \label{cor:nodecreas}
  Let $g \in \Delta$.  Suppose that there exist $0 \leq a<b$ such that, for all $t \in [a,b)$, the
  inequality $\phi(a, g) \leq \phi(t,g)$ (resp. $\phi(a, g) \geq \phi(t,g)$) holds. Then,
  $t \mapsto \phi(t, g)$ is non-decreasing (resp. non-increasing) on $[a, \infty)$.
\end{corollary}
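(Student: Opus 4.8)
The plan is to exploit the semi-group property of $\phi$ together with the order-preserving property established in Proposition \ref{prop:Ordpres}. I treat the non-decreasing case; the non-increasing case is entirely symmetric. So suppose there exist $0\leq a<b$ with $\phi(a,g)\leq \phi(t,g)$ for all $t\in[a,b)$. The key observation is that monotonicity on a time interval adjacent to $a$ propagates forward: for any $s\in[a,b)$ we have $\phi(a,g)\leq \phi(s,g)$, and applying the order-preserving map $\phi(r,\cdot)$ to both sides (legitimate since both $\phi(a,g)$ and $\phi(s,g)$ lie in $\Delta$) yields, via the semi-group property,
\[
  \phi(r+a,g)=\phi(r,\phi(a,g))\leq \phi(r,\phi(s,g))=\phi(r+s,g)
  \quad\text{for all }r\geq 0.
\]
In other words, for every $r\geq 0$ the function $t\mapsto \phi(t+r,g)$ dominates $\phi(t,g)$ on the time-shift corresponding to $[a,b)$.

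First I would use this to show that $t\mapsto \phi(t,g)$ is non-decreasing on the whole interval $[a,b)$. Fix $a\leq t_1\leq t_2<b$, write $t_2=t_1+(t_2-t_1)$ and $t_1=a+(t_1-a)$; setting $r=t_1-a\geq 0$ and $s=a+(t_2-t_1)\in[a,b)$ in the displayed inequality gives $\phi(t_1,g)=\phi(r+a,g)\leq\phi(r+s,g)=\phi(t_2,g)$, using $r+s=t_1-a+a+t_2-t_1=t_2$. Hence $t\mapsto\phi(t,g)$ is non-decreasing on $[a,b)$. In particular $\phi(t,g)\leq\phi(s,g)$ whenever $a\leq t\leq s<b$, which slightly strengthens the hypothesis.

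Next I would bootstrap past $b$. By continuity of $t\mapsto\phi(t,g)$ (it is $\mathcal{C}^1$), the inequality $\phi(a,g)\leq\phi(t,g)$ persists at $t=b$, so it holds on $[a,b]$. Now pick any $b'\in(a,b)$; applying the argument of the previous paragraph with $a$ replaced by $b'$ requires knowing $\phi(b',g)\leq\phi(t,g)$ for $t$ in a right-neighborhood of $b'$, which we already have on $[b',b]$. The cleanest way to iterate is: let $I$ be the set of $c\geq a$ such that $\phi$ is non-decreasing on $[a,c]$; then $I$ is a nonempty interval containing $b$, and if $c=\sup I<\infty$ then, writing $\phi(c,g)=\phi(c-a,\phi(a,g))$ and using that $\phi$ restricted to the forward-invariant set $\Delta$ is the semi-flow of a cooperative vector field, the local monotonicity near $c$ is inherited from the monotonicity at $c$ by applying Proposition \ref{prop:Ordpres} once more: for $\varepsilon>0$ small, $\phi(\varepsilon,\phi(c,g))\geq\phi(\varepsilon,\phi(c-\delta,g))$ for $0<\delta<c-a$ close to $c-a$, giving $\phi(c+\varepsilon,g)\geq\phi(c+\varepsilon-\delta,g)$, and letting $\delta$ run shows $\phi$ is non-decreasing on $[a,c+\varepsilon]$ for some $\varepsilon>0$, contradicting maximality of $c$. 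Hence $I=[a,\infty)$.

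The main obstacle — though it is more a bookkeeping point than a genuine difficulty — is organizing the bootstrap so that one never needs more regularity than Proposition \ref{prop:Ordpres} provides: the proof uses only that $\phi(r,\cdot)$ is order-preserving on $\Delta$ and the semi-group identity, plus continuity of trajectories. No differentiability of $F$ or additional structure is needed beyond what is already in force. An alternative, slightly slicker packaging avoids the supremum argument altogether: from the displayed inequality with $r=t-a$ and $s=a+h$ (for any fixed $h\in[0,b-a)$), one gets $\phi(t,g)\leq\phi(t+h,g)$ for \emph{all} $t\geq a$ at once, and since every $h'\geq 0$ can be written as a finite sum of such small increments $h$, summing (telescoping) yields $\phi(t,g)\leq\phi(t+h',g)$ for all $t\geq a$ and all $h'\geq 0$, which is exactly the assertion.
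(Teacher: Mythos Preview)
Your ``slicker packaging'' at the end is exactly the paper's proof: from the hypothesis and the order-preserving property one gets $\phi(t,g)\leq\phi(t+h,g)$ for every $t\geq a$ and every $h\in[0,b-a)$, and then any pair $s<t$ in $[a,\infty)$ is handled by covering $[s,t]$ with steps of length less than $b-a$. The paper phrases this as ``it is sufficient to show that $t\mapsto\phi(t,g)$ is non-decreasing on all subintervals of $[a,\infty)$ whose lengths are bounded from above by $b-a$,'' which is your telescoping observation.

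The preceding supremum/bootstrap argument is unnecessary and its exposition is a bit garbled (the phrase ``$0<\delta<c-a$ close to $c-a$'' and the subsequent ``letting $\delta$ run'' do not quite pin down monotonicity on all of $[c,c+\varepsilon]$ without further work). Since you already have the clean one-line argument, I would drop the bootstrap entirely and present only the final version.
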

\begin{proof}
  It is sufficient to show that $t \mapsto \phi(t, g)$ is non-decreasing on all subintervals of
  $[a, \infty)$ whose lengths are bounded from above by $b-a$. Let $t>s\geq a$ such that
  $t - s < b-a$. By assumption, we have: $\phi(a, g) \leq \phi(a + t - s, g)$.  Thus, Proposition
  \ref{prop:Ordpres} gives:
  \[
    \phi(s - a, \phi(a, g)) \leq \phi(s - a, \phi(a + t - s, g)).
  \]
  By the semi-group property of the semi-flow, this implies that $\phi(s, g) \leq \phi(t, g)$.
\end{proof}

\begin{proposition}\label{prop:propC}
  Let $g \in \Delta$. The path $t \mapsto \phi(t,g)$ is non-decreasing (resp. non-increasing) if and
  only if $F(g) \geq 0$ (resp. $F(g) \leq 0$).
\end{proposition}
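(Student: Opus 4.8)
The statement consists of two mutually symmetric equivalences; I plan to prove the ``non-decreasing if and only if $F(g)\geq 0$'' case in detail, the non-increasing case being obtained by reversing all inequalities.

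For the direct implication, assume $t\mapsto\phi(t,g)$ is non-decreasing. Since the semi-flow solves \eqref{eq:semiflow} with the $\mathcal{C}^\infty$ (hence locally Lipschitz) vector field $F$, it is $\mathcal{C}^1$ in time and has a right derivative at $0$ equal to $F(\phi(0,g))=F(g)$; that is, $\bigl(\phi(t,g)-g\bigr)/t\to F(g)$ in $\mathscr{L}^\infty$ as $t\to 0^+$. For $t>0$, monotonicity gives $\phi(t,g)-g=\phi(t,g)-\phi(0,g)\geq 0$, so each difference quotient $\bigl(\phi(t,g)-g\bigr)/t$ belongs to the cone $\mathscr{L}^\infty_+$; since this cone is closed, the limit $F(g)$ also lies in it, i.e. $F(g)\geq 0$.

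For the converse, assume $F(g)\geq 0$. I would first show that $g\leq\phi(t,g)$ for every $t\geq 0$ by applying the Comparison Theorem (Corollary \ref{cor:Comparison}) to the two $\mathcal{C}^1$ paths $u(t)=g$ (constant) and $v(t)=\phi(t,g)$, both defined on $[0,\infty)$ by Proposition \ref{prop:deltainv}\ref{prop:deltainvII}. They satisfy $u(0)=g=\phi(0,g)=v(0)$, hence $u(0)\leq v(0)$, and $u'(t)-F(u(t))=-F(g)\leq 0=\partial_t\phi(t,g)-F(\phi(t,g))=v'(t)-F(v(t))$, which is exactly hypothesis \eqref{eq:Comparison}. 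Since $g\in\Delta\subset 1-\mathscr{L}^\infty_+$ and $F$ is cooperative on $(1-\mathscr{L}^\infty_+)\times\mathscr{L}^\infty$ (Proposition \ref{prop:F}\ref{prop:Fcoop}) and locally Lipschitz (Proposition \ref{prop:F}\ref{prop:FLipschitz}), the cooperativity hypothesis of Corollary \ref{cor:Comparison} is met with $D_1=1-\mathscr{L}^\infty_+$ and $D_2=\mathscr{L}^\infty$. Corollary \ref{cor:Comparison} then gives $u(t)\leq v(t)$, i.e. $\phi(0,g)=g\leq\phi(t,g)$ for all $t\geq 0$. Finally, applying Corollary \ref{cor:nodecreas} with $a=0$ (and any $b>0$) upgrades this to global monotonicity: $t\mapsto\phi(t,g)$ is non-decreasing on $[0,\infty)$. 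The non-increasing equivalence is handled symmetrically, using $u(t)=\phi(t,g)$ and $v(t)=g$ in Corollary \ref{cor:Comparison} (so that the cooperative domain now carries the path $\phi(\cdot,g)$, which stays in $\Delta\subset 1-\mathscr{L}^\infty_+$) and then the ``resp.'' part of Corollary \ref{cor:nodecreas}. I do not expect a real obstacle here; the only points requiring care are the bookkeeping of the domains in Corollary \ref{cor:Comparison} so that the constant path lies in the region $1-\mathscr{L}^\infty_+$ on which $F$ is cooperative, and the elementary observation that passing to the $t\to 0^+$ limit of the difference quotients preserves membership in the closed cone $\mathscr{L}^\infty_+$.
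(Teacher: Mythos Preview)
Your proof is correct and close in spirit to the paper's. The direct implication (monotone path $\Rightarrow$ $F(g)\geq 0$ via the closedness of $\mathscr{L}^\infty_+$) is identical. For the converse, the paper argues slightly differently: instead of comparing the constant path $u(t)=g$ with $v(t)=\phi(t,g)$ via Corollary~\ref{cor:Comparison}, it invokes the lower-level Theorem~\ref{th:Inv} directly to show that the translated cone $g+\mathscr{L}^\infty_+$ is forward invariant with respect to $F$ (using cooperativity to check that $\braket{\nu,F(h)}\geq 0$ whenever $h\geq g$ and $\braket{\nu,h-g}=0$), which gives $\phi(t,g)\in g+\mathscr{L}^\infty_+$ and hence $g\leq\phi(t,g)$. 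Both routes then finish with Corollary~\ref{cor:nodecreas}. Your use of the packaged Comparison Theorem is arguably cleaner, since Corollary~\ref{cor:Comparison} is itself derived from Theorem~\ref{th:Inv}; the paper's route is more hands-on but requires re-verifying the hypotheses of Theorem~\ref{th:Inv}. No gaps.
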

\begin{proof}
  Let $g\in \Delta$. Suppose $F(g) \geq 0$. Let $h \geq g$ and
  $\nu \in \mathscr{L}^{\infty, \star}_+$. According to Proposition \ref{prop:F}~\ref{prop:Fcoop},
  if $\braket{\nu, h -g} = 0$, then $\braket{\nu, F(h) - F(g)} \geq 0$. Since $\nu$ is a positive
  linear form, it follows that $\braket{\nu, F(g)}\geq 0$ and then $\braket{\nu, F(h)}\geq
  0$. Applying Theorem \ref{th:Inv} with $G(t, \cdot) = F(\cdot)$, $K = \mathscr{L}^\infty_+$,
  $x = h-g$, $y = g$ and $x^\star = \nu$ (and assuming that $h-g\in \partial \mathscr{L}^\infty_+$),
  we get that $g + \mathscr{L}^\infty_+$ is forward invariant with respect to $F$. This means that
  for all $t\geq 0$, the following inequality holds:
  \[
    g = \phi(0, g) \leq \phi(t, g). 
  \]
  Thus, $t \mapsto \phi(t,g)$ is non-decreasing according to Corollary \ref{cor:nodecreas}.

  \medskip

  Now, suppose that $t \mapsto \phi(t,g)$ is non-decreasing. Then, for all $t>0$, the function
  $(\phi(t, g) - g) / t$ belongs to $ \mathscr{L}^\infty_+$. Since $ \mathscr{L}^\infty_+$ is
  closed, it follows that $F(g)=\lim_{t\rightarrow 0+} (\phi(t, g) - g) / t$ belongs also to
  $ \mathscr{L}^\infty_+$.  \medskip

  The equivalence between $F(g) \leq 0$ and the fact that $t \mapsto \phi(t,g)$ is non-increasing is
  proved the same way.
\end{proof}

Now we give some results about the regularity of the semi-flow.
\begin{proposition}[Flow regularity]
  \label{prop:Flow}
  Let $\phi \, \colon \, \mathbb{R}_+ \times \Delta \to \Delta$ be the semi-flow defined by Equation
  \eqref{eq:semiflow}.
  \begin{propenum}
  \item \label{prop:FlowSmooth} For all $g \in \Delta$, $t \mapsto \phi(t,g)$ is
    $\mathcal{C}^\infty$ and its repeated derivatives are bounded.
  \item \label{prop:FlowLipschitz} For all $t \in \mathbb{R}_+$, $g \mapsto \phi(t, g)$ is Lipschitz
    with respect to $\norm{\cdot}$.
  \item \label{prop:FlowCont} For all $t \in \mathbb{R}_+$, $g \mapsto \phi(t, g)$ is continuous
    with respect to the pointwise convergence topology.
  \end{propenum}
\end{proposition}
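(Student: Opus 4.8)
All three statements are essentially independent, and I would work throughout from the integral form $\phi(t,g)=g+\int_0^tF(\phi(s,g))\,\mathrm{d}s$, using repeatedly that by Proposition~\ref{prop:deltainv} the trajectory $\phi(\cdot,g)$ stays in the bounded convex set $\Delta$, on which $F$ and all its derivatives are bounded (Proposition~\ref{prop:F}~\ref{prop:FLipschitz}). Write $M=\sup_{h\in\Delta}\norm{F(h)}$ and $L=\sup_{h\in\Delta}\norm{\mathcal{D}F[h]}$, both finite.

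Parts \ref{prop:FlowSmooth} and \ref{prop:FlowLipschitz} are routine. For \ref{prop:FlowSmooth}: $t\mapsto\phi(t,g)$ is continuous, hence so is $t\mapsto F(\phi(t,g))$, hence $\phi(\cdot,g)$ is $\mathcal{C}^1$; differentiating $\partial_t\phi=F(\phi)$ and using that $F$ is $\mathcal{C}^\infty$ bootstraps to $\mathcal{C}^\infty$. For the bounds, the chain rule expresses $\partial_t^n\phi(t,g)$ as a finite sum of terms $\mathcal{D}^kF[\phi(t,g)]\bigl(\partial_t^{j_1}\phi(t,g),\dots,\partial_t^{j_k}\phi(t,g)\bigr)$ with $j_1+\dots+j_k=n-1$, so an induction on $n$ gives boundedness, the base case being $\norm{\partial_t\phi(t,g)}=\norm{F(\phi(t,g))}\le M$. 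For \ref{prop:FlowLipschitz}: since $\Delta$ is convex, $F$ is $L$-Lipschitz on $\Delta$ (integrate $\mathcal{D}F$ along segments, cf.\ Remark~\ref{rem:DiffCoop2}); subtracting the integral equations for $\phi(\cdot,g)$ and $\phi(\cdot,h)$ yields $\norm{\phi(t,g)-\phi(t,h)}\le\norm{g-h}+L\int_0^t\norm{\phi(s,g)-\phi(s,h)}\,\mathrm{d}s$, and Gronwall's lemma gives $\norm{\phi(t,g)-\phi(t,h)}\le\mathrm{e}^{Lt}\norm{g-h}$.

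The substantive part is \ref{prop:FlowCont}, and the difficulty is a circularity: one would like to pass to the limit in $\phi(t,g_n)(x)=g_n(x)+\int_0^tF(\phi(s,g_n))(x)\,\mathrm{d}s$, but Proposition~\ref{prop:F}~\ref{prop:Fproducttopo} only yields $F(\phi(s,g_n))(x)\to F(\phi(s,g))(x)$ once $\phi(s,g_n)\to\phi(s,g)$ pointwise is known, which is exactly the claim. My plan is to break the circularity with a Picard scheme on a globally Lipschitz modification of $F$ that still preserves continuity for pointwise convergence. Let $\pi\colon\mathscr{L}^\infty\to\Delta$ be the pointwise truncation $\pi(h)(x)=(h(x)\vee 0)\wedge 1$ and put $\tilde F=F\circ\pi$. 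Then $\tilde F$ is bounded by $M$ (it takes values in $F(\Delta)$) and globally Lipschitz (since $\pi$ is $1$-Lipschitz and $F$ is $L$-Lipschitz on the convex set $\Delta$); it is continuous for pointwise convergence on all of $\mathscr{L}^\infty$ (because $\pi$ is, and $F$ is on $\Delta$ by Proposition~\ref{prop:F}~\ref{prop:Fproducttopo}); and $\tilde F=F$ on $\Delta$, so, $\phi(\cdot,g)$ staying in $\Delta$, uniqueness for the globally Lipschitz field $\tilde F$ shows that the flow of $\tilde F$ coincides with $\phi$ on $\Delta$.

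Now consider the Picard iterates $\psi_0(t,g)=g$, $\psi_{k+1}(t,g)=g+\int_0^t\tilde F(\psi_k(s,g))\,\mathrm{d}s$. I would show by induction on $k$ that $\psi_k(t,\cdot)$ is continuous for pointwise convergence: if $g_n\to g$ pointwise then (induction hypothesis) $\psi_k(s,g_n)\to\psi_k(s,g)$ pointwise for every $s$, hence $\tilde F(\psi_k(s,g_n))(x)\to\tilde F(\psi_k(s,g))(x)$, and since this integrand is bounded by $M$, dominated convergence gives $\psi_{k+1}(t,g_n)(x)\to\psi_{k+1}(t,g)(x)$. On the other hand the standard Picard estimate bounds $\norm{\psi_{k+1}(t,g)-\psi_k(t,g)}$ by $M\,L^k\,t^{k+1}/(k+1)!$, a bound independent of $g$, so $\psi_k(t,\cdot)\to\phi(t,\cdot)$ in $\norm{\cdot}$ uniformly over $g\in\Delta$ (and over $t$ in compacts). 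Combining the two, for $g_n\to g$ pointwise,
\[
  \abs{\phi(t,g_n)(x)-\phi(t,g)(x)}\le 2\sup_{h\in\Delta}\norm{\psi_k(t,h)-\phi(t,h)}+\abs{\psi_k(t,g_n)(x)-\psi_k(t,g)(x)};
\]
choosing $k$ large to make the first term small and then $n$ large to make the second small proves $\phi(t,g_n)(x)\to\phi(t,g)(x)$. The only genuine obstacle is this circularity; the point of the truncation $\pi$ is to keep the Picard iterates in a fixed bounded set — $F$, being quadratic, is merely locally Lipschitz and the naive iterates could leave every ball — while keeping them continuous for pointwise convergence.
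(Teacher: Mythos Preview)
Your proof is correct. Parts \ref{prop:FlowSmooth} and \ref{prop:FlowLipschitz} match the paper's argument essentially verbatim. For \ref{prop:FlowCont} you take a genuinely different route, and it is worth contrasting the two.

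The paper exploits the order structure: given $g_n\to g$ pointwise in $\Delta$, it forms the monotone envelopes $\overline g_n=\sup_{j\ge n}g_j$ and $\underline g_n=\inf_{j\ge n}g_j$. Since the flow is order-preserving (Proposition~\ref{prop:Ordpres}), the sequences $\phi(t,\overline g_n)$ and $\phi(t,\underline g_n)$ are monotone in $n$, hence converge pointwise to limits $w(t,\cdot)$ and $v(t,\cdot)$ automatically. Passing to the limit in the integral equation (using Proposition~\ref{prop:F}~\ref{prop:Fproducttopo} and dominated convergence) shows that both $w$ and $v$ solve the same Cauchy problem with initial datum $g$, so by uniqueness $v=w=\phi(\cdot,g)$, and the sandwich $\phi(t,\underline g_n)\le\phi(t,g_n)\le\phi(t,\overline g_n)$ finishes. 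This sidesteps your circularity by producing convergence of the auxiliary sequences from monotonicity rather than from the claim itself.

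Your Picard-with-truncation argument buys generality: it uses only the forward invariance of $\Delta$, the boundedness and Lipschitz property of $F$ on $\Delta$, and its pointwise continuity there, never the cooperativity or the order-preserving property of the flow. It would therefore apply to non-cooperative vector fields as well. The price is a slightly heavier construction (the truncation $\pi$, the uniform Picard estimate). The paper's argument is shorter and more in keeping with the monotone-dynamical-systems viewpoint that pervades the rest of the analysis, but yours is a perfectly valid and self-contained alternative.
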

\begin{remark}
  Stronger regularity property than \ref{prop:FlowLipschitz} could be proved as in finite
  dimension. Since we use only the Lipschitz continuity property, we didn't go further in this
  direction.
\end{remark}

\begin{proof}
  We begin with property \ref{prop:FlowSmooth}. The smoothness of the semi-flow with respect to the
  time variable can be shown by recurrence in a classical way. We have indeed:
  \[
    \partial_t \phi(t,g) = F(\phi(t,g)), \qquad \partial_{t}^2 \phi(t,g) = \mathcal{D} F[\phi(t,
    g)](\partial_t \phi(t,g)), \qquad \ldots
  \]
  Since $F$ is of class $\mathcal{C}^\infty$ and its repeated derivatives are bounded on $\Delta$
  (see \ref{prop:Fsmooth} and \ref{prop:FLipschitz} in Proposition \ref{prop:F}), the function
  $t \mapsto \phi(t,g)$ is of class $\mathcal{C}^\infty$ and its repeated derivatives are bounded
  for all $g\in \Delta$.
  
  \medskip

  We prove \ref{prop:FlowLipschitz}.  Recall that, since $\phi$ is the semi-flow associated to
  Equation \eqref{eq:SISODE}, the following equality holds for all $g \in \Delta$ and
  $t \in \mathbb{R}_+$:
  \begin{equation}\label{eq:semiflow_int}
    \phi(t, g) = g + \int_0^t F(\phi(s,g)) \, \mathrm{d}s.
  \end{equation}
  Let $g,h \in \Delta$. We have the following control:
  \begin{align*}
    \norm{\phi(t, g) - \phi(t,h)} &\leq \norm{g - h}
                                    + \int_0^t \norm{F(\phi(s,g)) - F(\phi(s,h))} \, \mathrm{d}s \\
                                  &\leq \norm{g - h}
                                    + C \int_0^t \norm{\phi(s,g) - \phi(s,h)} \, \mathrm{d}s,
  \end{align*}
  where $C$ is the Lipschitz coefficient of $F$ on $\Delta$ (see Proposition
  \ref{prop:F}~\ref{prop:FLipschitz}). We conclude by applying Gr\"{o}nwall's inequality.

  \medskip

  We prove property \ref{prop:FlowCont}.  Let $(g_n, \; n \in \mathbb{N})$ be a sequence of
  functions in $\Delta$ converging pointwise toward $g \in \Delta$. We define for
  $n \in \mathbb{N}$:
  \[
    \overline{g}_n = \sup\limits_{j \geq n} g_j 
    \quad\text{and}\quad
    \underline{g}_n = \inf\limits_{j \geq n} g_j.
  \]
  The sequence $(\overline{g}_n, \, n \in \mathbb{N})$ is non-increasing while
  $(\underline{g}_n, \, n \in \mathbb{N})$ is non-decreasing. We also have
  $\underline{g}_n \leq g_n \leq \overline{g}_n$ for all natural number $n$. Since the semi-flow is
  order-preserving by Proposition \ref{prop:Ordpres}, the following inequalities hold for all
  $(t, x) \in \mathbb{R}_+ \times \Omega$ and all $n \in \mathbb{N}^*$:
  \begin{equation}
    \label{eq:ineq}
    \phi(t,\underline{g}_{n-1})(x) \leq \phi(t,\underline{g}_{n})(x) \leq \phi(t, g_{n})(x)
    \leq \phi(t,\overline{g}_{n})(x) \leq \phi(t,\overline{g}_{n-1})(x).
  \end{equation}
  Thus, we can define two measurable functions $v,w \, \colon \mathbb{R}_+ \times \Omega \to [0,1]$
  by:
  \[
    v(t,x) = \lim\limits_{n \to \infty} \phi(t,\underline{g}_n)(x), \qquad w(t,x) =
    \lim\limits_{n\to\infty} \phi(t,\overline{g}_n)(x),
  \]
  for all $(t, x) \in \mathbb{R}_+ \times \Omega$. Notice that $v(t,x) \leq w(t,x)$ by construction.

  Fix $x \in \Omega$ and $t\geq 0$. We have:
  \[
    \phi(t,\overline{g}_n)(x) = \overline{g}_n(x) + \int_0^t F(\phi(s,\overline{g}_n))(x) \,
    \mathrm{d}s.
  \]
  The sequence of functions $(\overline{g}_n(x), \; n\in \mathbb{N})$ converges to $g(x)$ while the
  sequence of functions $(\phi(s,\overline{g}_n), \; n\in \mathbb{N})$ converges pointwise to
  $w(s,\cdot) \in \Delta$ for all $s \geq 0$. By continuity (see Proposition
  \ref{prop:F}~\ref{prop:Fproducttopo}), $F(\phi(s,\overline{g}_n))(x)$ converges to
  $F(w(s, \cdot))(x)$. Furthermore, the functions $s \mapsto F(\phi(s,\overline{g}_n))(x)$ are
  uniformly bounded since $F$ is bounded on $\Delta$ (see Proposition
  \ref{prop:F}~\ref{prop:FLipschitz}). Hence, we deduce from the dominated convergence theorem that:
  \[ 
    w(t,x) = g(x) + \int_0^t F(w(s,\cdot))(x) \, \mathrm{d}s.
  \]

  The previous equality is true for all $x \in \Omega$ and $t \geq 0$. Since $t \mapsto \phi(t,g)$
  is the only solution of \eqref{eq:SIS} having $g$ as initial condition, we have necessarily
  $w(t,\cdot) = \phi(t,g)$. We prove that $v(t, \cdot) = \phi(t,g)$ the same way. Letting $n$ go to
  infinity in \eqref{eq:ineq} proves that $\phi(t, g_n)$ converges pointwise to $\phi(t,g)$, for all
  $t \geq 0$.
\end{proof}

\subsection{Equilibria}
A function $g \in \Delta$ is an equilibrium of the dynamical system $(\Delta, \phi)$ (also called a
stationary point) if for all $t \in \mathbb{R}_+$, $\phi(t,g) = g$. The latter assertion is
equivalent to $F(g) =0$. The function equal to $0$ everywhere is a trivial stationary point. In
mathematical epidemiology, the other equilibria, if they exist, are called \textit{endemic states}
because they model a situation where the infection is constantly maintained at a baseline level in
the population.

The following result gives an easy way to identify those special states in the system. It is a
well-known fact in dynamical system theory which we prove anyway.
\begin{proposition}[Limit points are equilibria]\label{prop:pointwise_conv_equilibrium}
  Let $g \in \Delta$. If $t \mapsto \phi(t,g)$ converges pointwise to a limit $h^* \in \Delta$ when
  $t$ goes to $\infty$, then the function $h^*$ is an equilibrium.
\end{proposition}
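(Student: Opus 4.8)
The plan is to exploit the semi-group property of $\phi$ together with the continuity of $g \mapsto \phi(t,g)$ for the topology of pointwise convergence, established in Proposition \ref{prop:Flow}\ref{prop:FlowCont}. First I would fix $s \geq 0$ and an arbitrary sequence $t_n \to \infty$. By hypothesis, $\phi(t_n, g)$ converges pointwise to $h^*$; likewise $\phi(t_n + s, g)$ converges pointwise to $h^*$, since $t_n + s \to \infty$ as well. On the other hand, the sequence $(\phi(t_n,g))_n$ lies in $\Delta$ and converges pointwise to $h^* \in \Delta$, so applying the (sequential) pointwise continuity of $g \mapsto \phi(s,g)$ yields that $\phi(s, \phi(t_n,g))$ converges pointwise to $\phi(s,h^*)$.

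Next, the semi-group property gives $\phi(s, \phi(t_n,g)) = \phi(t_n + s, g)$ for every $n$. Evaluating at an arbitrary point $x \in \Omega$ and combining the two limits above, one obtains
\[
  \phi(s, h^*)(x) = \lim_{n\to\infty} \phi\big(s, \phi(t_n,g)\big)(x) = \lim_{n\to\infty}\phi(t_n+s, g)(x) = h^*(x).
\]
Since $x \in \Omega$ and $s \geq 0$ are arbitrary, this shows $\phi(s, h^*) = h^*$ for all $s\geq 0$, i.e.\ $h^*$ is an equilibrium (equivalently $F(h^*) = 0$, by differentiating at $s = 0$).

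The argument is essentially self-contained, the only external input being Proposition \ref{prop:Flow}\ref{prop:FlowCont}, and I do not expect a genuine obstacle: the single point requiring a word of care is that pointwise continuity is available only sequentially, which is why I reduce to an arbitrary sequence $t_n \to \infty$ rather than arguing directly with the continuous parameter. An alternative route, avoiding the semi-group property, would be to fix $x$ and use $\phi(t,g)(x) - \phi(t_0,g)(x) = \int_{t_0}^t F(\phi(s,g))(x)\, \mathrm{d}s$: since $F(\phi(s,g))(x) \to F(h^*)(x)$ as $s\to\infty$ by the pointwise continuity of $F$ on $\Delta$ (Proposition \ref{prop:F}\ref{prop:Fproducttopo}), a nonzero value of $F(h^*)(x)$ would force the right-hand side to grow linearly in $t$, contradicting the convergence of $\phi(t,g)(x)$. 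I would present the semi-group version, as it is the shortest.
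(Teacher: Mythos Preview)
Your proposal is correct and follows essentially the same argument as the paper: both combine the semi-group identity $\phi(s,\phi(t,g))=\phi(s+t,g)$ with the pointwise continuity of $g\mapsto\phi(s,g)$ from Proposition~\ref{prop:Flow}\ref{prop:FlowCont} to conclude $\phi(s,h^*)=h^*$. Your extra care in passing to a sequence $t_n\to\infty$ is a minor refinement of the paper's presentation, not a different approach.
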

\begin{proof}
  For all $x \in \Omega$ and $s\geq 0$, we have
  \[
    \phi(s, h^*)(x) = \lim\limits_{t \to \infty} \phi(s, \phi(t, g))(x) = \lim\limits_{t\to\infty}
    \phi(s + t, g)(x) = h^*(x),
  \]
  where the first inequality follows from the continuity of $\phi$ with respect to the pointwise
  convergence topology given in Proposition \ref{prop:Flow}~\ref{prop:FlowCont}. Thus, $h^*$ is an
  equilibrium.
\end{proof}

In the next remark, we check that any equilibrium is continuous with respect to an intrinsic
distance on $\Omega$ based on $\kappa$ and $\gamma$.
\begin{remark}[Continuity of the equlibria]
We consider for all $x,y \in \Omega$:
\[
  r(x,y) = \norm{\kappa(x, \cdot) - \kappa(y, \cdot)}_{\mathrm{TV}} + \abs{\gamma(x) - \gamma(y)},
\]
where $\norm{\cdot}_{\mathrm{TV}}$ is the total variation norm. The function $r$ defines a
pseudo-metric on the space $\Omega$. This pseudo-metric can be thought as an extension of the
neighborhood distance on graphons (see \cite[Section~13.3]{lovasz_large_2012}). Notice that the
Borel $\sigma$-field associated to the topology defined by $r$ is included in $\mathscr{F}$ since
$\gamma$ is measurable and $\kappa$ is a kernel.  \medskip

We have that if $h^*$ is an equilibrium of the dynamical system $(\Delta, \phi)$, then it is
continuous with respect to $r$.  Indeed, we have for all $x \in \Omega$:
\[
  h^*(x) = \frac{\lambda(x)}{\lambda(x) + \gamma(x)} \quad\text{with}\quad \lambda(x) = \int_\Omega
  h^*(z) \, \kappa(x, \mathrm{d}z).
\]
Both $\lambda$ and $\gamma$ are continuous with respect to $r$ and the function
$(a,b) \mapsto a/(a+b)$ is continuous on $\mathbb{R}_+ \times \mathbb{R}_+^*$. This implies that
$h^*$ is continuous.
\end{remark}

\subsection{The maximal equilibrium}
As a consequence of Proposition \ref{prop:deltainv} and Corollary \ref{cor:nodecreas}, the path
$t \mapsto \phi(t, 1)$ is non-increasing and bounded below by $0$. Thus, the path
$t \mapsto \phi(t, 1)$ converges pointwise to a limit say $g^*$ when $t$ goes to infinity:
\begin{equation}
  \label{eq:gstar}
  g^*(x) = \lim\limits_{t \to +\infty} \phi(t, 1)(x), \qquad \forall x \in \Omega.
\end{equation}

\begin{proposition} \label{prop:limitequilibrium} Let $g^*$ be defined by \eqref{eq:gstar}. We have
  the following properties.
  \begin{propenum}
   \item \label{prop:limitequilibriumI} The function $g^*$ is the maximal
     equilibrium of the dynamical system $(\Delta, \phi)$,
     \textit{i.e.}, if $h^*$ is an equilibrium, then $h^* \leq g^*$. 
   \item \label{prop:limitequilibriumII} For all $g^* \leq g \leq 1$, $\phi(t,g)$ converges
     pointwise to $g^*$ as $t$ goes to infinity.
  \end{propenum}
\end{proposition}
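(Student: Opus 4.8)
The plan is to derive both statements directly from the order-preserving property of the semi-flow (Proposition~\ref{prop:Ordpres}) together with the fact that pointwise limits of the flow are equilibria (Proposition~\ref{prop:pointwise_conv_equilibrium}), so essentially no new work is needed. First I would record that $g^* \in \Delta$: since $\phi(t,1) \in \Delta$ for every $t$ by forward invariance (Proposition~\ref{prop:deltainv}), its pointwise limit $g^*$ satisfies $0 \leq g^* \leq 1$. As $t \mapsto \phi(t,1)$ converges pointwise to $g^* \in \Delta$, Proposition~\ref{prop:pointwise_conv_equilibrium} tells us that $g^*$ is an equilibrium, so $\phi(t,g^*) = g^*$ for all $t \geq 0$.

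For property~\ref{prop:limitequilibriumI}, I would take an arbitrary equilibrium $h^*$. Then $h^* \in \Delta$, hence $h^* \leq 1$, and Proposition~\ref{prop:Ordpres} gives $\phi(t,h^*) \leq \phi(t,1)$ for all $t \geq 0$. Since $h^*$ is stationary, the left-hand side is the constant $h^*$, so $h^*(x) \leq \phi(t,1)(x)$ for every $x \in \Omega$ and every $t$; letting $t \to \infty$ in this pointwise inequality and invoking \eqref{eq:gstar} gives $h^* \leq g^*$.

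For property~\ref{prop:limitequilibriumII}, I would fix $g$ with $g^* \leq g \leq 1$ and apply Proposition~\ref{prop:Ordpres} twice to get
\[
  g^* = \phi(t,g^*) \leq \phi(t,g) \leq \phi(t,1) \qquad \text{for all } t \geq 0.
\]
Fixing $x \in \Omega$, the outer terms both converge to $g^*(x)$ as $t \to \infty$ (the right one by \eqref{eq:gstar}), so a squeeze argument forces $\phi(t,g)(x) \to g^*(x)$; as $x$ is arbitrary, this is the claimed pointwise convergence.

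I do not expect any genuine obstacle here, since every step is an immediate application of a previously established result. The only point that merits a little care is that all comparisons and limits must be understood pointwise in $x \in \Omega$ --- the semi-flow is not asserted to converge in the uniform norm $\norm{\cdot}$ --- but this matches exactly the form in which Proposition~\ref{prop:Ordpres} and the definition \eqref{eq:gstar} are stated.
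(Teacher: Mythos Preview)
Your proof is correct and follows essentially the same approach as the paper's: both use Proposition~\ref{prop:pointwise_conv_equilibrium} to see that $g^*$ is an equilibrium, then for~\ref{prop:limitequilibriumI} compare $h^* = \phi(t,h^*)$ with $\phi(t,1)$ via Proposition~\ref{prop:Ordpres} and pass to the limit, and for~\ref{prop:limitequilibriumII} squeeze $\phi(t,g)$ between $g^* = \phi(t,g^*)$ and $\phi(t,1)$. Your version is slightly more explicit in recording that $g^* \in \Delta$ and that the lower bound uses $\phi(t,g^*) = g^*$, but the substance is identical.
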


\begin{proof}
  We first prove property \ref{prop:limitequilibriumI}. The function $g^*$ is an equilibrium
  according to Proposition \ref{prop:pointwise_conv_equilibrium}. Let $h^*$ be another equilibrium
  in $\Delta$. By Proposition \ref{prop:Ordpres}, we have that $h^* = \phi(t, h^*) \leq \phi(t,1)$
  for all $t$. Letting $t$ goes to infinity, we obtain that $h^* \leq g^*$ and thus $g^*$ is the
  maximal equilibrium.

  \medskip  
  
  Now we prove property \ref{prop:limitequilibriumII}. Let $g^* \leq g \leq 1$. By Proposition
  \ref{prop:Ordpres}, we have:
  \[
    g^* \leq \phi(t, g) \leq \phi(t,1).
  \]
  Then, \eqref{eq:gstar} implies that $\phi(t,g)$ converges to $g^*$ as $t$ tends to infinity for
  the pointwise convergence.
\end{proof}
\begin{remark}\label{rem:g_star_inf_1}
  Since $\gamma(x) >0$ for all $x \in \Omega$ according to Assumption \ref{Assum0} and $F(g^*) = 0$,
  we have that $g^*(x) < 1$ for all $x \in \Omega$.
\end{remark}

In general, $g^*$ cannot be computed thanks to a closed-form expression even for the
finite-dimensional model. However, if the function $x \mapsto \kappa(x, \Omega)/\gamma(x)$ is
constant, then the formula used for the one-group model can be extended.

\begin{proposition}
\label{prop:g*=cst}
Suppose that there exists $C \in \mathbb{R}_+$ such that $\kappa(x, \Omega)/\gamma(x) = C$ for all
$x \in \Omega$. Then, $g^*$ is a constant function equal to $\max(0, 1-1/C)$.
\end{proposition}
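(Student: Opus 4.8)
The plan is to prove $g^{*}=c$, where $c:=\max(0,1-1/C)$ is viewed as a constant element of $\Delta$ (note $c\in[0,1)$ since $C<\infty$), by establishing the two pointwise inequalities $c\le g^{*}$ and $g^{*}\le c$.

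For the lower bound I would check that the constant function $c$ is an equilibrium and then invoke the maximality of $g^{*}$ (Proposition~\ref{prop:limitequilibrium}\ref{prop:limitequilibriumI}). Using the standing hypothesis $\kappa(x,\Omega)=C\gamma(x)$, one computes, for every $x\in\Omega$,
\[
  F(c)(x)=(1-c)\,c\,\kappa(x,\Omega)-\gamma(x)\,c=\gamma(x)\,c\,\bigl((1-c)C-1\bigr).
\]
If $C>1$ then $c=1-1/C$, so $(1-c)C=1$ and the right-hand side vanishes; if $C\le 1$ then $c=0$ and it vanishes trivially. Hence $c$ is an equilibrium of $(\Delta,\phi)$, and therefore $c\le g^{*}$.

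For the upper bound, recall that $g^{*}$ is itself an equilibrium (Proposition~\ref{prop:pointwise_conv_equilibrium}), so $F(g^{*})=0$, that is, $(1-g^{*}(x))\int_\Omega g^{*}(y)\,\kappa(x,\mathrm{d}y)=\gamma(x)\,g^{*}(x)$ for all $x$. I would bound $\int_\Omega g^{*}(y)\,\kappa(x,\mathrm{d}y)\le\norm{g^{*}}\,\kappa(x,\Omega)=C\,\gamma(x)\,\norm{g^{*}}$ (using $0\le g^{*}\le\norm{g^{*}}$) and divide by $\gamma(x)>0$ to get $g^{*}(x)\le(1-g^{*}(x))\,C\norm{g^{*}}$, i.e. $g^{*}(x)\le C\norm{g^{*}}/(1+C\norm{g^{*}})$ for every $x$. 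Taking the supremum over $x$ yields $\norm{g^{*}}\le C\norm{g^{*}}/(1+C\norm{g^{*}})$. Then either $\norm{g^{*}}=0$, so $g^{*}\equiv 0\le c$; or $\norm{g^{*}}>0$, and dividing by $\norm{g^{*}}$ gives $1+C\norm{g^{*}}\le C$, i.e. $\norm{g^{*}}\le1-1/C$, which forces $C>1$ (since $\norm{g^{*}}>0$) and hence $\norm{g^{*}}\le c$. In every case $g^{*}\le\norm{g^{*}}\le c$, and combined with $c\le g^{*}$ this gives $g^{*}\equiv c$.

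I do not anticipate a genuine obstacle: the argument is a short sandwich, and the only delicate point is handling the two regimes $C\le1$ and $C>1$ uniformly so that the value $\max(0,1-1/C)$ comes out correctly. It is worth remarking that, since $\inf_\Omega\gamma$ may vanish, one cannot hope to dominate $t\mapsto\phi(t,1)$ by the solution of a single scalar SIS ODE with a uniform recovery rate; the argument above avoids this by reading the bound off the stationarity relation $F(g^{*})=0$ rather than off the flow.
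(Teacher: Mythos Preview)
Your proof is correct and follows essentially the same approach as the paper: first verify that the constant $c=\max(0,1-1/C)$ is an equilibrium (hence $c\le g^{*}$ by maximality), then use $F(g^{*})=0$ together with $T_\kappa(g^{*})\le C\gamma\,\norm{g^{*}}$ to deduce $\norm{g^{*}}\le c$. The paper's write-up of the upper bound passes to a maximizing sequence in the inequality $h^{*}\le C(1-h^{*})\norm{h^{*}}$ rather than first solving for $g^{*}(x)$, but the two computations are algebraically equivalent.
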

\begin{proof}
  It is straightforward to check that the function $x \mapsto \max(0, 1-1/C)$ is an
  equilibrium. Now, we prove that it is maximal. Let $h^* \in \Delta$ be an equilibrium. From
  $F(h^*)/\gamma = 0$, we obtain the inequality:
  \[
    h^* \leq C (1 - h^*) \norm{h^*}.
  \]
  Taking a sequence $(x_n, \, n \in \mathbb{N})$ such that $h^*(x_n)$ converges to $\norm{h^*}$, we
  obtain at the limit that $\norm{h^*} \leq C (1 - \norm{h^*}) \norm{h^*}$. It follows that
  $\norm{h^*} \leq \max(0, 1-1/C)$.
\end{proof}

Since we cannot determine $g^*$ in the general case, the important question that naturally arises is
to find out whether the epidemic can survive in the population or if it will die out whatever the
initial condition is, \textit{i.e.}, we have to determine if $g^*(x)=0$ for all $x \in \Omega$. In
the following, we answer this question with Assumption \ref{Assum1} which imposes further conditions
on the transmission kernel $\kappa$ and the recovery rate $\gamma$.

%%%%%%%%%%% SECTION 3 %%%%%%%%%%%%%%%%%%%

\section{Kernels with density}\label{sec:dense}

In this section, we introduce some tools that we will use in Section \ref{sec:equilibrium}.

\subsection{Banach lattices}\label{subsec:Banachlattice}

For any notion not explained in the text, we refer the reader to the standard texts \cite{zaanen}
and \cite{schaefer_banach_1974} on the subject.  We recall that a partial order is a binary relation
$\leq$ over a set $X$ which is reflexive, antisymmetric and transitive. The inverse (or converse) of
$\leq$, denoted $\geq$, is the relation that satisfies $y \geq x$ if and only if $x \leq y$. A lower
bound of a subset $S$ of the partially ordered set $(X, \leq)$ is an element $a$ of $X$ such that
$a \leq x$ for all $x$ in $S$. A lower bound $a$ of $S$ is called an infimum (or greatest lower
bound) of $S$ if for all lower bounds $y$ of $S$ in $X$, $y \leq a$ ($a$ is larger than or equal to
any other lower bound). Similarly, an upper bound of a subset S of a partially ordered set
$(X, \leq)$ is an element $b$ of $X$ such that $b \geq x$ for all $x$ in $S$. An upper bound $b$ of
$S$ is called a supremum (or least upper bound) of $S$ if $b$ is less than any other upper
bound. Infima and suprema do not necessarily exist. However, if an infimum or supremum does exist,
it is unique.

A lattice is an ordered set such that a subset consisting of two points has a supremum and an
infimum. In a lattice $X$, the infimum and supremum of the subset $\set{x,y} \subset X$ are denoted
$x \wedge y$ and $x \vee y$ respectively. By induction it is immediately evident that every finite
subset of $X$ has a supremum and an infimum.

\medskip

A Riesz space is a vector space $X$ endowed with a lattice structure (denoted $\leq$) such that, for
any $x,y \in X$:
\begin{itemize}
\item[-] Translation invariance: if $x \leq y$ then $x + z \leq y + z$ for all $z\in X$.
\item[-] Positive homogeneity: if $x \leq y$, then $\lambda x \leq \lambda y$, for all scalar
  $\lambda \geq 0$.
\end{itemize}
We define the absolute value $\abs{x}$ of an element $x$ of a Riesz space by
$\abs{x} = x \vee (-x)$. We proceed with some further definitions.

\begin{definition}
  A Banach lattice $(X, \leq, \norm{\cdot})$ is a Riesz space $(X, \leq)$ equipped with a complete
  norm $\norm{\cdot}$ and such that, for any $x,y \in X$, we have:
  \begin{equation}
   \label{eq:abs}
   \abs{x} \leq \abs{y} \implies \norm{x} \leq \norm{y}.
 \end{equation}
\end{definition}
In the Banach lattice $X$, the positive cone:
\[
  X_+ = \set{ x \in E \, \colon \, x \geq 0}.
\]
is a proper cone, as it is a closed (see Theorem 15.1 (ii) in \cite{zaanen}) convex set such that
$\lambda X_+ \subset X_+$ for all $\lambda \in \mathbb{R}_+$, and $X_+ \cap (- X_+) = \set{0}$. It
is also a reproducing cone ($X=X_+-X_+$) as every element $x$ in $X$ can be decomposed as
$ x = (x\vee 0) - ((-x)\vee 0)$ and $y\vee 0\in X_+$ for all $y\in X$.

\subsection{Spectral analysis in Banach lattices}

In this section, we present some results of spectral analysis in Banach spaces. Let
$(X, \norm{\cdot})$ be a Banach space. We recall that the spectrum $\sigma(A)$ of a bounded operator
$A$ on $X$ is the of all complex numbers $\lambda$ such that $A - \lambda \mathrm{Id}$ does not have
a bounded inverse operator. It is well known that the spectrum of a bounded operator is a compact
set in $\mathbb{C}$.  The essential spectrum $\sigma_{\text{ess}}(A)\subset \sigma(A)$ is the set of
complex numbers $\lambda$ such that $A - \lambda \mathrm{Id}$ is not a Fredholm operator with index
$0$.

For a bounded operator $A$ on $X$, the spectral bound, the spectral radius and the essential
spectral radius are defined as:
\begin{align}
  \label{eq:spectral_bound}
  s(A) &= \sup \set{ \operatorname{Re}(\lambda) \, \colon \, \lambda \in
         \sigma(A)},  \\ 
  \label{eq:spectral_radius}
  r(A) &=  \sup \set{ \abs{\lambda} \, \colon \, \lambda \in
         \sigma(A)}=\lim_{n\rightarrow +\infty }
         \norm{A^n}^{1/n}=\inf_{n\in \mathbb{N}^* }
         \norm{A^n}^{1/n},  \\
  \label{eq:essential_spectral_radius}
  r_{\mathrm{ess}}(A) &=  \sup \set{ \abs{\lambda} \, \colon \, \lambda \in \sigma_{\text{ess}}(A)},
\end{align}
respectively, with the convention that $\sup \emptyset=0$. We refer to
\cite[Section~I.4]{edmunds_spectral_2018} for definitions of other essential spectra, which however
define the same essential spectral radius (in our setting $\sigma_{\text{ess}}(A)$ corresponds to
the essential spectrum $\sigma_{e4}(A) $ defined p.~37 in \cite{edmunds_spectral_2018}). As
$\sigma_{\text{ess}}(A)\subset \sigma(A)$, we get:
\begin{equation}\label{eq:radius_ineq}
  r_{\text{ess}}(A) \leq r(A) \leq \norm{A}.
\end{equation}

\medskip

The spectral theory of positive bounded operator on Banach lattice extends the Perron-Frobenius
theory in infinite dimension. Let $A$ be a positive operator on a Banach lattice
$(X, \leq, \norm{\cdot})$ such that its spectral radius $r(A)$ is positive. Recall $X^\star_+$ is
the dual cone of $X_+$.  A vector $x \in X_+ \backslash \set{0}$ (resp.
$x^\star\in X^\star_+ \backslash \set{0}$) such that $Ax = r(A)x$ (resp.
$A^\star x^\star = r(A) x^\star$) is called a right (resp. left) Perron eigenvector. We have the
following important result.

\begin{theorem}\label{th:spectralradius}
  Let $(X, \leq, \norm{\cdot})$ be a Banach lattice. Let $A, B$ be positive bounded operators on
  $X$. We have the following properties.
  \begin{propenum}
  \item \label{th:spectralradiusII} If $B - A$ is a positive operator, then $r(A) \leq r(B)$.
  \item \label{th:spectralradiusI} The spectral radius $r(A)$ belongs to $\sigma(A)$ and thus
    $r(A) = s(A)$.
  \item \label{th:KR} If $r_{\mathrm{ess}}(A) < r(A)$, then, there exists
    $x \in X_+ \backslash \set{0}$ such that: $Ax = r(A) x$.
  \end{propenum}
\end{theorem}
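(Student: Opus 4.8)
The plan is to establish \ref{th:spectralradiusII}, \ref{th:spectralradiusI} and \ref{th:KR} in that order (item \ref{th:spectralradiusI} feeds into \ref{th:KR}), exploiting throughout the interplay between positivity of the operators and the lattice norm inequality \eqref{eq:abs}.

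For \ref{th:spectralradiusII}, I would use the monotonicity of the spectral radius formula in \eqref{eq:spectral_radius}. From $0 \le A \le B$ (that is, $B - A$ positive), an induction based on the identity $B^{n+1} - A^{n+1} = B(B^n - A^n) + (B - A) A^n$, a sum of products of positive operators, gives $0 \le A^n \le B^n$ for all $n$. Then, in a Banach lattice, $0 \le S \le T$ implies $\norm{S} \le \norm{T}$: for any $x$ one has $\abs{Sx} \le S\abs{x} \le T\abs{x}$, hence $\norm{Sx} \le \norm{T\abs{x}} \le \norm{T}\,\norm{x}$ by \eqref{eq:abs}. Applying this with $S = A^n$, $T = B^n$ yields $\norm{A^n} \le \norm{B^n}$, and passing to the limit $n \to \infty$ in \eqref{eq:spectral_radius} gives $r(A) \le r(B)$.

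For \ref{th:spectralradiusI}, write $R(\lambda, A) = (\lambda\,\mathrm{Id} - A)^{-1}$ for the resolvent. One may assume $r := r(A) > 0$ — if $r = 0$ then $\sigma(A) = \set{0}$, since the spectrum of a bounded operator is a non-empty subset of $\overline{D(0, r(A))}$, so the statement holds — and then, replacing $A$ by $A/r$ (still positive, with $\sigma(A/r) = r^{-1}\sigma(A)$), one may assume $r(A) = 1$. Suppose for contradiction that $1 \notin \sigma(A)$. Since $\sigma(A) \subset \overline{D(0,1)}$, the whole segment $[1,2]$ lies in the resolvent set, so $M := \sup_{s \in [1,2]} \norm{R(s, A)}$ is finite. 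For $\abs{\lambda} > 1$ the Neumann series $R(\lambda, A) = \sum_{n \ge 0} \lambda^{-n-1} A^n$ converges in operator norm, and termwise domination in the lattice gives, for every $x$,
\[
  \abs{R(\lambda, A) x} \le \sum_{n \ge 0} \abs{\lambda}^{-n-1} \abs{A^n x} \le \sum_{n \ge 0} \abs{\lambda}^{-n-1} A^n \abs{x} = R(\abs{\lambda}, A) \abs{x},
\]
so $\norm{R(\lambda, A)} \le \norm{R(\abs{\lambda}, A)}$; in particular $\norm{R(\lambda, A)} \le M$ on the annulus $1 < \abs{\lambda} \le 2$. On the other hand, the general bound $\norm{R(\lambda, A)} \ge 1/\operatorname{dist}(\lambda, \sigma(A))$ holds, so if some $\mu_0 \in \sigma(A)$ had $\abs{\mu_0} = 1$, then along $\lambda_t = (1 + t)\mu_0$ with $t \downarrow 0$ we would get $\norm{R(\lambda_t, A)} \ge 1/t \to \infty$, contradicting the bound $M$. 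Hence $\sigma(A) \cap \set{\abs{\lambda} = 1} = \emptyset$, and by compactness of $\sigma(A)$ we get $r(A) < 1$, a contradiction. Therefore $r(A) \in \sigma(A)$; as it is a real point of the spectrum, $s(A) \ge \operatorname{Re}(r(A)) = r(A)$, while $s(A) \le r(A)$ is immediate, so $s(A) = r(A)$. I expect the resolvent domination inequality above to be the delicate point here: it is precisely where the lattice structure enters, through $\abs{A^n x} \le A^n \abs{x}$.

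For \ref{th:KR}, the hypothesis $r_{\mathrm{ess}}(A) < r(A)$ means $r := r(A) \notin \sigma_{\mathrm{ess}}(A)$, so $A - r\,\mathrm{Id}$ is a Fredholm operator of index $0$; in particular $\ker(A - r\,\mathrm{Id})$ is finite-dimensional and $\mathrm{ran}(A - r\,\mathrm{Id})$ is closed. By \ref{th:spectralradiusI}, $r \in \sigma(A)$, so for reals $\mu_n \downarrow r$ one has $\norm{R(\mu_n, A)} \ge 1/(\mu_n - r) \to \infty$, and each $R(\mu_n, A) = \sum_{k \ge 0} \mu_n^{-k-1} A^k$ is a positive operator. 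Since $\abs{Sx} \le S\abs{x}$ for a positive $S$, the operator norm of $R(\mu_n, A)$ is attained over the positive cone, so I would pick $f_n \in X_+$ with $\norm{f_n} = 1$ and $\norm{R(\mu_n, A) f_n} \ge \norm{R(\mu_n, A)}/2$, and set $g_n := R(\mu_n, A) f_n / \norm{R(\mu_n, A) f_n} \in X_+$. Then $\norm{g_n} = 1$, $(\mu_n\,\mathrm{Id} - A) g_n = f_n/\norm{R(\mu_n, A) f_n} \to 0$, and hence $(A - r\,\mathrm{Id}) g_n \to 0$. To turn this approximate eigenvector into a genuine one, I would invoke the Fredholm property: writing $X = \ker(A - r\,\mathrm{Id}) \oplus Y$ with $Y$ closed, the operator $A - r\,\mathrm{Id}$ is bounded below on $Y$, so decomposing $g_n = k_n + y_n$ accordingly, $(A - r\,\mathrm{Id}) y_n = (A - r\,\mathrm{Id}) g_n \to 0$ forces $y_n \to 0$, whence $\norm{k_n} \to 1$; since $\ker(A - r\,\mathrm{Id})$ is finite-dimensional, a subsequence $k_{n_j}$ converges to some $x$, so $g_{n_j} \to x$, giving $x \in X_+$ (the cone is closed), $\norm{x} = 1$, and $(A - r\,\mathrm{Id}) x = 0$, i.e. $Ax = r(A) x$ with $x \in X_+ \setminus \set{0}$. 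The main obstacle here is the Fredholm-theoretic input — that $\lambda \notin \sigma_{\mathrm{ess}}(A)$ forces $A - \lambda\,\mathrm{Id}$ to be Fredholm of index $0$, together with the resulting compactness — which I would quote from the references on essential spectra (e.g. \cite{edmunds_spectral_2018}); alternatively, one can note that outside $\sigma_{\mathrm{ess}}(A)$ the spectrum consists of poles of the resolvent and recover the eigenvector from the leading Laurent coefficient of $R(\cdot, A)$ at $r$, which is a nonzero positive operator as a limit of $(\mu - r)^m R(\mu, A) \ge 0$ as $\mu \downarrow r$.
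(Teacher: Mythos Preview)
Your proposal is correct. The paper itself does not prove this theorem at all: it simply cites Marek for \ref{th:spectralradiusII}, Schaefer and Zaanen for \ref{th:spectralradiusI}, and Nussbaum for \ref{th:KR}, together with a remark reconciling the various notions of essential spectrum. So your write-up is not a different proof of the same argument, but rather a self-contained proof where the paper gives none.

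A few comments on how your arguments relate to what those references actually do. Your proof of \ref{th:spectralradiusII} via $0\le A^n\le B^n$ and the norm monotonicity $0\le S\le T\Rightarrow\norm{S}\le\norm{T}$ is exactly the standard one. Your proof of \ref{th:spectralradiusI} is essentially Schaefer's: domination of the resolvent $\norm{R(\lambda,A)}\le\norm{R(\abs{\lambda},A)}$ combined with the blow-up $\norm{R(\lambda,A)}\ge 1/\mathrm{dist}(\lambda,\sigma(A))$; the only thing you gloss over is that the Neumann series manipulation with complex $\lambda$ takes place in the complexification $X_{\mathbb{C}}$ of the real Banach lattice, where one needs the complex modulus and its compatibility with the norm --- this is routine but worth a word. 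For \ref{th:KR}, Nussbaum's original argument goes through fixed-point/degree-theoretic machinery on the cone; your route via positive approximate eigenvectors $g_n$ and the Fredholm decomposition $X=\ker(A-r\,\mathrm{Id})\oplus Y$ is more elementary and perfectly valid. The alternative you mention at the end --- extracting the eigenvector from the leading Laurent coefficient of $R(\cdot,A)$ at the pole $r$, which is positive as a limit of positive operators --- is in fact closer in spirit to how this is often presented in the Banach-lattice literature, and would also work cleanly here.
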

\begin{proof}
  Property \ref{th:spectralradiusII} is proved in \cite[Theorem~4.2]{marek}. Property
  \ref{th:spectralradiusI} is proved in \cite{Schaefer1960} (notice that \eqref{eq:abs} implies that
  $X_+$ is normal in the setting of \cite{Schaefer1960}), see also \cite[Lemma
  41.1.(ii)]{zaanen}. Property \ref{th:KR} was shown by Nussbaum in
  \cite[Corollary~2.2]{nussbaum_eigenvectors_1981} (notice that a reproducing cone is total), where
  the essential spectrum in \cite{nussbaum_eigenvectors_1981} is defined in
  \cite{nussbaum_radius_1970} and corresponds to $\sigma_{e5}(A)$ in
  \cite[p.~37]{edmunds_spectral_2018}. However, the essential spectral radius of $\sigma_{e5}(A)$ is
  equal to $r_{\mathrm{ess}}(A)$ the essential spectral radius of $\sigma_{e4}(A)$, according to
  \cite[Theorem I.4.10]{edmunds_spectral_2018}.
\end{proof}

It $A$ is assumed to be a compact operator, then Theorem \ref{th:spectralradius}~ \ref{th:KR} is the
so called Krein-Rutman theorem, see \cite[Theorem 41.2]{zaanen}.  We will also need the following
result proved in \cite[Propositions~2.1-2.2]{forster}.

\begin{proposition}[Collatz-Wielandt inequality]\label{prop:Collatz}
  Let $(X, \leq, \norm{\cdot})$ be a Banach lattice and $A$ be a positive bounded operator on
  $X$. We have:
  \[
    \sup \set{ \lambda \in \mathbb{R}\, \colon \, \exists x \in X_+ \backslash \set{0}, \, Ax \geq
      \lambda x } \leq r(A).
  \]
\end{proposition}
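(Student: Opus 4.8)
The plan is to prove the Collatz--Wielandt inequality for a positive bounded operator $A$ on a Banach lattice $(X, \leq, \norm{\cdot})$: if $x \in X_+ \setminus \set{0}$ satisfies $Ax \geq \lambda x$ for some real $\lambda$, then $\lambda \leq r(A)$. The case $\lambda \leq 0$ is immediate since $r(A) \geq 0$ (the spectral radius of a positive operator is a nonnegative real by Theorem \ref{th:spectralradius}~\ref{th:spectralradiusI}), so I may assume $\lambda > 0$.

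The key observation is that $Ax \geq \lambda x$ can be iterated: since $A$ is positive, applying $A$ to both sides of $Ax \geq \lambda x$ (equivalently $Ax - \lambda x \in X_+$) gives $A^2 x \geq \lambda A x \geq \lambda^2 x$, and by induction $A^n x \geq \lambda^n x$ for all $n \geq 1$. Now I would use the norm monotonicity \eqref{eq:abs} of the Banach lattice: from $0 \leq \lambda^n x \leq A^n x$ and the fact that $\abs{\lambda^n x} = \lambda^n x$ and $\abs{A^n x} = A^n x$ (both being in $X_+$), property \eqref{eq:abs} yields $\lambda^n \norm{x} = \norm{\lambda^n x} \leq \norm{A^n x} \leq \norm{A^n} \, \norm{x}$. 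Since $x \neq 0$ we have $\norm{x} > 0$, so dividing gives $\lambda^n \leq \norm{A^n}$, hence $\lambda \leq \norm{A^n}^{1/n}$ for every $n \in \mathbb{N}^*$.

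Finally, I would take the infimum (or limit) over $n$ and invoke Gelfand's formula, which is recorded in \eqref{eq:spectral_radius} as $r(A) = \inf_{n \in \mathbb{N}^*} \norm{A^n}^{1/n}$, to conclude $\lambda \leq r(A)$. Taking the supremum over all admissible $\lambda$ gives the stated inequality. I should double-check the edge case where no such $x$ exists: then the supremum is $\sup \emptyset = 0 \leq r(A)$ by the stated convention, so the inequality holds trivially.

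The argument is essentially routine; there is no serious obstacle. The only point requiring a little care is the first iteration step—confirming that positivity of $A$ really lets one multiply an inequality $Ax \geq \lambda x$ through by $A$—but this is immediate from the definition that $A$ maps $X_+$ into $X_+$, together with the linearity $A(Ax - \lambda x) = A^2 x - \lambda A x$. One should also make sure to handle $\lambda \le 0$ separately before iterating, since iterating $A^n x \ge \lambda^n x$ for negative $\lambda$ would be sign-problematic, though in fact the conclusion is trivial there.
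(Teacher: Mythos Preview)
Your argument is correct. The paper does not actually prove this proposition but merely cites \cite[Propositions~2.1-2.2]{forster}, so your self-contained elementary proof via iteration of $A^n x \geq \lambda^n x$ and Gelfand's formula \eqref{eq:spectral_radius} is a genuine addition rather than a repetition; it is also essentially the standard proof one finds in the literature.
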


\subsection{The Banach lattice of bounded measurable functions}\label{subsec:Linfty}

The Banach space $(\mathscr{L}^\infty, \norm{\cdot})$ equipped with the partial order $\leq$ defined
by the proper cone $\mathscr{L}^\infty_+$ from \eqref{eq:coneK} is a Banach lattice.

Let $\nu$ be a finite signed measure on $(\Omega, \mathscr{F})$. For $g\in\mathscr{L}^\infty$, we
write $\braket{\nu,g}=\int_\Omega g(x) \, \nu(\mathrm{d}x)$ and thus identify $\nu$ as an element of
$\mathscr{L}^{\infty,\star}$, the dual space of $\mathscr{L}^{\infty}$. (Recall that
$\mathscr{L}^{\infty,\star}$ can be identified as the space of bounded and finitely additive signed
measures on $(\Omega, \mathscr{F})$.)

Let $\mu$ be a given finite positive measure on $(\Omega, \mathscr{F})$. For $q\in (1, +\infty)$,
denote by $(L^q(\mu), \norm{\cdot}_q)$ the usual Banach space of real-valued measurable functions
$f$ defined on $(\Omega,\mathscr{F})$ such that
$\norm{f}_q=\left(\int_\Omega |f(x)|^q \, \mu(\mathrm{d}x)\right)^{1/q}$ is finite and quotiented by
the equivalence relation of the $\mu$-almost everywhere equality.

Let $\iota$ be the natural linear application $\iota$ from $\mathscr{L}^\infty$ to $L^p(\mu)$, with
$p=q/(q-1)$ the conjugate of $q$, and $\iota^\star$ its dual. For $f\in L^q(\mu)$, we can see
$\iota^\star(f)$ as the bounded $\sigma$-finite signed measure $f(x) \mu(\mathrm{d}x)$ elements of
$\mathscr{L}^{\infty,\star}$.  By convention, for $f \in L^q(\mu)$ and $g$ in $\mathscr{L}^\infty$,
we write:
\[
  \braket{f,g}=\braket{\iota^\star(f),g}=\int_\Omega f(x) g(x) \, \mu(\mathrm{d}x).
\]

Let $\mathsf{k}$ be a non-negative measurable function defined on
$(\Omega\times \Omega, \mathscr{F}\otimes \mathscr{F})$ such that
$\sup_{x \in \Omega} \int \mathsf{k}(x,y) \, \mu(\mathrm{d}y) < \infty$.  We define the integral
operator $T_\mathsf{k}$ as the operator $T_\kappa$ defined by \eqref{eq:def-T} with kernel
$\kappa(x, \mathrm{d}y)= \mathsf{k}(x,y) \, \mu(\mathrm{d}y)$. Let $q\in (1, +\infty )$. We assume
the following condition holds:
\begin{align}
\label{eq:strongintegrability2}
  \sup\limits_{x \in \Omega} \int \mathsf{k}(x,y)^q \, \mu(\mathrm{d}y) < \infty. 
\end{align}
Then, we can also define the bounded operator:
\begin{align*}
  \tilde T_\mathsf{k} \quad \colon \quad L^p(\mu) &\to \mathscr{L}^\infty \\
  \nonumber
  g &\mapsto \left( x \mapsto \int_\Omega g(y) \, \mathsf{k}(x,y)  \,
      \mu(\mathrm{d}y) \right). 
\end{align*}
Using the density of the range of $\iota$, we get that
$T_{\mathsf{k}} = \tilde{T}_{\mathsf{k}} \iota$. We also define the bounded operator
$\hat T_{\mathsf{k}}$ from $L^p(\mu)$ to $L^p(\mu)$:
\begin{equation}
   \label{eq:def-hat-Tk}
   \hat T_{\mathsf{k}} = \iota \tilde{T}_{\mathsf{k}}.
\end{equation}
And we have the following commutative diagram:
\[
\begin{tikzcd}
  \mathscr{L}^\infty \arrow[dd, "T_{\mathsf{k}}"] \arrow[rr, "\iota"] & & L^p(\mu) \arrow[lldd,
  "\tilde{T}_{\mathsf{k}}"] \arrow[dd, "\hat
  T_{\mathsf{k}}"] \\
  &  &  \\
  \mathscr{L}^\infty \arrow[rr, "\iota"] & & L^p(\mu)
\end{tikzcd}
\]

The following lemma has a fundamental importance for the development of Section
\ref{sec:equilibrium}. The last property of the following Lemma on connected integral operator is
part of the Perron-Jentzsch theorem, see \cite[Theorem V.6.6 and Example
V.6.5.b]{schaefer_banach_1974}.

\begin{lemma}\label{lem:schae}
  Let $\mathsf{k}$ be a non-negative measurable function defined on
  $(\Omega\times \Omega, \mathscr{F}\otimes \mathscr{F})$ such \eqref{eq:strongintegrability2} holds
  for some $q\in (1, +\infty )$.  Then, the positive bounded operators
  $T_{\mathsf{k}} \, \colon \, \mathscr{L}^\infty \to \mathscr{L}^\infty$ and
  $T_{\mathsf{k}} \, \colon \, L^p(\mu) \to L^p(\mu)$, with $p=q/(q-1)$, satisfies:
  \begin{propenum}
  \item \label{lem:schae0} If $g=0$ $\mu$-a.e., then we have $T_{\mathsf{k}} g = 0$.

  \item \label{lem:schaeII} The operator $T_{\mathsf{k}}$ is weakly compact.
  \item \label{lem:schaeIII} The operators $T_{\mathsf{k}}^2$ and $\hat T_{\mathsf{k}}$ are compact.
  \item \label{lem:schaeIII-r} The operators $T_{\mathsf{k}}$ and $\hat T_{\mathsf{k}}$ have the
    same spectrum, and thus $r(T_{\mathsf{k}})=r(\hat T_{\mathsf{k}})$.

  \item \label{lem:schaeIV} If $r(T_{\mathsf{k}})>0$, then the operator $T_{\mathsf{k}}$ has a right
    Perron eigenvector in $\mathscr{L}^\infty_+ \backslash \set{0}$ and a left Perron eigenvector in
    $L^q_+(\mu) \backslash \set{0} \subset \mathscr{L}^{\infty, \star}_+ \backslash \set{0}$.
  \item \label{lem:schaeVI} If $\mathsf{k}$ is connected in the sense of Assumption
    \ref{Assum_connectivity}, then $r(T_{\mathsf{k}})>0$ and the right and left Perron eigenvector
    are unique (up to a multiplicative constant) and are $\mu$-a.e. positive, with the left Perron
    eigenvector seen as an element of $L^q_+(\mu) \backslash \set{0}$.
\end{propenum}
\end{lemma}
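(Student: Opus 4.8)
The plan is to reduce everything to the integral operator $\hat T_{\mathsf k}$ acting on the reflexive space $L^p(\mu)$, using the factorisations $T_{\mathsf k}=\tilde T_{\mathsf k}\iota$ and $\hat T_{\mathsf k}=\iota\tilde T_{\mathsf k}$ set up just before the statement. Property \ref{lem:schae0} is immediate: if $g=0$ $\mu$-a.e., then for each $x$ the integrand $y\mapsto g(y)\mathsf k(x,y)$ vanishes $\mu$-a.e., so $T_{\mathsf k}g(x)=0$. For \ref{lem:schaeII}, I would note that $\iota\colon\mathscr L^\infty\to L^p(\mu)$ sends the unit ball into a norm-bounded, hence relatively weakly compact (by reflexivity of $L^p(\mu)$, since $1<p<\infty$) subset of $L^p(\mu)$; thus $\iota$ is weakly compact, and so is $T_{\mathsf k}=\tilde T_{\mathsf k}\iota$ since weakly compact operators form an operator ideal. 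For \ref{lem:schaeIII}, I would check that \eqref{eq:strongintegrability2} together with the finiteness of $\mu$ yields the Hille--Tamarkin bound $\int_\Omega\bigl(\int_\Omega\mathsf k(x,y)^q\,\mu(\mathrm{d}y)\bigr)^{p/q}\mu(\mathrm{d}x)\le\mu(\Omega)\,\bigl(\sup_x\int_\Omega\mathsf k(x,y)^q\,\mu(\mathrm{d}y)\bigr)^{p/q}<\infty$, so that $\hat T_{\mathsf k}$ is compact on $L^p(\mu)$ by \cite[Theorem~41.6]{zaanen}; then $T_{\mathsf k}^2=\tilde T_{\mathsf k}\hat T_{\mathsf k}\iota$ is compact as a bounded operator composed with a compact one.

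For \ref{lem:schaeIII-r}, I would invoke the classical fact that for bounded operators $A$ and $B$ the operators $AB$ and $BA$ have the same spectrum away from $0$; applied with $A=\tilde T_{\mathsf k}$ and $B=\iota$ this gives $\sigma(T_{\mathsf k})\setminus\{0\}=\sigma(\hat T_{\mathsf k})\setminus\{0\}$, and since the spectrum of a bounded operator is non-empty and compact, this already forces $r(T_{\mathsf k})=r(\hat T_{\mathsf k})$ (if the common non-zero part is empty, both spectra reduce to $\{0\}$). For \ref{lem:schaeIV}, the right Perron eigenvector comes from Theorem~\ref{th:spectralradius}~\ref{th:KR} applied to the positive operator $T_{\mathsf k}$ on the Banach lattice $\mathscr L^\infty$: since $T_{\mathsf k}^2$ is compact and $r_{\mathrm{ess}}(T_{\mathsf k})^2=r_{\mathrm{ess}}(T_{\mathsf k}^2)=0$, we have $r_{\mathrm{ess}}(T_{\mathsf k})=0<r(T_{\mathsf k})$, so there is $v\in\mathscr L^\infty_+\setminus\{0\}$ with $T_{\mathsf k}v=r(T_{\mathsf k})v$. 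For the left eigenvector I would apply the same theorem to the adjoint $\hat T_{\mathsf k}^\star$, which is a compact (Schauder's theorem) positive operator on the Banach lattice $L^q(\mu)=L^p(\mu)^\star$ with $r(\hat T_{\mathsf k}^\star)=r(\hat T_{\mathsf k})=r(T_{\mathsf k})>0$; this produces $\psi\in L^q_+(\mu)\setminus\{0\}$ with $\hat T_{\mathsf k}^\star\psi=r(T_{\mathsf k})\psi$. Setting $\nu=\iota^\star(\psi)$ and using the dual identity $T_{\mathsf k}^\star\iota^\star=\iota^\star\hat T_{\mathsf k}^\star$ gives $T_{\mathsf k}^\star\nu=r(T_{\mathsf k})\nu$, with $\nu\ne 0$ because $\iota^\star$ is injective on $L^q(\mu)$ and $\nu\in\mathscr L^{\infty,\star}_+$ because $\psi\ge 0$ $\mu$-a.e.

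For \ref{lem:schaeVI}, the crux is to translate Assumption~\ref{Assum_connectivity} into ideal-irreducibility of $\hat T_{\mathsf k}$ on $L^p(\mu)$: the closed ideals of $L^p(\mu)$ are the sets $\{f\colon f=0\ \mu\text{-a.e. on }A^c\}$, and such an ideal is $\hat T_{\mathsf k}$-invariant exactly when $\int_{A^c\times A}\mathsf k(x,y)\,\mu(\mathrm{d}x)\mu(\mathrm{d}y)=0$, so connectivity is precisely the absence of a non-trivial invariant ideal. Since $\hat T_{\mathsf k}$ is moreover compact, de Pagter's theorem gives $r(\hat T_{\mathsf k})>0$, hence $r(T_{\mathsf k})>0$; and the Perron--Jentzsch theorem \cite[Theorem~V.6.6]{schaefer_banach_1974} (see also \cite[Theorem~5.2]{grobler-1995}) gives that $r(\hat T_{\mathsf k})$ is a simple eigenvalue with a $\mu$-a.e.\ strictly positive eigenfunction, unique up to a scalar; applying the same to the compact irreducible positive operator $\hat T_{\mathsf k}^\star$ on $L^q(\mu)$ handles the left eigenvector. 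To conclude for $T_{\mathsf k}$ itself on $\mathscr L^\infty$, I would transfer along $\iota$: if $T_{\mathsf k}v=r(T_{\mathsf k})v$ with $v\in\mathscr L^\infty_+\setminus\{0\}$, applying $\iota$ gives $\hat T_{\mathsf k}(\iota v)=r(T_{\mathsf k})\,\iota v$ with $\iota v\ne 0$ — otherwise $v=0$ $\mu$-a.e.\ and \ref{lem:schae0} would force $v=0$ — so $\iota v$ is a scalar multiple of the positive Perron eigenfunction of $\hat T_{\mathsf k}$, whence $v$ is $\mu$-a.e.\ positive; and if $v'$ is another such eigenvector, then $v'-cv=0$ $\mu$-a.e.\ for a suitable $c$, so $T_{\mathsf k}(v'-cv)=0=r(T_{\mathsf k})(v'-cv)$ and therefore $v'=cv$ everywhere. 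The left eigenvector is treated symmetrically, pushing the unique $L^q$-eigenfunction of $\hat T_{\mathsf k}^\star$ forward by $\iota^\star$.

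I expect the main obstacle to be item \ref{lem:schaeVI}: pinning down the equivalence between the connectivity condition and ideal-irreducibility at the level of the $L^p(\mu)$-operator (the direction of the exceptional set, and the reduction to the relevant case $\dim L^p(\mu)\ge 2$), and then carefully bridging $\mu$-a.e.\ statements valid in $L^p(\mu)$ with the \emph{everywhere} statements required in $\mathscr L^\infty$ — which is exactly the point where \ref{lem:schae0} does the work. Parts \ref{lem:schae0}--\ref{lem:schaeIV} are, by contrast, essentially bookkeeping around the factorisation, the Hille--Tamarkin criterion, and the Krein--Rutman/Nussbaum theorem already recorded as Theorem~\ref{th:spectralradius}.
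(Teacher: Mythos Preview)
Your proposal is correct and follows the same overall architecture as the paper --- factor $T_{\mathsf k}$ through the reflexive space $L^p(\mu)$ via $T_{\mathsf k}=\tilde T_{\mathsf k}\iota$ and $\hat T_{\mathsf k}=\iota\tilde T_{\mathsf k}$, then harvest compactness and Perron--Jentzsch on $L^p(\mu)$ and transfer back. The tactical choices differ in two places and are worth noting.

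For \ref{lem:schaeIII}, you invoke the Hille--Tamarkin criterion directly to get compactness of $\hat T_{\mathsf k}$, then deduce compactness of $T_{\mathsf k}^2=\tilde T_{\mathsf k}\hat T_{\mathsf k}\iota$ by ideal property. The paper instead argues that $\mathscr L^\infty$ is an AM-space, hence has the Dunford--Pettis property, so the square of the weakly compact operator $T_{\mathsf k}$ is compact; compactness of $\hat T_{\mathsf k}$ is then pulled from a companion result in Schaefer. Your route is more elementary and self-contained; the paper's route is more in the Banach-lattice spirit and avoids checking the integrability bound explicitly.

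For \ref{lem:schaeIII-r}, you use the identity $\sigma(AB)\setminus\{0\}=\sigma(BA)\setminus\{0\}$ with $A=\tilde T_{\mathsf k}$, $B=\iota$ --- which is exactly the trick the paper uses later in Proposition~\ref{prop:aon=leaky}. The paper instead argues that both essential spectra reduce to $\{0\}$ (via power-compactness), so the non-zero spectra consist of eigenvalues, and then transfers eigenvectors explicitly in both directions. Your argument is shorter; note however that it only yields $\sigma(T_{\mathsf k})\cup\{0\}=\sigma(\hat T_{\mathsf k})\cup\{0\}$, which suffices for the spectral-radius conclusion (the only thing used downstream) but does not literally prove the equality of spectra claimed in the statement. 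The paper's eigenvalue-transfer argument does settle the full equality, modulo a case split on whether $\Omega$ is finite.

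For \ref{lem:schaeVI}, you separate out de~Pagter's theorem (irreducible compact positive $\Rightarrow r>0$) from Perron--Jentzsch; the paper just cites Perron--Jentzsch for both conclusions at once. Your identification of connectivity with ideal-irreducibility on $L^p(\mu)$, and your transfer of uniqueness/positivity to $\mathscr L^\infty$ via $\iota$ and \ref{lem:schae0}, match the paper's argument closely.
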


\begin{proof}
  Property \ref{lem:schae0} is straightforward.

  \medskip

  We prove property \ref{lem:schaeII}.  The natural linear application $\iota$ (resp.  the operator
  $\tilde{T}_{\mathsf{k}}$) from $\mathscr{L}^\infty$ to $L^p(\mu)$ (resp. from $L^p(\mu)$ to
  $\mathscr{L}^\infty$) is weakly compact thanks to \cite[Corollary IV.8.2 and Corollary
  VI.4.3]{df1}, which state that $L^p(\mu)$ is reflexive and that a bounded linear operator taking
  values in a reflexive Banach space is weakly-compact. In particular, we deduce that the operator
  $T_{\mathsf{k}}$, which is the product of a weakly compact operator and a bounded operator, is
  weakly compact, see \cite[Theorem VI.4.5]{df1}.  \medskip

  We prove property \ref{lem:schaeIII}. Notice that $\norm{f\vee g} = \norm{f}\vee \norm {g}$ for
  all $f, g \in \mathscr{L}^\infty_+$.  Thus the Banach lattice $\mathscr{L}^\infty$ is an AM-space,
  according to \cite[Definition II.7.1]{schaefer_banach_1974} and thus enjoys the Dunford-Pettis
  property thanks to \cite[Theorem II.9.9]{schaefer_banach_1974}. Then \cite[Corallary
  II.9.1]{schaefer_banach_1974} gives that the operator $T_{\mathsf{k}}^2$ is compact. And we also
  deduce from \cite[Theorem II.9.7]{schaefer_banach_1974} that $\hat T_{\mathsf{k}}$ is compact.
  This ends the proof of property \ref{lem:schaeIII}.  \medskip

  We prove property \ref{lem:schaeIII-r}.  If $\Omega$ is finite then the operators
  $ T_{\mathsf{k}}$ and $\hat T_{\mathsf{k}}$ coincide and there is nothing to prove.  So, we assume
  that $\Omega$ is infinite.  In this case, $\sigma_{\text{ess}}(T_{\mathsf{k}})$ and
  $\sigma_{\text{ess}}(\hat T_{\mathsf{k}})$ are non empty according to \cite[Footnote 2,
  p.~243]{kato2013perturbation}.  As $ \hat T_{\mathsf{k}}$ and $ T_{\mathsf{k}}^2$ are compact, we
  deduce from \cite[Theorems VII.4.5 and VII.4.6]{df1} respectively, that the essential spectra of
  $ \hat T_{\mathsf{k}}$ and $ T_{\mathsf{k}}$ are reduced to $\{0\}$, and that the non-null
  elements of their spectrum are eigenvalues. Then, use that
  $\iota T_{\mathsf{k}}= \hat T_{\mathsf{k}}\iota$ and property \ref{lem:schae0}, to deduce that if
  $f\in\mathscr{L}^\infty\setminus\set{0}$ is an eigenvector of $ T_{\mathsf{k}}$, then $\iota(f)$
  belongs to $L^p(\mu)\backslash\set{0}$ thanks to property \ref{lem:schae0} and that $\iota(f)$ is
  thus an eigenvector of $\hat T_{\mathsf{k}}$ corresponding to the same eigenvalue. If
  $v\in L^p(\mu)\backslash\set{0}$ is an eigenvector of $ \hat T_{\mathsf{k}}$ corresponding to the
  eigenvalue $\lambda$, then $f=\tilde T_{\mathsf{k}}(v)$ belongs to $\mathscr{L}^\infty$ and
  $f\neq 0$ (as $\iota (f)=\hat T_{\mathsf{k}}(v)=\lambda v$). We have
  $T_{\mathsf{k}}(f)=\tilde T_{\mathsf{k}} \iota \tilde T_{\mathsf{k}}(v)= \tilde
  T_{\mathsf{k}}(T_{\mathsf{k}}(v))=\lambda \tilde T_{\mathsf{k}}(v) =\lambda f$. Thus $\lambda$ is
  also an eignevalue of $T_{\mathsf{k}}$. We deduce that
  $\sigma(T_{\mathsf{k}})=\sigma(\hat T_{\mathsf{k}})$.

  \medskip

  We prove property \ref{lem:schaeIV}. We have seen that
  $\sigma_{\text{ess}}(T_{\mathsf{k}})\subset\set{0}$ and thus $r_{\mathrm{ess}}(T_{\mathsf{k}})=0$.
  According to Theorem \ref{th:spectralradius}~\ref{th:KR} (or the Krein-Rutman theorem) there
  exists a right Perron eigenvector for $T_{\mathsf{k}}$.  Since $\hat T^\star_{\mathsf{k}}$ is a
  compact operator, thanks to Schauder Theorem \cite[Theorem VI.5.2]{df1}, with the same spectrum as
  $\hat T_{\mathsf{k}}$, thanks to \cite[Lemma VII.3.7]{df1}, and which is clearly positive, we
  deduce from Theorem \ref{th:spectralradius}~\ref{th:KR} that there exists a right Perron
  eigenvector, $v^\star\in L^q_+(\mu)\backslash\set{0}$, for $\hat T_{\mathsf{k}}^\star$.  Since
  $T^\star_{\mathsf{k}} \iota^\star= \iota^\star \hat T^\star_{\mathsf{k}}$, we deduce that
  $\iota^\star (v^\star)$, and thus $v^\star$ by convention, is also a left Perron eigenvector for
  $ T_{\mathsf{k}}$. This gives property \ref{lem:schaeIV}.

  \medskip

  Now, let us prove property \ref{lem:schaeVI}. Set
  $\lambda=r( T_{\mathsf{k}})=r(\hat T_{\mathsf{k}})$, see property \ref{lem:schaeIII-r}.  According
  to the Perron-Jentzsch theorem \cite[Theorem V.6.6 and Example V.6.5.b]{schaefer_banach_1974},
  since $\mathsf{k}$ is connected in the sense of Assumption \ref{Assum_connectivity}, we have
  $\lambda>0$ and there exists a unique (up to a multiplicative constant) eigenvector $v$ of
  $\hat T_{\mathsf{k}}$ associated to the eigenvalue $\lambda$, and it can chosen such that
  $\mu$-a.e.  $v>0$.  According to the proof of property \ref{lem:schaeIII-r}, we get that
  $f=\tilde T_{\mathsf{k}}(v)$ is an eigenvector of $T_{\mathsf{k}}$ associated to $\lambda$. Notice
  that $f\geq 0$ as $\mathsf{k}\geq 0$.  Since $\iota(f)=\hat T_{\mathsf{k}}(v)=\lambda v$, we
  deduce that $\mu$-a.e.  $f>0$. Assume that $g\in\mathscr{L}^\infty\setminus\set{0}$ is a right
  Perron eigenvector of $T_{\mathsf{k}}$, then $\iota(g)$ is a right Perron eigenvector of
  $\hat T_{\mathsf{k}}$ and thus (up to a multiplicative constant chosen to be equal to $\lambda$),
  we have $\mu$-a.e.  $\iota(g)=\lambda v=\iota(f)$.  We deduce that $\mu$-a.e. $g-f=0$ and thanks
  to property \ref{lem:schae0}, we deduce that $\lambda(f-g)=T_{\mathsf{k}}(f-g)=0$.  So the right
  Perron eigenvector of $T_{\mathsf{k}}$ is unique and $\mu$-a.e. positive.

  Let $f^\star$ be a left Perron eigenvector of $T_{\mathsf{k}}$. Then
  $v^\star= \tilde T_{\mathsf{k}}^\star(f^\star)$ is an eigenvector of $\hat T_{\mathsf{k}}^\star$
  associated to $\lambda$ and $v^\star\in L^q_+(\mu)\backslash\set{0}$ as $\mathsf{k}\geq 0$. By the
  Perron-Jentzsch theorem, we get that $v^\star$ is unique (up to a multiplicative constant) and
  that $\mu$-a.e $v^\star>0$.  Since $\iota^\star (v^\star)=\lambda f^\star$, we deduce that
  $\mu$-a.e $f^\star>0$ and that $f^\star$ is unique (up to a multiplicative constant).
\end{proof}

\begin{remark}\label{rem:equilibrium_not_almost_zero}
  As a consequence of Lemma \ref{lem:schae}~\ref{lem:schae0}, under Assumption \ref{Assum1}, if
  $h^*$ is an equilibria which is $\mu$-a.e. equal to 0, then it is equal to 0 everywhere.
\end{remark}

%%%%%%%%%%% SECTION 4 %%%%%%%%%%%%%%%%%%%

\section{Infinite-dimensional SIS model when the kernel has a density}\label{sec:equilibrium}

The objective of this section is to study the long time behavior of the solutions of \eqref{eq:SIS}
under Assumption \ref{Assum0} and Assumption \ref{Assum1} (but for Section \ref{subsec:subcritical}
where the latter is not assumed).  Recall the definition of the spectral bound given in
\eqref{eq:spectral_bound}.  We will consider the spectral bound $s(T_k - \gamma)$ of the bounded
operator $T_k - \gamma$ on $\mathscr{L}^\infty$ to characterize three different regimes:
sub-critical, critical and super-critical, corresponding to the cases $s(T_k - \gamma)<,=, > 0$
respectively.  In the first part of the section, we establish a link between $s(T_k - \gamma)$ and
the basic reproduction number $R_0=r(T_{k/\gamma})$ associated to \eqref{eq:SIS}.

\subsection{Basic reproduction number and spectral bound}\label{subsec:R0}
Recall that Assumption \ref{Assum0} is in force.  If we assume $\inf \gamma > 0$, then the operator
$T_{\kappa/\gamma}$, where $\kappa/\gamma$ is the kernel defined by
$(\kappa/\gamma) (x, \mathrm{d}y)= \kappa (x, \mathrm{d}y)/\gamma(y)$ is bounded. Thieme's result
\cite[Theorem~3.5]{thieme_spectral_2009} implies the following proposition.

\begin{proposition}\label{prop:R0_Thieme}
  If $\inf \gamma > 0$, then $r(T_{\kappa/\gamma}) - 1$ has the same sign as $s(T_\kappa - \gamma)$
  (\textit{i.e.} these two numbers are simultaneously negative, zero, or positive).
\end{proposition}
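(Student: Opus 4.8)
The plan is to view $T_\kappa-\gamma$ as a bounded positive perturbation of the resolvent‑positive multiplication operator $-\gamma$, and then to quote \cite[Theorem~3.5]{thieme_spectral_2009}, which relates the spectral bound of such a sum to the spectral radius of the associated next‑generation operator.

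First I would collect what $\inf_\Omega\gamma>0$ buys us. Writing $\gamma$ also for the bounded multiplication operator $g\mapsto\gamma g$ on $\mathscr{L}^\infty$, the hypothesis makes $\gamma$ boundedly invertible, with inverse the multiplication operator $1/\gamma$, and $-\gamma$ is resolvent positive (it generates the positive multiplication semigroup $(\mathrm{e}^{-t\gamma})_{t\ge0}$); in particular $s(-\gamma)=-\inf_\Omega\gamma<0$ and $\big(0-(-\gamma)\big)^{-1}=1/\gamma$ is a positive operator. Moreover $T_\kappa$ is bounded and positive by \eqref{eq:def-T} under Assumption~\ref{Assum0}. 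From these two facts $T_\kappa-\gamma$ is itself resolvent positive: for real $\lambda$ large enough one has $\norm{(\gamma+\lambda)^{-1}T_\kappa}\le\norm{T_\kappa}/(\inf_\Omega\gamma+\lambda)<1$, so
\[
  \big(\lambda-(T_\kappa-\gamma)\big)^{-1}
  =\Big(\mathrm{Id}-(\gamma+\lambda)^{-1}T_\kappa\Big)^{-1}(\gamma+\lambda)^{-1}
  =\sum_{n\ge0}\big((\gamma+\lambda)^{-1}T_\kappa\big)^{n}(\gamma+\lambda)^{-1}\ge0 .
\]

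Next I would apply \cite[Theorem~3.5]{thieme_spectral_2009} with $\mathcal{B}=-\gamma$ and $\mathcal{A}=T_\kappa$. Its hypotheses hold: $\mathcal{B}$ is resolvent positive with $s(\mathcal{B})<0$, and $\mathcal{A}$ is positive and everywhere defined and bounded, hence $\mathcal{B}$‑bounded with $\mathcal{B}$‑bound $0$. The theorem then gives that $s(\mathcal{A}+\mathcal{B})=s(T_\kappa-\gamma)$ and $r(-\mathcal{A}\mathcal{B}^{-1})-1$ have the same sign, \textit{i.e.}, are simultaneously negative, zero, or positive. It only remains to identify $-\mathcal{A}\mathcal{B}^{-1}$: since $-\mathcal{B}^{-1}=1/\gamma$, for $g\in\mathscr{L}^\infty$ and $x\in\Omega$,
\[
  \big(-\mathcal{A}\mathcal{B}^{-1}\big)(g)(x)=T_\kappa\big(g/\gamma\big)(x)=\int_\Omega\frac{g(y)}{\gamma(y)}\,\kappa(x,\mathrm{d}y)=T_{\kappa/\gamma}(g)(x),
\]
so $r(-\mathcal{A}\mathcal{B}^{-1})=r(T_{\kappa/\gamma})$ and the claim follows.

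The only genuine thing to verify is that \cite[Theorem~3.5]{thieme_spectral_2009} applies in our Banach lattice $\mathscr{L}^\infty$ with these operators, and this reduces to the resolvent positivity of $-\gamma$ (hence of $T_\kappa-\gamma$) together with the $\mathcal{B}$‑boundedness of $T_\kappa$ — all immediate because $\gamma$ is bounded and bounded away from $0$ and $T_\kappa$ is bounded. I do not expect any real obstacle: the mathematical content of the proposition is carried entirely by Thieme's theorem, the hypothesis $\inf_\Omega\gamma>0$ serving precisely to make $\gamma$ invertible so that $T_{\kappa/\gamma}=-T_\kappa(-\gamma)^{-1}$ is a well‑defined bounded positive operator.
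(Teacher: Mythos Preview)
Your proof is correct and follows essentially the same approach as the paper: both verify that $-\gamma$ and $T_\kappa-\gamma$ are resolvent positive with $s(-\gamma)<0$, then invoke \cite[Theorem~3.5]{thieme_spectral_2009} and identify the resulting next-generation operator as $T_{\kappa/\gamma}$. The only cosmetic differences are that the paper labels $A=T_\kappa-\gamma$, $B=-\gamma$ (so the perturbation is $A-B$) whereas you label the perturbation $\mathcal{A}=T_\kappa$ directly, and the paper obtains resolvent positivity of $T_\kappa-\gamma$ by shifting to the positive operator $Q=T_\kappa-\gamma+\norm{\gamma}$ and expanding $(\lambda-Q)^{-1}$, while you factor $(\lambda+\gamma-T_\kappa)^{-1}$ through $(\gamma+\lambda)^{-1}$; both are routine Neumann-series arguments.
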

\begin{proof}
  Consider the operators $A=T_\kappa-\gamma$ and $B=-\gamma$, where $-\gamma$ is the operator
  corresponding to the multiplication by $-\gamma$.  It is clear from
  \cite[Definition~3.1]{thieme_spectral_2009} that the operator $B$ is a resolvent-positive
  operator, as the operator $\lambda-B=\lambda + \gamma$ is invertible and its inverse is positive
  for $\lambda>0$. We also get that $s(B)=s(-\gamma)= -\inf \gamma<0$.  Let
  $Q = A + \norm{\gamma} $. The operator $Q$ is positive. And for $\lambda > r(Q)$, we get that
  $(\lambda - Q)^{-1}$ is also positive since, thanks to the Neumann series expansion, we have:
  \[
    (\lambda - Q)^{-1} = \sum\limits_{i=0}^\infty \frac{1}{\lambda^{i+1}} Q^i \geq 0.
  \]
  We deduce that $(\lambda -A)$ is invertible and its inverse is positive for
  $\lambda>r(Q) - \norm{\gamma}$.  Hence, $A$ is resolvent-positive.  Applying
  \cite[Theorem~3.5]{thieme_spectral_2009} (notice it is required that $\mathscr{L}^\infty_+$ is
  normal, which is the case, see \cite[Proposition 19.1]{deimling2010nonlinear}, as the norm
  $\norm{\cdot}$ is monotonic: $0\leq f\leq g$ implies $\norm{f}\leq \norm{g}$), we deduce that
  $s(A)$ has the same sign as $r(-(A-B) B^{-1}) -1= r(T_{\kappa/\gamma}) -1$.
\end{proof}

Notice that under Assumption \ref{Assum1}, we have by definition
$R_0=r(T_{\kappa/\gamma})=r(T_{k/\gamma})$, as  $k$
is the density of $\kappa$ with respect to $\mu$. In what follows, we also write 
 $T_k$ for $T_\kappa$.  
When $\gamma$ is not bounded away  from 0, but still positive, $R_0-1$ and
$s(T_k - \gamma)$ may not have the same sign. For instance, if one takes
$T_k   =  0$   and   $\inf   \gamma  =   0$,   then   we  clearly   have
$s(T_k  - \gamma)  = s(-\gamma)  =  0$ and  $R_0  = 0$.  
\medskip

As we only assume that  $\inf \gamma\geq 0$, we  will get  a  weaker result  but  this will  be  sufficient for  our
purpose.  
According to Assumption \ref{Assum1} (see
\eqref{eq:strongintegrability}), the operator $T_{k/\gamma} $ defined by
\eqref{eq:def-tk/g} 
 is a bounded operator on
$\mathscr{L}^\infty$ which satisfies the integrability condition of
Lemma \ref{lem:schae} with $\mathsf{k}(x,y) = k(x,y)/\gamma(y)$. 

\begin{proposition}\label{prop:R0_s}
Suppose Assumption \ref{Assum1} is in force. Then, the following assertions are equivalent:
\begin{propenum}
\item \label{prop:R0_sI} $s(T_k - \gamma) > 0$.
\item \label{prop:R0_sII} $R_0 > 1$.
\item \label{prop:R0_sIII} There exists $\lambda>0$ and $w \in \mathscr{L}^\infty_+ \backslash \set{0}$ such that:
\begin{equation}\label{eq:eigenfunction}
T_k(w) - \gamma w = \lambda w.
\end{equation} 
\end{propenum}
\end{proposition}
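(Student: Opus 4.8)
The plan is to establish the cycle $(ii)\Rightarrow(iii)\Rightarrow(i)\Rightarrow(ii)$, the main device being Proposition~\ref{prop:R0_Thieme} applied to the \emph{shifted} recovery rate $\gamma_\lambda:=\gamma+\lambda$, which for $\lambda>0$ is bounded away from $0$ even when $\gamma$ is not. Write $k_\lambda(x,y)=k(x,y)/\gamma_\lambda(y)$; since $\gamma_\lambda\geq\gamma$, the kernel $k_\lambda$ satisfies the integrability condition of Lemma~\ref{lem:schae}, so $T_{k_\lambda}$ is a bounded positive operator on $\mathscr{L}^\infty$ with $T_{k_\lambda}^2$ compact, and $r(T_{k_0})=R_0$. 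I would first record three properties of $\rho(\lambda):=r(T_{k_\lambda})$ on $[0,\infty)$: it is non-increasing (Theorem~\ref{th:spectralradius}~\ref{th:spectralradiusII}, as $k_\lambda$ decreases with $\lambda$); it tends to $0$ as $\lambda\to\infty$ (since $\norm{T_{k_\lambda}}\leq\lambda^{-1}\sup_{x}\kappa(x,\Omega)$); and it is continuous. For the last point I would show $\lambda\mapsto T_{k_\lambda}$ is norm-continuous on $[0,\infty)$: the kernel of $T_{k_\lambda}-T_{k_{\lambda'}}$ is $\tfrac{k(x,y)}{\gamma(y)}\bigl(\tfrac{\gamma(y)}{\gamma(y)+\lambda}-\tfrac{\gamma(y)}{\gamma(y)+\lambda'}\bigr)$, so by Hölder's inequality its operator norm is at most $\bigl(\sup_{x}\int (k/\gamma)^{q}\,\mathrm{d}\mu\bigr)^{1/q}$ times the $L^p(\mu)$-norm of a function bounded by $1$ that converges pointwise to $0$ as $\lambda'\to\lambda$, hence tends to $0$ by dominated convergence ($\mu$ finite). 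Then $\lambda\mapsto T_{k_\lambda}^2$ is norm-continuous into the compact operators, and since the spectral radius is continuous on the set of compact operators for the operator-norm topology, $\rho(\lambda)^2=r(T_{k_\lambda}^2)$, hence $\rho$, is continuous on $[0,\infty)$.

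For $(ii)\Rightarrow(iii)$: assuming $R_0=\rho(0)>1$, continuity of $\rho$ together with $\rho(\lambda)\to0$ and the intermediate value theorem give $\lambda^*>0$ with $\rho(\lambda^*)=1$. Since $r(T_{k_{\lambda^*}})=1>0$, Lemma~\ref{lem:schae}~\ref{lem:schaeIV} yields $u\in\mathscr{L}^\infty_+\backslash\set{0}$ with $T_{k_{\lambda^*}}u=u$. Setting $w=u/(\gamma+\lambda^*)$, which lies in $\mathscr{L}^\infty_+\backslash\set{0}$ because $1/(\gamma+\lambda^*)$ is positive, measurable and bounded by $1/\lambda^*$, one checks for every $x$ that $T_k(w)(x)=\int k(x,y)\,u(y)/(\gamma(y)+\lambda^*)\,\mu(\mathrm{d}y)=(T_{k_{\lambda^*}}u)(x)=u(x)=(\gamma(x)+\lambda^*)w(x)$, i.e. $T_k(w)-\gamma w=\lambda^* w$ with $\lambda^*>0$, which is $(iii)$.

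For $(iii)\Rightarrow(i)$: if $T_k(w)-\gamma w=\lambda w$ with $\lambda>0$ and $w\in\mathscr{L}^\infty_+\backslash\set{0}$, then $\lambda$ is an eigenvalue of the bounded operator $T_k-\gamma$ on $\mathscr{L}^\infty$, so $\lambda\in\sigma(T_k-\gamma)$ and $s(T_k-\gamma)\geq\lambda>0$. For $(i)\Rightarrow(ii)$: assuming $s(T_k-\gamma)>0$, fix $\epsilon\in(0,s(T_k-\gamma))$; the pair $(\kappa,\gamma_\epsilon)$ satisfies Assumption~\ref{Assum0} with $\inf\gamma_\epsilon\geq\epsilon>0$, so Proposition~\ref{prop:R0_Thieme} gives that $r(T_{k_\epsilon})-1$ has the sign of $s(T_k-\gamma_\epsilon)=s(T_k-\gamma)-\epsilon>0$, whence $\rho(\epsilon)>1$; then $R_0=\rho(0)\geq\rho(\epsilon)>1$ by monotonicity of $\rho$.

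The main obstacle is the continuity of $\rho$ at $\lambda=0$, i.e. the fact that $r(T_{k/(\gamma+\lambda)})\to R_0$ as $\lambda\downarrow0$ although $\gamma$ need not be bounded away from $0$. This is precisely where the strong integrability \eqref{eq:strongintegrability} of Assumption~\ref{Assum1} enters (the weaker \eqref{eq:weakintegrability} would not suffice to force the operator-norm convergence $T_{k/(\gamma+\lambda)}\to T_{k/\gamma}$ when $\inf\gamma=0$), combined with the compactness of $T_{k_\lambda}^2$ from Lemma~\ref{lem:schae}~\ref{lem:schaeIII} and the continuity of the spectral radius on the compact operators. The remaining implications are routine, using only that the point spectrum is contained in the spectrum and Proposition~\ref{prop:R0_Thieme} at a small shift.
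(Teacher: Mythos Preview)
Your argument is correct and follows essentially the same route as the paper: the cycle $(iii)\Rightarrow(i)\Rightarrow(ii)\Rightarrow(iii)$, with $(i)\Rightarrow(ii)$ via Proposition~\ref{prop:R0_Thieme} applied to the shifted rate $\gamma+\epsilon$ together with monotonicity of the spectral radius, and $(ii)\Rightarrow(iii)$ via the intermediate value theorem applied to $\rho(\lambda)=r(T_{k/(\gamma+\lambda)})$, whose continuity is the heart of the matter.

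The only noteworthy difference is in how you justify that continuity. The paper asserts norm-continuity of $a\mapsto T_{k/(\gamma+a)}$, then invokes power-compactness (via Lemma~\ref{lem:schae}~\ref{lem:schaeIII}), K\"onig's result that power-compact operators have totally disconnected spectra, and Newburgh's theorem on continuity of the spectrum in the Hausdorff metric. You instead make the norm-continuity explicit through H\"older's inequality and dominated convergence (which is where \eqref{eq:strongintegrability} is genuinely needed, as you correctly emphasise), and then pass to $T_{k_\lambda}^2$ and use $r(A)^2=r(A^2)$ together with continuity of the spectral radius on compact operators. Your version is arguably more self-contained; the paper's is shorter but leans on less commonly cited references. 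Both lead to the same eigenvector construction $w=u/(\gamma+\lambda^*)$.
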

\begin{proof}
  It is immediate that property \ref{prop:R0_sIII} implies property \ref{prop:R0_sI}.

  \medskip
 
  We suppose property \ref{prop:R0_sI} and we prove \ref{prop:R0_sII}. Let
  $a\in (0,s(T_k - \gamma))$, so that $s(T_k - (\gamma+a)) = s(T_k - \gamma) - a >0$. Using
  Proposition \ref{prop:R0_Thieme} (with $\gamma$ replaces by $\gamma+a$), we get that
  $r\left(T_{k/(\gamma + a)}\right)>1$.  Since
  $r(T_{k/\gamma}) \geq r\left(T_{k/(\gamma + a)}\right)$ according to Theorem
  \ref{th:spectralradius}~\ref{th:spectralradiusII}, property \ref{prop:R0_sII} is shown.

  \medskip
 
  Now we assume property \ref{prop:R0_sII}, and we prove property \ref{prop:R0_sIII}. Consider for
  any non-negative real number $a\geq 0$, the function:
  \[
    \psi(a) = r(T_{k/(\gamma + a)}).
  \]
  Property \ref{prop:R0_sII} exactly means that:
  \begin{equation}
    \label{eq:psi_0}
    \psi(0) > 1.
  \end{equation}
  Moreover, it follows from the inequality
  $r(T_{k/(\gamma + a)}) \leq \norm{T_{k/(\gamma + a)}} \leq \norm{T_k} / a$ (use
  \eqref{eq:radius_ineq} for the first inequality), that:
  \begin{equation}\label{eq:psi_infinite}
    \lim\limits_{a \to \infty} \psi(a) = 0.
  \end{equation}
  Equation \eqref{eq:strongintegrability} of Assumption \ref{Assum1} enables to apply Lemma
  \ref{lem:schae}~\ref{lem:schaeIII} and we obtain that all the operators $T_{k/(\gamma + a)}$, for
  $a \in \mathbb{R}_+$, are power compact (as $T^2_{k/(\gamma + a)}$ is compact). According to
  \cite[Theorem p.~21]{konig1986}, their spectra are totally disconnected. Moreover, since the
  function $a \mapsto T_{k/(\gamma +a)}$ mapping $\mathbb{R}_+$ to $\mathcal{L}(\mathscr{L}^\infty)$
  is continuous, we also get thanks to \cite[Theorem 11]{newburgh1951} that the application
  $a \mapsto \sigma(T_{k/(\gamma + a)})$ mapping $\mathbb{R}_+$ to the set $\mathcal{K}(\mathbb{C})$
  of non-empty compact subsets endowed with the Hausdorff distance (see Section \ref{sec:Hausdorff}
  for the definition of the Hausdorff distance) is continuous. Hence, the function $\psi$ is
  continuous according to Lemma \ref{lem:Hausdorff_radius}. From the continuity of $\psi$ and
  Equations \eqref{eq:psi_0} and \eqref{eq:psi_infinite}, we conclude that there exists $\lambda>0$
  such that $\psi(\lambda) = 1$. According to Lemma \ref{lem:schae}~\ref{lem:schaeIV}, there exists
  a function $v \in \mathscr{L}^\infty_+\backslash \set{0}$ such that:
  \[
    T_k\left(\frac{v}{\gamma + \lambda}\right) = v.
  \]
  Then, Equation \eqref{eq:eigenfunction} holds with $w = v/(\gamma+\lambda)$ which proves property
  \ref{prop:R0_sIII}.
\end{proof}

\begin{remark}\label{rem:eigenvector_non_almost_zero}
  Using Lemma \ref{lem:schae}~\ref{lem:schae0}, it is easy to show that $w$ in Proposition
  \ref{prop:R0_s}~\ref{prop:R0_sIII} should satisfy $\int_\Omega w(x) \, \mu(\mathrm{d}x) >0$.
\end{remark}

The next result is stronger than the implication (i) $\implies$ (ii) in Proposition \ref{prop:R0_s}.
\begin{lemma}
  Under Assumption \ref{Assum1}, the following inequality holds:
  \begin{equation}
    \label{eq:R0_s_inequality}
    s(T_k - \gamma) \leq \max(\norm{\gamma} (R_0 - 1), 0).
  \end{equation}
\end{lemma}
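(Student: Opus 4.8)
The plan is to separate the cases $R_0\le 1$ and $R_0>1$. When $R_0\le 1$ the inequality reads $s(T_k-\gamma)\le 0$, and this is exactly the contrapositive of the implication \ref{prop:R0_sI}$\Rightarrow$\ref{prop:R0_sII} in Proposition \ref{prop:R0_s}: if $s(T_k-\gamma)$ were positive then $R_0$ would exceed $1$. So all the content lies in the super-critical case, where one must prove $s(T_k-\gamma)\le\norm{\gamma}(R_0-1)$.

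Assume $R_0>1$ and set $\lambda^\star=s(T_k-\gamma)$, which is positive by Proposition \ref{prop:R0_s}. The first step is to realise $\lambda^\star$ as a genuine eigenvalue of $T_k-\gamma$ with a nonnegative eigenvector. To this end, apply Proposition \ref{prop:R0_Thieme} with $\gamma$ replaced by $\gamma+\lambda^\star$ (which is legitimate since $\gamma+\lambda^\star$ is positive, bounded, and bounded below by $\lambda^\star>0$): as the spectrum of $T_k-(\gamma+\lambda^\star)$ is that of $T_k-\gamma$ shifted by $-\lambda^\star$, one has $s\bigl(T_k-(\gamma+\lambda^\star)\bigr)=\lambda^\star-\lambda^\star=0$, so Proposition \ref{prop:R0_Thieme} forces $r\bigl(T_{k/(\gamma+\lambda^\star)}\bigr)=1$. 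Since this spectral radius is positive, Lemma \ref{lem:schae}~\ref{lem:schaeIV}, applied to the kernel $\mathsf{k}(x,y)=k(x,y)/(\gamma(y)+\lambda^\star)$ (which satisfies \eqref{eq:strongintegrability2} for the same $q$ as in Assumption \ref{Assum1}, being dominated by $k/\gamma$), provides $v\in\mathscr{L}^\infty_+\setminus\set{0}$ with $T_{k/(\gamma+\lambda^\star)}v=v$. Setting $w=v/(\gamma+\lambda^\star)$, which belongs to $\mathscr{L}^\infty_+\setminus\set{0}$ since $\gamma+\lambda^\star$ is bounded and bounded below by $\lambda^\star>0$, a direct computation gives $T_k w=v=(\gamma+\lambda^\star)w$.

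The second step turns this identity into a Collatz--Wielandt lower bound for $R_0$. Write $\beta=1+\lambda^\star/\norm{\gamma}$. Since $\gamma(y)\le\norm{\gamma}$ for every $y$, we have $\gamma(y)+\lambda^\star\ge\beta\,\gamma(y)$, hence $T_k w\ge\beta\,\gamma w$ pointwise (using $w\ge 0$). Put $\tilde w=\gamma w$; then $\tilde w\in\mathscr{L}^\infty_+\setminus\set{0}$ (here we use Assumption \ref{Assum0}, namely that $\gamma>0$ everywhere, so $\gamma w$ vanishes only where $w$ does), and $T_{k/\gamma}\tilde w=T_k w\ge\beta\,\tilde w$. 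The Collatz--Wielandt inequality, Proposition \ref{prop:Collatz}, then yields $\beta\le r(T_{k/\gamma})=R_0$, that is $\lambda^\star\le\norm{\gamma}(R_0-1)$; since $R_0>1$ makes the right-hand side positive, this gives $s(T_k-\gamma)\le\max(\norm{\gamma}(R_0-1),0)$, as wanted.

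I expect the only delicate point to be the first step. One cannot simply invoke the eigenfunction produced by Proposition \ref{prop:R0_s}~\ref{prop:R0_sIII}, because a priori the $\lambda$ there need not coincide with $s(T_k-\gamma)$ (it only witnesses $1$ as an eigenvalue of $T_{k/(\gamma+\lambda)}$, not that its spectral radius equals $1$, which may fail without a connectivity assumption). Translating the recovery rate by exactly $\lambda^\star$ and appealing to Thieme's sign correspondence (Proposition \ref{prop:R0_Thieme}) is what pins the spectral radius of $T_{k/(\gamma+\lambda^\star)}$ to $1$ and thereby makes $\lambda^\star$ an eigenvalue of $T_k-\gamma$ with a nonnegative eigenvector; everything after that is elementary pointwise estimation together with Lemma \ref{lem:schae} and Proposition \ref{prop:Collatz}.
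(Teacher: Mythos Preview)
Your proof is correct, and the second half (the Collatz--Wielandt step from $T_kw=(\gamma+\lambda^\star)w$ to $R_0\ge 1+\lambda^\star/\norm{\gamma}$) is exactly what the paper does. The difference lies in how the eigenvector $w$ satisfying $(T_k-\gamma)w=s(T_k-\gamma)\,w$ is produced. The paper works directly with the positive operator $T_k-\gamma+\norm{\gamma}$: it shows that the essential spectral radius of this operator equals $r_{\mathrm{ess}}(\norm{\gamma}-\gamma)$ (because $T_k$ is weakly compact and $\mathscr{L}^\infty$ has the Dunford--Pettis property), and then invokes the Krein--Rutman-type Theorem~\ref{th:spectralradius}~\ref{th:KR} to obtain the Perron eigenvector at the spectral radius. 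You instead shift the recovery rate by exactly $\lambda^\star$, use Thieme's sign correspondence (Proposition~\ref{prop:R0_Thieme}) to pin $r(T_{k/(\gamma+\lambda^\star)})$ to $1$, and then read off the eigenvector from Lemma~\ref{lem:schae}~\ref{lem:schaeIV}. Your route avoids the essential-spectrum machinery entirely, at the cost of leaning on Proposition~\ref{prop:R0_Thieme}; the paper's route is more self-contained spectrally but needs the perturbation result of \cite{latrach} for essential spectra under weakly compact perturbations. Both land on the identical eigenvalue equation and conclude identically.
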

\begin{proof}
  If $s(T_k - \gamma) \leq 0$, the result is obviously true. Suppose $s(T_k - \gamma) >0$.  Since
  $T_k - \gamma + \norm{\gamma}$ is a positive operator, Theorem
  \ref{th:spectralradius}~\ref{th:spectralradiusI} implies that
  $r(T_k - \gamma + \norm{\gamma})=s(T_k - \gamma + \norm{\gamma})$. Since
  $s(T_k - \gamma + \norm{\gamma})=s(T_k - \gamma) + \norm{\gamma}>\norm{\gamma} $, we obtain that:
  \[
    r(T_k - \gamma + \norm{\gamma}) >  \norm{\gamma}.
  \]
  Besides, we have $\norm{\gamma}\geq r(\norm{\gamma}- \gamma)$ according to Theorem
  \ref{th:spectralradius}~\ref{th:spectralradiusII} and
  $r(\norm{\gamma} - \gamma)\geq r_{\mathrm{ess}}(\norm{\gamma} - \gamma)$ according to Equation
  \eqref{eq:radius_ineq}. We deduce that:
  \[
    r(T_k - \gamma + \norm{\gamma} > r_{\mathrm{ess}}(\norm{\gamma} - \gamma).
  \]
  The operator $T_k$ is weakly compact thanks to Lemma \ref{lem:schae}~\ref{lem:schaeII} since $k$
  satisfies \eqref{eq:strongintegrability2}, see Assumption \ref{Assum1} and more precisely
  \eqref{eq:weakintegrability}.  Since $\mathscr{L}^\infty$ has the Dunford-Pettis property, see
  \cite[Section~II.9]{schaefer_banach_1974}, we deduce from \cite[Theorem~3.1]{latrach} (where
  $\sigma_{\text{ess}}(A)$ in our setting corresponds to $\sigma_{e5}(A)$ in \cite{latrach}) that
  $r_{\mathrm{ess}}(\norm{\gamma} - \gamma)=r_{\mathrm{ess}}(T_k-\gamma+ \norm{\gamma})$.
  Therefore, we get that:
  \[
    r(T_k - \gamma + \norm{\gamma}) >  r_{\mathrm{ess}}(T_k  - \gamma +
    \norm{\gamma}). 
  \]
  Hence, we can apply Theorem \ref{th:spectralradius}~\ref{th:KR} with the positive operator
  $T_k - \gamma + \norm{\gamma}$, to get the existence of a function
  $w \in \mathscr{L}^\infty_+ \backslash \set{0}$ such that:
  \begin{equation}
    T_k(w) - \gamma w = s(T_k - \gamma) w,
  \end{equation}
  where we used
  $r(T_k - \gamma + \norm{\gamma})=s(T_k - \gamma + \norm{\gamma}) =s(T_k - \gamma) + \norm{\gamma}$
  for the equality.  We have shown that one can actually take $\lambda = s(T_k - \gamma)$ in
  Equation \eqref{eq:eigenfunction}. Thus, we obtain:
  \[
    T_{k/\gamma} (\gamma w) = T_k(w) = (\gamma + s(T_k - \gamma)) w \geq \left(1 + \frac{s(T_k -
        \gamma)}{\norm{\gamma}}\right) \gamma w.
  \]
  According to Proposition \ref{prop:Collatz}, we conclude that:
  \begin{equation}\label{eq:R0minoration}
    R_0=r(T_{k/\gamma}) \geq 1 + \frac{s(T_k - \gamma)}{\norm{\gamma}}\cdot 
  \end{equation}
  We deduce that Equation \eqref{eq:R0_s_inequality} holds.
\end{proof}

We continue the study with a proposition about the perturbation of the operator $T_{k/\gamma}$. For
$g\in \Delta$, we define $R_0(g) = r(g T_{k / \gamma})$.
\begin{proposition}
  \label{prop:spectrumcontinuous}
  Suppose Assumption \ref{Assum1} holds.  The function $g \mapsto R_0(g)$ defined on $\Delta$ is
  non-decreasing and continuous with respect to the $L^1(\mu)$ topology.
\end{proposition}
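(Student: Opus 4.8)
Monotonicity is the easy half. If $g_1\le g_2$ with $g_1,g_2\in\Delta$, then for every $h\in\mathscr{L}^\infty_+$ the function $(g_2-g_1)\,T_{k/\gamma}(h)$ is non‑negative, since $g_2-g_1\ge 0$ and $T_{k/\gamma}$ is a positive operator; hence $g_2T_{k/\gamma}-g_1T_{k/\gamma}$ is a positive bounded operator and Theorem~\ref{th:spectralradius}~\ref{th:spectralradiusII} gives $R_0(g_1)=r(g_1T_{k/\gamma})\le r(g_2T_{k/\gamma})=R_0(g_2)$.

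For the continuity statement, the plan is to realise $R_0(g)$ as the spectral radius of an operator on $L^p(\mu)$, where the $L^1$‑topology on $g$ genuinely controls the operator norm. Write $M_g$ for the (positive, bounded) multiplication operator by $g$ on $\mathscr{L}^\infty$, so that $gT_{k/\gamma}=M_gT_{k/\gamma}=M_g\tilde T_{k/\gamma}\iota$, using $T_{k/\gamma}=\tilde T_{k/\gamma}\iota$ (recall $\mathsf{k}(x,y)=k(x,y)/\gamma(y)$ satisfies \eqref{eq:strongintegrability2} with the exponent $q$ of Assumption~\ref{Assum1}, and $p=q/(q-1)$). Set $\hat R_g=\iota M_g\tilde T_{k/\gamma}$, a bounded operator on $L^p(\mu)$. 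Since $AB$ and $BA$ have the same non‑zero spectrum, applying this with $A=M_g\tilde T_{k/\gamma}$ and $B=\iota$ gives $R_0(g)=r(gT_{k/\gamma})=r(\hat R_g)$. Moreover, for $g,h\in\Delta$ and $v\in L^p(\mu)$ with $\norm{v}_p\le 1$, writing $\phi=\tilde T_{k/\gamma}v\in\mathscr{L}^\infty$ (so $\norm{\phi}_\infty\le\norm{\tilde T_{k/\gamma}}$) and using that $\abs{g-h}\le 1$ $\mu$‑a.e., hence $\abs{g-h}^p\le\abs{g-h}$, one gets
\[
  \norm{\hat R_g v-\hat R_h v}_p^p=\int_\Omega\abs{g(x)-h(x)}^p\,\abs{\phi(x)}^p\,\mu(\mathrm{d}x)\le\norm{\tilde T_{k/\gamma}}^p\int_\Omega\abs{g-h}\,\mathrm{d}\mu,
\]
so that $\norm{\hat R_g-\hat R_h}_{\mathcal{L}(L^p)}\le\norm{\tilde T_{k/\gamma}}\,\norm{g-h}_{L^1(\mu)}^{1/p}$; thus $g\mapsto\hat R_g$ is continuous from $(\Delta,\norm{\cdot}_{L^1(\mu)})$ to $\mathcal{L}(L^p(\mu))$ with the uniform operator topology.

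It then remains to repeat the spectral‑radius‑continuity argument of Proposition~\ref{prop:R0_s}. The operator $M_gT_{k/\gamma}$ on $\mathscr{L}^\infty$ is weakly compact (product of the bounded $M_g$ with the weakly compact $T_{k/\gamma}$, see Lemma~\ref{lem:schae}~\ref{lem:schaeII}); since $\mathscr{L}^\infty$ has the Dunford--Pettis property, its square $(M_gT_{k/\gamma})^2$ is compact, exactly as in the proof of Lemma~\ref{lem:schae}~\ref{lem:schaeIII}. Hence $M_gT_{k/\gamma}$ is power compact and its spectrum is totally disconnected by \cite[Theorem p.~21]{konig1986}; as $\sigma(\hat R_g)$ coincides with $\sigma(M_gT_{k/\gamma})$ up to the point $0$, which is at worst an isolated point or the unique accumulation point of a countable set, $\sigma(\hat R_g)$ is totally disconnected as well, for every $g\in\Delta$. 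Combining this with the norm‑continuity of $g\mapsto\hat R_g$, \cite[Theorem 11]{newburgh1951} gives that $g\mapsto\sigma(\hat R_g)$ is continuous from $(\Delta,\norm{\cdot}_{L^1(\mu)})$ into the non‑empty compact subsets of $\mathbb{C}$ endowed with the Hausdorff distance, and Lemma~\ref{lem:Hausdorff_radius} then yields that $g\mapsto R_0(g)=r(\hat R_g)$ is continuous. The one point that requires care is precisely this passage to the $L^p(\mu)$ realisation: on $\mathscr{L}^\infty$ itself $g\mapsto M_gT_{k/\gamma}$ is only norm‑continuous for the $L^\infty$‑topology on $g$, whereas the elementary bound $\abs{g-h}^p\le\abs{g-h}$ on $\Delta$ is what upgrades this to $L^1$‑continuity once one conjugates through $\iota$ and $\tilde T_{k/\gamma}$.
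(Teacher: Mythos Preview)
Your proof is correct and follows essentially the same approach as the paper: both pass to the $L^p(\mu)$ realisation $\hat R_g=\iota M_g\tilde T_{k/\gamma}$ (which is exactly the paper's operator $A_g=\hat T_{\mathsf k}$ for the kernel $\mathsf k(x,y)=g(x)k(x,y)/\gamma(y)$), prove operator-norm continuity in $g$, and then invoke Newburgh's theorem together with Lemma~\ref{lem:Hausdorff_radius}. The only cosmetic differences are that the paper obtains total disconnectedness directly from the compactness of $A_g$ (Lemma~\ref{lem:schae}~\ref{lem:schaeIII}) rather than via power-compactness on $\mathscr L^\infty$, and establishes $L^p$-continuity first before observing that the $L^p$ and $L^1$ topologies coincide on the bounded set $\Delta$, whereas you go straight to the $L^1$ estimate.
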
 
\begin{proof}
  The fact that $g \mapsto R_0(g)$ is non-decreasing is a direct consequence of
  Theorem~\ref{th:spectralradius}~\ref{th:spectralradiusII}.

  For for $g \in \Delta$, the bounded operator $A_g =\hat T_{\mathsf{k}}$ on $L^p(\mu)$ defined in
  Equation \eqref{eq:def-hat-Tk} with the kernel $\mathsf{k}(x,y)=g(x) k(x,y)/\gamma(y)$ is compact
  according to Lemma \ref{lem:schae}~\ref{lem:schaeIII}. According to Lemma
  \ref{lem:schae}~\ref{lem:schaeIII-r}, we have that for all $g\in \Delta$:
  \begin{equation}
    \label{eq:spectral_equality}
    R_0(g) = r(A_g).
  \end{equation}
  Besides, the function $g \mapsto A_g$ mapping $\Delta$ to $\mathcal{L}(L^p(\mu))$ is continuous
  with respect to the $L^p(\mu)$ norm.  We deduce from \cite[Theorem 11]{newburgh1951}, that the
  function $ g \mapsto \sigma(A_g)$ from $(\Delta, \norm{\cdot}_p)$ to
  $(\mathcal{K}(\mathbb{C}),d_{\mathrm{H}})$ is continuous, where $\mathcal{K}(\mathbb{C})$ is the
  set of non-empty compact subsets and $d_{\mathrm{H}}$ is the Hausdorff distance (see Section
  \ref{sec:Hausdorff} for the definition of the Hausdorff distance).  Using Lemma
  \ref{lem:Hausdorff_radius} and then Equation~\eqref{eq:spectral_equality}, we get that
  $g \mapsto R_0(g)$ defined on $(\Delta, \norm{\cdot}_p)$ is continuous.  In order to conclude, we
  notice that the topologies induced by $L^p(\mu)$ and $L^1(\mu)$ are equal on $\Delta$ because
  $\Delta$ is a bounded subset of $L^\infty(\mu)$.  This proves that $g \mapsto R_0(g)$ defined on
  $(\Delta, \norm{\cdot}_1)$ is continuous.
\end{proof}

\subsection{The subcritical regime: $s(T_\kappa - \gamma) < 0$}\label{subsec:subcritical}
Recall that Assumption \ref{Assum0} is in force.  Here, we show that in the subcritical regime, the
solutions of Equation \eqref{eq:SISODE} converge exponentially fast to $0$ in norm.

\begin{theorem}[Uniform exponential extinction]\label{th:sub}
  Suppose that $s(T_\kappa - \gamma) < 0$. Then, for all $c \in (0, -s(T_\kappa - \gamma))$, there
  exists a finite constant $\theta = \theta(c)$ such that, for all $g \in \Delta$, we have:
  \begin{equation}\label{eq:sub_conv}
    \norm{\phi(t,g)} \leq  \theta \norm{g}\ \mathrm{e}^{-ct}.
  \end{equation}
  In particular, the maximal equilibrium $g^*$ is equal to $0$ everywhere.
\end{theorem}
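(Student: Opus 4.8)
The plan is to squeeze the nonlinear flow between $0$ and the linear semigroup generated by $T_\kappa-\gamma$, and then to use that this semigroup decays exponentially because its generator is a \emph{bounded} operator with negative spectral bound. The only tools needed are the cooperative-systems machinery already developed (Corollary \ref{cor:Comparison}), forward invariance of $\Delta$ (Proposition \ref{prop:deltainv}), and the fact that for a bounded generator the growth bound of the semigroup coincides with the spectral bound.

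First I would record that the linear vector field $L(g)=T_\kappa(g)-\gamma g$ on $\mathscr{L}^\infty$ is cooperative on $\mathscr{L}^\infty\times\mathscr{L}^\infty$. Since $L$ is bounded linear it is Lipschitz and smooth, with $\mathcal{D}L[x]=L$; and for $z\in\mathscr{L}^\infty_+$ and $\nu\in\mathscr{L}^{\infty,\star}_+$ with $\braket{\nu,z}=0$ one has $\braket{\nu,L(z)}=\braket{\nu,T_\kappa(z)}-\braket{\nu,\gamma z}=\braket{\nu,T_\kappa(z)}\geq 0$, using Lemma \ref{lem:Stability} (with $g$ replaced by $z$ and $h$ by $\gamma$) to annihilate the second term and positivity of $T_\kappa$ and of $\nu$ for the inequality; Remark \ref{rem:DiffCoop2} (the reverse implication, valid here since the domain is all of $\mathscr{L}^\infty$) then gives cooperativeness of $L$. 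Now fix $g\in\Delta$ and set $u(t)=\phi(t,g)$ and $v(t)=\mathrm{e}^{t(T_\kappa-\gamma)}g$; both are $\mathcal{C}^1$ paths (Proposition \ref{prop:Flow}~\ref{prop:FlowSmooth} for $u$, analyticity of the operator exponential for $v$) with $u(0)=v(0)=g$. Because $\phi(t,g)\in\Delta$ and $T_\kappa$ is positive, $T_\kappa(u(t))\geq 0$, so
\[
  u'(t)-L(u(t))=F(u(t))-L(u(t))=(1-u(t))T_\kappa(u(t))-T_\kappa(u(t))=-\,u(t)\,T_\kappa(u(t))\leq 0=v'(t)-L(v(t)).
\]
Corollary \ref{cor:Comparison}, applied with the cooperative field $L$ in place of $F$ (and, say, $D_1=\Delta$, $D_2=\mathscr{L}^\infty$), yields $\phi(t,g)\leq \mathrm{e}^{t(T_\kappa-\gamma)}g$ for all $t\geq 0$, and $\phi(t,g)\geq 0$ by forward invariance of $\Delta$. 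Taking the supremum over $\Omega$ gives $\norm{\phi(t,g)}\leq\norm{\mathrm{e}^{t(T_\kappa-\gamma)}g}\leq\norm{\mathrm{e}^{t(T_\kappa-\gamma)}}\,\norm{g}$.

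It remains to bound $\norm{\mathrm{e}^{t(T_\kappa-\gamma)}}$. Since $T_\kappa-\gamma$ is bounded, the spectral mapping theorem for the exponential of a bounded operator gives $\sigma(\mathrm{e}^{t(T_\kappa-\gamma)})=\mathrm{e}^{t\,\sigma(T_\kappa-\gamma)}$, hence $r(\mathrm{e}^{t(T_\kappa-\gamma)})=\mathrm{e}^{t\,s(T_\kappa-\gamma)}$ for every $t\geq 0$; as $r(A')=\lim_n\norm{(A')^n}^{1/n}$, this forces the growth bound $\lim_{t\to\infty}\tfrac1t\log\norm{\mathrm{e}^{t(T_\kappa-\gamma)}}$ to equal $s(T_\kappa-\gamma)<0$. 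Consequently, for any $c\in(0,-s(T_\kappa-\gamma))$ there is $t_0\geq 0$ with $\norm{\mathrm{e}^{t(T_\kappa-\gamma)}}\leq\mathrm{e}^{-ct}$ for $t\geq t_0$; combining with the finite bound $\sup_{t\in[0,t_0]}\norm{\mathrm{e}^{t(T_\kappa-\gamma)}}$ produces a finite $\theta=\theta(c)$ with $\norm{\mathrm{e}^{t(T_\kappa-\gamma)}}\leq\theta\,\mathrm{e}^{-ct}$ for all $t\geq0$, and with the estimate of the previous paragraph this is exactly \eqref{eq:sub_conv}. Finally, for the last assertion, by \eqref{eq:gstar} and \eqref{eq:sub_conv} applied to $g=1$, $0\leq g^*(x)=\lim_{t\to\infty}\phi(t,1)(x)\leq\lim_{t\to\infty}\theta\,\mathrm{e}^{-ct}=0$, so $g^*=0$ everywhere.

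The one non-mechanical point is the identity between the exponential growth rate of $(\mathrm{e}^{t(T_\kappa-\gamma)})_{t\geq0}$ and $s(T_\kappa-\gamma)$: this is where boundedness of the generator is crucial, since in general (for $C_0$-semigroups) there can be a gap between spectral bound and growth bound, whereas for bounded generators the holomorphic functional calculus gives the spectral mapping theorem and closes that gap. If one prefers to stay inside the paper's framework, an equivalent route is to write $\mathrm{e}^{t(T_\kappa-\gamma)}=\mathrm{e}^{-t\norm{\gamma}}\mathrm{e}^{tQ}$ with $Q=T_\kappa-\gamma+\norm{\gamma}\,\mathrm{Id}\geq 0$, note that $\mathrm{e}^{t(T_\kappa-\gamma)}$ is then a positive operator so Theorem \ref{th:spectralradius}~\ref{th:spectralradiusI} gives $r(\mathrm{e}^{t(T_\kappa-\gamma)})=s(\mathrm{e}^{t(T_\kappa-\gamma)})$, and combine this with $\sigma(\mathrm{e}^{t(T_\kappa-\gamma)})=\mathrm{e}^{t\,\sigma(T_\kappa-\gamma)}$ to recover $r(\mathrm{e}^{t(T_\kappa-\gamma)})=\mathrm{e}^{t\,s(T_\kappa-\gamma)}$; everything else is routine.
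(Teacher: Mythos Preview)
Your proof is correct and follows essentially the same approach as the paper: dominate the nonlinear flow by the linear semigroup $\mathrm{e}^{t(T_\kappa-\gamma)}$ via the comparison theorem, then invoke the equality of growth bound and spectral bound for bounded generators. The only cosmetic difference is that the paper applies Corollary~\ref{cor:Comparison} with the \emph{nonlinear} field $F$ (showing $v'(t)-F(v(t))=v(t)T_\kappa(v(t))\geq 0$ for $v(t)=\mathrm{e}^{t(T_\kappa-\gamma)}1$), whereas you apply it with the \emph{linear} field $L$ (showing $u'(t)-L(u(t))=-u(t)T_\kappa(u(t))\leq 0$) and compare against $v(t)=\mathrm{e}^{t(T_\kappa-\gamma)}g$; your choice has the small advantage of delivering the factor $\norm{g}$ in \eqref{eq:sub_conv} directly.
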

\begin{proof}
  Recall $T_\kappa-\gamma$ is a bounded operator. For all $t \in \mathbb{R}_+$, define:
\begin{equation}\label{eq:sub_conv_proof1}
  v(t) = \mathrm{e}^{t(T_\kappa - \gamma)} 1 = \sum\limits_{n \in
    \mathbb{N}} \frac{t^n}{n!}(T_\kappa - \gamma)^n \, 1. 
\end{equation}
We also have:
\[
  \mathrm{e}^{\norm{\gamma} t} v(t) = \mathrm{e}^{t(T_\kappa - \gamma+ \norm{\gamma})} 1 =
  \sum\limits_{n \in \mathbb{N}} \frac{t^n}{n!}(T_\kappa - \gamma+\norm{\gamma} )^n \, 1.
\]
As $T_\kappa - \gamma+ \norm{\gamma}$ is positive, we deduce that $v(t)\geq 0$.  As $T_\kappa$ is
positive, we deduce that:
\[
  v'(t) - F(v(t)) = (T_\kappa - \gamma)(v(t)) - F(v(t)) = v(t) T_\kappa(v(t)) \geq 0.
\]
Thus, the following inequality holds for all $g \in \Delta$ and all $t \geq 0$:
\[
  0 = \partial_t \phi(t,g) - F(\phi(t,g)) \leq v'(t) - F(v(t)).
\]
As $F$ is cooperative on $\Delta\times \mathscr{L}^\infty_+$, see Proposition
\ref{prop:F}~\ref{prop:Fcoop}, we can apply Corollary \ref{cor:Comparison} with
$K=\mathscr{L}^\infty_+$, $D_1 = \Delta$, $D_2 = \mathscr{L}^\infty_+$ and $u(t) = \phi(t,g)$ to
obtain that:
\begin{equation}\label{eq:sub_conv_proof2}
  \phi(t, g) \leq v(t) \quad \text{for all $t \in \mathbb{R}_+$.}
\end{equation}
Besides, since $T_k - \gamma$ is a bounded operator, its growth bound (\textit{i.e.}, the left
member of the equality below) is equal to its spectral bound according to
\cite[Theorem~I.4.1]{daleckij_stability_2005}:
\begin{equation}\label{eq:sub_conv_proof3}
  \inf \left\lbrace \eta \in \mathbb{R} \, \colon \, \sup\limits_{t \in \mathbb{R}_+} \mathrm{e}^{-
      \eta t} \norm{\exp(t(T_\kappa - \gamma))}
    < \infty  \right\rbrace = s(T_\kappa - \gamma).
\end{equation}
We deduce from Equations \eqref{eq:sub_conv_proof1}, \eqref{eq:sub_conv_proof2} and
\eqref{eq:sub_conv_proof3}, that for all $c \in (0, -s(T_\kappa - \gamma))$, there exists a finite
constant $\theta$ such that Equation \eqref{eq:sub_conv} is true.  In particular,
$t \mapsto \phi(t,1)$ converges uniformly to $0$. It then follows from Equation \eqref{eq:gstar}
that $g^*$ is equal to $0$ everywhere.
\end{proof}

\subsection{Critical regime:  $s(T_k -\gamma)=0$}
\label{subsec:critical}
Assumption \ref{Assum1} holds. Recall that $k$ is the density of the kernel $\kappa$ and that we
then write $T_k$ for $T_\kappa$. We give the main result of this section.

\begin{theorem}[Extinction at criticality]\label{th:critical}
  Suppose Assumption \ref{Assum1} is in force and $s(T_k - \gamma)=0$. Then the maximal equilibrium
  $g^*$ is equal to $0$ everywhere. In other words, for all $g \in \Delta$ and all $x \in \Omega$,
  we have that:
  \[
    \lim\limits_{t \to \infty} \phi(t, g)(x) = 0.
  \]
\end{theorem}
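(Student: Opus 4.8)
The plan is to argue by contradiction: suppose the maximal equilibrium satisfies $g^*\neq 0$ and deduce $s(T_k-\gamma)>0$, contradicting the hypothesis. Since $g^*$ is an equilibrium, $F(g^*)=0$ rewrites as $T_k(g^*)(x)=\gamma(x)g^*(x)/(1-g^*(x))$ for all $x$ (the denominator is positive by Remark~\ref{rem:g_star_inf_1}), and by Remark~\ref{rem:equilibrium_not_almost_zero} the set $\Omega_1=\set{x\in\Omega:g^*(x)>0}$ has positive $\mu$-measure. The key structural observation is that no transmission leaves $\Omega_1$: for $x\in\Omega_0:=\Omega\setminus\Omega_1$ the identity gives $\int_{\Omega_1}k(x,y)g^*(y)\,\mu(\mathrm{d}y)=T_k(g^*)(x)=0$, hence, $g^*$ being positive on $\Omega_1$, $k(x,\cdot)=0$ $\mu$-a.e.\ on $\Omega_1$ for every $x\in\Omega_0$. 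Therefore the closed subspace of functions vanishing $\mu$-a.e.\ on $\Omega_0$, identified with $\mathscr{L}^\infty(\Omega_1)$, is invariant under $T_k$, under $T_{k/\gamma}$ and under $F$. Writing $k_1=k|_{\Omega_1\times\Omega_1}$, the SIS model on $(\Omega_1,\mu|_{\Omega_1})$ with kernel $k_1$ and recovery $\gamma|_{\Omega_1}$ satisfies Assumptions~\ref{Assum0} and~\ref{Assum1}, and $g^*$ restricted to $\Omega_1$ is its maximal equilibrium: indeed $\phi(t,\mathbf{1}_{\Omega_1})$ remains supported in $\Omega_1$, is non-increasing because $F(\mathbf{1}_{\Omega_1})=-\gamma\mathbf{1}_{\Omega_1}\leq 0$ (Proposition~\ref{prop:propC}), and satisfies $g^*=\phi(t,g^*)\leq\phi(t,\mathbf{1}_{\Omega_1})\leq\phi(t,1)$, so it converges to $g^*$ by \eqref{eq:gstar}.

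Next I would pin down the restricted reproduction number $\rho:=r(T_{k_1/\gamma})$. On one hand, $T_{k_1/\gamma}(\gamma g^*)=T_{k_1}(g^*)=\gamma g^*/(1-g^*)\geq\gamma g^*$ on $\Omega_1$ with $\gamma g^*$ a nonzero nonnegative function, so the Collatz--Wielandt inequality (Proposition~\ref{prop:Collatz}) gives $\rho\geq 1$. On the other hand, invariance of $\mathscr{L}^\infty(\Omega_1)$ under $T_{k/\gamma}$ gives $\rho\leq r(T_{k/\gamma})=R_0$, while $s(T_k-\gamma)=0$ and Proposition~\ref{prop:R0_s} force $R_0\leq 1$. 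Hence $\rho=1$, in particular $\rho>0$, so Lemma~\ref{lem:schae}~\ref{lem:schaeIV} applied to $T_{k_1}$ yields a left Perron eigenvector $\nu\in L^q_+(\mu|_{\Omega_1})\setminus\set{0}$ with $T_{k_1/\gamma}^\star\nu=\nu$.

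The last step is a Lyapunov-type estimate along $u(t):=\phi(t,\mathbf{1}_{\Omega_1})$ restricted to $\Omega_1$, which solves the restricted equation, is non-increasing and decreases pointwise to $g^*$. Set $E(t)=\braket{\nu,u(t)}$. Using $T_{k_1}(h)=T_{k_1/\gamma}(\gamma h)$, $T_{k_1/\gamma}^\star\nu=\nu$ and $\rho=1$, one gets
\[
  E'(t)=\braket{\nu,(1-u(t))T_{k_1}(u(t))-\gamma u(t)}=\braket{\nu,T_{k_1}(u(t))}-\braket{\nu,\gamma u(t)}-\braket{\nu,u(t)\,T_{k_1}(u(t))}=-\braket{\nu,u(t)\,T_{k_1}(u(t))}\leq 0.
\]
Thus $E$ is non-increasing and bounded below, hence converges, so $s\mapsto\braket{\nu,u(s)\,T_{k_1}(u(s))}$ is integrable on $\mathbb{R}_+$; being itself non-increasing (as $u(s)T_{k_1}(u(s))$ decreases pointwise and $\nu$ is a positive measure) and nonnegative, it tends to $0$. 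Letting $t\to\infty$ and using dominated convergence (with $T_{k_1}(u(t))\to T_{k_1}(g^*)$ pointwise, $\nu$ a finite measure, and $g^*T_{k_1}(g^*)=\gamma(g^*)^2/(1-g^*)$ on $\Omega_1$) yields $\braket{\nu,\gamma(g^*)^2/(1-g^*)}=0$. Since $\gamma(g^*)^2/(1-g^*)$ is strictly positive \emph{everywhere} on $\Omega_1$, this forces $\nu=0$ in $L^q(\mu|_{\Omega_1})$, a contradiction. Hence $g^*=0$, and for any $g\in\Delta$ the bound $0\leq\phi(t,g)\leq\phi(t,1)$ (Proposition~\ref{prop:Ordpres}) together with \eqref{eq:gstar} gives $\phi(t,g)(x)\to 0$.

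The hard part is the first paragraph: correctly isolating the invariant support $\Omega_1$ of $g^*$ and checking that $g^*$ restricts there to the maximal equilibrium of a genuine SIS model. This is precisely what lets the argument avoid the connectivity Assumption~\ref{Assum_connectivity}: after restriction, $g^*$ is strictly positive on the \emph{whole} state space $\Omega_1$, so the left Perron eigenvector of the restricted model, which lives on $\Omega_1$, cannot be concentrated away from $\set{g^*>0}$. Pairing directly on $\Omega$ with a left Perron eigenvector of $T_{k/\gamma}$ would only show that eigenvector vanishes on $\set{g^*>0}$, which is not contradictory without connectivity.
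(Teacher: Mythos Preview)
Your proof is correct and shares its skeleton with the paper's: both restrict to the support $A=\Omega_1=\{g^*>0\}$, use the left Perron eigenvector of the restricted next-generation operator $T_{k'/\gamma}$ (your $T_{k_1/\gamma}$), and exploit that $g^*$ is strictly positive on $A$ to force a contradiction. The observation you call the ``hard part'' --- that no transmission leaves $\Omega_1$ --- is exactly the paper's move of replacing $k$ by $k'=\mathds{1}_A k\mathds{1}_A$.

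Where you diverge is in the endgame, and your route is longer than necessary. You first pin down $\rho=r(T_{k_1/\gamma})=1$ via the sandwich $1\leq\rho\leq R_0\leq 1$, then run a Lyapunov argument along the flow $u(t)=\phi(t,\mathbf{1}_{\Omega_1})$ to conclude $\braket{\nu,\gamma (g^*)^2/(1-g^*)}=0$. The paper bypasses both the sandwich and the dynamics: it simply pairs the \emph{static} equilibrium identity
\[
  T_{k'/\gamma}(\gamma g^*)=\Bigl(1+\tfrac{g^*}{1-g^*}\Bigr)\gamma g^*
\]
with the left Perron eigenvector $h$ to obtain $(r(T_{k'/\gamma})-1)\,\braket{h,\gamma g^*}=\braket{h,\gamma (g^*)^2/(1-g^*)}$. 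Both brackets being strictly positive (as $h$ is supported on $A$ and $g^*>0$ there) gives $r(T_{k'/\gamma})>1$, hence $R_0>1$, contradicting Proposition~\ref{prop:R0_s}. In your framework this is the same computation evaluated at $t=\infty$: once you have $\rho=1$ and $\nu$, the identity $\braket{\nu,\gamma (g^*)^2/(1-g^*)}=0$ follows in one line from $T_{k_1/\gamma}^\star\nu=\nu$ and $T_{k_1}(g^*)=\gamma g^*/(1-g^*)$, with no need to track $E(t)$, its monotonicity, or the integrability of $-E'$. Your verification that $g^*$ is the maximal equilibrium of the restricted system is likewise correct but unused in the contradiction.
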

\begin{proof}
  Suppose, to derive a contradiction, that $g^*$ is not equal to $0$ $\mu$-almost everywhere. We
  know according to Remark \ref{rem:g_star_inf_1} that $1 - g^*$ is positive everywhere. Hence, we
  get:
  \begin{equation}
    \label{eq:R0_big}
    T_{k/\gamma}(\gamma g^*) =T_k(g^*)= \left( 1 + \frac{g^*}{1-g^*}\right) \gamma g^* \geq \gamma g^*.
  \end{equation}
  According to Proposition \ref{prop:Collatz}, $R_0$ is then greater than or equal to $1$.

  \medskip

  We will prove that in fact $R_0>1$.  Consider the set
  $A = \set{ x \in \Omega \, \colon \, g^*(x)>0}$. Equation \eqref{eq:R0_big} remains true by
  replacing $k$ by $k' = \mathds{1}_A \, k \, \mathds{1}_A$ (\textit{i.e.}
  $k'(x,y)=\mathds{1}_A(x)k(x,y) \mathds{1}_A(y)$):
  \begin{equation}
    \label{eq:R0_big_bis}
    T_{k'/\gamma}(\gamma g^*) = \left( 1 + \frac{g^*}{1-g^*}\right) \gamma g^* \geq \gamma g^*.
  \end{equation}
  Using Proposition \ref{prop:Collatz}, we get that $r(T_{k'/\gamma}) \geq 1$. Since Assumption
  \ref{Assum1} is in force, $T_{k'/\gamma}$ has a left Perron eigenvector $h$ in
  $L^q_+(\mu) \backslash \set{0}$ (see Lemma \ref{lem:schae}~\ref{lem:schaeIV}).  By multiplying
  both members of Equation \eqref{eq:R0_big_bis} by $h$ and integrating with respect to $\mu$, we
  obtain:
  \begin{equation}\label{eq:brakets}
    (r(T_{k'/\gamma}) - 1) \braket{h, \gamma g^*} = \braket{h, (g^*)^2 \gamma / (1 - g^*)}
  \end{equation}
  It is clear that $h \mathds{1}_{A^c} = 0$. Since $h \in L^q_+(\mu) \backslash \set{0}$, we have
  necessarily:
  \[
    \int_A h(x) \, \mu(\mathrm{d}x) > 0.
  \]
  Hence, both brakets in Equation \eqref{eq:brakets} are positive. Thus, we get that
  $r(T_{k'/\gamma})>1$.  Using Theorem \ref{th:spectralradius}~\ref{th:spectralradiusII} and that
  the operator $T_{k/\gamma} - T_{k'/\gamma}$ is positive, we deduce that
  $R_0\geq r(T_{k'/\gamma})>1$. This is in contradiction with Proposition \ref{prop:R0_s} which
  asserts that $R_0\leq 1$ as $s(T_k-\gamma)=0$.  Thus, we obtain that $\mu$-a.e. $g^*=0$.  We
  conclude using Remark~\ref{rem:equilibrium_not_almost_zero}.
\end{proof}

\subsection{Supercritical regime: $s(T_k - \gamma) > 0$}\label{subsec:supercritical}
Assumption \ref{Assum1} is in force in this section. We consider the case $s(T_k - \gamma) > 0$. We
will begin the analysis by proving that $g^*$ is different from $0$. Then, we will show the
convergence of the system to $g^*$.  \medskip

According to Proposition \ref{prop:spectrumcontinuous}, there exists $\varepsilon_0 \in (0,1)$ such
that, for all $\varepsilon \in (0, \varepsilon_0)$, $R_0((1 - \varepsilon)) >1$. For each
$\varepsilon\in (0, \varepsilon_0)$, Proposition \ref{prop:R0_s} ensures the existence of a vector
$w_\varepsilon \in \mathscr{L}^\infty_+ \backslash \set{0}$ and a positive real number
$\lambda(\varepsilon)>0$, such that:
  \begin{equation}\label{eq:eigenvector}
    (1 - \varepsilon) T_k(w_\varepsilon) = (\gamma + \lambda(\varepsilon)) w_\varepsilon.
  \end{equation}
  We can take $w_\varepsilon$ such that $\norm{w_\varepsilon} < \varepsilon$. Moreover, according to
  Remark \ref{rem:eigenvector_non_almost_zero}:
\begin{equation}
\int_\Omega w_\varepsilon(x) \, \mathrm{d}x > 0.
\end{equation}
Then, we get the following proposition.
\begin{proposition}[Increasing trajectory]\label{prop:decolle}
  Suppose Assumption \ref{Assum1} is in force and that $s(T_k - \gamma) > 0$.  For all
  $\varepsilon\in (0,\varepsilon_0)$, the map $t \mapsto \phi(t, w_\varepsilon)$ is non-decreasing.
\end{proposition}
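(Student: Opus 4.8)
The plan is to reduce the statement to the sign criterion of Proposition \ref{prop:propC}, which says that $t \mapsto \phi(t,g)$ is non-decreasing if and only if $F(g) \geq 0$. So it suffices to check that $F(w_\varepsilon) \geq 0$ for each $\varepsilon \in (0,\varepsilon_0)$. First I would record two elementary consequences of the way $w_\varepsilon$ was chosen: since $w_\varepsilon \in \mathscr{L}^\infty_+$ with $\norm{w_\varepsilon} < \varepsilon < \varepsilon_0 < 1$, we have the pointwise bounds $0 \leq w_\varepsilon(x) \leq \norm{w_\varepsilon} < \varepsilon$ for every $x \in \Omega$. In particular $w_\varepsilon \in \Delta$, so that Proposition \ref{prop:propC} is applicable to $g = w_\varepsilon$, and moreover $1 - w_\varepsilon \geq 1 - \varepsilon$ pointwise on $\Omega$.

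Next I would combine these bounds with the positivity of the operator $T_k$ and the eigen-relation \eqref{eq:eigenvector}. Because $T_k$ is a positive operator and $w_\varepsilon \geq 0$, we have $T_k(w_\varepsilon) \geq 0$; combining this with $1 - w_\varepsilon \geq 1 - \varepsilon \geq 0$ gives, pointwise,
\[
  (1 - w_\varepsilon)\, T_k(w_\varepsilon) \;\geq\; (1 - \varepsilon)\, T_k(w_\varepsilon) \;=\; (\gamma + \lambda(\varepsilon))\, w_\varepsilon \;\geq\; \gamma\, w_\varepsilon,
\]
where the middle equality is exactly \eqref{eq:eigenvector} and the last inequality uses $\lambda(\varepsilon) > 0$ together with $w_\varepsilon \geq 0$. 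Subtracting $\gamma w_\varepsilon$ yields
\[
  F(w_\varepsilon) \;=\; (1 - w_\varepsilon)\, T_k(w_\varepsilon) - \gamma\, w_\varepsilon \;\geq\; 0,
\]
and Proposition \ref{prop:propC} then gives that $t \mapsto \phi(t, w_\varepsilon)$ is non-decreasing.

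I do not expect a genuine obstacle here: the proposition is essentially a bookkeeping consequence of the construction of $w_\varepsilon$ as a positive eigenvector of the ``discounted'' operator $(1-\varepsilon) T_k$ with eigenvalue $\gamma + \lambda(\varepsilon)$ having $\lambda(\varepsilon) > 0$. The only point that needs a moment's attention is the passage from the norm smallness $\norm{w_\varepsilon} < \varepsilon$ to the pointwise inequality $w_\varepsilon \leq \varepsilon$, which is what allows the nonlinear factor $(1 - w_\varepsilon)$ in $F$ to dominate the constant $1 - \varepsilon$ appearing in \eqref{eq:eigenvector}; this is precisely the role of the normalization $\norm{w_\varepsilon} < \varepsilon$ imposed when $w_\varepsilon$ was selected, and it is what makes the argument work.
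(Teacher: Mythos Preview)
Your argument is correct and is essentially identical to the paper's: both verify $F(w_\varepsilon)\geq 0$ via the chain $0\leq \lambda(\varepsilon)w_\varepsilon=(1-\varepsilon)T_k(w_\varepsilon)-\gamma w_\varepsilon\leq (1-w_\varepsilon)T_k(w_\varepsilon)-\gamma w_\varepsilon=F(w_\varepsilon)$, using $\norm{w_\varepsilon}<\varepsilon$ for the key inequality, and then invoke Proposition~\ref{prop:propC}.
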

\begin{proof}
  Let $\varepsilon\in (0,\varepsilon_0)$. We have:
  \[
    0 \leq \lambda(\varepsilon) w_\varepsilon = (1 - \varepsilon) T_k(w_\varepsilon) - \gamma
    w_\varepsilon \leq (1 - w_\varepsilon) T_k(w_\varepsilon) - \gamma w_\varepsilon =
    F(w_\varepsilon),
  \]
  where the last inequality holds because $\norm{w_\varepsilon} < \varepsilon$. We conclude using
  Proposition \ref{prop:propC}.
\end{proof}

Proposition \ref{prop:decolle} shows that the equilibrium $0$ is not asymptotically stable in the
following sense: we can find initial conditions arbitrarily close to $0$ in norm such that
$\phi(t,g)$ does not converge to $0$ pointwise. An easy consequence of Proposition
\ref{prop:decolle} is that $g^*$ is not $\mu$-a.e. equal to 0.

\begin{corollary}\label{cor:existenceendemic}
  If Assumption \ref{Assum1} is in force and $s(T_k - \gamma) > 0$, then we have:
  \[
  \int_\Omega g^*(x) \, \mu(\mathrm{d}x) > 0.
  \] 
\end{corollary}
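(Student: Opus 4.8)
The plan is to compare the increasing trajectory issued from $w_\varepsilon$, provided by Proposition \ref{prop:decolle}, with the decreasing trajectory $t\mapsto \phi(t,1)$ that defines $g^*$ in \eqref{eq:gstar}. First I would fix some $\varepsilon\in(0,\varepsilon_0)$ and record that the vector $w_\varepsilon$ from \eqref{eq:eigenvector} belongs to $\mathscr{L}^\infty_+\setminus\set{0}$ and satisfies $\norm{w_\varepsilon}<\varepsilon<1$; in particular $w_\varepsilon\in\Delta$, so the semi-flow $t\mapsto\phi(t,w_\varepsilon)$ is well defined and takes values in $\Delta$.

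Next, Proposition \ref{prop:decolle} asserts that $t\mapsto\phi(t,w_\varepsilon)$ is non-decreasing, hence $\phi(t,w_\varepsilon)\geq \phi(0,w_\varepsilon)=w_\varepsilon$ for every $t\geq 0$. On the other hand, since $w_\varepsilon\leq 1$, the order-preserving property of the semi-flow (Proposition \ref{prop:Ordpres}) gives $\phi(t,w_\varepsilon)\leq \phi(t,1)$ for all $t\geq 0$. Chaining the two bounds, for every $x\in\Omega$ and every $t\geq 0$ we obtain $w_\varepsilon(x)\leq \phi(t,1)(x)$; letting $t\to\infty$ and using the definition \eqref{eq:gstar} of $g^*$ yields $w_\varepsilon\leq g^*$ pointwise.

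Finally I would integrate this inequality against $\mu$, so that $\int_\Omega g^*(x)\,\mu(\mathrm{d}x)\geq \int_\Omega w_\varepsilon(x)\,\mu(\mathrm{d}x)$, and the right-hand side is strictly positive by Remark \ref{rem:eigenvector_non_almost_zero}. This establishes the claim. There is no genuine obstacle at this stage: all the substance has been packaged into the construction of the sub-solution $w_\varepsilon$ and into Proposition \ref{prop:decolle}; the only point that needs a word of care is that $w_\varepsilon\in\Delta$, which is immediate from the normalization $\norm{w_\varepsilon}<\varepsilon<1$ and is exactly what makes the comparison $\phi(\cdot,w_\varepsilon)\leq\phi(\cdot,1)$ legitimate.
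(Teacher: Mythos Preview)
Your argument is correct and is precisely the ``easy consequence of Proposition \ref{prop:decolle}'' the paper alludes to without spelling out: from $w_\varepsilon\leq\phi(t,w_\varepsilon)\leq\phi(t,1)$ one lets $t\to\infty$ to get $w_\varepsilon\leq g^*$, and then Remark \ref{rem:eigenvector_non_almost_zero} gives the positive integral. The only cosmetic difference is that the paper's surrounding discussion hints at passing through the limit equilibrium $h^*=\lim_t\phi(t,w_\varepsilon)$ and the maximality of $g^*$ (Proposition \ref{prop:limitequilibrium}), whereas you bypass this by comparing directly with $\phi(t,1)$; both routes are equivalent and equally short.
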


We deduce from Proposition \ref{prop:decolle} that $t \mapsto \phi(t,w_\varepsilon)$ converges
pointwise as $t$ tends to infinity since $\phi(t,w_\varepsilon)\leq 1$ for all $t$. According to
Proposition \ref{prop:pointwise_conv_equilibrium}, the limit is an equilibrium. It is not $0$ but it
might be different from $g^*$. We will use Assumption \ref{Assum_connectivity} to ensure that $0$
and $g^*$ are the only equilibria. In order to prove this result, we need the following lemma.

\begin{lemma}[Instantaneous propagation of the infection]\label{lem:QuasiInt}
 Suppose Assumptions \ref{Assum1} and \ref{Assum_connectivity} are in force. If $g \in \Delta$ is such that:
  \[
  \int_\Omega g(x) \, \mu(\mathrm{d}x) > 0.
  \]
  Then, for all $t>0$, $\phi(t,g)$ is $\mu$-a.e. positive.
\end{lemma}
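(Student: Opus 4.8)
The plan is to argue by ``reachability'': the infection is already present on the set $\set{g>0}$, which has positive $\mu$-measure since $\int_\Omega g\,\mathrm{d}\mu>0$, and connectivity of $k$ will force it to spread to $\mu$-almost the whole space instantaneously. First I would record an elementary pointwise lower bound on the trajectory. Writing $u(t,\cdot)=\phi(t,g)$, we have $u(t,\cdot)\in\Delta$ for all $t\geq0$ by Proposition~\ref{prop:deltainv}, and for each fixed $x\in\Omega$ the map $t\mapsto u(t,x)$ is $\mathcal{C}^1$ (Proposition~\ref{prop:Flow}~\ref{prop:FlowSmooth}) with
\[
  \partial_t u(t,x)=(1-u(t,x))\,T_k(u(t,\cdot))(x)-\gamma(x)\,u(t,x)\geq -\norm{\gamma}\,u(t,x),
\]
because $0\leq u(t,x)\leq 1$ and $T_k$ is a positive operator applied to $u(t,\cdot)\geq 0$. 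Gr\"onwall's inequality then gives $u(t,x)\geq u(s,x)\,\mathrm{e}^{-\norm{\gamma}(t-s)}$ for $0\leq s\leq t$; hence the sets $P_t:=\set{x\in\Omega\,\colon\,u(t,x)>0}$ are non-decreasing in $t$ and each contains $\set{g>0}$, so $\mu(P_t)>0$ for every $t\geq0$.

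Next I would fix $t_0>0$ and set $P:=\bigcup_{0<t<t_0}P_t$, which (by the monotonicity just established) is an increasing countable union, hence measurable, contained in $P_{t_0}$, with $\mu(P)>0$. It suffices to prove $\mu(P^c)=0$; suppose not. If $x\in P^c$ then $u(t,x)=0$ for all $t\in(0,t_0)$, so $\partial_t u(t,x)=0$ on that interval, and the evolution equation forces $T_k(u(t,\cdot))(x)=\int_\Omega k(x,y)\,u(t,y)\,\mu(\mathrm{d}y)=0$ for all $t\in(0,t_0)$. Since $k\geq0$ and $u(t,\cdot)\geq0$ with $u(t,\cdot)>0$ on $P_t$, this means $\int_{P_t}k(x,y)\,\mu(\mathrm{d}y)=0$; letting $t\uparrow t_0$ and using monotone convergence (the mass $\kappa(x,\Omega)$ being finite by Assumption~\ref{Assum0}) gives $\int_P k(x,y)\,\mu(\mathrm{d}y)=0$ for every $x\in P^c$. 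On the other hand, if $\mu(P^c)>0$ then, as also $\mu(P)>0$, the connectivity Assumption~\ref{Assum_connectivity} applied to $A=P^c$ yields $\int_{P^c\times P}k(x,y)\,\mu(\mathrm{d}x)\mu(\mathrm{d}y)>0$, so $x\mapsto\int_P k(x,y)\,\mu(\mathrm{d}y)$ is positive on a set of positive $\mu$-measure of $x\in P^c$ --- a contradiction. Hence $\mu(P^c)=0$, so $\mu(P_{t_0}^c)=0$, i.e.\ $\phi(t_0,g)$ is $\mu$-a.e.\ positive; since $t_0>0$ was arbitrary, the lemma follows.

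I expect the only delicate point to be the bookkeeping around time intervals: to justify the step ``$\partial_t u(\cdot,x)\equiv0$ on an interval'' one cannot work with a single time-slice $P_{t_0}$ but must pass to the union $P=\bigcup_{0<t<t_0}P_t$, which is precisely where the monotonicity of $t\mapsto P_t$ from the first paragraph is needed. One also has to be careful to apply the connectivity hypothesis to $P^c$ rather than to $P$: the infection reaching $x$ is governed by $k(x,\cdot)$, i.e.\ by the \emph{second} variable of $k$, and matching this with the form $\int_{A\times A^c}k$ in Assumption~\ref{Assum_connectivity} forces the choice $A=P^c$.
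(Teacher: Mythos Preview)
Your proof is correct and takes a genuinely different route from the paper's. The paper first reduces (by monotonicity of the flow) to $\norm{g}<1/2$, restricts to a short time interval $[0,c)$ on which $\phi(t,g)<1/2$, and then uses the comparison theorem (Corollary~\ref{cor:Comparison}) with the linear flow to obtain the explicit lower bound
\[
  \phi(t,g)\ \geq\ \mathrm{e}^{-\norm{\gamma}t}\,\mathrm{e}^{tT_k/2}g,\qquad t\in[0,c).
\]
Connectivity of $k$ is then applied to the \emph{series expansion} of $\mathrm{e}^{tT_k/2}g$ to show that this lower bound is $\mu$-a.e.\ positive, and the semigroup property propagates this to all $t>0$. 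By contrast, you never compare with a linear semigroup: your pointwise Gr\"onwall bound $u(t,x)\geq u(s,x)\,\mathrm{e}^{-\norm{\gamma}(t-s)}$ only yields monotonicity of the positivity sets $P_t$, and you then read the constraint $\int_{P}k(x,y)\,\mu(\mathrm{d}y)=0$ for $x\in P^c$ directly off the ODE, contradicting connectivity with $A=P^c$. Your argument is more elementary---it avoids the comparison theorem and the preliminary reduction to small $\norm{g}$---while the paper's approach is more quantitative, producing an explicit positive lower bound rather than mere positivity. Your care about the orientation of the variables in Assumption~\ref{Assum_connectivity} (taking $A=P^c$ so that the second argument of $k$ sits in $P$) and about working with the union $P=\bigcup_{0<t<t_0}P_t$ rather than a single slice is exactly right.
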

\begin{proof}
  Since the flow is order-preserving (see Proposition \ref{prop:Ordpres}), it is sufficient to show
  the proposition for $g$ such that $\norm{g} < 1/2$. It follows from Equation
  \eqref{eq:semiflow_int} that:
  \[
    \phi(t,g) \leq \norm{g} + t \norm{T_k} .
  \]  
  Thus, for all $t \in [0, c)$, with $c=(1-2\norm{g})/2\norm{T_k}$ (and $c=+\infty $ if
  $\norm{T}=0$), we have that $\phi(t,g) <1/2$. Now, we define the function:
  \[
    u(t) = \mathrm{e}^{-\norm{\gamma} t}\, \mathrm{e}^{t T_k/2} g.
  \]  
  We get the following inequality for all $t\in[0,c)$:
  \[
    u'(t) - (T_k/2 - \norm{\gamma})(u(t)) =0\leq (1/2- \phi(t,g)) T_k (\phi(t,g)) \leq \partial_t
    \phi(t,g) - (T_k/2 - \norm{\gamma})(\phi(t,g)).
  \]
  Using Corollary \ref{cor:Comparison} with $v(t)=\phi(t,g)$ and $F=T_k/2 - \gamma$ (which is
  clearly cooperative as it is linear), we get for $t\in [0, c)$:
  \begin{equation}
    \label{eq:minoration}
    \phi(t,g) \geq u(t). 
  \end{equation}
  Now, we fix $t \in [0,c)$. We denote by $A=\set{ x \in \Omega \,
    \colon \, u(t)(x) > 0}$ the support of $u(t)$. 
  We have:
  \[
    0 = \braket{\mathds{1}_{A^c} , u(t)} = \mathrm{e}^{-\norm{\gamma} t}\, \sum\limits_{n \in
      \mathbb{N}} \frac{1}{n!}\braket{\mathds{1}_{A^c}, (t T_k/2)^n(g)}.
  \]
  This implies that $\braket{\mathds{1}_{A^c}, (t T_k/2)^n(g)} = 0$ for all $n$, and thus that
  $\braket{\mathds{1}_{A^c} , T_k u(t) } = 0$. We deduce that:
  \[
    \int_{A^c \times A} k(x,y) \, \mu(\mathrm{d}x)\mu(\mathrm{d}y) = 0.
  \]
  Since the set $A$ contains the support of $g$, we get $\mu(A)>0$. It follows from Assumption
  \ref{Assum_connectivity} that $\mu(A^c) = 0$. This means that $u(t) $ is $\mu$-a.e.  positive.
  Hence, from Equation \eqref{eq:minoration}, we get that, for $t \in [0,c)$, $\phi(t,g)$ is
  $\mu$-a.e. positive.  Using the semi-group property of the semi-flow this results propagates on
  the whole positive half-line and the result is proved.
\end{proof}
\begin{remark}\label{rem:positivity_equilibrium}
  Lemma \ref{lem:QuasiInt} together with Remark \ref{rem:equilibrium_not_almost_zero} shows that an
  equilibrium $h^*$ different from $0$ is $\mu$-a.e. positive.
\end{remark}
\begin{remark}
  One can check from its proof, that Lemma \ref{lem:QuasiInt} does not require the integrability
  condition \eqref{eq:strongintegrability} in Assumption \ref{Assum1} to be true.
\end{remark}

Now we can show the following important result.

\begin{proposition}[Uniqueness of the endemic state]
  \label{prop:uniq}
  Under Assumptions \ref{Assum1} and \ref{Assum_connectivity}, the maximal equilibrium $g^*$ is the
  unique equilibrium different from $0$.
\end{proposition}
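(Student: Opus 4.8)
The plan is to take an arbitrary equilibrium $h^*\in\Delta$ with $h^*\neq 0$ and prove $h^*=g^*$. We already know $h^*\le g^*$ (Proposition~\ref{prop:limitequilibrium}\ref{prop:limitequilibriumI}), that $h^*$ and $g^*$ are $\mu$-a.e.\ positive (Remark~\ref{rem:positivity_equilibrium}) and everywhere $<1$ (Remark~\ref{rem:g_star_inf_1}), and that $F(h^*)=0$ reads, pointwise, $(1-h^*)\,T_k(h^*)=\gamma\,h^*$, and likewise $(1-g^*)\,T_k(g^*)=\gamma\,g^*$. The obstacle to face is that $\inf\gamma$ may vanish, so one cannot simply divide by $\gamma$ and argue with $T_{k/\gamma}$ and the (possibly unbounded) function $h^*/(1-h^*)$; the remedy will be to absorb $1/\gamma(y)$ against the bounded factor $1-h^*(y)$.

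First I would set $m=T_k(h^*)\in\mathscr{L}^\infty_+$ and introduce the kernel $k'$ given by $k'(x,y)=k(x,y)\,(1-h^*(y))/\gamma(y)$. Since $(1-h^*)\,m=\gamma\,h^*$, i.e.\ $(1-h^*)\,m/\gamma=h^*$, one computes
\[
  T_{k'}(m)(x)=\int_\Omega k(x,y)\,\frac{(1-h^*(y))\,m(y)}{\gamma(y)}\,\mu(\mathrm{d}y)=\int_\Omega k(x,y)\,h^*(y)\,\mu(\mathrm{d}y)=m(x),
\]
so $m$ is a non-negative fixed point of the positive bounded operator $T_{k'}$. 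As $k'\le k/\gamma$ and \eqref{eq:strongintegrability} holds, Lemma~\ref{lem:schae} applies to $T_{k'}$, and $k'$ is connected whenever $k$ is (the factor $(1-h^*)/\gamma$ being everywhere positive). Also $m\neq 0$ in $L^p(\mu)$, for otherwise $\gamma h^*=0$ $\mu$-a.e.\ and $h^*=0$ by Remark~\ref{rem:equilibrium_not_almost_zero}. Let $\ell\in L^q_+(\mu)\setminus\set{0}$ be the left Perron eigenvector of $T_{k'}$ furnished by Lemma~\ref{lem:schae}\ref{lem:schaeVI}; it is $\mu$-a.e.\ positive and $T_{k'}^\star\ell=r(T_{k'})\,\ell$. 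Pairing with $m$ gives $(r(T_{k'})-1)\braket{\ell,m}=0$, while $\braket{\ell,m}=\int_\Omega\ell\,m\,\mathrm{d}\mu>0$, hence $r(T_{k'})=1$; then $m$ is a right Perron eigenvector of $T_{k'}$ and so, by Lemma~\ref{lem:schae}\ref{lem:schaeVI}, is $\mu$-a.e.\ positive. The same computation with $g^*$ and $k''(x,y)=k(x,y)\,(1-g^*(y))/\gamma(y)$ gives $T_{k''}(n)=n$ for $n=T_k(g^*)$, and $n\ge m>0$ $\mu$-a.e.

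Next I would compare $k'$ and $k''$. Since $h^*\le g^*$ everywhere, $k''\le k'$, hence $T_{k''}\le T_{k'}$, and pairing with $\ell$, using $r(T_{k'})=1$:
\[
  \braket{\ell,n}=\braket{\ell,T_{k''}(n)}\le\braket{\ell,T_{k'}(n)}=\braket{T_{k'}^\star\ell,n}=\braket{\ell,n},
\]
so $\braket{\ell,(T_{k'}-T_{k''})(n)}=0$. But $(T_{k'}-T_{k''})(n)(x)=\int_\Omega\big(g^*(y)-h^*(y)\big)\,k(x,y)\,n(y)\,\gamma(y)^{-1}\,\mu(\mathrm{d}y)\ge 0$ and $\ell>0$ $\mu$-a.e., so $(T_{k'}-T_{k''})(n)=0$ $\mu$-a.e.; as $n/\gamma>0$ $\mu$-a.e., this forces $\big(g^*(y)-h^*(y)\big)\,k(x,y)=0$ for $\mu$-a.e.\ $y$, for $\mu$-a.e.\ $x$. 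Integrating in $x$ and setting $A=\set{x\in\Omega\,\colon\,g^*(x)>h^*(x)}$, we get $\int_{A^c\times A}k(x,y)\,\mu(\mathrm{d}x)\,\mu(\mathrm{d}y)=0$; Assumption~\ref{Assum_connectivity} then forces $\mu(A)=0$ or $\mu(A^c)=0$, the latter being impossible since it would make $k=0$ $\mu$-a.e.\ and hence $m=T_k(h^*)=0$ $\mu$-a.e. Thus $g^*=h^*$ $\mu$-a.e., and since an equilibrium is recovered from its $\mu$-equivalence class through $h^*(x)=\lambda(x)/\big(\lambda(x)+\gamma(x)\big)$ with $\lambda(x)=\int_\Omega h^*(y)\,k(x,y)\,\mu(\mathrm{d}y)$, we conclude $g^*=h^*$ everywhere.

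The crux, and the whole reason for the kernel twist, is the possible degeneracy $\inf\gamma=0$, which obstructs the naive route via $T_{k/\gamma}$ and $h^*/(1-h^*)$. Once the equilibrium identity is recast as the Perron eigenproblem $T_{k'}(m)=m$ with $k'$ still connected and still obeying \eqref{eq:strongintegrability}, the conclusion is just the rigidity of Perron theory: the strict monotonicity of the spectral radius, detected here through the $\mu$-a.e.\ positive left eigenvector $\ell$, together with the connectivity Assumption~\ref{Assum_connectivity}.
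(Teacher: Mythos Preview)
Your proof is correct and shares the core strategy with the paper---recasting the equilibrium equation as an eigenvalue-$1$ problem for a connected integral operator satisfying \eqref{eq:strongintegrability2}, then using the $\mu$-a.e.\ positive left Perron eigenvector to force $g^*=h^*$ $\mu$-a.e.---but the specific construction differs. The paper twists on the \emph{first} variable, taking $\mathsf{k}(x,y)=(1-g^*(x))\,k(x,y)/\gamma(y)$, so that $T_{\mathsf{k}}(\gamma g^*)=\gamma g^*$; after pairing with the left Perron eigenvector $v$ to obtain $r(T_{\mathsf{k}})=1$, the identity $\braket{v,F(h^*)}=0$ is decomposed as
\[
  \braket{v,\,T_{\mathsf{k}}(\gamma h^*)-\gamma h^*}+\braket{v,\,(g^*-h^*)\,T_k(h^*)}=0,
\]
and the first bracket vanishes, giving $(g^*-h^*)T_k(h^*)=0$ $\mu$-a.e.\ in one stroke. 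You instead twist on the \emph{second} variable with $(1-h^*(y))/\gamma(y)$, making $m=T_k(h^*)$ itself the right Perron eigenvector; this requires introducing a second kernel $k''$ attached to $g^*$ and a monotonicity comparison $T_{k''}\le T_{k'}$ paired against $\ell$. Both routes handle the obstacle $\inf\gamma=0$ cleanly; the paper's is slightly more economical (one kernel, one decomposition), while yours makes the role of each equilibrium more symmetric and brings out the strict-monotonicity-of-spectral-radius viewpoint more explicitly.
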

\begin{proof}
  Let $h^*$ be another equilibrium different from $0$. Since $g^*$ is the maximal equilibrium, we
  have $h^* \leq g^*$. We shall prove that $h^*$ is equal to $g^*$ almost everywhere. Let us define
  the non-negative kernel $\mathsf{k}$ by:
  \[
    \mathsf{k}(x,y)=(1- g^*(x)) \, \frac{k(x,y)}{\gamma(y)} \quad \text{for $x,y\in \Omega$.}
  \]
  Notice that $\mathsf{k}$ satisfies \eqref{eq:strongintegrability2}.  Since
  $T_\mathsf{k} (\gamma g^*) = \gamma g^*$, we deduce from Proposition \ref{prop:Collatz} that
  $r(T_\mathsf{k}) \geq 1$. Let $v\in L^q(\mu)_+ \backslash \set{0}$ be a left Perron vector of the
  operator $T_\mathsf{k}$ (given by Lemma \ref{lem:schae}~\ref{lem:schaeIV}). The kernel
  $\mathsf{k}$ satisfies Assumption \ref{Assum_connectivity} as $k$ does and $1-g^*$ is positive
  everywhere (see Remark \ref{rem:g_star_inf_1}). Hence, $v$ can be chosen positive
  $\mu$-a.e. according to Lemma \ref{lem:schae}~\ref{lem:schaeVI}. The following computation:
  \[
    \braket{v, \gamma g^*} = \braket{v, T_\mathsf{k}(\gamma g^*)} = r(T_\mathsf{k}) \braket{v, g^*},
  \]
  shows that $r(T_\mathsf{k})$ is actually equal to $1$ since $\braket{v, \gamma g^*}>0$. Now we
  compute:
  \begin{align*}
    0 &= \braket{v, F(h^*)}  \\
      &= \braket{v, T_\mathsf{k}(\gamma h^*) - \gamma h^*} + \braket{v, (g^* -
        h^*) T_{k / \gamma} (\gamma h^*)} \\ 
      &= \braket{v, (g^* - h^*) T_{k}(h^*)},
  \end{align*}
  where we used that $\braket{v, T_\mathsf{k}f - f}=0$ as $r(T_\mathsf{k})=1$ and $v$ is a left
  Perron eigenvector.  According to Remark \ref{rem:positivity_equilibrium}, $h^*$ is
  $\mu$-a.e. positive. Since we have $T_{k}(h^*)=\gamma h^*/(1-h^*)$, the function $T_{k}(h^*)$ is
  also $\mu$-a.e. positive. Hence $g^*$ and $h^*$ are equal $\mu$-a.e. since $v$ is
  $\mu$-a.e. positive, see Lemma \ref{lem:schae}~\ref{lem:schaeVI}. This implies in particular that
  $T_k(h^*)=T_k(g^*)$ by Lemma \ref{lem:schae}~\ref{lem:schae0}. We deduce that, for all
  $x \in \Omega$:
  \[
    h^*(x) = T_k(h^*)(x)/ (\gamma(x) + T_k(h^*)(x)) = T_k(g^*)(x)/ (\gamma(x) + T_k(g^*)(x)) =
    g^*(x).
  \]
  Therefore $g^*$ is then unique equilibrium different from $0$.
\end{proof}

Now we can prove the main result of this section on the pointwise convergence of $\phi(t,g)$. If $g$
is $\mu$-a.e. equal to 0, then clearly, as $\gamma$ is positive, we get that
$\lim\limits_{t \to \infty} \phi(t, g) =0$ pointwise. So only the case $g$ not $\mu$-a.e. equal to
$0$ is pertinent.
\begin{theorem}\label{th:super}
  Suppose that Assumptions \ref{Assum1} and \ref{Assum_connectivity} are in force. Let
  $g \in \Delta$ such that $\int_\Omega g(x) \, \mu(\mathrm{d}x) > 0$.  Then,we have that for all
  $x \in \Omega$:
  \[
    \lim\limits_{t \to \infty} \phi(t, g)(x) = g^*(x).
  \]
\end{theorem}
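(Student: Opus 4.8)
The plan is to sandwich the trajectory, showing that for every $x\in\Omega$ one has both $\limsup_{t\to\infty}\phi(t,g)(x)\le g^*(x)$ and $\liminf_{t\to\infty}\phi(t,g)(x)\ge g^*(x)$. The upper bound is immediate: since $g\le 1$, Proposition~\ref{prop:Ordpres} gives $\phi(t,g)\le\phi(t,1)$, and by \eqref{eq:gstar} the right-hand side converges pointwise to $g^*$, so $\limsup_{t}\phi(t,g)(x)\le g^*(x)$. Everything then reduces to the lower bound.

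For the lower bound, first observe that one may assume $\norm{g}\le 1/4$: replacing $g$ by $g\wedge 1/4$ (which still has positive integral) only decreases $\phi(t,g)$ by Proposition~\ref{prop:Ordpres}, and since $\phi(t,g)$ is squeezed between $\phi(t,g\wedge 1/4)$ and $\phi(t,1)\to g^*$, it is enough to treat $g\wedge 1/4$; note also $\norm{T_k}>0$, as $R_0=r(T_{k/\gamma})>1$ rules out $T_k=0$. Fix $\varepsilon\in(0,\varepsilon_0)$ and recall from \eqref{eq:eigenvector} the subsolution $w_\varepsilon\in\mathscr{L}^\infty_+\setminus\set{0}$ with $\norm{w_\varepsilon}<\varepsilon$, positive integral, and $t\mapsto\phi(t,w_\varepsilon)$ non-decreasing (Proposition~\ref{prop:decolle}). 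Being non-decreasing and bounded by $1$, it converges pointwise to some $h_\varepsilon$, which is an equilibrium by Proposition~\ref{prop:pointwise_conv_equilibrium}; since $h_\varepsilon\ge w_\varepsilon$ has positive integral, Proposition~\ref{prop:uniq} forces $h_\varepsilon=g^*$, so $\phi(t,w_\varepsilon)\to g^*$ pointwise. Consequently it suffices to produce one time $s>0$ (and possibly one small $\varepsilon$) with $\phi(s,g)\ge w_\varepsilon$: then $\phi(t+s,g)=\phi(t,\phi(s,g))\ge\phi(t,w_\varepsilon)\to g^*$, giving $\liminf_t\phi(t,g)(x)\ge g^*(x)$, and combining with the upper bound yields $\lim_t\phi(t,g)(x)=g^*(x)$.

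To obtain $\phi(s,g)\ge w_\varepsilon$ I would use Lemma~\ref{lem:QuasiInt}: for every $s_1>0$ the function $\phi(s_1,g)$ is $\mu$-a.e.\ positive, and reproducing the comparison argument in the proof of that lemma (with the linear cooperative field $T_k/2-\gamma$ and Corollary~\ref{cor:Comparison}) gives the explicit lower bound $\phi(s_1+s_2,g)\ge \mathrm e^{-\norm{\gamma}s_2}\,\mathrm e^{s_2 T_k/2}\phi(s_1,g)\ge \mathrm e^{-\norm{\gamma}s_2}\tfrac{s_2}{2}\,T_k(\phi(s_1,g))$ for $s_2$ small. One then bounds $w_\varepsilon$ from above through its eigenrelation $w_\varepsilon=\tfrac{1-\varepsilon}{\gamma+\lambda(\varepsilon)}T_k(w_\varepsilon)\le\tfrac{1}{\lambda(\varepsilon)}T_k(w_\varepsilon)$, compares the two sides inside $T_k$ (both $w_\varepsilon$ and $T_k(\phi(s_1,g))$ vanish exactly on the set where $k(x,\cdot)=0$ $\mu$-a.e., and are strictly positive off it), and closes the estimate by rescaling $w_\varepsilon$ — which preserves \eqref{eq:eigenvector} — and choosing $s_2$ small enough. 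I expect this last step to be the main obstacle: a $\mu$-a.e.\ positive bounded function need not be bounded below, so turning "$\phi(s_1,g)$ is $\mu$-a.e.\ positive" into the \emph{everywhere} pointwise domination $\phi(s,g)\ge w_\varepsilon$ requires propagating positivity quantitatively through $T_k$ (iterating it and invoking Assumption~\ref{Assum_connectivity} if one application is not enough) and exploiting the precise structure of the eigenrelation defining $w_\varepsilon$ rather than the crude bound $w_\varepsilon\le\varepsilon$.
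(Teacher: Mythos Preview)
Your overall architecture --- sandwich $\phi(t,g)$ between $\phi(t,w)$ (for some subsolution $w\le g$ giving an increasing trajectory) and $\phi(t,1)$, then invoke Proposition~\ref{prop:uniq} to identify the lower limit as $g^*$ --- is exactly right and matches the paper. The divergence is in how to manufacture the subsolution, and the gap you flag is real: there is no reason the fixed eigenfunction $w_\varepsilon$ of the \emph{full} operator $(1-\varepsilon)T_k$ should be dominated by $\phi(s,g)$ everywhere, even after rescaling. Your proposed fix (compare through $T_k$, iterate, rescale) would need $\phi(s_1,g)\ge c\,w_\varepsilon$ to hold $\mu$-a.e.\ for some $c>0$, but a $\mu$-a.e.\ positive function need not dominate another $\mu$-a.e.\ positive function on any set of full measure; the ratio can degenerate.

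The paper sidesteps this entirely by \emph{changing the operator} rather than the comparison point. After using Lemma~\ref{lem:QuasiInt} to assume $g>0$ $\mu$-a.e., it sets $\Omega_n=\{g\ge 1/n\}$, observes $(1-1/n)\mathds{1}_{\Omega_n}\to 1$ in $L^1(\mu)$, and invokes the $L^1$-continuity of $h\mapsto R_0(h)=r(hT_{k/\gamma})$ (Proposition~\ref{prop:spectrumcontinuous}) to get $R_0\big((1-1/n)\mathds{1}_{\Omega_n}\big)>1$ for some large $n$. Proposition~\ref{prop:R0_s}\ref{prop:R0_sIII} then produces $w_n\in\mathscr{L}^\infty_+\setminus\{0\}$ and $\lambda>0$ with $(1-1/n)\mathds{1}_{\Omega_n}T_k(w_n)=(\gamma+\lambda)w_n$. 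The indicator forces $w_n\equiv 0$ on $\Omega_n^c$, so after scaling to $\norm{w_n}\le 1/n$ one has $w_n\le g$ \emph{everywhere} for free --- no quantitative positivity propagation needed. One then checks $F(w_n)\ge 0$ exactly as in Proposition~\ref{prop:decolle} and closes the sandwich at time $0$ rather than at some later time $s$. The key idea you are missing is this localization trick: build the eigenfunction for a truncated kernel supported where $g$ is already bounded below.
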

\begin{proof}
  By Lemma \ref{lem:QuasiInt}, it is enough to show the result for $g$ $\mu$-a.e. positive. We
  define, for all $n \in \mathbb{N}^*$:
  \[
    \Omega_n = \set{x \in \Omega \, \colon \, g(x) \geq 1/n}.
  \]
  Since $g$ is $\mu$-a.e. positive, we get that
  $\lim_{n\rightarrow\infty } \left(1- \frac{1}{n}\right) \mathds{1}_{\Omega_n}=1$ in $L^1(\mu)$.
  Besides, $R_0$ is greater than $1$ by Proposition \ref{prop:R0_s}. Hence, according to Proposition
  \ref{prop:spectrumcontinuous}, there exists $n$ large enough such that:
  \[
    R_0\left(\left( 1 - 1/n\right) \mathds{1}_{\Omega_n}\right) > 1.
  \]
  By applying Proposition \ref{prop:R0_s}~\ref{prop:R0_sIII} to the kernel
  $( 1 - 1/n) \mathds{1}_{\Omega_n}(x) k(x,y)$, we get that there exists
  $w_n \in \mathscr{L}^\infty_+ \backslash \set{0}$ and $\lambda>0$ such that:
  \[
    \left( 1 - \frac{1}{n}\right) \mathds{1}_{\Omega_n} T_{k}(w_n) = (\gamma + \lambda) w_n.
  \]
  We deduce that for all $x \in \Omega_n^c$, $w_n(x) =0$. Furthermore we can choose $w_n$ such that
  $\norm{w_n}\leq 1/n$. This proves that $w_n \leq g$. Then, it follows from the monotony of the
  semi-flow (see Proposition \ref{prop:Ordpres}) that, for all $t \in \mathbb{R}_+$:
  \begin{equation}
    \label{eq:encadrement}
    \phi(t, w_n) \leq  \phi(t, g) \leq \phi(t, 1).
  \end{equation}
  Besides, we have:
  \begin{align*}
    0 \leq \lambda w_n  
    &= ( 1 - 1/n) \mathds{1}_{\Omega_n} T_{k}(w_n) - \gamma w_n \\
    &\leq ( 1 - 1/n) T_{k}(w_n) - \gamma w_n \\
    &\leq( 1 - w_n) T_{k}(w_n) - \gamma w_n \\
    &=  F(w_n),
  \end{align*}
  where the last inequality follows from the fact that $\norm{w_n} \leq 1/n$. Thus, the path
  $t \mapsto \phi(t, w_n)$ is non-decreasing according to Proposition \ref{prop:propC}. Hence, it
  converges pointwise to a limit $h^* \neq 0$ since
  $w_n \in \mathscr{L}^\infty_+ \backslash \set{0}$. This limit has to be an equilibrium by
  Proposition \ref{prop:pointwise_conv_equilibrium}. Since $0$ and $g^*$ are the only equilibria by
  Proposition \ref{prop:uniq}, we have necessarily $h^* = g^*$. We conclude thanks to Equation
  \eqref{eq:encadrement}.
\end{proof}

\subsection{Endemic states in the critical regime}
\label{sec:exple-N}
Here we show by a counter-example that the integral condition \eqref{eq:strongintegrability} is
necessary to obtain the convergence towards the disease-free equilibrium in the critical regime. In
the following example, the transmission kernel has a bounded density with respect to a finite
measure $\mu$ and we have $\inf\gamma>0$ and $R_0=1$. However, there exists a continuum set of
distinct equilibria.  \medskip

Consider the set $\mathbb{N}^*$ equipped with some finite measure $\mu$ such that
$\mu_n=\mu(\set{n})>0$ for all $n \in \mathbb{N}^*$. We choose $\gamma$ constant equal to $1$ and
the kernel $\kappa$ defined for $i,j\in \mathbb{N}^*$ by:
\begin{equation}
  \label{eq:counter_example}
  \kappa(i,\set{j}) = \left\{
  \begin{array}{ll}
    \frac{2 i + 2}{2 i - 1} & \mbox{if } j = i+1, \\
    0 & \mbox{otherwise},
  \end{array} \right. \qquad \text{and} \qquad \gamma(i) = 1.
\end{equation}
Clearly Assumption \ref{Assum0} is satisfied. Moreover, the kernel $\kappa$ has the following
density $k$ with respect to $\mu$ defined by $k(i,j)=\kappa(i,\set{j})/\mu(\set{j})$ for
$i,j\in \mathbb{N}^*$.  However condition \eqref{eq:weakintegrability}, and thus
\eqref{eq:strongintegrability} from Assumption \ref{Assum1}, is not satisfied.  Indeed, for all
$q>1$ we have:
\[
\sup\limits_{n \in \mathbb{N}^*}  \int_{\mathbb{N}^*} k(x,y)^{q} \,  
  \mu(\mathrm{dy})  
=\sup\limits_{n \in \mathbb{N}^*} k(n,n+1)^q \, \mu_{n+1} =
\lim\limits_{n\to \infty} \frac{(2 n + 1)^q}{(2n-1)^q}
\, \mu_{n+1}^{1-q} = +\infty, 
\]
where divergence of the sequence follows from the convergence of $\mu_{n+1}$ to $0$ (because $\mu$
is a finite measure). The following proposition asserts that we are in the critical regime.
\begin{proposition}
  Let $\kappa$ be defined by \eqref{eq:counter_example}, $k$ be its density and $\gamma=1$.  We have
  for the reproduction number: $R_0=r(T_{k/\gamma})=1$, and for the spectral bound:
  $s(T_k - \gamma)=0$.
\end{proposition}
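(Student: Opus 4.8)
The plan is to exploit the fact that the operator $T_k$ here is (up to the weights) a backward shift on $\mathscr{L}^\infty(\mathbb{N}^*)$, compute $\|T_k^n\|$ exactly, and read off the spectral radius from Gelfand's formula \eqref{eq:spectral_radius}. First I would record the reductions coming from $\gamma\equiv 1$: the multiplication operator ``$\gamma$'' is the identity, so $T_{k/\gamma}=T_k=T_\kappa$ and $T_k-\gamma=T_k-\mathrm{Id}$; in particular $\sigma(T_k-\gamma)=\sigma(T_k)-1$ and $s(T_k-\gamma)=s(T_k)-1$. Thus it suffices to prove $r(T_k)=1$ and then to identify $s(T_k)$ with $r(T_k)$. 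From \eqref{eq:counter_example}, $T_k$ acts on $\mathscr{L}^\infty(\mathbb{N}^*)$ by $(T_k g)(i)=\kappa(i,\{i+1\})\,g(i+1)=a_i\,g(i+1)$ with $a_i=\tfrac{2i+2}{2i-1}=1+\tfrac{3}{2i-1}$, a sequence strictly decreasing from $a_1=4$ to $1$; in particular $\|T_k\|=\sup_i a_i=4<\infty$.

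Iterating, $(T_k^n g)(i)=P(i,n)\,g(i+n)$ with $P(i,n)=\prod_{j=i}^{i+n-1}a_j$. Since $(a_j)$ is decreasing, $P(i+1,n)/P(i,n)=a_{i+n}/a_i<1$, so $i\mapsto P(i,n)$ is decreasing, and evaluating the norm on the constant function $\mathbf 1$ gives $\|T_k^n\|=\sup_i P(i,n)=P(1,n)$. A telescoping computation yields
\[
  P(1,n)=\prod_{j=1}^n\frac{2j+2}{2j-1}=\frac{2^n(n+1)!}{(2n)!/(2^n n!)}=\frac{4^n\,(n+1)!\,n!}{(2n)!}=(n+1)\,\frac{4^n}{\binom{2n}{n}}.
\]
Now I would sandwich $P(1,n)$ using two elementary bounds on $\binom{2n}{n}$: from $\binom{2n}{n}\le 4^n$ one gets $P(1,n)\ge n+1\ge 1$ (alternatively, each factor $a_j>1$), and from $\binom{2n}{n}\ge 4^n/(2n+1)$ (it is the largest of the $2n+1$ binomial coefficients summing to $4^n$) one gets $P(1,n)\le(n+1)(2n+1)$. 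Hence $1\le\|T_k^n\|^{1/n}\le\big((n+1)(2n+1)\big)^{1/n}\to 1$, and by \eqref{eq:spectral_radius} we conclude $R_0=r(T_{k/\gamma})=r(T_k)=1$.

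Finally, $T_k$ is a positive bounded operator on the Banach lattice $\mathscr{L}^\infty(\mathbb{N}^*)$, so Theorem \ref{th:spectralradius}\ref{th:spectralradiusI} gives $s(T_k)=r(T_k)=1$, whence $s(T_k-\gamma)=s(T_k)-1=0$, as claimed. The only mildly delicate point is justifying $\|T_k^n\|=P(1,n)$ and the closed form for $P(1,n)$; both are routine, and in fact the weights in \eqref{eq:counter_example} are engineered precisely so that $P(1,n)\sim\sqrt{\pi}\,n^{3/2}$ by Stirling, i.e. grows polynomially rather than exponentially, which is exactly what forces $R_0=1$ while (as shown elsewhere in this section) criticality here does not entail extinction.
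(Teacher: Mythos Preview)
Your proof is correct and follows essentially the same route as the paper's: both reduce to $s(T_k-\gamma)=s(T_k)-1$ and $R_0=r(T_k)$ via $\gamma\equiv 1$, then compute $r(T_k)$ through Gelfand's formula after identifying $\|T_k^n\|=\prod_{i=1}^n\frac{2i+2}{2i-1}$. The only cosmetic difference is how the limit is evaluated---the paper applies the Cesàro lemma to $\frac{1}{n}\sum_{i=1}^n\log\frac{2i+2}{2i-1}$, whereas you obtain the closed form $P(1,n)=(n+1)4^n/\binom{2n}{n}$ and sandwich it between $1$ and $(n+1)(2n+1)$; both are perfectly valid, and your version has the bonus of making the polynomial growth rate of $\|T_k^n\|$ explicit.
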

\begin{proof}
  Since $\gamma$ is the function constant equal to $1$, we have $s(T_k - \gamma) = R_0 - 1$ and
  $R_0 = r(T_k)$. We compute the spectral radius of $T_k$ using Gelfand's formula:
  \[
    r(T_k) = \lim\limits_{n \to \infty} \norm{T_k^n}^{1/n} = \lim\limits_{n \to \infty}
    \left(\prod\limits_{i = 1}^n \frac{2 i + 2}{2 i - 1} \right)^{1/n} = 1.
  \]
  The limit is found by applying the logarithm to the sequence and using Ces\`aro lemma.
\end{proof}

The following result shows that even if we are in the critical regime, the maximal equilibrium $g^*$
is not equal to $0$ everywhere, and there exists infinitely many distinct equilibria. For
$\alpha\in [0,1]$, we define the function $g_\alpha^*$ on $\mathbb{N}^*$ by $g^*_\alpha(1) = \alpha$
and for $n \in \mathbb{N}^*$:
\[
  g^*_\alpha(n+1) =
  \begin{cases}
    \frac{2n - 1}{2n+2}\,\, \frac{ g^*_\alpha(n)}{1 - g^*_\alpha(n)}
    & \text{if $g^*_\alpha(n)<1$,}\\
    0 & \text{if $g^*_\alpha(n)\geq 1$.}
  \end{cases}
\]

\begin{proposition}\label{prop:equilibria_example}
  Let $\kappa$ be defined by \eqref{eq:counter_example}, $k$ be its density and $\gamma=1$.
\begin{propenum}
\item\label{ex-eqI} The equilibria of Equation \eqref{eq:SIS} are
  $\set{g^*_\alpha\, \colon \, \alpha\in [0, 1/2]}$.
\item\label{ex-eqII} The function $\alpha\mapsto g^*_\alpha$ defined on $[0, 1/2]$ and taking values
  in $\Delta\subset \mathscr{L}^\infty$ is increasing and continuous (with respect to
  $\norm{\cdot}$).  In particular, the set of equilibria is totally ordered, compact and connected.
\item The equilibrium $g_{1/2}^*$ is the maximal equilibrium. We have that $g_{1/2}^*(n)=1/(2n)$ for
  $n \in \mathbb{N}^*$.
\end{propenum}
\end{proposition}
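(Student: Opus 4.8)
The plan is to reduce the equilibrium equation $F(g)=0$ to the scalar recursion it encodes on $\Omega=\mathbb{N}^*$ and then analyse that recursion. Writing $g_n=g(n)$ and $a_n=\kappa(n,\set{n+1})=\frac{2n+2}{2n-1}$, a function $g\in\Delta$ is an equilibrium if and only if $(1-g_n)\,a_n\,g_{n+1}=g_n$ for every $n\ge 1$. First I would record two elementary facts about any equilibrium $g$: (a) $g$ never takes the value $1$, since the equation at an index where $g_n=1$ would read $0=1$; hence $g_n\in[0,1)$ for all $n$ and the equation is equivalent to $g_{n+1}=\psi_n(g_n)$ with $\psi_n(x)=\frac{2n-1}{2n+2}\cdot\frac{x}{1-x}$, which is increasing on $[0,1)$ and satisfies $a_n\psi_n(x)=\frac{x}{1-x}$ and $\psi_n(\frac1{2n})=\frac1{2(n+1)}$; (b) if $g_n=0$ for some $n\ge 2$, the equation at index $n-1$ forces $g_{n-1}=0$, so by downward induction $g\equiv 0=g^*_0$. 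Consequently a non-trivial equilibrium is completely determined by $\alpha:=g_1\in(0,1)$, satisfies $g_n\in(0,1)$ for all $n$, and coincides with the sequence generated by $g_1=\alpha$ and $g_{n+1}=\psi_n(g_n)$; it remains to pin down the admissible values of $\alpha$.

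To do this I would linearise by setting $u_n=\frac1{g_n}-1>0$, which turns the recursion into the affine one $u_{n+1}=\frac{2n+2}{2n-1}u_n-1$. The arithmetic sequence $u_n=2n-1$ — corresponding to $g_n=\frac1{2n}$ — is a particular solution, so the general solution is $u_n=(2n-1)+(u_1-1)\,b_n$ with $b_n=\prod_{i=1}^{n-1}\frac{2i+2}{2i-1}$ (so $b_1=1$). Using $\frac{2i+2}{2i-1}=\frac{i+1}{i}\cdot\frac{2i}{2i-1}$ one gets $b_n=n\prod_{i=1}^{n-1}\frac{2i}{2i-1}$, and $\log\prod_{i=1}^{n-1}\frac{2i}{2i-1}\ge\frac14\sum_{i=1}^{n-1}\frac1i\to\infty$, so $b_n/n\to\infty$. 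Since $u_1-1=\frac{1-2\alpha}{\alpha}$, if $\alpha>\frac12$ then $u_1-1<0$ and $u_n\to-\infty$, contradicting $u_n>0$; hence necessarily $\alpha\le\frac12$. Conversely, for $\alpha\in[0,\frac12]$ I would show by induction, using that $\psi_n$ is increasing and $\psi_n(\frac1{2n})=\frac1{2(n+1)}$, that $g^*_\alpha(n)\le\frac1{2n}<1$ for all $n$; therefore the "$\ge 1$" branch in the definition of $g^*_\alpha$ never activates, $g^*_\alpha\in\Delta$, and since $a_n\psi_n(x)=\frac{x}{1-x}$ the relation $g^*_\alpha(n+1)=\psi_n(g^*_\alpha(n))$ gives $(1-g^*_\alpha(n))\,a_n\,g^*_\alpha(n+1)=g^*_\alpha(n)$, i.e. $F(g^*_\alpha)=0$. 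This proves (i); the same induction with equality throughout yields $g^*_{1/2}(n)=\frac1{2n}$, the formula in (iii).

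For (ii): monotonicity of $\alpha\mapsto g^*_\alpha$ follows by induction on $n$ from the strict monotonicity of $\psi_n$ on $(0,1)$, the case $\alpha=0$ (where $g^*_0\equiv 0$ lies pointwise below $g^*_\beta$ for $\beta>0$) being immediate; thus $\alpha<\beta$ implies $g^*_\alpha(n)<g^*_\beta(n)$ for every $n$, and in particular the map is injective. For continuity with respect to $\norm{\cdot}$ I would combine two observations: for each fixed $n$, $\alpha\mapsto g^*_\alpha(n)$ is a composition of the continuous maps $\psi_1,\dots,\psi_{n-1}$ (no branch being active), hence continuous on $[0,\frac12]$; and $0\le g^*_\alpha(n)\le\frac1{2n}$ uniformly in $\alpha$, so the tail $\sup_{n\ge N}\abs{g^*_{\alpha'}(n)-g^*_\alpha(n)}\le\frac1{2N}$ is uniformly small. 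Splitting the supremum defining $\norm{g^*_{\alpha'}-g^*_\alpha}$ into $n<N$ (finitely many continuous coordinates) and $n\ge N$ shows $\norm{g^*_{\alpha'}-g^*_\alpha}\to 0$ as $\alpha'\to\alpha$. Finally, by (i) the set of equilibria is the image of $[0,\frac12]$ under this continuous, injective, order-preserving map, hence totally ordered, compact (continuous image of a compact set), and connected (continuous image of an interval); and $g^*_{1/2}$, being the largest element of the family, is the maximal equilibrium $g^*$ of \eqref{eq:gstar}.

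I expect the core difficulty to be the converse direction of (i): recognising $\frac1{2n}$ as the critical sequence and proving that $\alpha>\frac12$ is incompatible with $g\in\Delta$. The substitution $u_n=1/g_n-1$, the explicit solution of the affine recursion, and the super-linear growth $b_n/n\to\infty$ settle this; the remaining points — monotonicity and continuity of the $\psi_n$, a couple of inductions, and the soft topological facts about images of intervals — are routine.
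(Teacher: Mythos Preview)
Your proof is correct, and the overall structure (reduce to the scalar recursion, identify $1/(2n)$ as the separating solution, rule out $\alpha>1/2$, then handle monotonicity and uniform continuity via the tail bound $g^*_\alpha(n)\le 1/(2n)$) matches the paper's.

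The one genuine difference is how you rule out $\alpha>1/2$. The paper sets $v_n=2n\,g^*_\alpha(n)$, obtains the nonlinear recursion $v_{n+1}=v_n\frac{2n-1}{2n-v_n}$, and runs a two-step bootstrap: first $v_n\ge c\,n^{\alpha-1/2}$, then feeding this back to get $v_n\ge c'\exp(c''n^{\alpha-1/2})$, contradicting $v_n<2n$. Your substitution $u_n=1/g_n-1$ is cleaner: it turns the recursion into the \emph{linear} one $u_{n+1}=\frac{2n+2}{2n-1}u_n-1$, which you solve explicitly as $u_n=(2n-1)+(u_1-1)b_n$, and a single estimate $b_n/n\to\infty$ finishes the job. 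This avoids the bootstrap entirely and is arguably more transparent; the paper's approach, on the other hand, gives finer quantitative information (the growth rate of $v_n$) that is not needed here.
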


\begin{proof}
  We first explicit $g^*_{1/2}$ and prove property \ref{ex-eqII}. Let $\Gamma$ denote the function
  $\alpha\mapsto g^*_\alpha$ defined on $[0, 1/2]$ and taking values in $ \mathscr{L}^\infty$.  By
  definition of $g^*_\alpha$, it is immediate that $g_{1/2}^*(n)=1/(2n)$ and $g^*_0(n)=0$ for
  $n \in \mathbb{N}^*$.  Using that the function $x \mapsto \lambda x / (1 - x)$ is increasing on
  $[0, 1)$ for all $\lambda> 0$, we deduce by induction that
  $0\leq g_\alpha^*(n)< g_\beta^*(n) \leq g^*_{1/2}(n)$ for all $0\leq\alpha < \beta\leq 1/2$ and
  $n\in \mathbb{N}^*$.  This implies that the function $\Gamma$. As $g^*0$ and $g^*_{1/2}$ belong to
  $\Delta$, we deduce that $\Gamma$ takes values in $\Delta$ by monotonicity.  It is also immediate
  to check that the function $\Gamma$ is continuous for the pointwise convergence in $\Delta$. Then
  using that
  $\lim_{n\rightarrow \infty } \sup_{\alpha\in [0, 1/2]} g_\alpha^*(n)=\lim_{n\rightarrow \infty }
  g_{1/2}^*(n)=0$, we deduce the function $\Gamma$ is also continuous with respect to the uniform
  convergence in $\Delta$. This proves property \ref{ex-eqII}.

  \medskip

  We prove property \ref{ex-eqI}.  It is clear that if $h^*$ is an equilibrium, then $h^*(n)<1$ for
  all $n \in \mathbb{N}^*$ thanks to Remark \ref{rem:g_star_inf_1} and by the definition of the
  kernel $\kappa$ that:
  \begin{equation}
    \label{eq:exple-h*}
    h^*(n+1) = \frac{2n - 1}{2n+2}\,\, \frac{ h^*(n)}{1 - h^*(n)}
    \quad\text{for all $n\in     \mathbb{N}^*$.}
  \end{equation}
  This readily implies that $g^*_\alpha$ is an equilibrium for $\alpha\in [0, 1/2]$ as, in this
  case, $g^*_\alpha(n)\leq g^*_{1/2}(n)=1/(2n)$ and $g^*_\alpha$ solves \eqref{eq:exple-h*}. As
  $g^*_1(1)=1$, we also get that $g^*_1$ is not an equilibrium.

  Let $\alpha\in (1/2, 1)$. We shall now prove by contradiction that there exists
  $n\in \mathbb{N}^* $ such that $g^*_\alpha(n)\geq 1$.  Let us assume that $g^*_\alpha(n)< 1$ for
  all $n\in \mathbb{N}^* $. Arguing as in the first part of the proof, we get
  $g^*_\alpha(n) > g^*_{1/2} (n)$ for all $n\in \mathbb{N}^*$. Thus the sequence
  $v=(v_n\,\colon\, n\in \mathbb{N}^*)$ with $v_n = 2n g^*_\alpha(n)$ satisfies the following
  recurrence for $ n\in \mathbb{N}^*$:
  \[
    v_{n+1} = v_n \frac{2n - 1}{2n - v_n} \quad\text{and} \quad 1<v_n<2n.
  \]
  We deduce that the sequence $v$ is increasing, and thus
  $v_{n+1} \geq v_n \frac{2n - 1}{2n - 2\alpha} $, as $v_1=2\alpha$.  We deduce that
  $v_n\geq c \, n^{\alpha -1/2}$ for some positive constant $c$.  This in turn implies that
  $v_{n+1} \geq v_n \frac{2n - 1}{2n - c \, n^{\alpha -1/2}}$ and thus
  $v_n \geq c' \exp(c'' n^{\alpha -1/2})$ for some positive constants $c'$ and $c''$.  This
  contradicts the fact that $v_n<2n$ for $ n\in \mathbb{N}^*$.  As a conclusion, there exists
  $n\in \mathbb{N}^* $ such that $g^*_\alpha(n)\geq 1$.  This implies that $g ^*_\alpha$ can not be
  an equilibrium. This ends the proof of property \ref{ex-eqI}.  \medskip

  We have already computed $g_{1/2}^*$. We deduce from properties \ref{ex-eqI} and \ref{ex-eqII}
  that $g_{1/2}^*$ is the maximal equilibrium.
\end{proof}

Since $k$ is upper-triangular, the long-time behavior of the dynamic does not depend on the first
terms of the initial condition. Indeed, for $n \geq 2$, consider the subspace
$E_n =\set{g\in \mathscr{L}^\infty\,\colon\, g(p)=0 \text{ for $1\leq p<n$}}$ of functions whose
first $n-1$ terms are $0$. Denote by $P_n$ the canonical projection from $\mathscr{L}^\infty$ on
$E_n$.  For $n \geq 2$ and $g \in \Delta$, we have:
\begin{equation}
   \label{eq:def-Pn}
   P_n \phi(t,g) = P_n\left(\phi(t, P_n(g))\right).
\end{equation}
Let us introduce a new partial order $\preceq$ defined by $g \preceq h$ if there exists $n \geq 2$
such that $P_n(g) \leq P_n(h)$. We have the following result.
\begin{proposition}
  For all $g \in \Delta$, we have:
  \[
    g^*_{\alpha_-} \leq \liminf\limits_{t \to \infty} \phi(t,g) \leq \limsup\limits_{t \to \infty}
    \phi(t,g) \leq g^*_{ \alpha_+}
  \]
  with
  \[
    \alpha_-= \max \set{\alpha \, \colon \, g^*_\alpha \preceq g} \quad\text{and}\quad \alpha_+ =
    \min \set{\alpha \, \colon \, (g \wedge g^*_{1/2} )\preceq g^*_\alpha}.
  \]
\end{proposition}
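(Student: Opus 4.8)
The strategy is to combine the reduction~\eqref{eq:def-Pn} to the tail spaces $E_n$ with the comparison principle (Proposition~\ref{prop:Ordpres}) and the explicit description of equilibria from Proposition~\ref{prop:equilibria_example}. First I would reduce to proving the statement after applying $P_n$ for arbitrary $n\geq 2$: since $\phi(t,g)(p)\to g^*_\alpha(p)$ must hold componentwise and the first $n-1$ coordinates evolve in a way that does not feed back into the tail, it suffices to understand the asymptotics of $P_n\phi(t,g)=P_n(\phi(t,P_n(g)))$ for all large~$n$, and then let $n\to\infty$. The definition of $\preceq$ is tailored precisely so that $g^*_{\alpha_-}\preceq g$ means $P_n(g^*_{\alpha_-})\le P_n(g)$ for some $n$, and similarly for the upper bound.

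\textbf{Lower bound.} Fix $n$ with $P_n(g^*_{\alpha_-})\le P_n(g)$. Because $g^*_{\alpha_-}$ is an equilibrium and (by the triangular structure of $\kappa$) $P_n$ commutes with the flow in the sense of~\eqref{eq:def-Pn}, the function $P_n(g^*_{\alpha_-})$ is a stationary point of the restricted dynamics on $E_n\cap\Delta$; by Proposition~\ref{prop:Ordpres} applied on $E_n$ (equivalently, monotonicity of $\phi$ together with~\eqref{eq:def-Pn}), $P_n(\phi(t,g))=P_n(\phi(t,P_n g))\ge P_n(\phi(t,P_n(g^*_{\alpha_-})))=P_n(g^*_{\alpha_-})$ for all $t$. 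Letting $t\to\infty$ and then $n\to\infty$ gives $\liminf_{t\to\infty}\phi(t,g)\ge g^*_{\alpha_-}$ coordinatewise. The maximality of $\alpha_-$ just reflects that we are free to choose the best such initial sub-equilibrium.

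\textbf{Upper bound.} Here I would first replace $g$ by $g\wedge g^*_{1/2}\le g$, which only increases $\limsup\phi(t,g)$'s upper bound is unaffected in the sense that $\phi(t,g)\le\phi(t,1)$ and more precisely $\phi(t,g)$ stays below the maximal equilibrium $g^*_{1/2}$ after the flow has acted? — actually the clean route is: for $n$ with $P_n(g\wedge g^*_{1/2})\le P_n(g^*_{\alpha_+})$, use that $g\wedge g^*_{1/2}\le g$ need not be monotone-helpful, so instead observe $P_n(\phi(t,g))\le P_n(\phi(t, g\vee\text{something}))$; the honest statement is $P_n(g)\le P_n(g^*_{\alpha_+})$ may fail, but $P_n(g\wedge g^*_{1/2})\le P_n(g^*_{\alpha_+})$ holds by definition of $\alpha_+$, and since $g^*_{\alpha_+}$ is an equilibrium, $P_n(\phi(t, g\wedge g^*_{1/2}))\le P_n(g^*_{\alpha_+})$. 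It then remains to compare $\phi(t,g)$ with $\phi(t,g\wedge g^*_{1/2})$ as $t\to\infty$: one shows that the "excess" $g-g\wedge g^*_{1/2}$ sits in coordinates where the dynamics drives the solution down toward the maximal equilibrium profile (this uses Proposition~\ref{prop:limitequilibrium}(ii) together with $g^*\le g\le 1$ type bounds localized coordinatewise, and the fact that $g^*_{1/2}(n)=1/(2n)\to0$), so $\limsup_t\phi(t,g)\le\limsup_t\phi(t,g\wedge g^*_{1/2})$ coordinatewise; combining with the previous display and letting $n\to\infty$ yields the claim.

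\textbf{Main obstacle.} The delicate point is the upper bound: justifying that the first few (possibly large) coordinates of $g$ where $g(p)>g^*_{1/2}(p)$ do not cause $\limsup_t\phi(t,g)$ to exceed $g^*_{\alpha_+}$. The triangular structure means a large initial value at coordinate $p$ can only influence coordinates $>p$, and through the recursion $h^*(n+1)=\tfrac{2n-1}{2n+2}\tfrac{h^*(n)}{1-h^*(n)}$ any overshoot is damped by the contracting factor $\tfrac{2n-1}{2n+2}<1$; quantifying this damping to get that the tail profile relaxes below $g^*_{\alpha_+}$ in the limit — essentially re-running the divergence estimate from the proof of Proposition~\ref{prop:equilibria_example}(i) but now as a stability statement along trajectories rather than for stationary sequences — is where the real work lies. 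Everything else is a routine application of~\eqref{eq:def-Pn}, Proposition~\ref{prop:Ordpres}, and the compactness/continuity of $\alpha\mapsto g^*_\alpha$ from Proposition~\ref{prop:equilibria_example}(ii) (which guarantees the max and min defining $\alpha_\pm$ are attained).
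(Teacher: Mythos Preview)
Your lower bound is essentially the paper's argument, but the phrase ``letting $t\to\infty$ and then $n\to\infty$'' is misleading: you need the bound on \emph{all} coordinates, not just those with index $\geq n$. The correct statement (which the paper uses) is that $\phi(t,P_n g^*_\alpha)\to g^*_\alpha$ on every coordinate: the coordinates $\geq n$ are already at equilibrium and stay there, and then the finitely many coordinates $<n$ each solve a scalar ODE with forcing converging to the equilibrium value, so they relax to $g^*_\alpha$ as well. With this in hand, $P_n g^*_{\alpha_-}\leq P_n g\leq g$ and monotonicity give $\liminf_t\phi(t,g)\geq g^*_{\alpha_-}$ directly, for one fixed $n$.

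Your ``main obstacle'' for the upper bound is not an obstacle at all, and this is where you diverge from the paper. Observe the dichotomy: if $\alpha_+=1/2$, the upper bound $\limsup_t\phi(t,g)\leq g^*_{1/2}$ is immediate from Proposition~\ref{prop:limitequilibrium} since $g^*_{1/2}$ is the maximal equilibrium. If instead $\alpha_+<1/2$, then the condition $(g\wedge g^*_{1/2})\preceq g^*_{\alpha_+}$ automatically upgrades to $g\preceq g^*_{\alpha_+}$. Indeed, pick $m$ with $P_m(g\wedge g^*_{1/2})\leq P_m(g^*_{\alpha_+})$; for each $i\geq m$ we have $\min(g(i),g^*_{1/2}(i))\leq g^*_{\alpha_+}(i)<g^*_{1/2}(i)$, which forces $g(i)\leq g^*_{1/2}(i)$ (otherwise the minimum would equal $g^*_{1/2}(i)>g^*_{\alpha_+}(i)$, a contradiction), hence $g(i)=\min(g(i),g^*_{1/2}(i))\leq g^*_{\alpha_+}(i)$. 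So $P_m g\leq P_m g^*_{\alpha_+}$, and the upper bound follows by the same comparison-and-relaxation argument as the lower bound (e.g.\ compare $g$ with the initial condition equal to $g^*_{\alpha_+}$ on coordinates $\geq m$ and to $1$ on coordinates $<m$). There is no need to compare $\phi(t,g)$ with $\phi(t,g\wedge g^*_{1/2})$, nor to quantify any damping along trajectories; the triangular structure and the strict monotonicity of $\alpha\mapsto g^*_\alpha$ do all the work.
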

\begin{proof}
  Using \eqref{eq:def-Pn}, it is easy to check that
  $\lim_{t\rightarrow +\infty } \phi(t, P_n g^*_\alpha)=g^*_\alpha$ for $n\geq 2$.  Using that the
  flow is order preserving, we get that if $g^*_\alpha \preceq g \preceq g^*_\beta$ for some
  $0\leq \alpha\leq \beta\leq 1/2$, then:
  \[
    g^*_\alpha \leq \liminf\limits_{t \to \infty} \phi(t,g) \leq \limsup\limits_{t \to \infty}
    \phi(t,g) \leq g^*_\beta.
  \]
  The result then follows from the continuity and the monotonicity of $\alpha \mapsto g^*_\alpha$
  (for $\alpha\in [0, 1/2]$) and that $g^*_{1/2}$ is the maximal equilibrium (see Proposition
  \ref{prop:equilibria_example}).
\end{proof}

\subsection{Uniform convergence}
In Sections \ref{subsec:critical} and \ref{subsec:supercritical}, we have obtained results about
pointwise convergence toward the equilibrium $g^*$. The next result implies in particular that this
convergence is uniform if $\inf \gamma>0$. (See the stronger result from Theorem \ref{th:sub} in the
sub-critical case, where the uniform convergence is exponentially fast.)
\begin{theorem}\label{th:strongconv}
  Suppose that Assumption \ref{Assum1} and \ref{Assum_connectivity} are in force and let
  $A \in \mathscr{F}$. If $\gamma$ is bounded away from $0$ on $A$, that is:
  \[
    \inf\limits_{x \in A} \gamma(x) > 0,
  \]
  then, for $g \in \Delta$, with positive integral if $g^*\neq 0$, we have:
  \[
    \lim\limits_{t \to \infty}
    \,\, \sup\limits_{x \in A} \, \abs{\phi(t, g)(x) -g^*(x)} = 0. 
  \]
\end{theorem}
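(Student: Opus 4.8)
The plan is to reduce the statement, for each fixed $x\in A$, to a scalar dissipative ordinary differential equation in which the only nonlinearity enters through the ``force of infection'' $T_k(\phi(t,g))$, and then to exploit that this force of infection converges to its equilibrium value \emph{uniformly over the whole space $\Omega$} (not merely on $A$), a uniformity that comes from the $L^q$-integrability of the kernel in Assumption~\ref{Assum1}. First I would record the pointwise convergence that is already available: under the present hypotheses, Theorems~\ref{th:sub}, \ref{th:critical} and~\ref{th:super} together give that $\phi(t,g)(x)\to g^*(x)$ as $t\to\infty$ for every $x\in\Omega$. Indeed, if $g^*\neq 0$ we are in the supercritical regime (Corollary~\ref{cor:existenceendemic}) and the extra assumption $\int_\Omega g\,\mathrm{d}\mu>0$ is exactly the one required in Theorem~\ref{th:super}; otherwise $g^*=0$, and Theorems~\ref{th:sub} and~\ref{th:critical} apply to an arbitrary $g\in\Delta$.

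The crucial step is to prove that $\varepsilon(t):=\sup_{x\in\Omega}\abs{T_k(\phi(t,g))(x)-T_k(g^*)(x)}$ tends to $0$. For each $x$, Hölder's inequality with the conjugate exponents $p=q/(q-1)$ and $q$ yields
\[
  \abs{T_k(\phi(t,g))(x)-T_k(g^*)(x)}
  =\abs{\int_\Omega\bigl(\phi(t,g)(y)-g^*(y)\bigr)\,k(x,y)\,\mu(\mathrm{d}y)}
  \leq\norm{\phi(t,g)-g^*}_{L^p(\mu)}\,\norm{k(x,\cdot)}_{L^q(\mu)}.
\]
By Assumption~\ref{Assum1}, and more precisely~\eqref{eq:weakintegrability}, the quantity $M:=\sup_{x\in\Omega}\norm{k(x,\cdot)}_{L^q(\mu)}$ is finite; and since $\phi(t,g)(y)\to g^*(y)$ for all $y$, all these functions take values in $[0,1]$ and $\mu$ is a finite measure, dominated convergence gives $\norm{\phi(t,g)-g^*}_{L^p(\mu)}\to 0$. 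Hence $\varepsilon(t)\leq M\,\norm{\phi(t,g)-g^*}_{L^p(\mu)}\to 0$.

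To conclude, fix $x\in A$ and set $\gamma_0:=\inf_A\gamma>0$, $h(t):=\phi(t,g)(x)$, $h^*:=g^*(x)$, $\lambda(t):=T_k(\phi(t,g))(x)$ and $\lambda^*:=T_k(g^*)(x)$. From $\partial_t\phi(t,g)=F(\phi(t,g))$ and $F(g^*)=0$ one gets $\dot h(t)=(1-h(t))\lambda(t)-\gamma(x)h(t)$ and $0=(1-h^*)\lambda^*-\gamma(x)h^*$; subtracting and rearranging, $d(t):=h(t)-h^*$ satisfies $\dot d(t)=-(\lambda^*+\gamma(x))\,d(t)+(1-h(t))(\lambda(t)-\lambda^*)$. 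Variation of constants, together with $\lambda^*\geq 0$, $\gamma(x)\geq\gamma_0$, $0\leq 1-h(s)\leq 1$, $\abs{d(t_0)}\leq 1$ and $\abs{\lambda(s)-\lambda^*}\leq\varepsilon(s)$, gives for all $t\geq t_0\geq 0$
\[
  \abs{d(t)}\leq\mathrm{e}^{-\gamma_0(t-t_0)}+\int_{t_0}^{t}\mathrm{e}^{-\gamma_0(t-s)}\,\varepsilon(s)\,\mathrm{d}s,
\]
a bound that is independent of $x\in A$. Given $\eta>0$, I would first choose $t_0$ with $\varepsilon(s)\leq\gamma_0\eta/2$ for $s\geq t_0$ (possible by the previous paragraph), so that the integral above is at most $\eta/2$, and then choose $t_1\geq t_0$ with $\mathrm{e}^{-\gamma_0(t-t_0)}\leq\eta/2$ for $t\geq t_1$. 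For $t\geq t_1$ this gives $\sup_{x\in A}\abs{\phi(t,g)(x)-g^*(x)}\leq\eta$, which is the claim.

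The only genuinely delicate point is the uniform (over all of $\Omega$) convergence of the force of infection in the second paragraph; this is precisely where the integrability condition of Assumption~\ref{Assum1} is used, and it is what upgrades the pointwise convergence of the previous sections. Once that is in hand, the dissipativity of the scalar equation with the uniform rate $\gamma_0$ on $A$ makes the remainder a routine Gr\"onwall estimate. (Note that in the subcritical case Theorem~\ref{th:sub} already provides convergence uniform on all of $\Omega$, so the argument above is only needed when $s(T_k-\gamma)\geq 0$.)
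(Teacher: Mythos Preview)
Your proof is correct and shares the paper's two key ingredients: upgrading the pointwise convergence of $\phi(t,g)$ to $g^*$ into $L^p(\mu)$-convergence by dominated convergence, and then using the $L^q$-integrability of $k$ from Assumption~\ref{Assum1} together with the dissipation rate $\gamma_0=\inf_A\gamma$ to obtain a bound on $A$ that is uniform in $x$. The paper organises the final step differently: instead of working directly with $\phi(t,g)-g^*$, it sandwiches $\phi(t,f)\leq\phi(t,g)\leq\phi(t,1)$ for a suitable $f\leq g\wedge g^*$ with positive integral, and bounds each of the nonnegative differences $\phi(t,1)-g^*$ and $g^*-\phi(t,f)$ separately by a differential inequality of the form $\partial_t w\leq M\norm{w}_p-\gamma w$. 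Your variation-of-constants argument on the exact linear scalar ODE for $d(t)=\phi(t,g)(x)-g^*(x)$ is slightly more direct, since it handles both signs at once and dispenses with the construction of the auxiliary initial datum $f$; the paper's route, on the other hand, keeps the monotone-flow structure in the foreground and reuses the comparison machinery of Section~\ref{sec:Fproperty}.
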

\begin{proof}
  Set $m=\inf\limits_{x \in A} \gamma(x)$.  For $s\in \mathbb{R}_+$, we have:
  \begin{align*}
    \partial_t (\phi(s, 1)-g^*) 
    &= F(\phi(s, 1)) - F(g^*) \\
    &\leq  (1 - \phi(s, 1)) T_k(\phi(s, 1) - g^*) - \gamma (\phi(s, 1) - g^*) \\
    &\leq T_k(\phi(s, 1) - g^*) - \gamma (\phi(s, 1) - g^*)\\
    &\leq M\, \norm{\phi(s, 1) - g^*}_p - \gamma (\phi(s, 1) - g^*),
  \end{align*}
  where we used that $T_k$ is positive for the second inequality and H\"{o}lder inequality for the
  last with
  $M= \sup\limits_{x \in \Omega} \left(\int_{\Omega} k(x,y)^{q} \, \mu(\mathrm{dy}) \right)^{1/q} <
  \infty$.  For $s\in \mathbb{R}_+$, set $v_s= \mathrm{e}^{ms}(\phi(s, 1)-g^*) $. Notice that
  $v_s\geq 0$ and that $ \partial_t v_s(x) \leq M\, \norm{v_s}_p$ for $x\in A$. Integrating for
  $s\in [0, t]$, we deduce that for $x\in A$:
  \begin{align*}
    0\leq  (\phi(t, 1)-g^*)(x)
    &\leq  \mathrm{e}^{-mt}(1- g^*) + M\int_0^t \mathrm{e}^{-
      m(t-s)}\norm{\phi(s, 1) - g^*}_p
      \, \mathrm{d}s\\
    &\leq  \mathrm{e}^{-mt} + M\int_0^t \mathrm{e}^{-
      ms}\norm{\phi(t-s, 1) - g^*}_p
      \, \mathrm{d}s.
  \end{align*}
  Note the right hand-side does not depend on $x$. As $\phi(s,1)$ converges pointwise to $g^*$ (see
  Equation \eqref{eq:gstar}) and is bounded by $1$, using the dominated convergence theorem, we
  deduce that the right hand-side goes to $0$ as $t$ goes to infinity. So, we obtain that:
  \begin{equation}
    \label{eq:cvu1}
    \lim_{t\rightarrow +\infty  } \,  \sup\limits_{x \in A}  \, \abs{\phi(t,
      1)(x) -g^*(x)} = 0.
  \end{equation}
  If $g^*=0$, use that $0\leq \phi(t,g)\leq \phi(t, 1)$ for all $g\in\Delta$ and $t\in \mathbb{R}_+$
  to conclude.

  \medskip

  If $g^*$ is non zero (which corresponds to the super-critical case), consider a function $f\leq g$
  with positive integral such that $f\leq g^*$. By monotonicity of the flow, this implies that
  $0\leq g^* - \phi(s,f)$ for all $s\in \mathbb{R}_+$. Arguing similarly as above, we get for
  $s\in \mathbb{R}_+$:
  \[
    \partial_t (g^* -\phi(s, f)) \leq M\, \norm{g^* -\phi(s, f)}_p - \gamma (g^* -\phi(s, f)).
  \]
  Using that $\phi(s,f)$ converges pointwise to $g^*$ (see Theorem \ref{th:super}), we similarly get
  that
  \begin{equation}
    \label{eq:cvuf}
    \lim_{t\rightarrow +\infty  } \,  \sup\limits_{x \in A}  \, \abs{\phi(t,
      f)(x) -g^*(x)} = 0.
  \end{equation}
  Then, use the monotonicity of the flow which implies that
  $\phi(t,f)\leq \phi(t, g) \leq \phi(t, 1)$ for $f\leq g\leq 1$ as well as \eqref{eq:cvu1} and
  \eqref{eq:cvuf} to conclude.
\end{proof}

%%%%%%%%%%% SECTION 5 %%%%%%%%%%%%%%%%%%%

\section{Vaccination model}\label{sec:vacc}

\subsection{Infinite-dimensional models}

We write an infinite-dimensional model that take into account the heterogeneity in the transmission
of the infectious disease in the spirit of \eqref{eq:SIS} which generalizes Equations
\eqref{eq:1D_leaky} and \eqref{eq:1D_all_or_nothing} and take into account a family of different
vaccines.  Recall that the measurable space $(\Omega, \mathscr{F})$ represents the features of the
individuals in a given population, the finite measure $\mu$ describes the size of the population and
its sub-groups, and the number $\gamma(x)$ is the recovery rate of individuals with feature
$x \in \Omega$.  The transmission kernel $\kappa$ describes the way the disease is spread among the
population without vaccination.

Suppose that we have different vaccines or treatments available that we can give to individuals in
order to fight the disease upstream. The set of vaccines is represented by a set $\Sigma$ which is
finite in practice. We endow $\Sigma$ with a $\sigma$-field $\mathscr{G}$. We are also given two
measurable functions $e, \delta \, \colon \,\Omega\times \Sigma \to [0,1]$. For both models, the
number $\delta(x,\xi)$ is the relative reduction of infectiousness for people with feature $x$
vaccinated by the vaccine $\xi$. The coefficient $e(x,\xi)$ is the efficacy of vaccine $\xi$ given
on individuals with feature $x$. In $\Sigma$ there is a particular type of vaccines $\xi_0$ which is
the absence of vaccination.  This vaccination has no efficacy upon the individuals: $e(x,\xi_0) = 0$
and $\delta(x,\xi_0) = 0$ for all $x \in \Omega$.  We define a vaccination policy as a non-negative
kernel $\eta \, \colon \, \Omega \times \mathscr{G} \to [0,1]$. The probability for an individual
with feature type $x$ to be vaccinated by a vaccine in the measurable set $A \in \mathscr{G}$ under
the policy $\eta$ is equal to $\eta(x, A)$.  The recovery rate can be affected by the vaccine, and
in this case $\gamma$ is then a non-negative measurable function defined on $ \Omega\times \Sigma $,
with $\gamma(x, \xi)$ the recovery rate of individuals with feature $x$ and vaccine $\xi$.  The
number $u(t, x, \xi)$ is the probability for an individual with feature $x$ which has been
inoculated by the vaccine $\xi$ to be infected at time $t$.  The total number of infected
individuals at time $t$ is therefore given by:
\begin{equation}
  \int_{\Omega \times \Sigma} u(t,x, \xi) \, \eta(x, \mathrm{d}\xi) \mu(\mathrm{d}x).
\end{equation}

\subsubsection{The leaky vaccination mechanism} In this setting, $e(x, \xi)$ denotes the leaky
vaccine efficacy of $\xi \in \Sigma$ on an individual with feature $x$, \textit{i.e.}, the relative
reduction in the transmission rate.  We generalize Equation \eqref{eq:1D_leaky} to get the following
infinite dimensional evolution equation:
\begin{multline}\label{eq:leaky_multi}
  \partial_t u(t, x, \xi) = - \gamma(x, \xi) \, u(t, x, \xi) \\
  + (1 - u(t, x, \xi))(1 - e(x, \xi)) \int_{\Omega \times \Sigma} (1 - \delta(y,\zeta)) u(t,y,\zeta)
  \kappa(x,\mathrm{d}y) \eta(y,\mathrm{d}\zeta).
\end{multline}
The evolution Equation \eqref{eq:leaky_multi} can be seen as the SIS evolution Equation
\eqref{eq:SIS} with:
\begin{itemize}
\item[-] feature $\boldsymbol{x}=(x,\xi)$ and feature space
  $\boldsymbol{\Omega }=\Omega \times \Sigma$ endowed with the $\sigma$-field
  $\mathscr{F} \otimes \mathscr{G}$,
\item[-] recovery rate: $\boldsymbol{\gamma}(\boldsymbol{x})=\gamma(x, \xi)$,
\item[-] transmission kernel:
\begin{equation}
   \label{eq:leaky-kernel}
   \boldsymbol{\kappa}^a(\boldsymbol{x}, \mathrm{d}\boldsymbol{y})=
   (1 - e(x, \xi) ) (1 - \delta(y, \zeta)) \kappa(x,\mathrm{d}y)
   \eta(y,\mathrm{d}\zeta).
 \end{equation}
\end{itemize}

\begin{remark}
  In the leaky mechanism, we suppose that the vaccine acts directly on the susceptibility and the
  infectiousness of the individuals. Protective gears (like respirators or safety glasses) which are
  designed to protect the wearer from absorbing airborne microbes or transmitting them have a
  similar effect. Hence, Equation \eqref{eq:leaky_multi} is not limited to vaccination and can also
  be used as a model for distribution of equipment in the population.
\end{remark}

\subsubsection{The all-or-nothing mechanism} In this setting, $e(x, \xi)$, is defined as the
probability to immunize completely the individual with feature $x$ to the disease with vaccine
$\xi$. We generalize Equation \eqref{eq:1D_all_or_nothing} to get the following infinite dimensional
evolution equation:
\begin{multline}\label{eq:aon_multi}
  \partial_t u(t, x, \xi) = - \gamma(x, \xi)\, u(t, x, \xi) \\ + (1 - e(x, \xi) - u(t, x, \xi))
  \int_{\Omega \times \Sigma} (1 - \delta(y, \zeta)) u(t,y,\zeta) \kappa(x,\mathrm{d}y)
  \eta(y,\mathrm{d}\zeta).
\end{multline}
The probability $v(t, x, \xi)=u(t,x, \xi)/ (1 - e(x,\xi))$ for an individual with feature $x$ which
has not been vaccinated by the inoculation of vaccine $\xi$ to be infected at time $t$ satisfies the
following equation:
 \begin{multline}\label{eq:aon_multi2}
   \partial_t v(t, x, \xi) = - \gamma(x) v(t, x, \xi) \\ + (1 - v(t, x, \xi)) \int_{\Omega \times
     \Sigma} (1 - \delta(y,\zeta)) v(t,y,\zeta) (1 - e(y, \zeta) ) \kappa(x,\mathrm{d}y)
   \eta(y,\mathrm{d}\zeta).
\end{multline}
The evolution Equation \eqref{eq:aon_multi2} can be seen as the SIS evolution Equation
\eqref{eq:SIS} with:
\begin{itemize}
\item[-] feature $\boldsymbol{x}=(x,\xi)$ and feature space
  $\boldsymbol{\Omega }=\Omega \times \Sigma$ endowed with the $\sigma$-field
  $\mathscr{F} \otimes \mathscr{G}$,
\item[-] recovery rate: $\boldsymbol{\gamma}(\boldsymbol{x})=\gamma(x, \xi)$,
\item[-] transmission kernel:
\begin{equation}
   \label{eq:aon-kernel}
   \boldsymbol{\kappa}^\ell(\boldsymbol{x}, \mathrm{d}\boldsymbol{y})=
   (1 - e(y, \zeta) ) (1 - \delta(y, \zeta)) \kappa(x,\mathrm{d}y)
   \eta(y,\mathrm{d}\zeta).
 \end{equation}
\end{itemize}
Notice the difference between the evolution Equation \eqref{eq:leaky_multi} for leaky mechanism and
the evolution Equation \eqref{eq:aon_multi2} for the all-or-nothing mechanism is that $e(y, \zeta)$
in \eqref{eq:aon_multi2} (or in the kernel $\boldsymbol{\kappa}^a$ from \eqref{eq:aon-kernel}) is
replaced by $e(x, \xi)$ in \eqref{eq:leaky_multi} (or in the kernel $\boldsymbol{\kappa}^\ell$
from\eqref{eq:leaky-kernel}).

\subsection{Discussion on the basic reproduction number}
Suppose that Assumption \ref{Assum1} is in force. Then, we can define a new basic reproduction
number for the vaccination models. We consider the following bounded operators on
$\mathscr{L}^\infty(\Omega \times \Sigma)$:
\begin{align*}
T(g)(x,\xi) 
&= \int_{\Omega \times \Sigma} (1 - \delta(y, \zeta)) g(y,
\zeta) \, \frac{\kappa(x,\mathrm{d}y)}{\gamma(y, \zeta)}  \,  \eta(y,
  \mathrm{d}\zeta), \\ 
\\ M(g)(x, \xi) &= (1 - e(x, \xi)) g(x, \xi).
\end{align*}
Following Section \ref{subsec:R0}, the all-or-nothing vaccination reproduction number $R_0^a(\eta)$
associated to Equation \eqref{eq:aon_multi2} and vaccine policy $\eta$ is:
\begin{equation}
   \label{eq:all-R0}
   R_0^a(\eta) = r(TM).
\end{equation}
where we recall that $r$ stands for the spectral radius. For the leaky vaccination, the basic
reproduction number associated to Equation \eqref{eq:leaky_multi} and vaccine policy $\eta$ is:
\begin{equation}
   \label{eq:leaky-R0}
R_0^\ell(\eta) = r(MT).
\end{equation}
In \cite{shim_distinguishing_2012}, the authors already remarked that the two vaccination mechanisms
actually leads to the same basic reproduction number for the one-group models.  This result also
holds in the infinite-dimension SIS model. Notice that Assumption \ref{Assum1} insures that those
two basic reproduction numbers are well defined.
\begin{proposition}
  \label{prop:aon=leaky}
  We assume Assumption \ref{Assum1} holds. Let $\eta$ be a vaccination policy.  The basic
  reproduction number for the leaky vaccination and the for the all-or-nothing vaccination are the
  same:
  \[
    R_0^\ell(\eta) = R_0^a(\eta).
  \]
\end{proposition}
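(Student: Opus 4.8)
The plan is to recognize the statement as an instance of the classical spectral identity $r(AB)=r(BA)$. Indeed, by the definitions \eqref{eq:leaky-R0} and \eqref{eq:all-R0} we have $R_0^\ell(\eta)=r(MT)$ and $R_0^a(\eta)=r(TM)$, where $M$ is the multiplication operator by $1-e$ and $T$ is the integral operator written just above the statement, both acting on $\mathscr{L}^\infty(\Omega\times\Sigma)$. The operator $M$ is bounded with $\norm{M}\leq 1$ since $e$ takes values in $[0,1]$, and $T$ is a bounded operator because Assumption \ref{Assum1} is in force (this is exactly what makes the two reproduction numbers well defined); in particular $TM$ and $MT$ are bounded operators. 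So it suffices to prove $r(MT)=r(TM)$.

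For this I would use Gelfand's formula for the spectral radius recalled in \eqref{eq:spectral_radius}. First I would note the elementary identity $(AB)^{n+1}=A(BA)^nB$, valid for any bounded operators $A,B$ and any $n\geq 0$, whence
\[
  \norm{(AB)^{n+1}}^{1/(n+1)}\ \leq\ \bigl(\norm{A}\,\norm{B}\bigr)^{1/(n+1)}\ \norm{(BA)^n}^{1/(n+1)}.
\]
Letting $n\to\infty$: the first factor on the right tends to $1$, while $\norm{(BA)^n}^{1/(n+1)}=\bigl(\norm{(BA)^n}^{1/n}\bigr)^{n/(n+1)}$ tends to $r(BA)$ by \eqref{eq:spectral_radius}, and the left-hand side tends to $r(AB)$ for the same reason. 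Hence $r(AB)\leq r(BA)$, and exchanging the roles of $A$ and $B$ gives equality. Taking $A=M$ and $B=T$ then yields $R_0^\ell(\eta)=r(MT)=r(TM)=R_0^a(\eta)$, which is the desired conclusion.

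There is no serious obstacle here: the only point requiring a little care is the fact that $T$ (and hence $MT$ and $TM$) is a genuinely bounded operator on $\mathscr{L}^\infty(\Omega\times\Sigma)$, which is precisely where Assumption \ref{Assum1} — applied to the transmission kernels $\boldsymbol{\kappa}^a$ and $\boldsymbol{\kappa}^\ell$ of \eqref{eq:leaky-kernel}–\eqref{eq:aon-kernel} together with the recovery rate $\boldsymbol{\gamma}$ — enters. The spectral identity $r(AB)=r(BA)$ is completely standard and could alternatively be obtained by recalling that the nonzero parts of the spectra of $AB$ and $BA$ coincide; I prefer the Gelfand-formula argument above since it is self-contained and uses only \eqref{eq:spectral_radius}, which is already available in the paper.
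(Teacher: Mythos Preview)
Your argument is correct and complete. The key step is the identity $r(AB)=r(BA)$ for bounded operators, and you deduce it cleanly from Gelfand's formula \eqref{eq:spectral_radius} via $(AB)^{n+1}=A(BA)^nB$.

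The paper proves the same result by a different (also standard) route: it shows directly that the nonzero spectra coincide, $\sigma(MT)\cup\{0\}=\sigma(TM)\cup\{0\}$, by writing down an explicit inverse for $\lambda\,\mathrm{Id}-TM$ whenever $\lambda\,\mathrm{Id}-MT$ is invertible (namely $\lambda^{-1}(\mathrm{Id}+TAM)$ where $A=(\lambda\,\mathrm{Id}-MT)^{-1}$). You even anticipate this alternative in your last paragraph. Your Gelfand-formula argument is arguably more self-contained here since it uses only \eqref{eq:spectral_radius}; the paper's resolvent argument yields a bit more information (equality of nonzero spectra rather than merely of spectral radii), though that extra information is not needed for the proposition.
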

\begin{proof}
  Thanks to the definition of the spectral radius \eqref{eq:spectral_radius} and the basic
  reproduction numbers defined in \eqref{eq:all-R0} and \eqref{eq:leaky-R0}, the result is a direct
  consequence of the following equality on the spectra:
  \[
    \sigma(MT) \cup \set{0} = \sigma(TM) \cup  \set{0}.
  \]
  We prove this later equality by following \cite[Appendix~A1]{pedersen_c-algebras_2018}. Let
  $\lambda \in \mathbb{C} \backslash (\sigma(MT) \cup \set{0})$. By definition, there exists a
  bounded operator $A$ on $\mathscr{L}^\infty(\Omega \times \Sigma)$ such that:
  \[
    A (\lambda \mathrm{Id} - MT) = (\lambda \mathrm{Id} - MT) A = \mathrm{Id},
  \]
  where $\mathrm{Id}$ is the identity operator. Then, one can check easily that
  $\lambda^{-1} (\mathrm{Id} + T A M)$ is the inverse of $\lambda \mathrm{Id} - TM$, whence
  $\lambda \in \mathbb{C} \backslash (\sigma(TM) \cup \set{0})$.  This gives that
  $\sigma(TM) \cup \set{0}) \subset \sigma(MT) \cup \set{0})$. The other inclusion is proved
  similarly.
\end{proof}

\subsection{The perfect vaccine}
The most simplistic case is a situation where there is only one vaccine with complete efficacy on
every individuals: $\Sigma = \set{\xi_0, \xi_1}$ with $e(x,\xi_1) = 1$ and $\delta(x,\xi_1) = 1$ for
all $x \in \Omega$. Recall that $\xi_0$ corresponds to the absence of vaccine. For simplicity, we
denote by $\eta^0(x) = \eta(x, \set{\xi_0})$ the probability for or the proportion of individuals of
type $x \in \Omega$ which are not vaccinated. We assume for simplicity that initially no vaccinated
individuals are infected, that is $u(0,x, \xi_1)=0$. Since individuals that have been vaccinated are
fully immunized, we have $u(t,x, \xi_1) = 0$ for all $x$ and $t$. The only equation that matter is
the one on $u^0(t,x)=u(t,x, \xi_0)$ which represents the proportion of unvaccinated individuals that
are infected. For both mechanisms (all-or-nothing and leaky vaccination), the evolution equation of
$u^0$ writes:
\begin{equation}
  \label{eq:SIS-vaccinated}
  \partial_t u^0(t, x) = (1  - u^0(t, x)) \int_{\Omega}
  u^0(t,y)  \eta^0(y) \kappa(x,\mathrm{d}y) - \gamma(x) u^0(t, x).
\end{equation}
We shall use this formulation in a future work to find optimal vaccination policies for a given
cost.

%%%%%%%%%%% SECTION 6 %%%%%%%%%%%%%%%%%%%

\section{Limiting contacts within the population}\label{sec:quarantine}
Motivated by the recent lockdown policies taken by many countries all around the world to slow down
the propagation of Covid-19 in 2020, we propose to investigate the possible impact on our SIS model
of the limitations of contacts within the population. We consider the case where $\kappa$ takes the
form of Example \ref{ex:graphonform}:
\[
  \kappa_W(x,\mathrm{d} y) = \beta(x) W(x,y) \theta(y) \, \mu(\mathrm{d} y),
\]
where $\beta$ is the susceptibility function, $\theta$ is the infectiousness function, $\mu$ is a
probability measure on the space $\Omega$ of features of the individuals and the graphon $W$
represents the initial graph of the contacts between individuals of the population (recall that
$W(x,y)=W(y,x)\in [0, 1]$ is the probability that $x$ and $y$ are connected and can be also seen as
the density of contact between the individuals with features $x$ and $y$).  In order to stress the
dependence in $W$, we write $R_0(W)=r(T_{\kappa_W/\gamma})$ the corresponding basic reproduction
number and $\phi_W$ the semi-flow \eqref{eq:semiflow} associated to $F=F_W$ in \eqref{eq:DefF} given
by $F_W(g)=(1-g) T_{\kappa_W}(g) - \gamma g$.  We model the impact of a policy which reduces the
contacts between the individuals, by a new graph of contact given by a new graphon $W'$. We say that
$W'$ is a perfect lockdown with respect to $W$ if:
\begin{equation}
  \label{eq:perfect_lockdown}
  W'(x,y) \leq W(x,y), \qquad \forall x,y \in \Omega.
\end{equation}

Intuitively $x$ and $y$ have a lesser probability to be connected in the graphon $W'$ than in the
graphon $W$.  We get the following intuitive result as a direct application of Theorem
\ref{th:spectralradius}~\ref{th:spectralradiusII} and Corollary \ref{cor:Comparison}.
\begin{proposition}[Perfect Lockdown]
  Assume that $\beta$ and $\theta$ are bounded and $\gamma$ is bounded away from 0.  If $W'$ is a
  perfect lockdown with respect to $W$ then $R_0(W') \leq R_0(W)$ and
  $\phi_{W'}(t,g) \leq \phi_{W}(t,g)$ for all initial condition $g \in \Delta$.
\end{proposition}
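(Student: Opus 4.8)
The plan is to prove the two assertions separately, each by direct appeal to a result already established. Throughout, note that the hypotheses ($\beta,\theta$ bounded, $\inf\gamma>0$, and $\mu$ a probability measure) guarantee that both $\kappa_W$ and $\kappa_{W'}$ satisfy Assumption \ref{Assum0}, since $\kappa_W(x,\Omega)=\beta(x)\int_\Omega W(x,y)\theta(y)\,\mu(\mathrm dy)\le \norm{\beta}\,\norm{\theta}\,\mu(\Omega)<\infty$ and likewise $\sup_x(\kappa_W/\gamma)(x,\Omega)\le \norm{\beta}\,\norm{\theta}\,\mu(\Omega)/\inf\gamma<\infty$; so $T_{\kappa_W}$, $T_{\kappa_{W'}}$, $T_{\kappa_W/\gamma}$ and $T_{\kappa_{W'}/\gamma}$ are bounded positive operators on the Banach lattice $\mathscr{L}^\infty$, and the semi-flows $\phi_W$, $\phi_{W'}$ on $\Delta$ are well defined by Proposition \ref{prop:deltainv}.

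\emph{The bound $R_0(W')\le R_0(W)$.} Since $W'(x,y)\le W(x,y)$ and $\beta,\theta,\gamma$ are non-negative, the kernel $(\kappa_W-\kappa_{W'})/\gamma$ has non-negative density $\beta(x)\bigl(W(x,y)-W'(x,y)\bigr)\theta(y)/\gamma(y)$, so the operator $T_{\kappa_W/\gamma}-T_{\kappa_{W'}/\gamma}$ is positive. Theorem \ref{th:spectralradius}\ref{th:spectralradiusII} then yields $r(T_{\kappa_{W'}/\gamma})\le r(T_{\kappa_W/\gamma})$, that is $R_0(W')\le R_0(W)$.

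\emph{The bound $\phi_{W'}(t,g)\le\phi_W(t,g)$.} We apply the Comparison Theorem (Corollary \ref{cor:Comparison}) with $K=\mathscr{L}^\infty_+$, $F=F_W$, the path $u(t)=\phi_{W'}(t,g)$ taking values in $D_1=\Delta$, and $v(t)=\phi_W(t,g)$ taking values in $D_2=\mathscr{L}^\infty$. Both are $\mathcal{C}^1$ with $u(0)=v(0)=g$, and $F_W$ is locally Lipschitz and cooperative on $(1-\mathscr{L}^\infty_+)\times\mathscr{L}^\infty\supset\Delta\times\mathscr{L}^\infty$ by Proposition \ref{prop:F}\ref{prop:FLipschitz} and \ref{prop:Fcoop}. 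It remains to check the inequality \eqref{eq:Comparison}: since $v'(t)=F_W(v(t))$ its right-hand side vanishes, while its left-hand side is
\[
  u'(t)-F_W(u(t))=F_{W'}(u(t))-F_W(u(t))=(1-u(t))\bigl(T_{\kappa_{W'}}(u(t))-T_{\kappa_W}(u(t))\bigr),
\]
which is non-positive because $0\le u(t)\le 1$ (so $1-u(t)\ge 0$, $u(t)\ge 0$) and $\kappa_{W'}-\kappa_W$ has non-positive density, hence $T_{\kappa_{W'}}(u(t))\le T_{\kappa_W}(u(t))$. Corollary \ref{cor:Comparison} therefore gives $u(t)\le v(t)$ for all $t$, i.e. $\phi_{W'}(t,g)\le\phi_W(t,g)$.

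The only real work is the sign bookkeeping for $F_{W'}-F_W$ on $\Delta$ and the verification that the structural properties of $F_W$ (cooperativeness, local Lipschitz continuity) and of $T_{\kappa_W/\gamma}$ (bounded, positive) apply to the graphon kernels at hand; as noted above, this is immediate from the boundedness of $\beta,\theta$, the positivity of $\gamma$, and the finiteness of $\mu$, so no real obstacle arises.
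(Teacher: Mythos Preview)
Your proof is correct and follows exactly the route the paper indicates: the inequality $R_0(W')\le R_0(W)$ comes from Theorem~\ref{th:spectralradius}\ref{th:spectralradiusII} applied to the positive difference $T_{\kappa_W/\gamma}-T_{\kappa_{W'}/\gamma}$, and the flow inequality from Corollary~\ref{cor:Comparison} with $F=F_W$ after checking $F_{W'}(u)-F_W(u)=(1-u)\bigl(T_{\kappa_{W'}}-T_{\kappa_W}\bigr)(u)\le 0$ on $\Delta$. The paper gives no further details beyond naming these two results, so your write-up is in fact more explicit.
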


However, assuming that all the contacts within the population are reduced might be unrealistic
(\textit{e.g.}  people can have stronger contacts with their family in lockdown).  Instead, we can
suppose as a weaker condition, that each individual reduces the average number of contacts he has.
Recall \eqref{eq:deg} for the definition of the degree $\mathrm{deg}_W(x)$ of an individual
$x \in \Omega$ (\textit{i.e.} the average number of his contacts) and the mean degree
$\mathrm{d}_{W}$ over the population for a graphon $W$.  as:
\[
  \mathrm{deg}_W(x) = \int_\Omega \!\! W(x,y) \, \mu(\mathrm{d}y) \quad\text{and}\quad \mathrm{d}_W
  = \int_\Omega \!\! \mathrm{deg}_W(x) \, \mu(\mathrm{d}x)= \int_{\Omega^2} \!\! W(x,y) \,
  \mu(\mathrm{d}y)\, \mu(\mathrm{d}x).
\]
Recall that $\norm{\cdot}_1$ is the usual $L^1(\mu)$ norm. The following lemma bounds the basic
reproduction number with the supremum and the mean degree of the graphon.

\begin{lemma}\label{lem:bounded-R0}
  Let $W$ be a graphon. Assume that $\beta$ and $\theta/\gamma$ are bounded. We have that:
\begin{equation}\label{eq:bounded-R0}
  \frac{1}{\norm{\gamma/\beta\theta}_1} \, \mathrm{d}_{W} \leq R_0(W)
  \leq \norm{\beta\theta/\gamma} \, \sup\limits_{x \in \Omega} \, \mathrm{deg}_{W}(x).
\end{equation}
\end{lemma}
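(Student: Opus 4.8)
The plan is to transfer $R_0(W)=r(T_{\kappa_W/\gamma})$ to the spectral radius of a \emph{symmetric} integral operator on $L^2(\mu)$, using repeatedly the identity $\sigma(AB)\cup\set{0}=\sigma(BA)\cup\set{0}$ for bounded operators (as in the proof of Proposition~\ref{prop:aon=leaky}), and then reading off the upper bound from an operator-norm estimate and the lower bound from the variational formula for the norm of a self-adjoint operator.

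\emph{Setup.} Write $h=\beta\theta/\gamma$; since $\beta$ and $\theta/\gamma$ are bounded, $h\in\mathscr{L}^\infty_+$. As $0\le W\le 1$ and $\mu$ is finite, $\kappa_W$ has bounded density $k(x,y)=\beta(x)W(x,y)\theta(y)$ and $k/\gamma$ is bounded, so Assumption~\ref{Assum1} holds (with any $q>1$, in particular $q=2$), $R_0(W)=r(T_{\kappa_W/\gamma})$ is well defined, and Lemma~\ref{lem:schae} applies. Let $\mathcal{W}$ be the integral operator with kernel $W$ and $M_f$ the multiplication by $f$; all of $M_\beta$, $\mathcal{W}$, $M_{\theta/\gamma}$, $M_h$, $M_{\sqrt h}$ are bounded on $\mathscr{L}^\infty$, and $T_{\kappa_W/\gamma}=M_\beta\,\mathcal{W}\,M_{\theta/\gamma}$.

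\emph{Reduction.} Applying $\sigma(AB)\cup\set{0}=\sigma(BA)\cup\set{0}$ first with $A=M_\beta\mathcal{W}$, $B=M_{\theta/\gamma}$ gives $\sigma(T_{\kappa_W/\gamma})\cup\set{0}=\sigma(M_h\mathcal{W})\cup\set{0}$ (using $M_{\theta/\gamma}M_\beta=M_h$); applying it again with $A=M_{\sqrt h}$, $B=M_{\sqrt h}\mathcal{W}$ gives $\sigma(M_h\mathcal{W})\cup\set{0}=\sigma(\hat A)\cup\set{0}$, where $\hat A:=M_{\sqrt h}\mathcal{W}M_{\sqrt h}$ is the integral operator with the symmetric kernel $\sqrt{h(x)}\,W(x,y)\,\sqrt{h(y)}$. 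Since the spectrum of a bounded operator on a non-trivial space is non-empty, it follows that $R_0(W)=r(M_h\mathcal{W})=r(\hat A)$.

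\emph{The two bounds.} For the upper bound, $r(M_h\mathcal{W})\le\norm{M_h\mathcal{W}}=\sup_{x\in\Omega}h(x)\int_\Omega W(x,y)\,\mu(\mathrm{d}y)=\sup_{x\in\Omega}h(x)\,\mathrm{deg}_W(x)\le\norm{\beta\theta/\gamma}\,\sup_{x\in\Omega}\mathrm{deg}_W(x)$. For the lower bound we may assume $\norm{\gamma/\beta\theta}_1<\infty$, otherwise the bound is trivial; then $\beta\theta>0$ $\mu$-a.e., so $h>0$ $\mu$-a.e.\ (using $\gamma>0$ from Assumption~\ref{Assum0}) and $\phi:=1/\sqrt h=\sqrt{\gamma/\beta\theta}\in L^2(\mu)$ with $\norm{\phi}_2^2=\norm{\gamma/\beta\theta}_1$. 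The kernel of $\hat A$ is bounded, so $\hat A$ satisfies the integrability condition of Lemma~\ref{lem:schae} with $q=2$, and by Lemma~\ref{lem:schae}\ref{lem:schaeIII-r} the spectral radius of $\hat A$ on $\mathscr{L}^\infty$ equals that of $\hat A$ viewed as a self-adjoint Hilbert--Schmidt operator on $L^2(\mu)$; for the latter, $r(\hat A)=\norm{\hat A}_{L^2}\ge\braket{\hat A\phi,\phi}/\norm{\phi}_2^2$. Since $\sqrt{h(x)}\,\phi(x)=1$, one gets (by Tonelli, the integrand being non-negative) $\braket{\hat A\phi,\phi}=\int_{\Omega^2}W(x,y)\,\mu(\mathrm{d}x)\mu(\mathrm{d}y)=\mathrm{d}_W$, hence $R_0(W)=r(\hat A)\ge\mathrm{d}_W/\norm{\gamma/\beta\theta}_1$, which is the left inequality in \eqref{eq:bounded-R0}. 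The only delicate point is moving from $\mathscr{L}^\infty$ (where $R_0$ and the spectrum-swapping live) to $L^2(\mu)$ (where the self-adjoint variational bound is available), which is exactly what Lemma~\ref{lem:schae}\ref{lem:schaeIII-r} supplies once the $q=2$ integrability is checked; the bookkeeping of the $\set{0}$'s and of the degenerate cases ($\beta\theta$ vanishing on a positive-measure set, $\norm{\gamma/\beta\theta}_1=\infty$) is harmless.
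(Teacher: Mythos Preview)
Your proof is correct and follows essentially the same route as the paper's: the spectrum-swap identity to pass from $T_{\kappa_W/\gamma}=M_\beta\,\mathcal{W}\,M_{\theta/\gamma}$ to $M_h\mathcal{W}$ and then to the symmetric operator $M_{\sqrt h}\,\mathcal{W}\,M_{\sqrt h}$, the operator-norm bound for the upper inequality, Lemma~\ref{lem:schae}\ref{lem:schaeIII-r} with $q=p=2$ to transfer the spectral radius to $L^2(\mu)$, and the self-adjoint variational inequality with the test function $1/\sqrt h$ for the lower inequality. Your treatment is in fact slightly more careful than the paper's, which does not explicitly dispose of the degenerate case $\norm{\gamma/\beta\theta}_1=\infty$ and contains a typo (it writes ``$q=p=1/2$'' where $q=p=2$ is clearly intended).
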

\begin{proof}
  Recall $T_\mathsf{k}$ is the operator defined by \eqref{eq:def-T} with
  $\kappa(x, \mathrm{d}y)= \mathsf{k}(x,y) \, \mu(\mathrm{d}y)$. Let $M(v)$ be the operator
  corresponding to the multiplication by the function $v$.  We have:
  \begin{align*}
    R_0(W) 
    &= r(M(\beta) \, T_{W}\,  M(\theta/\gamma)) \\
    &= r(M(\beta\theta/\gamma)\,  T_{W} )\\
    &\leq \norm{M(\beta\theta/\gamma) \, T_{W} } \\
    &= \sup\limits_{x \in \Omega} \frac{\beta(x)\theta(x)}{\gamma(x)}
      \, \int_\Omega W(x,y) \, \mu(\mathrm{d}y) \\
    &\leq \norm{\beta\theta/\gamma} \, \sup\limits_{x \in \Omega}\,
      \mathrm{deg}_{W}(x),
  \end{align*}
  where we used the definition of the basic reproduction number \eqref{eq:R0} for the first
  equality, arguments similar as in the proof of Proposition \ref{prop:aon=leaky} for the second,
  and the (third) definition of the spectral radius \eqref{eq:spectral_radius} for the first
  inequality.

  \medskip
  
  Using similar arguments, we have:
  \[
    R_0(W) = r(M(\beta) \,T_W\, M(\theta/\gamma)) = r\left( M(v) \, T_W\, M(v)\right),
  \]
  with $v=\sqrt{\beta \theta/\gamma}$.  Recall notations from Lemma \ref{lem:schae}, and notice that
  $ M(v) \, T_W\, M(v)=T_\mathsf{k}$ is a bounded integral operator on $\mathscr{L}^\infty $
  associated to the symmetric kernel $\mathsf{k}(x,y) = v(x)W(x,y) v(y)$.  According to Lemma
  \ref{lem:schae}~\ref{lem:schaeIII-r} with $q = p = 1/2$ and $\hat T_\mathsf{k}$ the integral
  operator on $L^2(\mu)$ with the same kernel $\mathsf{k}$, defined in \eqref{eq:def-hat-Tk}, we get
  $R_0(W)=r(\hat T_\mathsf{k})$.  The operator $\hat T_\mathsf{k}$ is self-adjoint, as $\mathsf{k}$
  is symmetric, and compact according to \ref{lem:schaeIII}. Thanks to the Courant-Fischer-Weyl
  min-max principle, we obtain:
  \[
    R_0(W) =r(\hat T_\mathsf{k})= \sup_{g\in L^2(\mu)\backslash\set{0}} \frac{\braket{ M(v)\, g,\,
        T_W\, M(v)\, g}}{\braket{g,g}}\cdot
  \]
  Taking $g = 1/v$, we get $M(v)g=1$ and thus:
  \[
    R_0(W) \geq \frac{\braket{1, T_W1}}{\norm{\gamma/\beta \theta}_1} =
    \frac{d_W}{\norm{\gamma/\beta \theta}_1}\cdot
  \]
  This ends the proof of Lemma \ref{lem:bounded-R0}.
\end{proof}

We deduce from Lemma \ref{lem:bounded-R0} the following result for a lockdown policy $W'$ for which
the degree of each individuals is less than the average degree of the initial graphon $W$.

\begin{corollary}[Partial Lockdown]
  \label{cor:lockdown}
  Assume that $\beta$ and $\theta/\gamma$ are bounded. If $W'$ is a partial lockdown of $W$, that
  is:
  \begin{equation}
    \label{eq:lockdown}
    \sup\limits_{x \in \Omega} \, \mathrm{deg}_{W'}(x) \leq C \mathrm{d}_{W} \quad\text{with}\quad
    C=\frac{1}{\norm{\beta\theta/\gamma}\,\norm{\gamma/\beta\theta}_1}, 
  \end{equation}
  then we have $R_0(W') \leq R_0(W)$.
\end{corollary}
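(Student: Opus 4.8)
The plan is to chain together the two inequalities from Lemma~\ref{lem:bounded-R0}. The lower bound applied to $W$ gives $R_0(W)\geq \mathrm{d}_W/\norm{\gamma/\beta\theta}_1$, while the upper bound applied to $W'$ gives $R_0(W')\leq \norm{\beta\theta/\gamma}\,\sup_{x\in\Omega}\mathrm{deg}_{W'}(x)$. So it suffices to show that the right-hand side of the second inequality is at most the right-hand side of the first, i.e. that
\[
  \norm{\beta\theta/\gamma}\,\sup\limits_{x\in\Omega}\mathrm{deg}_{W'}(x) \leq \frac{\mathrm{d}_W}{\norm{\gamma/\beta\theta}_1}.
\]

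First I would observe that the partial lockdown hypothesis~\eqref{eq:lockdown} reads $\sup_{x}\mathrm{deg}_{W'}(x)\leq C\,\mathrm{d}_W$ with $C=1/(\norm{\beta\theta/\gamma}\,\norm{\gamma/\beta\theta}_1)$. Multiplying both sides by $\norm{\beta\theta/\gamma}$ gives exactly
\[
  \norm{\beta\theta/\gamma}\,\sup\limits_{x\in\Omega}\mathrm{deg}_{W'}(x) \leq \norm{\beta\theta/\gamma}\,C\,\mathrm{d}_W = \frac{\mathrm{d}_W}{\norm{\gamma/\beta\theta}_1}.
\]
Combining the three displayed inequalities yields
\[
  R_0(W') \leq \norm{\beta\theta/\gamma}\,\sup\limits_{x\in\Omega}\mathrm{deg}_{W'}(x) \leq \frac{\mathrm{d}_W}{\norm{\gamma/\beta\theta}_1} \leq R_0(W),
\]
which is the claim.

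There is essentially no obstacle here: the corollary is a bookkeeping consequence of Lemma~\ref{lem:bounded-R0}, and the constant $C$ in~\eqref{eq:lockdown} has been engineered precisely so that the upper bound for $R_0(W')$ meets the lower bound for $R_0(W)$. The only points to check are that the boundedness hypotheses of Lemma~\ref{lem:bounded-R0} (namely $\beta$ and $\theta/\gamma$ bounded) are in force --- which they are by assumption in the statement of the corollary --- and that $\norm{\gamma/\beta\theta}_1$ and $\norm{\beta\theta/\gamma}$ are finite and nonzero so that $C$ is well defined and positive; finiteness of $\norm{\gamma/\beta\theta}_1$ is implicit (otherwise the lower bound in Lemma~\ref{lem:bounded-R0} is vacuous and $C=0$, making~\eqref{eq:lockdown} force $W'=0$ a.e., in which case $R_0(W')=0\leq R_0(W)$ trivially), and positivity of $\norm{\beta\theta/\gamma}$ holds whenever $\beta\theta$ is not $\mu$-a.e. zero, which is the only interesting case. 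I would state the proof in three lines by simply inserting the lockdown inequality into the chain above.
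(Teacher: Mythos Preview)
Your proof is correct and is exactly the argument the paper has in mind: the corollary is stated in the paper as an immediate deduction from Lemma~\ref{lem:bounded-R0}, and your chaining of the upper bound for $R_0(W')$ with the lower bound for $R_0(W)$ via the hypothesis~\eqref{eq:lockdown} is precisely that deduction. Your remarks on the degenerate cases are a reasonable addition but not essential.
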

In the general case, we have $C \leq 1$.  But, if the functions $\beta$, $\theta$ and $\gamma$ are
constants (or simply if $\beta\theta/\gamma$ constant), then we have $C=1$ since $\mu$ is a
probability measure.
\begin{remark}
  \label{rem:W=p}
  Suppose that $\beta$, $\theta$ and $\gamma$ are constants (or that $\beta\theta/\gamma$ is
  constant).  Inequality \eqref{eq:bounded-R0} shows that the graphon $W$ which corresponds to a
  minimal basic reproduction number $R_0(W)$, when the mean degree $\mathrm{d}_{W}$ is fixed, say
  equal to $p$, is any graphon with constant degree equal to $p$, that is $\mathrm{deg}_{W}(x) =p$
  for all $x\in \Omega$.  We then deduce from Lemma \ref{lem:bounded-R0} that
  $R_0(W)= p\beta\theta/\gamma$.

  \medskip

  This is in particular the case for the constant graphon $W=p\in [0,1] $.  According to Example
  \ref{ex:graphingform}\ref{ex:graphonform-c}, this corresponds to the one dimensional SIS model
  \eqref{eq:one-group}.  \medskip

  This is also the case for the geometric graphon, where the probability of edges between $x$ and
  $y$ depends only on the distance between $x$ and $y$.  Keeping notations from Example
  \ref{ex:graphingform}\ref{ex:graphonform-geom}, we consider the population uniformly spread on the
  unit circle: $\Omega=[0, 2\pi]$ and $\mu(\mathrm{d}x)=\mathrm{d}x/2\pi$, and the graphon $W_f$
  defined by $W_f(x,y)=f(x-y)$ for $x, y\in \Omega$, where $f$ is a measurable non-negative function
  defined on $\mathbb{R}$ which is bounded by 1 and $2\pi$-periodic.  Let
  $p=(2\pi)^{-1}\, \int_{[0, 2\pi]} f(y)\, \mathrm{d}y$.  We have:
  $\mathrm{deg}_{W}(x) = \mathrm{d}_{W}=p$; the basic reproduction number
  $R_0(W_f)=p\beta\theta/\gamma$ and the maximal equilibrium $g^*=\max(0, 1 - R_0^{-1})$.
  Furthermore, the graphon $W_f$ minimizes the basic reproduction number among all graphons with
  mean degree $p$. It is interesting to notice that $R_0(W_f)$ does not depend on the support of $f$
  or even on $\sup\{|r| \, \colon\, r\in [-\pi, \pi] \text{ and } f(r)>0\}$, which can be seen as
  the maximal contamination distance from an infected individual.
\end{remark}

%%%%%%%%%%%%% APPENDIX %%%%%%%%%%%%%%%%%%

\appendix

\section{Proofs of the results of Theorem \ref{th:Inv} and Corollary \ref{cor:Comparison}}
\label{sec:proofs}

Let $X$ be a Banach space. For $x \in X$ and $D \subset X$, we denote by $\rho(x, D)$ the distance
between $x$ and the set $D$:
\begin{equation}
  \label{eq:DistanceSet}
  \rho(x, D) = \inf\limits_{y \in D} \norm{x - y}.
\end{equation}

Let $a>0$ and $G \, \colon \, [0,a) \times X \to X$ be a locally Lipschitz function with respect to
the second variable. Recall Definition \ref{def:forwardinvariant} of a forward invariant set with
respect to $G$. The following result appears in \cite[Theorem~5.2]{deimlingordinary}.
\begin{lemma}\label{Invariance2}
  Let $D$ be a closed convex set with non-empty interior. Suppose that $G$ satisfies:
  \begin{equation}
    \label{eq:lim}
    \lim\limits_{\lambda \to 0^+} \frac{1}{\lambda} \rho(x + \lambda G(t,x), D) = 0,
    \qquad \forall (t,x) \in (0,a)\times\partial D.
  \end{equation} 
  Then $D$ is forward invariant with respect to $G$.
\end{lemma}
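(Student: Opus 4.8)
The plan is to argue by contradiction and reduce the statement to a scalar Gr\"onwall-type inequality for the distance to $D$. Let $(y,[0,b))$ be the maximal solution of $y'=G(t,y)$, $y(0)=y_0\in D$ --- which exists and is unique by the Picard--Lindel\"of theorem, since $G$ is locally Lipschitz in the second variable --- and suppose $y(t_1)\notin D$ for some $t_1\in(0,b)$. Put $g(t)=\rho(y(t),D)$. Since $\rho(\cdot,D)$ is $1$-Lipschitz and $y$ is $\mathcal{C}^1$, $g$ is locally Lipschitz with $g\ge 0$, $g(0)=0$ and $g(t_1)>0$. A standard covering argument applied to the local Lipschitz hypothesis provides a constant $L$ and a radius $\delta>0$ such that $\norm{G(t,z)-G(t,z')}\le L\norm{z-z'}$ for all $t\in[0,t_1]$ and all $z,z'$ within distance $\delta$ of the compact curve $y([0,t_1])$. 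I would then aim to establish that $D^+g(t):=\limsup_{h\to 0^+}\frac1h\bigl(g(t+h)-g(t)\bigr)\le L\,g(t)$ on $[0,t_1)$; combined with $g(0)=0$, the comparison theorem for scalar differential inequalities forces $g\equiv 0$, contradicting $g(t_1)>0$.

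For the differential inequality, fix $t\in[0,t_1)$ and set $x=y(t)$. If $x\in D$ (so $g(t)=0$), then $x$ is either interior to $D$ --- whence $g(t+h)=0$ for $h$ small --- or on $\partial D$, in which case the subtangential hypothesis \eqref{eq:lim} at $(t,x)$ gives $\rho(x+hG(t,x),D)=o(h)$, and together with $y(t+h)=x+hG(t,x)+o(h)$ this yields $D^+g(t)=0$. If $x\notin D$, choose $\hat x\in D$ with $\norm{x-\hat x}$ as close to $g(t)$ as desired and let $\bar x$ be the (necessarily boundary) point where the segment $[x,\hat x]$ first enters $D$, so that $g(t)\le\norm{x-\bar x}\le\norm{x-\hat x}$. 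Comparing $y(t+h)$ with the ``subtangent step'' $\bar x+hG(t,\bar x)$, the triangle inequality for $\rho(\cdot,D)$, the uniform Lipschitz bound and differentiability of $y$ at $t$ give
\[
  g(t+h)\ \le\ \norm{x-\bar x}\,(1+hL)\ +\ \rho\!\bigl(\bar x+hG(t,\bar x),D\bigr)\ +\ \norm{y(t+h)-x-hG(t,x)},
\]
where \eqref{eq:lim} at $(t,\bar x)$ makes the middle term $o(h)$ and the last term is $o(h)$. If one could take $\norm{x-\bar x}=g(t)$ exactly, this would read $g(t+h)-g(t)\le h L\,g(t)+o(h)$, i.e. $D^+g(t)\le L\,g(t)$, as wanted.

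The delicate point --- which I expect to be the main obstacle --- is precisely that in a general Banach space the infimum defining $\rho(x,D)$ need not be attained, so $\bar x$ is only an approximate nearest point, $\norm{x-\bar x}=g(t)+\varepsilon$ with $\varepsilon>0$, and the estimate above then produces a spurious term of order $\varepsilon/h$ that does not vanish as $h\to 0^+$ for fixed $\varepsilon$. I see two ways to deal with it. In the concrete situations where Lemma~\ref{Invariance2} is actually invoked in this paper, $D$ is a translate of the positive cone $\mathscr{L}^\infty_+$, for which the positive-part map $z\mapsto z\vee 0$ furnishes an honest nearest point for the supremum norm (as $\norm{z-z\vee 0}=\rho(z,\mathscr{L}^\infty_+)$); one may then take $\varepsilon=0$ and the argument above closes. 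In full generality the distance-function argument must be replaced by Nagumo's method of $\varepsilon$-approximate solutions: one builds Euler polygons whose short steps, issued from boundary points, are corrected back to within $o(\text{step})$ of $D$ using \eqref{eq:lim}, checks (via the uniqueness furnished by Picard--Lindel\"of) that these polygons converge to $y$ as $\varepsilon\to0$, and concludes that $y$ remains in the closed set $D$ --- this also disposes of the endpoint $t=0$, where \eqref{eq:lim} is not assumed. This is the content of \cite[Theorem~5.2]{deimlingordinary}, which one may alternatively cite directly.
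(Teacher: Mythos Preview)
The paper does not prove this lemma at all: it simply states it and cites \cite[Theorem~5.2]{deimlingordinary} as the source. Your sketch therefore goes well beyond what the paper does, and your closing remark that one may ``alternatively cite directly'' is in fact exactly what the paper chooses to do.

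That said, your outline is a fair account of the standard distance-function argument, and you correctly identify the genuine obstacle in a general Banach space: the lack of an attained nearest point in $D$ spoils the naive Gr\"onwall estimate. Your proposed fix for the specific case $D=y+\mathscr{L}^\infty_+$ via the lattice truncation $z\mapsto z\vee 0$ is sound, and your fallback to Nagumo-type $\varepsilon$-approximate polygons (and the citation to Deimling) is the right resolution in full generality. One small point: the endpoint $t=0$ is not excluded by \eqref{eq:lim} only because the hypothesis is stated on $(0,a)$, but the argument really only needs the subtangent condition at the times where the solution touches $\partial D$, and the approximate-solution method handles this automatically by starting strictly inside $D$ after an arbitrarily small perturbation (using that $D$ has non-empty interior).
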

If the set $D$ is a proper cone with non-empty interior, the following equivalence enables to
establish \eqref{eq:lim} more easily. It is a consequence of \cite[Lemma~4.1]{deimlingordinary} and
\cite[Example~4.1.ii]{deimlingordinary}
\begin{lemma}
  \label{Invariance3}
  Let $K$ be a proper cone and let $x \in \partial K$ and $z\in X$. The following conditions are
  equivalent:
\begin{propenum}
\item $\lim\limits_{\lambda \to 0^+} \lambda^{-1} \, \rho(x + \lambda z, K) = 0.$
\item For all $x^\star\in K^\star$ such that $\braket{x^\star,x} = 0$, we have
  $\braket{x^\star,z} \geq 0$.
\end{propenum}
\end{lemma}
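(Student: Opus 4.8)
The plan is to read condition (i) as the assertion that $z$ lies in the tangent cone of the convex set $K$ at the point $x$, and condition (ii) as the assertion that $z$ lies in the bipolar of that cone; the equivalence then reduces to the bipolar theorem, that is, to a Hahn--Banach separation argument. Throughout, $C$ will denote this tangent cone, and the proof splits into three short reformulations.

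First I would rewrite condition (i). For every $\lambda>0$,
\[
  \frac1\lambda\,\rho(x+\lambda z,K)=\inf_{y\in K}\norm{z-\lambda^{-1}(y-x)}=\rho\!\bigl(z,\tfrac1\lambda(K-x)\bigr).
\]
Using that $K$ is convex and $x\in K$, the sets $\tfrac1\lambda(K-x)$ increase as $\lambda\downarrow0$: if $0<\lambda'\le\lambda$ and $y\in K$, then $x+\tfrac{\lambda'}{\lambda}(y-x)$ is a convex combination of $x,y\in K$, hence lies in $K$, so $\tfrac1\lambda(y-x)\in\tfrac1{\lambda'}(K-x)$. Therefore $\lambda\mapsto\rho\bigl(z,\tfrac1\lambda(K-x)\bigr)$ is nondecreasing, and since the distance to a union of sets is the infimum of the distances and the distance to a set equals the distance to its closure,
\[
  \lim_{\lambda\to0^+}\frac1\lambda\,\rho(x+\lambda z,K)=\inf_{\lambda>0}\rho\!\bigl(z,\tfrac1\lambda(K-x)\bigr)=\rho(z,C),\qquad C:=\overline{\bigcup_{\lambda>0}\tfrac1\lambda(K-x)}.
\]
As $C$ is a closed convex cone, (i) is equivalent to $z\in C$.

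Next I would identify the functionals in (ii). Set $C^\star=\set{x^\star\in X^\star:\braket{x^\star,w}\ge0\ \text{for all }w\in C}$. By continuity and positive homogeneity, $x^\star\in C^\star$ if and only if $\braket{x^\star,y-x}\ge0$ for every $y\in K$; taking $y=0\in K$ gives $\braket{x^\star,x}\le0$, and taking $y=2x\in K$ (here $K$ is a cone) gives $\braket{x^\star,x}\ge0$, hence $\braket{x^\star,x}=0$ and then $\braket{x^\star,y}\ge0$ for all $y\in K$. The converse inclusion being immediate, we obtain $C^\star=\set{x^\star\in K^\star:\braket{x^\star,x}=0}$, so that (ii) says precisely $\braket{x^\star,z}\ge0$ for all $x^\star\in C^\star$, i.e.\ $z\in C^{\star\star}$ (the bidual cone, taken in $X$).

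Finally, it remains to show $C=C^{\star\star}$. The inclusion $C\subseteq C^{\star\star}$ is trivial. Conversely, if $z\notin C$, then $C$ being closed and convex, Hahn--Banach separation yields $f\in X^\star$ and $\alpha\in\mathbb{R}$ with $\braket{f,z}<\alpha\le\braket{f,w}$ for all $w\in C$; since $0\in C$ we get $\alpha\le0$, and since $C$ is a cone the inequality $t\braket{f,w}\ge\alpha$ for all $t>0$ forces $\braket{f,w}\ge0$, so $f\in C^\star$, while $\braket{f,z}<\alpha\le0$, whence $z\notin C^{\star\star}$. Combining the three steps gives (i) $\iff z\in C\iff z\in C^{\star\star}\iff$ (ii). The only non-elementary ingredient is this separation argument; the one point needing care is the monotone-family argument of the second paragraph, which is exactly what lets the limit in (i) be rewritten as a distance to the closed cone $C$. (One may also note that the hypothesis $x\in\partial K$ is not actually needed: if $x\in\operatorname{int}K$ then $C=X$ so (i) holds for every $z$, and no nonzero $x^\star\in K^\star$ can vanish at $x$, so (ii) holds vacuously.)
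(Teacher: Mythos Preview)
Your proof is correct. The paper does not actually prove this lemma; it merely cites it as a consequence of \cite[Lemma~4.1 and Example~4.1.ii]{deimlingordinary}, so you have supplied a self-contained argument where the paper defers to the literature.

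Your route is the standard convex-analytic one: you identify condition~(i) with membership of $z$ in the tangent cone $C=\overline{\bigcup_{\lambda>0}\lambda^{-1}(K-x)}$, identify the functionals in~(ii) with the polar cone $C^\star$, and then close the loop with the bipolar theorem via Hahn--Banach separation. Each step is clean; in particular the monotonicity of $\lambda\mapsto\lambda^{-1}(K-x)$ (which uses convexity of $K$ and $x\in K$) is exactly what turns the limit in~(i) into $\rho(z,C)$, and the identification $C^\star=\{x^\star\in K^\star:\braket{x^\star,x}=0\}$ correctly exploits that $K$ is a cone (through the choices $y=0$ and $y=2x$). Deimling's treatment goes through the same objects but is phrased in terms of general subtangential conditions for closed sets and then specializes to cones via his Example~4.1; your argument is more direct for the case at hand. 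Your closing observation that $x\in\partial K$ is inessential is also correct and is a nice bonus.
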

Thanks to these two lemmas, we give the demonstration of the result of forward invariance of Section
\ref{subsec:preamble}:
\begin{proof}[Proof of Theorem \ref{th:Inv}]
  Let $y \in X$. We assume that, for all $(x,t) \in \partial K \times [0,a)$ and for all
  $x^\star \in K^\star$ such that $\braket{x^\star,x} = 0$, we have:
  $\braket{x^\star,G(t,y+x)} \geq 0$. According to Lemma \ref{Invariance3}, we obtain:
  \[ \lim\limits_{\lambda \to 0^+} \lambda^{-1} \, \rho(x + \lambda G(t,y + x), K) = 0, \] for all
  $(x,t) \in \partial K \times [0,a)$. Since
  $\rho(y + x + \lambda G(t,y + x), y + K) = \rho(x + \lambda G(t,y + x), K)$ by Equation
  \eqref{eq:DistanceSet}, we can conclude the proof using Lemma \ref{Invariance2} with $D=y+K$.
\end{proof}

We end this section with the proof of the comparison theorem.

\begin{proof}[Proof of Corollary \ref{cor:Comparison}]
  We suppose that $F$ is cooperative on $D_1 \times X$ and the inequality \eqref{eq:Comparison}
  holds. Let $w = v - u$.  The function $w$ is solution of the ODE $w' = G(t,w)$ where:
  \[
    G(t,x) = F(u(t)+x) - F(u(t)) + d(t) \qquad \text{and} \qquad d(t) = v'(t) - F(v(t)) - u'(t) + F(u(t)).
  \]
  First we show that $G$ is locally Lipschitz with respect to the second variable. Let
  $(t,x) \in [0,a) \times X$. Let $U$ be a neighborhood of $u(t)+x$ such that $F$ is Lipschitz on
  $U$ with a Lipschitz constant $L$. By continuity of $u$, there exist a neighborhood $V_x$ of $x$
  and a positive constant $\eta$, such that $u(s) + y \in U$, for all $s \in [t,t+\eta] \cap [0,a)$
  and $y \in V_x$. Thus, for all $s \in [t,t+\eta] \cap [0,a)$ and $y,z \in V_x$, we have
  $\norm{G(s, y) - G(s, z)} \leq L \norm{y - z}$.
  
  Let $t \in [0,a), x \in \partial K$ and let $x^\star \in K^\star$ such that
  $\braket{x^\star,x} = 0$. Let us prove that $\braket{x^\star,G(t,x)} \geq 0$. By
  \eqref{eq:Comparison}, we know that $d(t)$ belongs to $K$. Furthermore, the inequality
  $\braket{x^\star,F(u(t) + x) - F(u(t))}\geq 0$ holds because the function $F$ is cooperative on
  $D_1 \times X$.  Thus, $\braket{x^\star,G(t,x)}$ is non-negative. Hence, we can apply Theorem
  \ref{th:Inv} with $y=0$ and obtain that $K$ is forward invariant with respect to $G$. Since
  $w(0) \in K$, this shows that $w(t) \in K$ for all $t \in [0,a)$, \textit{i.e.}, $u(t) \leq v(t)$
  for all $t \in [0,a)$.
  
  When $F$ is cooperative on $X \times D_2$, the proof is similar.
\end{proof}

\section{The Hausdorff distance on the compact sets of $\mathbb{C}$}\label{sec:Hausdorff}

Let $\mathcal{K}(\mathbb{C})$ be the set of non-empty compact subsets of $\mathbb{C}$. The Hausdorff
distance between $A$ and $B$ in $\mathcal{K}(\mathbb{C})$ is defined as:
\begin{equation}
  d_{\mathrm{H}}(A, B) = \max \left\lbrace \sup\limits_{z_1 \in A} \; \inf\limits_{z_2 \in B} \, \abs{z_1 - z_2}, \quad  \sup\limits_{z_2 \in B} \inf\limits_{z_1 \in A}  \abs{z_2 - z_1} \right\rbrace.
\end{equation}
We recall that the space $(\mathcal{K}(\mathbb{C}), d_{\mathrm{H}})$ is a metric space, see
\cite[Section 7.3.1]{burago2001}.  Since
$\sup \set{ \abs{z} \, \colon \, z \in A} = d_{\mathrm{H}}(A, \set{0})$ for all
$A \in \mathcal{K}(\mathbb{C})$, we deduce the following result.
\begin{lemma}\label{lem:Hausdorff_radius}
  The map $A \mapsto \sup \set{ \abs{z} \, \colon \, z \in A}$ from
  $(\mathcal{K}(\mathbb{C}), d_{\mathrm{H}})$ to $\mathbb{R}$ endowed with the usual Euclidean
  distance is continuous.
\end{lemma}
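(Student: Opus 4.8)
The plan is to reduce the statement to the elementary fact that the distance to a fixed point in a metric space is $1$-Lipschitz. First I would verify the identity announced just before the lemma: for every $A \in \mathcal{K}(\mathbb{C})$ one has $\sup\set{\abs{z}\, \colon\, z \in A} = d_{\mathrm{H}}(A, \set{0})$. Unwinding the definition of the Hausdorff distance with $B = \set{0}$, the first term in the maximum is $\sup_{z_1 \in A}\inf_{z_2 \in \set{0}}\abs{z_1 - z_2} = \sup_{z_1 \in A}\abs{z_1}$, while the second term is $\sup_{z_2 \in \set{0}}\inf_{z_1 \in A}\abs{z_2 - z_1} = \inf_{z_1 \in A}\abs{z_1}$, which is bounded above by $\sup_{z_1 \in A}\abs{z_1}$. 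Hence the maximum equals $\sup_{z_1 \in A}\abs{z_1}$, as claimed. (Non-emptiness and compactness of $A$ guarantee these quantities are finite, which is all that is needed.)

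Next, I would invoke that $(\mathcal{K}(\mathbb{C}), d_{\mathrm{H}})$ is a genuine metric space, as recalled from \cite[Section 7.3.1]{burago2001}. Applying the triangle inequality to the three points $A$, $B$ and $\set{0}$ gives $d_{\mathrm{H}}(A, \set{0}) \leq d_{\mathrm{H}}(A, B) + d_{\mathrm{H}}(B, \set{0})$, and symmetrically, so that
\[
  \bigl\lvert d_{\mathrm{H}}(A, \set{0}) - d_{\mathrm{H}}(B, \set{0}) \bigr\rvert \leq d_{\mathrm{H}}(A, B).
\]
Combined with the identity from the first step, this shows that $A \mapsto \sup\set{\abs{z}\, \colon\, z \in A}$ is $1$-Lipschitz from $(\mathcal{K}(\mathbb{C}), d_{\mathrm{H}})$ to $\mathbb{R}$ (with its usual distance), and in particular continuous.

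There is essentially no real obstacle here; the only point requiring a moment's care is the first step, where one must observe that the ``$\inf$'' term appearing in the definition of $d_{\mathrm{H}}(A, \set{0})$ is dominated by the ``$\sup$'' term and therefore does not contribute to the maximum. Everything else is the standard Lipschitz property of $x \mapsto d(x, x_0)$ in an arbitrary metric space.
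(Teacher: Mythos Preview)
Your proof is correct and follows exactly the approach of the paper, which simply notes the identity $\sup\set{\abs{z}\,\colon\, z\in A} = d_{\mathrm{H}}(A,\set{0})$ and deduces continuity from it. You have merely spelled out the two steps (verifying the identity and invoking the $1$-Lipschitz property of the distance to a fixed point) in more detail than the paper does.
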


%%%%%%%%%% BIBLIOGRAPHY %%%%%%%%%%%%%%

\bibliographystyle{plain}
\bibliography{biblio}

\end{document}